\newcommand{\Z}{\ensuremath{\mathbb{Z}}}
\newcommand{\Q}{\ensuremath{\mathbb{Q}}}
\newcommand{\R}{\ensuremath{\mathbb{R}}}
\newcommand{\C}{\ensuremath{\mathbb{C}}}
\newcommand{\A}{\ensuremath{\mathbb{A}}}
\newcommand{\Resprod}{\ensuremath{{\prod}'}}
\newcommand{\dd}{\ensuremath{\,\mathrm{d}}}
\newcommand{\angles}[1]{\ensuremath{\langle #1 \rangle}}
\newcommand{\mes}{\ensuremath{\mathrm{mes}}}
\newcommand{\identity}{\ensuremath{\mathrm{id}}}
\newcommand{\Hom}{\ensuremath{\mathrm{Hom}}}
\newcommand{\rightiso}{\ensuremath{\stackrel{\sim}{\rightarrow}}}
\newcommand{\Ker}{\ensuremath{\mathrm{Ker}\,}}
\newcommand{\Lie}{\ensuremath{\mathrm{Lie}\,}}
\newcommand{\Ad}{\ensuremath{\mathrm{Ad}\,}}
\newcommand{\Spec}{\ensuremath{\mathrm{Spec}\,}}
\newcommand{\Gm}{\ensuremath{\mathbb{G}_\mathrm{m}}}
\newcommand{\Ga}{\ensuremath{\mathbb{G}_\mathrm{a}}}
\newcommand{\Supp}{\ensuremath{\mathrm{Supp}}}
\newcommand{\GL}{\ensuremath{\mathrm{GL}}}
\theoremstyle{plain}
\newtheorem{proposition}{Proposition}[subsection]
\newtheorem{lemma}[proposition]{Lemme}
\newtheorem{theorem}[proposition]{Théorème}
\newtheorem{corollary}[proposition]{Corollaire}
\theoremstyle{definition}
\newtheorem{definition}[proposition]{Définition}
\newtheorem{definition-theorem}[proposition]{Définition-Théorème}
\newtheorem{definition-proposition}[proposition]{Définition-Proposition}
\newtheorem{hypothesis}[proposition]{Hypothèse}
\newtheorem{example}[proposition]{Exemple}
\newtheorem{remark}[proposition]{Remarque}
\newcommand{\bmu}{\ensuremath{\bbmu}}
\newcommand{\bomega}{{\ensuremath{\boldsymbol{\omega}}}}
\newcommand{\noyau}{\ensuremath{\boldsymbol{\varepsilon}}} 
\newcommand{\rev}{\ensuremath{\mathbf{p}}} 
\newcommand{\asp}{\ensuremath{\dashrule[.7ex]{2 2 2 2}{.4}}} 
\title{La formule des traces pour les revêtements de groupes réductifs connexes. I. \\ Le développement géométrique fin}
\author{Wen-Wei Li}
\date{}
\begin{document}

\maketitle

\begin{abstract}
  On étudie la partie spécifique de la formule des traces d'Arthur-Selberg pour certains revêtements des groupes réductifs connexes. Comme un premier pas vers la formules des traces invariante, on exprime le côté géométrique en termes des intégrales orbitales pondérées. Les résultats s'appliquent, en particulier, aux revêtements construits par Brylinski et Deligne.
\end{abstract}

\begin{flushleft}
  \small MSC classification (2010): \textbf{11F72}, 11F70.
\end{flushleft}

\tableofcontents

\opt{these}{\chapter{Le développement géométrique fin}}
\section{Introduction}
\paragraph{Motivation}
La théorie des représentations automorphes des groupes réductifs connexes a depuis longtemps été l'objet de travaux intensifs, et la formule des traces d'Arthur-Selberg s'est avérée un outil indispensable. Or certaines questions arithmétiques nous obligent à considérer non seulement les groupes réductifs connexes, mais aussi leurs revêtements finis qui ne sont pas algébriques. Cet article fait partie d'un projet consistant à étendre les travaux d'Arthur aux revêtements.

Historiquement, Flicker et Kazhdan \cite{F80,KF86} ont déjà utilisé une forme simple de la formule des traces sur les revêtements métaplectiques de $\GL(n)$. Mezo \cite{MZ02,MZ01} reprenait leur travail à l'aide de la formule des traces invariante d'Arthur. Malheureusement ils ne considèrent pas les autres revêtements. De plus, vu la profondeur des travaux d'Arthur sur la formule des traces invariante \cite{Ar88-TF1,Ar88-TF2}, les justifications données dans \cite{MZ02} ne sont peut-être pas suffisantes.

Passons en revue le cas des groupes réductifs connexes. Soient $F$ un corps de nombres et $G$ un $F$-groupe réductif connexe. Désignons $\A = \A_F$ l'anneau des adèles de $F$. On sait définir le sous-groupe $G(\A)^1 \subset G(\A)$  (voir \S\ref{sec:app-HC-global}). Fixons un sous-groupe de Lévi minimal $M_0$ et un sous-groupe compact maximal $K$ de $G(\A)$ en bonne position relativement à $M_0$. \textit{Grosso modo}, la formule des traces ``grossière'' d'Arthur \cite{Ar78,Ar80} est une égalité des fonctionnelles linéaires sur $C_c^\infty(G(\A)^1)$ (que l'on appelle aussi ``distributions'', par abus de terminologie)
$$ J := \sum_{\mathfrak{o}} J_\mathfrak{o} = \sum_{\chi} J_\chi , $$
où $\mathfrak{o}$ (resp. $\chi$) indexe des données géométriques (resp. spectrales). Les données $\mathfrak{o}$ sont faciles à décrire: elles correspondent aux classes de conjugaison semi-simples dans $G(F)$. Le terme correspondant à la classe $\{1\}$ est noté $J_\text{unip}$ et s'appelle le terme unipotent. Les données $\chi$ correspondent, en gros, aux représentations automorphes cuspidales sur les sous-groupes de Lévi semi-standards modulo l'action du groupe de Weyl de $G$. Cette formule grossière est relativement facile à adapter aux revêtements (voir \S\ref{sec:formule-grossiere-revetement}).

Avant d'obtenir la formule des traces invariante, il faut d'abord développer les distributions $J_\mathfrak{o}$ (resp. $J_\chi$) en termes des intégrales orbitales pondérées (resp. caractères pondérés), et le résultat s'appelle le développement géométrique (resp. spectral) fin. En sommant les développements fins pour chaque $J_\mathfrak{o}$, le développement géométrique fin  prend la forme (cf. \cite{Ar86})
$$ J(f) = \sum_M |W_0^M| |W_0^G|^{-1} \sum_{\gamma \in (M(F))_{M,S}} a^M(S, \dot{\gamma}) J_M(\dot{\gamma}, f), \quad f \in C_c^\infty(G(\A)^1), $$
où
\begin{itemize}
  \item $M$ parcourt les sous-groupes de Lévi semi-standards de $G$;
  \item $W_0^M$ est le groupe de Weyl de $M$;
  \item $S$ est un ensemble fini de places de $F$ suffisamment grand relativement à $f$;
  \item $(M(F))_{M,S}$ est l'ensemble des classes de $(M,S)$-équivalence (voir \ref{def:(M,S)-equiv});
  \item le symbole pointé $\dot{\gamma}$ signifie que l'on choisit une mesure invariante sur la classe de conjugaison de $\gamma$ dans $M(F_S)$;
  \item $a^M(S,\dot{\gamma})$ s'appelle le coefficient de ce développement en $\dot{\gamma}$, qui est un objet global;
  \item $J_M(\dot{\gamma}, f)$ est l'intégrale orbitale pondérée de $f$ en $\dot{\gamma}$, qui est un objet local.
\end{itemize}

C'est ce qui est problématique pour les revêtements. Le principal but de cet article est un développement géométrique fin pour les revêtements. Quoique le résultat \ref{prop:developpement-raffine-geometrique} a l'air très similaire, sa formulation ainsi que sa démonstration nécessitent des modifications inattendues. Précisons.

\paragraph{Revêtements}
Avant d'entamer ce projet, il faut bien sûr signaler une classe convenable de revêtements. Soit $F$ un corps local ou global, toujours supposé de caractéristique nulle dans cet article. Soit $G$ un $F$-groupe réductif connexe. Notons $A := F$ si $F$ est local et $A := \A$ si $F$ est global, alors $G(A)$ est muni d'une topologie déduite de celle de $A$. En premier lieu, on considère des extensions centrales finies topologiques de $G(A)$, à savoir
$$ 1 \to N \to \tilde{G} \stackrel{\rev}{\to} G(A) \to 1 $$
où $N$ est un groupe abélien fini. Les représentations de $\tilde{G}$ se décomposent selon les caractères de $N$. On fixe un tel caractère $\xi: N \to \bmu_m$, où $m \in \Z_{\geq 1}$ et $\bmu_m := \{\noyau \in \C^\times : \noyau^m=1 \}$. On pousse l'extension centrale en avant via $\xi$. On s'est ainsi ramené aux revêtements avec $N = \bmu_m$, ce que l'on suppose dorénavant, et les représentations sur lesquelles $\bmu_m$ (regardé comme un sous-groupe de $\tilde{G}$) opère par $\noyau \mapsto \noyau \cdot \identity$. De telles représentations sont dites spécifiques. Pour l'étude des représentations spécifiques, il suffit de considérer les fonctions $f$ sur $\tilde{G}$ telles que $f(\noyau\tilde{x})=\noyau^{-1}f(\tilde{x})$ pour tout $\noyau \in \bmu_m$, $\tilde{x} \in \tilde{G}$; de telles fonctions sont dites anti-spécifiques.

On montrera qu'un revêtement se scinde de façon canonique au-dessus des sous-groupes unipotents. Lorsque $F$ est global, on suppose de plus qu'un scindage au-dessus de $G(F)$ est fixé. Tel est le formalisme posé dans \cite{MW94}; on dispose alors de la théorie de décomposition spectrale et des séries d'Eisenstein. Mentionnons aussi que d'un revêtement de $G(\A)$ se déduisent des revêtements de $G(F_v)$, où $v$ est une place de $F$, en prenant la fibre de $\rev$ au-dessus de $G(F_v)$.

Or ces hypothèses ne suffisent pas dans le cas adélique. Par exemple, pour avoir un théorème de décomposition tensorielle des représentations lisses irréductibles spécifiques, il faut définir les algèbres de Hecke sphériques anti-spécifiques en presque toute place et montrer qu'elles sont commutatives. On posera des conditions (dites ``non ramifiées") dans \S\ref{sec:algebre-Hecke} qui doivent être vérifiées en dehors d'un ensemble fini de places $V_\text{ram}$ contenant les places archimédiennes. Notre traitement de tels revêtements s'inspire beaucoup de \cite{Sa88, Sa04, Na09}.

D'après une philosophie bien connue, il faut considérer non seulement un revêtement $\rev: \tilde{G} \to G(A)$, mais aussi ses fibres au-dessus des sous-groupes de Lévi; on appelle ces fibres les sous-groupes de Lévi de $\tilde{G}$. Nos hypothèses pour un revêtement local, non ramifié ou adélique sont préservées par passage aux sous-groupes de Lévi de $G$. De plus, si l'on exige que l'algèbre de Hecke sphérique anti-spécifique d'un revêtement non ramifié est commutative, et idem pour tous les sous-groupes de Lévi, alors les conditions posées dans \S\ref{sec:algebre-Hecke} sont bien minimales. Par ailleurs, nos hypothèses sont aussi préservées par pousser-en-avant le groupe $\bmu_m$ par un homomorphisme.

En pratique les revêtements sont souvent dotés de structures supplémentaires. On démontrera dans \S\ref{sec:BD-ext} que les $\mathbf{K}_2$-torseurs multiplicatifs de Brylinski-Deligne \cite{BD01}, qui généralisent la construction de Steinberg, Moore et Matsumoto \cite{Ma69}, fournissent des revêtements vérifiant nos hypothèses. La démonstration est basé sur un résultat de Weissman \cite{We09}.

\paragraph{La formule des traces grossière}
Soit $\rev: \tilde{G} \to G(\A)$ un revêtement au sens ci-dessus. Fixons un sous-groupe de Lévi minimal $M_0$ et un sous-groupe compact maximal $K \subset G(\A)$ en bonne position relativement à $M_0$. Notons $\tilde{G}^1 := \rev^{-1}(G(\A)^1)$ et $C_{c,\asp}^\infty(\tilde{G}^1)$ l'ensemble des fonctions anti-spécifiques dans $C_c^\infty(\tilde{G}^1)$. Comme dans le cas des groupes réductifs connexes, on a la formule des traces grossière

$$ J(f) = \sum_{\mathfrak{o}} J_\mathfrak{o}(f) = \sum_\chi J_\chi(f), \quad f \in C_{c,\asp}^\infty(\tilde{G}^1) $$
où les indices $\mathfrak{o}$ correspondent aux classes de conjugaison semi-simples dans $G(F)$ comme précédemment, et les indices $\chi$ correspondent, en gros, aux représentations automorphes cuspidales spécifiques sur les sous-groupes de Lévi semi-standards de $\tilde{G}$ modulo l'action de $W_0^G$.

Ici on observe une asymétrie: le côté géométrique est indexé par toutes les classes de conjugaison semi-simples dans $G(F)$, tandis que le côté spectral ne fait intervenir que les représentations spécifiques. Nous y remédierons lors du raffinement. 

\paragraph{Raffinement géométrique}
Pour un groupe réductif connexe $G$, le raffinement géométrique d'un terme $J_\mathfrak{o}$ repose sur la descente au terme $J^{G_\sigma}_\text{unip}$ dans la formule des traces grossière associé au commutant connexe $G_\sigma$ de $\sigma$, où $\sigma \in \mathfrak{o}$. On exprime $J^{G_\sigma}_\text{unip}$ en termes des intégrales orbitales pondérées unipotentes définies dans \cite{Ar88LB}. Les intégrales orbitales pondérées satisfont aussi à une formule de descente. En comparant ces formules de descente, on exprime $J_\mathfrak{o}$ en termes des intégrales orbitales pondérées sur le même groupe.

Le procédé pour un revêtement $\rev: \tilde{G} \to G(\A)$ est analogue sauf que le revêtement disparaît après la descente, et le résultat n'est plus $J^{G_\sigma}_\text{unip}$, mais tordu par un certain caractère de $G_\sigma(\A)$ à cause du fait qu'un élément dans $\rev^{-1}(G_\sigma(\A))$ ne commute pas forcément avec le relèvement de $\sigma$ dans $\tilde{G}$. Mentionnons que la partie elliptique de la formule des traces ``avec un caractère" est beaucoup étudiée (eg. \cite{Lab99}), cependant il nous faut l'autre extrême, la partie unipotente.

De même, nous définissons les intégrales orbitales pondérées sur les revêtements et leurs propriétés se déduisent par descente aux intégrales orbitales pondérées unipotentes sur un groupe réductif connexe, et là encore un caractère intervient.

Notre méthode du raffinement est, pour l'essentiel, celle d'Arthur \cite{Ar85,Ar86}. Or d'une part l'adaptation au cas avec caractère n'est pas toujours triviale, et d'autre part nous avons besoin de renseignements plus précis sur les coefficients dans le développement géométrique fin avec caractère. Cela nécessite la longue section \S\ref{sec:formule-traces-caractere}. Une fois que le formalisme avec un caractère est mis en place, la théorie sur les revêtements en découle par descente. L'un des nouveaux ingrédients dans le développement géométrique fin sur les revêtements \ref{prop:developpement-raffine-geometrique} est la notion des bons éléments (voir \ref{def:bon}); pourtant c'est difficile de les caractériser pour les revêtements en général. Le résultat \ref{prop:developpement-raffine-geometrique} s'écrit

$$ J(f) = \sum_{M \in \mathcal{L}(M_0)} |W_0^M| |W_0^G|^{-1} \sum_{\substack{\gamma \in (M(F))_{M,S}^{K,\mathrm{bon}} \\ \gamma \leadsto \widetilde{\gamma_S} }} a^{\tilde{M}}(S, \dot{\widetilde{\gamma_S}}) J_{\tilde{M}}(\dot{\widetilde{\gamma_S}}, f), $$
où
\begin{itemize}
  \item $M$, $W_0^M$ et $S$ sont pareils que dans le cas des groupes réductifs connexes;
  \item $(M(F))_{M,S}^{K,\mathrm{bon}}$ est le sous-ensemble de $(M(F))_{M,S}$ défini dans \ref{def:K-bon}; notons que la seule nouveauté est la bonté, les autres conditions sont implicites dans les travaux d'Arthur (cf. \cite[Lemma 2.1]{Ar02});
  \item la correspondance $\gamma \leadsto \widetilde{\gamma_S} \in \tilde{M}_S$ est définie \ref{def:passage-local}, où on suppose que $\gamma$ est un représentant admissible de la classe de $(M,S)$-équivalence;
  \item $a^{\tilde{M}}(S,\dot{\widetilde{\gamma_S}})$ est le coefficient de ce développement en $\widetilde{\gamma_S}$;
  \item $J_{\tilde{M}}(\dot{\widetilde{\gamma_S}}, f)$ est l'intégrale orbitale pondérée anti-spécifique en $\dot{\widetilde{\gamma_S}}$.
\end{itemize}

Nous démontrerons que le produit $a^{\tilde{M}}(S, \dot{\widetilde{\gamma_S}}) J_{\tilde{M}}(\dot{\widetilde{\gamma_S}}, f)$ ne dépend que de la classe de $(M,S)$-équivalence et de $f$. Donc cette expression est loisible.

Notons en passant que la démonstration sera beaucoup plus simple si l'on considère un revêtement  tel que deux éléments dans $\tilde{G}$ commutent si et seulement si leurs images par $\rev$ commutent. Tel est le cas du revêtement métaplectique de Weil.

Remarquons que notre méthode permet aussi de raffiner le côté géométrique de la formule des traces avec caractère pour un groupe réductif connexe (voir l'exemple \ref{ex:caractere}). Une généralisation aux groupes tordus aura un intérêt arithmétique.

\paragraph{Structure de cet article}
Dans \S\ref{sec:revetement-local}, nous définissons les revêtements dans le cas local, mettons en place le formalisme de base de l'analyse harmonique et fixons les notations. Le traitement n'est nullement original, mais nous essayons de travailler dans un cadre général: il n'y a aucune hypothèse sur le déploiement, la connexité simple du groupe ou sur les racines d'unité du corps en question.

Dans \S\ref{sec:revetement-global}, nous étudions les revêtements ``non ramifiés", établissons un isomorphisme de Satake et puis définissons les revêtements adéliques. Afin de supporter nos hypothèses, nous démontrons que les $\mathbf{K}_2$-torseurs multiplicatifs de Brylinski-Deligne \cite{BD01} fournissent de tels revêtements adéliques.

La section \S\ref{sec:combinatoire} ne sert qu'à fixer les notations sur les fonctions combinatoires de Langlands et les $(G,M)$-familles.

Dans \S\ref{sec:formule-traces-caractere}, nous étudions le côté géométrique de la formule des traces grossière avec un caractère. Après l'étude des intégrales orbitales pondérées avec caractère, nous obtenons le développement géométrique fin dans ce contexte. Enfin, nous étudions diverses propriétés des coefficients dans le développement fin, qui serviront à remonter ce développement au revêtement.

Dans \S\ref{sec:formule-traces-revetement}, nous mettons en place d'abord la formule des traces grossière pour les revêtements. Puisque des structures analogues sont déjà présentes dans \S\ref{sec:formule-grossiere-revetement}, nous procédons rapidement. Ensuite, nous définissons les intégrales orbitales pondérées anti-spécifiques. Le développement géométrique fin découle d'une réduction au cas unipotent. Nous donnons aussi des formules pour les coefficients similaires à celles d'Arthur.

Une grande partie de ce travail consiste en des paraphrases des travaux d'Arthur. Vu l'épaisseur des ses articles, on se contentera souvent d'indiquer les modifications nécessaires.

\opt{sep}{
\paragraph{Remerciements}
Je tiens à remercier Jean-Loup Waldspurger pour avoir attentivement lu le manuscrit de cet article et d'avoir signalé des erreurs et inexactitudes.
}

\paragraph{Conventions}
Les schémas en groupes sur une base $S$ sont désignés par les symboles $G$, $M$ etc. Leurs algèbres de Lie sont désignées par $\mathfrak{g}$, $\mathfrak{m}$ etc. Le centre de $G$ est noté $Z_G$, le centralisateur d'un sous-schéma en groupes $H$ (resp. d'un $S$-point $x$) est noté $Z_G(H)$ (resp. $Z_G(x)$); le normalisateur de $H$ est noté $N_G(H)$. Soit $T$ un $S$-schéma, l'ensemble des $T$-points d'un $S$-schéma $X$ est désigné par $X(T)$. Lorsque $T=\Spec A$ où $A$ est une algèbre, on écrit aussi $X(A)$ au lieu de $X(T)$. Si $A$ est muni d'une topologie, on munit $X(A)$ de la topologie induite. 

Soit $F$ un corps, on fixe une clôture algébrique $\bar{F}$ de $F$. Soit $G$ un $F$-groupe algébrique. On désigne l'ensemble des éléments semi-simples dans $G(F)$ par $G(F)_\text{ss}$. Soit $x \in G(F)$, on pose $G^x := Z_G(x)$ le commutant de $x$ dans $G$, et $G_x$ désigne la composante neutre de $G^x$. On dit que $x \in G(F)_\text{ss}$ est régulier (resp. fortement régulier) si $G_x$ (resp. $G^x$) est un tore. On désigne la sous-variété des éléments semi-simples réguliers par $G_\text{reg}$. La sous-variété des éléments unipotents dans $G$ est désignée par $G_\text{unip}$. De même, pour l'algèbre de Lie $\mathfrak{g}$, on a la sous-variété $\mathfrak{g}_\text{reg}$ des éléments réguliers semi-simples et le cône nilpotent $\mathfrak{g}_\text{nil}$. \index[iFT1]{$G_\text{reg}$}\index[iFT1]{$\mathfrak{g}_\text{reg}$}\index[iFT1]{$G_\text{unip}$}\index[iFT1]{$G_\text{nil}$}\index[iFT1]{$G_x, G^x$}

On désigne le sous-groupe dérivé (schématique) de $G$ par $G_\text{der}$ et le groupe adjoint par $G_\text{AD}$. Si $G$ est réductif et connexe, on désigne le revêtement simplement connexe de $G_\text{der}$ par $\pi: G_\text{SC} \to G_\text{der}$.\index[iFT1]{$\pi: G_\text{SC} \to G_\text{der}$}

On dit que deux éléments $x,y \in G(F)$ sont géométriquement conjugués s'ils sont conjugués par un élément dans $G(\bar{F})$. On définit ainsi les classes de conjugaison géométriques dans $G(F)$.

Soit $F$ un corps complet à valuation discrète. On utilise toujours la valuation normalisée $v$ de sorte que $v(F)=\Z$. L'anneau des entiers est noté $\mathfrak{o}_F$ et l'idéal maximal est noté par $\mathfrak{p}_F$. Soit $F$ un corps global, on prend les valeurs absolues $|\cdot|_v$ de façon usuelle en chaque place $v$ de telle sorte que $\prod_v |x|_v = 1$ pour tout $x \in F^\times$.

Pour deux éléments $u,v$ dans un groupe quelconque, leur commutateur est défini comme\index[iFT1]{$[\cdot,\cdot]$}
$$ [u,v] := u^{-1} v^{-1} u v. $$

On désigne la fonction modulaire d'un groupe topologique $A$ par $\delta_A(\cdot)$. On désigne la mesure d'un espace mesurable $E$ par $\mes(E)$.

\section{Revêtements locaux}\label{sec:revetement-local}
\subsection{Généralités}\label{sec:generalites}
Soient $F$ un corps local de caractéristique nulle et $M$ un $F$-groupe algébrique affine. Un revêtement de $M(F)$ à $m$ feuillets (où $m \in \Z \setminus \{0\}$) est une extension centrale de groupes topologiques\index[iFT1]{$\bmu_m$}\index[iFT1]{$\rev$}
$$ 1 \to \bmu_m \to \tilde{M} \xrightarrow{\rev} M(F) \to 1, $$
où $\bmu_m := \{\noyau \in \C^\times : \noyau^m = 1 \}$. Alors $\tilde{M}$ est unimodulaire si $M(F)$ l'est. Si $F$ est archimédien, alors $\tilde{M}$ appartient à la classe de Harish-Chandra. De plus, si $F=\C$ alors $\rev$ provient d'un revêtement étale de $\C$-groupes algébriques affines (\cite{SGA1} Exp XII, 5.1). Si $F$ est non archimédien, alors $\tilde{M}$ est un groupe localement profini. Cela permet de parler de représentation lisses, admissibles etc. On dit que $\rev$ est modéré si $F$ est non archimédien de caractéristique résiduelle $q$ première à $m$. Nous adoptons la convention de doter les éléments dans $\tilde{M}$ d'un $\sim$, par exemple $\tilde{x}$, et désignons son image dans $M(F)$ par le symbole sans $\sim$, par exemple $x=\rev(\tilde{x})$.

Remarquons que $M(F)$ agit sur $\tilde{M}$ par conjugaison: de chaque $x \in M(F)$ se déduit un homomorphisme $\tilde{m} \mapsto x^{-1} \tilde{m} x$ de $\tilde{M}$. Sauf mention expresse du contraire, un revêtement signifie un revêtement d'un groupe réductif connexe. 

Notons\index[iFT1]{$\widehat{\bmu_m}$}
$$\widehat{\bmu_m} := \Hom(\bmu_m, \C^\times). $$
Pour un revêtement à $m$ feuillets $\rev: \tilde{M} \to M(F)$, on peut définir les objets spécifiques et anti-spécifiques selon l'action de $\bmu_m$, dotés de l'indice $-$ et $\asp$ respectivement, ou plus généralement les objets équivariants par rapport à certain élément dans $\widehat{\bmu_m}$. Plus précisément, soit $\chi_-$ l'inclusion $\bmu_m \hookrightarrow \C^\times$; pour tout $\chi \in \widehat{\bmu_m}$, posons\index[iFT1]{$C_{c,\chi}^\infty(\tilde{M})$}\index[iFT1]{$C_{c,\asp}^\infty(\tilde{M})$}\index[iFT1]{$C_{c,-}^\infty(\tilde{M})$}\index[iFT1]{anti-spécifique}\index[iFT1]{spécifique}
\begin{align*}
  C_{c,\chi}^\infty(\tilde{M}) & := \{f \in C_c^\infty(\tilde{M}) : \forall \noyau \in \bmu_m, \forall \tilde{x} \in \tilde{M},\; f(\noyau \tilde{x}) = \chi(\noyau) f(\tilde{x}) \}, \\
  C_{c,-}^\infty(\tilde{M}) & := C_{c,\chi_-}^\infty, \\
  C_{c,\asp}^\infty(\tilde{M}) & := C_{c,\chi_-^{-1}}^\infty .
\end{align*}

Notons $\Pi(\tilde{M})$\index[iFT1]{$\Pi(\tilde{M})$} l'ensemble de classes d'équivalences de représentations admissibles irréductibles de $\tilde{M}$, posons\index[iFT1]{$\Pi_{\chi}(\tilde{M})$}\index[iFT1]{$\Pi_-(\tilde{M})$}\index[iFT1]{$\Pi_{\asp}(\tilde{M})$}
\begin{align*}
  \Pi_{\chi}(\tilde{M}) & := \{\pi \in \Pi(\tilde{M}) : \forall \noyau \in \bmu_m, \; \pi(\noyau) = \chi(\noyau) \identity \}, \\
  \Pi_-(\tilde{M}) & := \Pi_{\chi_-}(\tilde{M}), \\
  \Pi_{\asp}(\tilde{M}) & := \Pi_{\chi_-^{-1}}(\tilde{M}).
\end{align*}
De même, on définit l'ensemble $\Pi_2(\tilde{M})$ (resp. $\Pi_\text{temp}(\tilde{M})$, $\Pi_\text{unit}(\tilde{M})$) de représentations de la série discrète (resp. tempérées, unitaires) de $\tilde{M}$, et on rajoute les indices $-,\asp$ ou $\chi \in \widehat{\bmu_m}$ pour signifier l'équivariance.\index[iFT1]{$\Pi_2(\tilde{M})$}\index[iFT1]{$\Pi_{\text{temp}}(\tilde{M})$}\index[iFT1]{$\Pi_{\text{unip}}(\tilde{M})$}

On a une décomposition canonique $C_c^\infty(\tilde{M}) = \bigoplus_{\chi \in \widehat{\bmu_m}} C_{c,\chi}^\infty(\tilde{M})$. Cela permet aussi de parler de l'équivariance de distributions de sorte qu'une fonction $\chi$-équivariante localement intégrable fournit une distribution $\chi$-équivariante. L'étude des représentations sur les revêtements se ramène, pour l'essentiel, à l'étude des représentations spécifiques.

\begin{remark}
  Pour l'étude de représentations $\chi$-équivariantes sur $\tilde{M}$, il suffit de considérer les fonctions test $\bar{\chi}$-équivariantes. En effet, supposons fixée une mesure de Haar sur $\tilde{M}$. Soient $\chi, \xi \in \widehat{\bmu_m}$. Pour tout $\pi \in \Pi_{\chi}(\tilde{M})$ et $f \in C_{c,\xi}^\infty(\tilde{M})$, l'opérateur
  $$ \pi(f) = \int_{\tilde{M}} f(\tilde{m}) \pi(\tilde{m}) \dd \tilde{m} $$
  est nul sauf si $\xi=\bar{\chi}$.
\end{remark}

\subsection{Scindage unipotent}
Conservons les notations précédentes.\index[iFT1]{scindage unipotent}

\begin{proposition}\label{prop:scindage-unip}
  Il existe une seule section continue $s: M_\text{unip}(F) \to \tilde{M}$ de $\rev$ telle que
  \begin{itemize}
    \item pour tout sous-groupe unipotent $U$ de $M$ défini sur $F$, $s|_{U(F)}$ est un homomorphisme;
    \item $s$ est invariant par conjugaison.
  \end{itemize}
\end{proposition}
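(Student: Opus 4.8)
The plan is to construct the section $s$ place by place on the "pieces" of the unipotent variety and then glue, using uniqueness to force compatibility. First I would recall the structure of $M_\text{unip}(F)$: every unipotent element lies in the $F$-points of some maximal unipotent subgroup, and any two maximal unipotent $F$-subgroups are conjugate under $M(F)$ (one may pass to a minimal parabolic). Fix a minimal $F$-parabolic $P_0 = M_0 U_0$ and set $U := U_0$. The key fact is that a unipotent algebraic group over a field of characteristic zero is \emph{uniquely divisible} and in fact has a canonical exponential, so the extension $1 \to \bmu_m \to \rev^{-1}(U(F)) \to U(F) \to 1$ splits, and splits \emph{uniquely}: any two splittings differ by a homomorphism $U(F) \to \bmu_m$, but $U(F)$ is generated by divisible subgroups (the root subgroups, each $\cong \Ga(F)$, which is divisible) while $\bmu_m$ is finite, so the only such homomorphism is trivial. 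This gives a canonical homomorphic section $s_U$ over $U(F)$.

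Next I would transport $s_U$ around by conjugation to define $s$ on all of $M_\text{unip}(F)$. Given $v \in M_\text{unip}(F)$, choose $g \in M(F)$ with $g^{-1} v g \in U(F)$, pick any lift $\tilde g \in \tilde M$ of $g$, and set $s(v) := \tilde g\, s_U(g^{-1} v g)\, \tilde g^{-1}$. One must check this is independent of the choices. Independence of the lift $\tilde g$ is immediate because $\bmu_m$ is central. Independence of $g$ reduces to the following: if $g, g' \in M(F)$ both conjugate $v$ into $U(F)$, then $h := g^{-1} g'$ conjugates one unipotent element of $U(F)$ to another; I need $\tilde h\, s_U(\cdot)\, \tilde h^{-1}$ to agree with $s_U$ on the relevant element. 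This is where the argument has to be run carefully: it is \emph{not} true that conjugation by $h$ preserves $U(F)$ globally, only that it matches up the two specific unipotent elements. The clean way is to observe that both $g^{-1}vg$ and $g'^{-1}vg'$ generate unipotent \emph{subgroups} (their Zariski closures, which are unipotent since we are in characteristic zero), and on each such subgroup $V \cong \Ga(F)$ the restriction of any continuous section of $\rev$ that is a homomorphism is unique by the divisibility argument above; so it suffices to know $\tilde h (\rev^{-1}(V))\tilde h^{-1} = \rev^{-1}(hVh^{-1})$ matches the canonical splittings, which follows because conjugation by $\tilde h$ carries \emph{a} homomorphic splitting of $\rev^{-1}(V)$ to \emph{a} homomorphic splitting of $\rev^{-1}(hVh^{-1})$, and by uniqueness it must be \emph{the} canonical one. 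Thus $s$ is well defined.

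It then remains to verify the three properties: $s$ is a section (clear), $s$ is conjugation-invariant (clear from the construction, since conjugating $v$ by $x \in M(F)$ just composes the conjugating element), $s$ restricted to $U'(F)$ is a homomorphism for \emph{every} $F$-unipotent subgroup $U'$ (reduce to $U' \subseteq$ some conjugate of $U$ since all unipotent subgroups lie in a Borel, hence in a conjugate of $U_0$ after conjugating; then use that $s_U$ is a homomorphism and conjugation by a fixed $\tilde g$ is a homomorphism), and continuity (the construction is continuous in $v$ on each conjugate $g U(F) g^{-1}$, and these are closed subsets covering $M_\text{unip}(F)$ locally; in the non-archimedean case one can also argue that $s$ is locally constant modulo $s_U$ on small neighborhoods). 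Finally, uniqueness of $s$ with these properties is forced: any such $s'$ must agree with $s_U$ on $U(F)$ by the divisibility argument, and then conjugation-invariance propagates this to all of $M_\text{unip}(F)$.

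The main obstacle I anticipate is the well-definedness step — specifically, controlling what conjugation by a lift $\tilde h$ does when $h$ does \emph{not} normalize $U$. The resolution is to never ask for that: only compare the two canonical homomorphic sections on the single one-parameter unipotent subgroup generated by the element in question, where uniqueness is automatic. A secondary technical point, relevant only in the archimedean case, is checking continuity of $s$ globally rather than just on each conjugate of $U(F)$; here one uses that $M_\text{unip}(F)$, with its analytic topology, is covered by the images of the smooth maps $M(F) \times U(F) \to M_\text{unip}(F)$, $(g,u)\mapsto g u g^{-1}$, together with the fact that $s$ lifts these maps continuously. In the $F = \C$ case one can alternatively invoke that $\rev$ comes from an étale morphism of algebraic groups and argue algebraically.
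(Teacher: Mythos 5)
Your overall route differs from the paper's: you split the covering over one maximal unipotent radical $U_0(F)$ and then transport by conjugation, whereas the paper defines $s$ globally and canonically in one stroke by $s(\exp X) := \bigl(\widetilde{\exp(X/m)}\bigr)^m$ (any lift of $\exp(X/m)$, the ambiguity in $\bmu_m$ being killed by the $m$-th power), which makes conjugation-invariance and continuity immediate, and only then checks the homomorphism property subgroup by subgroup. The difference matters, because the one point your argument leaves unproved is precisely the one the paper has to work for: the \emph{existence} of a continuous homomorphic splitting of $1 \to \bmu_m \to \rev^{-1}(U_0(F)) \to U_0(F) \to 1$ when $U_0$ is non-commutative. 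You derive it from "$U_0(F)$ is uniquely divisible, so the extension splits", but unique divisibility of a non-abelian group does not by itself kill $H^2(U_0(F),\bmu_m)$; it only gives you the \emph{uniqueness} of a splitting once one exists (via $\Hom(U_0(F),\bmu_m)=0$). The paper establishes existence by induction on $\dim U$: it chooses a normal $F$-subgroup $U_1 \triangleleft U$ with $U/U_1$ commutative, uses $H^1(U'(F),\bmu_m)=0$ to get the inflation--restriction exact sequence $0 \to H^2((U/U_1)(F),\bmu_m) \to H^2(U(F),\bmu_m) \to H^2(U_1(F),\bmu_m)$, and handles the commutative case directly via the $m$-th root construction. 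Some such argument is needed in your write-up; without it the whole construction of $s_U$ is unfounded.

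A secondary weakness is continuity of the glued section. The conjugates $gU_0(F)g^{-1}$ form an uncountable family of closed sets, and nearby unipotent elements can lie in orbits of different dimensions and be conjugated into $U_0(F)$ by wildly varying $g$, so "continuous on each piece" does not yield continuity of $s$; the submersion claim for $M(F)\times U_0(F) \to M_\text{unip}(F)$ also fails across orbit strata. The paper's formula $s(x)=(\tilde x')^m$ with $\tilde x'$ a local continuous lift of the continuous map $x \mapsto \exp(\log(x)/m)$ gives continuity for free; you should either adopt that construction or supply a genuine argument here. The rest of your proposal (uniqueness on one-parameter subgroups to settle well-definedness, reduction of the homomorphism property to a conjugate of $U_0$, and the final uniqueness of $s$) is sound modulo these two points.
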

\begin{proof}
  C'est contenu dans \cite[A.1]{MW94}. Donnons une preuve directe pour le cas de caractéristique nulle. L'exponentielle fournit un $F$-isomorphisme de variétés algébriques
  $$ \exp: \mathfrak{m}_\text{nil} \to M_\text{unip}. $$
  Pour tout $x = \exp(X)$ dans $M_\text{unip}(F)$, prenons $\tilde{x}'$ un relèvement quelconque de $\exp\left(\frac{X}{m}\right)$. Alors $s(x) := (\tilde{x}')^m \in \rev^{-1}(x)$ est canoniquement défini; en particulier $s$ est invariant par conjugaison. On vérifie aisément la continuité de $s$.

  Soit $U$ un sous-groupe unipotent de $M$, alors $U(F)$ est divisible et sans torsion. D'après la construction ci-dessus, $\rev$ se scinde au-dessus de $U(F)$ si et seulement si $s|_{U(F)}$ est un homomorphisme; de plus, dans ce cas-là $s|_{U(F)}$ est l'unique scindage.

  Montrons que $\rev$ se scinde au-dessus de $U(F)$. Si $U$ est commutatif, alors $s$ est un homomorphisme d'après la construction, d'où le scindage. En général, les revêtements à $m$ feuillets de $U(F)$ sont classifiés par $H^2(U(F),\bmu_m)$ (la cohomologie continue) et il suffit de montrer que ce $H^2$ est trivial. Supposons que $\dim U \geq 1$. Il existe un sous-groupe algébrique distingué $U_1 \triangleleft U$ tel que $\dim U_1 < \dim U$ et $U/U_1$ est commutatif. Rappelons que $U(F)/U_1(F) = (U/U_1)(F)$ car $H^1(F, U_1)=0$. Pour tout $F$-groupe unipotent $U'$, on a $H^1(U'(F), \bmu_m)=0$. D'où la suite exacte de restriction-inflation
  $$ 0 \to H^2((U/U_1)(F),\bmu_m) \to H^2(U(F),\bmu_m) \to H^2(U_1(F),\bmu_m), $$
  ce qui entraîne que $H^2(U(F),\bmu_m)=0$ par récurrence. Par conséquent $s|_{U(F)}$ est un homomorphisme. Comme $M_\text{unip}(F)$ est la réunion des $U(F)$, cela caractérise $s$.
\end{proof}

Ce scindage canonique s'appelle le scindage unipotent. Identifions désormais $M_\text{unip}(F)$ comme un sous-ensemble de $\tilde{M}$ via $s$. Cela permet de généraliser la décomposition de Jordan.\index[iFT1]{décomposition de Jordan}

\begin{proposition}
  Pour tout $\tilde{x} \in \tilde{M}$, il existe $\tilde{\sigma} \in \tilde{M}$ et $u \in M_\text{unip}(F)$ tels que $\sigma$ est semi-simple et $\tilde{x}=\tilde{\sigma} u = u \tilde{\sigma}$. Cette décomposition est unique.
\end{proposition}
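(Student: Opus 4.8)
The plan is to reduce the statement to the classical Jordan decomposition in $M(F)$ via the covering map $\rev$, using the unipotent splitting $s$ from Proposition~\ref{prop:scindage-unip} to make sense of the unipotent part inside $\tilde{M}$. Given $\tilde{x} \in \tilde{M}$, set $x := \rev(\tilde{x}) \in M(F)$ and write its Jordan decomposition $x = \sigma u = u \sigma$ with $\sigma$ semi-simple, $u \in M_\text{unip}(F)$. Since $\sigma$ and $u$ commute in $M(F)$, and $u$ lies in some $F$-split unipotent subgroup, one may choose a unipotent subgroup $U$ containing $u$ that is normalized by $\sigma$; then $s|_{U(F)}$ is a homomorphism and, by the conjugation-invariance of $s$, the relation $\sigma u \sigma^{-1} = $ (some element of $U(F)$) lifts compatibly. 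Concretely, I would define $\tilde{\sigma} := \tilde{x} \cdot s(u)^{-1} = \tilde{x} \cdot s(u^{-1})$, which is a lift of $\sigma$, and then check that $\tilde{\sigma}$ and $s(u)$ commute and that $\tilde{\sigma}$ is "semi-simple" in the appropriate sense (i.e., $\rev(\tilde{\sigma}) = \sigma$ is semi-simple, which is all the statement asks).

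The commutation $\tilde{\sigma} s(u) = s(u) \tilde{\sigma}$ is the point requiring care. Projecting to $M(F)$ the commutator $[\tilde{\sigma}, s(u)]$ lands in $\bmu_m$, so it is a root of unity $\zeta$; I must show $\zeta = 1$. The cleanest argument: because $u$ is unipotent it is divisible — for each $n$ there is $u_n \in M_\text{unip}(F)$ with $u_n^n = u$ (take $u_n = \exp(X/n)$ if $u = \exp(X)$), and $u_n$ still commutes with $\sigma$ in $M(F)$ and lies in a $\sigma$-stable unipotent subgroup, so by functoriality of $s$ we get $s(u_n)^n = s(u)$. Then $[\tilde{\sigma}, s(u)] = [\tilde{\sigma}, s(u_n)^n] = [\tilde{\sigma}, s(u_n)]^n$ since the commutator is central; as $\zeta$ is an $m$-th root of unity and $n$ is arbitrary (take $n = m$), one forces $\zeta = \zeta'^m = 1$ where $\zeta' = [\tilde\sigma, s(u_n)] \in \bmu_m$. (Alternatively: $s(u)$ lies in the image of the canonical splitting over a unipotent group, and one can run the same divisibility trick directly on $\tilde x$, writing $\tilde x = \tilde\sigma \cdot s(u)$ and checking the two factors commute by passing to roots.) This gives $\tilde{x} = \tilde{\sigma} \, s(u) = s(u) \, \tilde{\sigma}$ with $\sigma$ semi-simple, which is existence.

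For uniqueness, suppose $\tilde{x} = \tilde{\sigma}_1 u_1 = \tilde{\sigma}_2 u_2$ are two such decompositions, with $u_i \in M_\text{unip}(F)$ identified with $s(u_i) \in \tilde M$, $\rev(\tilde\sigma_i) = \sigma_i$ semi-simple, and $\tilde\sigma_i$ commuting with $u_i$. Applying $\rev$ gives $\sigma_1 u_1 = \sigma_2 u_2$ as commuting-factor decompositions in $M(F)$, so by uniqueness of the classical Jordan decomposition $\sigma_1 = \sigma_2 =: \sigma$ and $u_1 = u_2 =: u$. Hence $\tilde\sigma_1$ and $\tilde\sigma_2$ are two lifts of the same $\sigma$, so $\tilde\sigma_2 = \noyau \tilde\sigma_1$ for some $\noyau \in \bmu_m$; but then $\tilde\sigma_1 \, s(u) = \tilde x = \tilde\sigma_2 \, s(u) = \noyau \tilde\sigma_1 \, s(u)$, forcing $\noyau = 1$ and $\tilde\sigma_1 = \tilde\sigma_2$.

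The main obstacle is genuinely the commutation step — verifying that the putative semi-simple lift $\tilde\sigma$ commutes with the (splitting of the) unipotent part, i.e. that no nontrivial root of unity obstructs it. Everything else is bookkeeping on top of the classical Jordan decomposition in $M(F)$ and the properties of $s$ established in Proposition~\ref{prop:scindage-unip} (notably its conjugation-invariance and its compatibility with the divisibility of unipotent groups). One should also take a moment to record that a suitable $\sigma$-stable unipotent $F$-subgroup containing $u$ exists — e.g. the Zariski closure of the group generated by the $\sigma$-conjugates of the one-parameter unipotent through $u$, or more simply just invoke that $u \in M_\sigma$ and pick $U$ inside $M_\sigma$ — so that the conjugation-invariance of $s$ applies verbatim; this is routine in characteristic zero.
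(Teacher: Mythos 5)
Votre preuve est correcte et suit essentiellement la même démarche que l'article : on relève la décomposition de Jordan de $x=\rev(\tilde{x})$ en posant $\tilde{\sigma}:=\tilde{x}\,s(u)^{-1}$, et l'unicité se déduit de celle de $(\sigma,u)$ dans $M(F)$. Pour la commutation, l'article conclut en une ligne par l'invariance du scindage unipotent par conjugaison ($\tilde{\sigma}s(u)\tilde{\sigma}^{-1}=s(\sigma u\sigma^{-1})=s(u)$), ce que vous évoquez sans l'exploiter ; votre argument de divisibilité ($[\tilde{\sigma},s(u)]=[\tilde{\sigma},s(u_m)]^m=1$ dans $\bmu_m$) est néanmoins valable et revient au même.
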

On dit que $\tilde{\sigma}$ (resp. $u$) est la partie semi-simple (resp. unipotente) de $\tilde{x}$.
\begin{proof}
  Soit $x = \sigma u = u \sigma$ la décomposition de Jordan dans $M(F)$ avec $\sigma \in M(F)_\text{ss}$ et $u \in M_\text{unip}(F)$. Prenons l'unique $\tilde{\sigma} \in \rev^{-1}(\sigma)$ de sorte que $\tilde{x} = \tilde{\sigma} u$. L'unicité de $(\tilde{\sigma}, u)$ provient de celle de $(\sigma,u)$. De plus, on a $\tilde{\sigma} u = u \tilde{\sigma}$ par l'invariance du scindage unipotent, d'où le résultat cherché.
\end{proof}

\begin{corollary}
  Soit $\tilde{x} = \tilde{\sigma}u$ la décomposition de Jordan. Soit $\tilde{y} \in \tilde{M}$, alors $\tilde{y}$ commute à $\tilde{x}$ si et seulement si $\tilde{y} \tilde{\sigma} = \tilde{\sigma} \tilde{y}$ et $yu=uy$.
\end{corollary}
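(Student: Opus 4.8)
Le plan est de déduire l'énoncé de l'unicité de la décomposition de Jordan dans $\tilde{M}$ établie ci-dessus. On commence par observer que la conjugaison par un élément $\tilde{y} \in \tilde{M}$ ne dépend que de $y = \rev(\tilde{y})$, puisque $\bmu_m$ est central; notons $\mathrm{Int}(\tilde{y}) \colon \tilde{m} \mapsto \tilde{y}^{-1}\tilde{m}\tilde{y}$ cet automorphisme de $\tilde{M}$. Le point crucial, déjà contenu dans \ref{prop:scindage-unip}, est que $\mathrm{Int}(\tilde{y})$ respecte le scindage unipotent: identifiant $M_\text{unip}(F)$ à un sous-ensemble de $\tilde{M}$ via $s$, on a $\mathrm{Int}(\tilde{y})(u') = s(y^{-1}u'y)$ pour tout $u' \in M_\text{unip}(F)$.

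Pour le sens « si », supposons $\tilde{y}\tilde{\sigma} = \tilde{\sigma}\tilde{y}$ et $yu = uy$. Alors $\mathrm{Int}(\tilde{y})(u) = s(y^{-1}uy) = s(u) = u$, donc $\tilde{y}$ commute à $u$; comme il commute aussi à $\tilde{\sigma}$, il commute au produit $\tilde{x} = \tilde{\sigma}u$.

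Pour le sens « seulement si », supposons $\tilde{y}\tilde{x} = \tilde{x}\tilde{y}$, c'est-à-dire $\mathrm{Int}(\tilde{y})(\tilde{x}) = \tilde{x}$. En appliquant $\mathrm{Int}(\tilde{y})$ à l'identité $\tilde{x} = \tilde{\sigma}u$, on obtient
$$ \tilde{x} = \bigl(\tilde{y}^{-1}\tilde{\sigma}\tilde{y}\bigr)\bigl(\tilde{y}^{-1}u\tilde{y}\bigr). $$
On vérifie que c'est une décomposition de Jordan de $\tilde{x}$: l'image de $\tilde{y}^{-1}\tilde{\sigma}\tilde{y}$ dans $M(F)$ est $y^{-1}\sigma y$, qui est semi-simple; on a $\tilde{y}^{-1}u\tilde{y} = s(y^{-1}uy) \in M_\text{unip}(F)$; et ces deux éléments commutent, puisque $\tilde{\sigma}$ et $u$ commutent et que $\mathrm{Int}(\tilde{y})$ est un automorphisme. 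Par l'unicité de la décomposition de Jordan dans $\tilde{M}$, il vient $\tilde{y}^{-1}\tilde{\sigma}\tilde{y} = \tilde{\sigma}$, d'où $\tilde{y}\tilde{\sigma} = \tilde{\sigma}\tilde{y}$, et $\tilde{y}^{-1}u\tilde{y} = u$, d'où $y^{-1}uy = u$ en appliquant $\rev$, soit $yu = uy$.

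L'argument est entièrement formel et aucun obstacle sérieux n'est à prévoir; le seul point à surveiller, tout à fait mineur, est que le couple $\bigl(\tilde{y}^{-1}\tilde{\sigma}\tilde{y},\, \tilde{y}^{-1}u\tilde{y}\bigr)$ remplit bien toutes les conditions de la proposition d'unicité, ce qui repose précisément sur l'invariance par conjugaison du scindage unipotent.
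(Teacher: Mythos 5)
Votre démonstration est correcte et suit exactement la même voie que celle du papier, qui se réduit à la phrase « cela résulte de l'unicité de la décomposition de Jordan et de l'invariance du scindage unipotent » ; vous ne faites qu'expliciter ces deux ingrédients (la conjugaison de la décomposition par $\tilde{y}$ redonne une décomposition de Jordan, d'où l'égalité des facteurs par unicité, et l'invariance de $s$ traduit $yu=uy$ en la commutation de $\tilde{y}$ avec $u$ vu dans $\tilde{M}$). Rien à redire.
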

\begin{proof}
  Cela résulte de l'unicité de la décomposition de Jordan et l'invariance du scindage unipotent.
\end{proof}

\subsection{Sous-groupes de Lévi et paraboliques}
Passons en revue la description des sous-groupes paraboliques. Les détails se trouvent dans \cite[\S 5]{ArIntro}. Soit $F$ un corps quelconque et $G$ un $F$-groupe réductif connexe. Fixons un sous-groupe de Lévi minimal $M_0$ de $G$: c'est le centralisateur d'un $F$-tore déployé maximal $A_0$. Un sous-groupe de Lévi $M$ est dit semi-standard si $M \supset M_0$, un sous-groupe parabolique $P$ est dit semi-standard si $P \supset A_0$. Tout sous-groupe parabolique semi-standard $P$ admet une décomposition de Lévi canonique $P=M_P U_P$ avec $M_P$ semi-standard et $U_P$ le radical unipotent de $P$. Notons $\overline{P} = M_P  U_{\overline{P}}$ le sous-groupe parabolique opposé de $P$.

Pour un sous-groupe de Lévi semi-standard $M$, définissons les ensembles finis suivants
\begin{align*}
  \mathcal{L}(M) &:= \{\text{les Lévis contenant $M$} \},\\
  \mathcal{P}(M) &:= \{\text{les paraboliques dont $M$ est un facteur de Lévi}\},\\
  \mathcal{F}(M) &:= \{\text{les paraboliques contenant $M$}\}.
\end{align*}

Nous indiquons le groupe ambiant $G$ en exposant dans ces notations: $\mathcal{L}^G(M)$, $\mathcal{P}^G(M)$, $\mathcal{F}^G(M)$ lorsqu'il y a crainte de confusion.\index[iFT1]{$\mathcal{L}^G(M)$}\index[iFT1]{$\mathcal{P}^G(M)$}\index[iFT1]{$\mathcal{F}^G(M)$}

Notons $A_M$\index[iFT1]{$A_M$} le $F$-tore central déployé maximal dans $M$. Si $P \in \mathcal{P}(M)$, notons $A_P := A_M$. Posons aussi $X^*(M) := \Hom_\text{alg}(M,\Gm)$ et $\mathfrak{a}_P = \mathfrak{a}_M := \Hom(X^*(M),\R)$. Relativisons ces constructions. Pour tous $L,M$ semi-standards tels que $L \supset M$, on sait définir les $\R$-espaces vectoriels de dimension finie $\mathfrak{a}^L_M$ avec une suite exacte courte scindée canonique\index[iFT1]{$\mathfrak{a}_M$, $\mathfrak{a}^G_M$}
$$ 0 \to \mathfrak{a}_L \to \mathfrak{a}_M \leftrightarrows \mathfrak{a}^L_M \to 0. $$

Ainsi, on regarde $\mathfrak{a}^L_M$ comme sous-espace de $\mathfrak{a}_0$. En dualisant, on en déduit des suites exactes courtes scindées pour $(\mathfrak{a}^L_M)^*$, $\mathfrak{a}_M^*$ etc. Les complexifiés des espaces sont notés par $\mathfrak{a}^L_{M,\C}, (\mathfrak{a}^L_{M,\C})^*$, etc.

Pour tout $M \in \mathcal{L}(M_0)$, notons $W_0^M$ le groupe de Weyl de $M$. Si $M=G$, on le note aussi $W_0$. Pour deux sous-groupes paraboliques semi-standards $P, P'$, ``l'ensemble de Weyl'' $W(\mathfrak{a}_P, \mathfrak{a}_{P'})$ est l'ensemble des isomorphismes linéaires $\mathfrak{a}_{P} \to \mathfrak{a}_{P'}$ obtenus en restreignant les isomorphismes $\mathfrak{a}_0 \to \mathfrak{a}_0$ induits par $W_0^G$. En particulier, on peut définir les groupes $W(\mathfrak{a}_P) := W(\mathfrak{a}_P, \mathfrak{a}_P)$. Deux sous-groupes paraboliques semi-standards $P, P'$ sont dits associés si $W(\mathfrak{a}_P, \mathfrak{a}_{P'}) \neq \emptyset$.

Fixons $P_0 \in \mathcal{P}(M_0)$. Un sous-groupe parabolique $P$ est dit standard si $P \supset P_0$. Un sous-groupe de Lévi $M$ est dit standard s'il existe un sous-groupe parabolique standard $P$ avec décomposition de Lévi canonique $P=MU$. Soit $P$ un sous-groupe parabolique. Il existe un sous-ensemble fini $\Sigma_P \subset X^*(A_P) \subset \mathfrak{a}_P^*$ paramétrisant la décomposition
$$ \mathfrak{u}_P := \Lie(U_P) = \bigoplus_{\alpha \in \Sigma_P} \mathfrak{u}_\alpha $$
en espaces propres pour l'action adjointe de $A_P$. Par abus de notation, on dit aussi que $\Sigma_P$ est l'ensemble des racines pour $(A_P,P)$, bien qu'il ne forme pas un système de racines en général. Notons $\Sigma_P^\text{red}$ le sous-ensemble de $\Sigma_P$ des racines réduites, i.e. indivisibles\index[iFT1]{$\Sigma_P$, $\Sigma_P^\text{red}$}. Posons\index[iFT1]{$\rho_P$}
$$\rho_P := \frac{1}{2} \sum
_{\alpha \in \Sigma_P} (\dim \mathfrak{u}_\alpha) \alpha. $$

Soit $\Delta_0 = \Delta^G_0$ l'ensemble des racines simples de $(A_0, P_0)$, c'est une base pour $(\mathfrak{a}_0^G)^*$. Les paraboliques standards sont en correspondance biunivoque $P \leftrightarrow \Delta_0^P $ avec les sous-ensembles de $\Delta_0$ préservant l'ordre. Plus précisément, supposons que $P \supset P_0$ et soit $P=MU$ la décomposition de Lévi canonique; posons $\Delta_P := \Delta_0 \setminus \Delta^P_0$. On peut identifier $\Delta_P$ à un sous-ensemble de $\Sigma_P^\text{red}$ par restriction. Tout élément dans $\Sigma_P$ admet une unique écriture en combinaison linéaire d'éléments de $\Delta_P$ à coefficients dans $\Z_{\geq 0}$.

On obtient ainsi les bases
\begin{align*}
  \Delta_0 & \subset (\mathfrak{a}_0^G)^*  : \quad \text{racines simples},\\
  \Delta_0^\vee & \subset \mathfrak{a}_0^G  : \quad \text{coracines simples},\\
  \widehat{\Delta_0} & \subset (\mathfrak{a}_0^G)^*  : \quad \text{la base duale de } \Delta_0^\vee, \\
  \widehat{\Delta_0^\vee} & \subset \mathfrak{a}_0^G  : \quad \text{la base duale de } \Delta_0 .
\end{align*}

On peut aussi relativiser cette situation: étant donnés sous-groupes paraboliques standards $P \supset Q$, on obtient les bases
\begin{align*}
  \Delta^P_Q & \subset (\mathfrak{a}^P_Q)^*, \\
  {\Delta^P_Q}^\vee & \subset \mathfrak{a}^P_Q, \\
  \widehat{\Delta^P_Q} & \subset (\mathfrak{a}^P_Q)^*, \\
  \widehat{{\Delta^P_Q}^\vee} & \subset \mathfrak{a}^P_Q.
\end{align*}

\subsection{L'application de Harish-Chandra: le cas local}\label{sec:app-HC}
On se donne $F$ un corps local, $G$ un $F$-groupe réductif dont $M$ est un sous-groupe de Lévi. On définit l'homomorphisme de Harish-Chandra local $H_M: M(F) \to \mathfrak{a}_M$ par\index[iFT1]{$H_M$}\index[iFT1]{application de Harish-Chandra}
$$ \forall \chi \in X^*(M), \quad \angles{\chi, H_M(x)} = \log |\chi(x)|. $$

\begin{definition}
  On dit qu'un sous-groupe compact maximal $K \subset G(F)$ est en bonne position relativement à $M$ (et réciproquement) si
  \begin{itemize}
    \item dans le cas $F$ archimédien, les algèbres de Lie de $K$ et de $A_M(F)$ sont orthogonales par rapport à la forme de Killing de $G$;
    \item dans le cas $F$ non archimédien, $K$ est associé à un sommet spécial dans l'immeuble de Bruhat-Tits élargi de $G$, noté $\mathscr{I}(G)$, qui appartient à l'image d'une immersion équivariante $\mathscr{I}(M) \hookrightarrow \mathscr{I}(G)$.
  \end{itemize}

  Arthur l'appelle admissible dans \cite{Ar81}.
\end{definition}

Notons $M(F)^1 := \Ker(H_M)$. Si $P \in \mathcal{P}(M)$ et $K$ est un sous-groupe compact maximal en bonne position relativement à $M$, alors la décomposition d'Iwasawa $G(F)=P(F)K$ permet de prolonger $H_M$ en une fonction $H_P: G(F) \to \mathfrak{a}_M$ en posant
$$ H_P(umk) = H_M(m), \quad u \in U(F), m \in M(F), k \in K.$$
Pour tout $x \in G(F)$, $H_P(x)$ est déterminé par la classe de $x$ dans $U(F)\backslash G(F)/K$. La fonction modulaire $\delta_P$ de $P(F)$ s'exprime comme $\delta_P(x) = e^{\angles{2\rho_P, H_P(p)}}$.

Nous adoptons la convention suivante: soit $x \in G(F)$, écrivons-le comme
\begin{gather*}
  x = u_P(x) m_P(x) k_P(x) \in G(F), \\
  u_P(x) \in U_P(F), m_P(x) \in M_P(F), k_P(x) \in K;
\end{gather*}
à l'aide de la décomposition d'Iwasawa; l'élément $m_P(x)$ (resp. $k_P(x)$) est uniquement déterminé comme une classe dans $M(F)/M(F) \cap K$ (resp. dans $P(F) \cap K \backslash K$).

Posons\index[iFT1]{$\mathfrak{a}_{M,F}, \tilde{\mathfrak{a}}_{M,F}$}
\begin{align*}
  \mathfrak{a}_{M,F} & := H_M(M(F)), \\
  \tilde{\mathfrak{a}}_{M,F} & := H_M(A_M(F)).
\end{align*}
Ils coïncident avec $\mathfrak{a}_M$ si $F$ est archimédien; sinon ils sont des réseaux dans $\mathfrak{a}_G$. Définissons leurs réseaux duaux dans $i\mathfrak{a}_M^*$
\begin{align*}
  \mathfrak{a}_{M,F}^\vee & := \Hom(\mathfrak{a}_{M,F}, 2\pi i\Z),\\
  \tilde{\mathfrak{a}}_{M,F}^\vee & := \Hom(\tilde{\mathfrak{a}}_{M,F}, 2\pi i\Z).
\end{align*}
Ils se réduisent à $\{0\}$ si $F$ est archimédien; sinon $i\mathfrak{a}_M^*/\mathfrak{a}_{M,F}^\vee$ et $i\mathfrak{a}_M^*/\tilde{\mathfrak{a}}_{M,F}^\vee$ sont des tores réels compacts.

Considérons un revêtement à $m$ feuillets $\rev: \tilde{G} \to G(F)$. On prend les images réciproques $\tilde{M}$ (resp. $\tilde{P}$) par $\rev$ des sous-groupes de Lévi $M$ (resp. sous-groupes paraboliques $P$) de $G$. Soit $M \in \mathcal{L}(M_0)$. En composant $H_M$ avec $\rev$, on obtient $H_M: \tilde{M} \to \mathfrak{a}_M$; en particulier on sait définir
$$\tilde{M}^1 := \Ker(H_M) = \rev^{-1}(M(F)^1).$$

\subsection{Mesures et intégrales}\label{sec:mesure}
Soient $F$ un corps local de caractéristique nulle et $G$ un groupe $F$-réductif connexe. Supposons fixées des mesures de Haar sur $M(F)$ pour tout sous-groupe de Lévi $M$. Imposons les règles suivantes
\begin{itemize}
  \item un sous-groupe compact maximal fixé de $G(F)$ est de masse totale $1$;
  \item un groupe discret est muni de la mesure de comptage.
\end{itemize}

Fixons des mesures de Haar sur $\mathfrak{a}_M$ pour tout sous-groupe de Lévi $M$ de $G$, d'où les mesures de Haar duales sur $i\mathfrak{a}_M^*$ au sens que
$$ \iint_{i\mathfrak{a}_M^* \times \mathfrak{a}_M} \phi(H) e^{-\angles{\lambda,H}} \dd H \dd\lambda = \phi(0) $$
pour tout $h \in C_c(\mathfrak{a}_M)$. Si $F$ est non archimédien, nous demandons que
$$ \mes(i\mathfrak{a}_M^*/\tilde{\mathfrak{a}}_{M,F}^\vee) = 1. $$

Comme $\tilde{\mathfrak{a}}_{M,F}$ est soit discret, soit égal à $\mathfrak{a}_M$, et $\Ker(H_M|_{A_M(F)})$ est compact, on normalise ainsi la mesure de Haar sur $A_M(F)$. De même, une mesure de Haar sur $M(F)$ induit une mesure de Haar sur $M(F)^1$.

Fixons désormais une forme quadratique définie positive $W_0$-invariante sur $\mathfrak{a}_0$. Soient $L \supset M$ deux sous-groupes de Lévi de $G$, on vérifie que la décomposition canonique
$$ \mathfrak{a}_M = \mathfrak{a}_M^L \oplus \mathfrak{a}_L $$
est orthogonale par rapport à la forme quadratique $W_0$-invariante. Puisque les mesures de Haar sur $\mathfrak{a}_M$ et $\mathfrak{a}_L$ sont déjà fixées, on en déduit une mesure canonique sur $\mathfrak{a}_M^L$. En dualisant, on normalise la mesure de Haar sur $(\mathfrak{a}_M^L)^*$.

Soient $P=MU \in \mathcal{P}(M)$ et $K$ un sous-groupe compact maximal en bonne position relativement à $M$, alors on dispose de la décomposition d'Iwasawa $G(F)=U(F)M(F)K$. Il existe une  mesure de Haar sur $U(F)$ de sorte que pour tout $f \in C_c(G(F))$,
\begin{align}
  \label{eqn:UMK} \int_{G(F)} f(x) \dd x & = \iiint_{U(F) \times M(F) \times K} f(umk) \delta_P(m)^{-1} \dd k \dd m \dd u .
\end{align}

Dans le cas $F$ non archimédien et $G$ non ramifié, la compatibilité des mesures est simple. Prenons $K$ hyperspécial. Prenons la mesure de Haar sur $G(F)$ (resp. $M(F), U(F)$) telle que $G(F) \cap K$ (resp. $M(F) \cap K$, $U(F) \cap K$) a masse totale $1$. Alors ces mesures vérifient \eqref{eqn:UMK}.

Considérons maintenant les revêtements. Conservons les conventions précédentes pour les groupes réductifs et leurs sous-groupes. Imposons la règle suivante pour les mesures sur les revêtements:
\begin{itemize}
  \item supposons que $\rev: A \to B$ est un revêtement fini de groupes topologiques localement compacts, et $B$ est muni d'une mesure de Haar, alors $A$ est muni de la mesure de Haar telle que $\mes_B(E) = \mes_A(\rev^{-1}(E))$ pour tout $E \subset A$ mesurable.
\end{itemize}

Montrons qu'avec nos définitions, appliquées au revêtements de $G(F)$, les formules d'intégration habituelles restent valables. Soit $\rev: \tilde{G} \to G(F)$ un revêtement à $m$ feuillets. En prenant les images réciproques par $\rev$ et en utilisant le scindage unipotent, on a $\tilde{G}=U(F)\tilde{M}\tilde{K}$. Prenons les mesures de Haar sur $\tilde{G}$, $\tilde{M}$ et $\tilde{K}$ selon la règle ci-dessus. Alors pour tout $f \in C_c(\tilde{G})$, on a
\begin{align}
  \label{eqn:UMK-rev} \int_{\tilde{G}} f(\tilde{x}) \dd \tilde{x} & = \iiint_{U(F) \times \tilde{M} \times \tilde{K}} f(u\tilde{m}\tilde{k}) \delta_P(m)^{-1} \dd\tilde{k} \dd\tilde{m} \dd u .
\end{align}

En effet, il suffit de le vérifier pour les fonctions $f$ qui se factorisent par $\rev: \tilde{G} \to G(F)$. La convention sur les mesures permet de remplacer l'intégrale sur $\tilde{G}$ par celle sur $G(F)$, et idem pour $\tilde{M}, M(F)$ et $\tilde{K}, K$. L'identité cherchée en résulte. Les compatibilités avec d'autres décompositions (eg. la décomposition $G=K A K$) se vérifient de la même manière.

\subsection{Commutateurs}\label{sec:commutateurs}
On revient aux notations de \S\ref{sec:generalites}. Définissons la sous-variété
$$\text{Comm}(M) := \{(x,y) \in M \times M : xy=yx \}.$$
Pour $(x,y) \in \text{Comm}(M)(F)$, choisissons des relèvements $\tilde{x},\tilde{y} \in \widetilde{M}$, alors le commutateur
$$ [\tilde{x},\tilde{y}] := \tilde{x}^{-1}\tilde{y}^{-1}\tilde{x}\tilde{y} \in \bmu_m; $$
ne dépend pas du choix de relèvements. On en déduit une application continue $[\cdot, \cdot]: \text{Comm}(M)(F) \to \bmu_m$, notée $(x,y) \mapsto [x,y]$. Les propriétés suivantes sont immédiates.

\begin{itemize}
  \item Si $x,x'$ commutent à $y$, alors $[xx',y]=[x,y][x',y]$.
  \item Soit $t \in M(F)$, alors $[txt^{-1},tyt^{-1}]=[x,y]$ pour tout $(x,y) \in \text{Comm}(M)(F)$.
  \item Pour tout $(x,y) \in \text{Comm}(M)(F)$, on a $[x,y]=[y,x]^{-1}$.
  \item Si $(x,y) \in \text{Comm}(M)(F)$ et s'ils appartiennent à un sous-groupe de $M(F)$ sur lequel $\rev$ est scindé, alors $[x,y]=1$.
\end{itemize}

Soit $\gamma \in M(F)$, on a
\begin{align*}
  y^{-1} \tilde{\gamma} y & = [\gamma,y] \tilde{\gamma}, \\
  y \tilde{\gamma} y^{-1} & = [y,\gamma] \tilde{\gamma}, \quad y \in M^\gamma(F) .
\end{align*}
D'où un homomorphisme continu $M^\gamma(F) \to \bmu_m$, noté $[\cdot, \gamma]: y \mapsto [y,\gamma]$.

\begin{definition}[cf. {\cite[I.8]{KF86}} ]\label{def:bon}
  Un élément $\gamma \in M(F)$ est dit bon si $[\cdot,\gamma]=1$ sur $M^\gamma(F)$. Cette propriété ne dépend que de la classe de conjugaison de $\gamma$. On dit qu'une classe de conjugaison dans $\widetilde{M}$ est bonne si son image par $\rev$ l'est.
\end{definition}

Montrons que la bonté est stable par petite perturbation par le centre. Posons\index[iFT1]{$A_M(F)^\dagger$}
\begin{align}\label{eqn:A_M^dagger}
  A_M(F)^\dagger & := A_M(F)^m, \\
  \widetilde{A_M} & := \rev^{-1}(A_M(F)), \\
  \widetilde{A_M}^\dagger & := \rev^{-1}(A_M(F)^\dagger).
\end{align}
Alors $\widetilde{A_M}^\dagger$ (resp. $A_M(F)^\dagger$) est un sous-groupe ouvert et fermé d'indice fini de $\widetilde{A_M}$ (resp. de $A_M(F)$). De plus, $\widetilde{A_M}^\dagger$ est central dans $\tilde{M}$.

\begin{lemma}\label{prop:bon-perturbation}
  Pour tout $\gamma \in M(F)$ et tout $a \in A_M(F)^\dagger$, $\gamma$ est bon si et seulement si $a\gamma$ l'est.
\end{lemma}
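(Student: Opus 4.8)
The statement asks us to show that goodness of $\gamma \in M(F)$ is insensitive to translation by $a \in A_M(F)^\dagger = A_M(F)^m$. Since $a$ is central in $M(F)$, we have $M^{a\gamma}(F) = M^\gamma(F)$, so the two conditions $[\cdot,\gamma]=1$ and $[\cdot,a\gamma]=1$ are both statements about homomorphisms on the same group $M^\gamma(F)$. The key computation is the identity
\[
  [y, a\gamma] = [y,a]\,[y,\gamma], \qquad y \in M^\gamma(F),
\]
which follows from the bilinearity-type property $[xx',z]=[x,z][x',z]$ recorded just before Definition~\ref{def:bon} (applied with the roles of the two slots as needed, using $[x,y]=[y,x]^{-1}$), together with the fact that $a$ commutes with $\gamma$ and with every $y \in M^\gamma(F)$. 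Thus it suffices to show that $[\cdot, a] = 1$ on $M^\gamma(F)$, i.e. that $a \in A_M(F)^\dagger$ is good — more precisely, that $[y,a]=1$ for all $y$ commuting with $a$, which since $a$ is central means all $y \in M(F)$.

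**Key step.** So everything reduces to: for $a \in A_M(F)^m$ and any $y \in M(F)$ (note $a$ central forces $(y,a)\in\mathrm{Comm}(M)(F)$), one has $[y,a]=1$. Write $a = b^m$ with $b \in A_M(F)$; then $b$ is central in $M(F)$ as well, so $(y,b) \in \mathrm{Comm}(M)(F)$ and $[y,a] = [y,b^m] = [y,b]^m$ by the additivity property in the second slot. But $[y,b] \in \bmu_m$, so $[y,b]^m = 1$. Hence $[y,a]=1$, as needed. (Equivalently: the homomorphism $[\cdot,b]\colon M^b(F)=M(F)\to\bmu_m$ has image killed by $m$, and $a$ lies in the $m$-th power subgroup.) Combining with the first step: $[y,a\gamma] = [y,a][y,\gamma] = [y,\gamma]$ for all $y \in M^\gamma(F)$, so $[\cdot,\gamma]=1$ on $M^\gamma(F)$ if and only if $[\cdot,a\gamma]=1$ on $M^{a\gamma}(F)=M^\gamma(F)$, which is exactly the assertion.

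**Main obstacle.** There is no serious obstacle here; the proof is a short formal manipulation. The only point requiring a moment's care is bookkeeping with the two slots of $[\cdot,\cdot]$: the additivity property is stated for the first argument ($[xx',y]=[x,y][x',y]$), so to use additivity in the second argument one must pass through the antisymmetry relation $[x,y]=[y,x]^{-1}$, or equivalently simply note $[y, a\gamma] = [a\gamma,y]^{-1} = ([a,y][\gamma,y])^{-1} = [y,a][y,\gamma]$ and $[y, b^m] = [b^m, y]^{-1} = ([b,y]^m)^{-1} = [y,b]^m$. One should also make sure the relevant pairs genuinely lie in $\mathrm{Comm}(M)(F)$ so that $[\cdot,\cdot]$ is defined: this holds because $a$ (hence $b$) is central, being in $A_M(F)$, which is exactly why $\widetilde{A_M}^\dagger$ was observed to be central in $\tilde M$.
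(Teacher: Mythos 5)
Your proof is correct and follows essentially the same route as the paper: the paper reduces to the identity $[x,a\gamma]=[x,\gamma]$ using the fact (recorded just before the lemma) that $\widetilde{A_M}^\dagger$ is central in $\tilde M$. Your ``key step'' ($a=b^m$, hence $[y,a]=[y,b]^m=1$ since $[y,b]\in\bmu_m$) is precisely a proof of that centrality, which the paper states without proof, so you have merely made explicit a step the paper takes for granted.
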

\begin{proof}
  On a $M^{a\gamma}=M^\gamma$ car $a \in A_M(F)$. Soit $x \in M^{a\gamma}$, on a $[x,a\gamma] = [x,\gamma]$ car $\widetilde{A_M}^\dagger$ est central. Cela permet de conclure.
\end{proof}


\section{Revêtements non ramifiés et adéliques}\label{sec:revetement-global}
\subsection{Le cas non ramifié}\label{sec:algebre-Hecke}
Soit $F$ un corps local non archimédien avec $q : = |\mathfrak{o}_F/\mathfrak{p}_F|$. On se donne un revêtement $\rev: \tilde{M} \to M(F)$ à $m$ feuillets tel que $M$ est non ramifié. Fixons un sous-groupe hyperspécial $K \subset M(F)$ et supposons qu'il existe un scindage continu $s: K \to \tilde{M}$ de $\rev$ au-dessus de $K$.\index[iFT1]{revêtement non ramifié}

Regardons $K$ comme un sous-groupe de $\tilde{M}$ en fixant un tel scindage $s$. Prenons la mesure de Haar sur $M(F)$ telle que $\mes(K)=1$, d'où une mesure de Haar sur $\tilde{M}$ selon les conventions de \S\ref{sec:mesure}. Cette mesure est canonique car les sous-groupes hyperspéciaux sont conjugués par $M_\text{AD}(F)$.

On définit l'algèbre de Hecke sphérique $\mathcal{H}(\tilde{G}/\!/K)$: c'est l'espace des fonctions $K$-bi-invariantes à support compact, muni du produit de convolution. Soit $\chi \in \widehat{\bmu_m}$, posons $\mathcal{H}_\chi(\tilde{G}/\!/K) := \mathcal{H}(\tilde{G}/\!/K) \cap C_{c,\chi}^\infty(\tilde{G})$; c'est une sous-algèbre et on a $\mathcal{H}(\tilde{G}/\!/K) = \prod_{\chi \in \widehat{\bmu_m}} \mathcal{H}_\chi(\tilde{G}/\!/K)$. Définissons la fonction $f_{K,\chi} \in \mathcal{H}_\chi(\tilde{G}/\!/K)$ à support dans $\tilde{K}$ telle que
\begin{align*}
  \forall \noyau \in \bmu_m, \;\forall k \in K, \quad &  f_{K,\chi}(\noyau K) = \chi(\noyau), \\
  \forall \tilde{x} \notin \tilde{K}, \quad & f_{K,\chi}(\tilde{x}) = 0.
\end{align*}
Selon notre convention de mesures, $f_{K,\chi}$ est l'unité de $\mathcal{H}_\chi(\tilde{G}/\!/K)$. Si $\chi=\chi_-^{-1}$ (i.e. on considère l'algèbre de Hecke sphérique anti-spécifique), posons $f_K = f_{K,\chi}$.\index[iFT1]{$f_K$}

En particulier, on peut définir l'algèbre de Hecke anti-spécifique associée à $K$, notée $\mathcal{H}_{\asp}(\tilde{G}/\!/K)$\index[iFT1]{$\mathcal{H}_{\asp}(\tilde{G}//K)$}\index[iFT1]{algèbre de Hecke anti-spécifique} dont $f_K$ est l'unité. De même, on peut définir l'algèbre d'Iwahori-Hecke anti-spécifique (ou plus généralement, $\chi$-équivariante) sous les mêmes hypothèses.

\begin{definition}\label{def:cond-nr}
  On dit qu'un triplet $(\rev, K, s)$ vérifie la condition non ramifiée si
  \begin{itemize}
    \item $\rev: \tilde{M} \to M(F)$ est un revêtement;
    \item $K \subset M(F)$ est un sous-groupe hyperspécial;
    \item $s: K \to \tilde{M}$ est un scindage de $\rev$ au-dessus de $K$ par lequel $K$ s'identifie à un sous-groupe de $\tilde{M}$;
    \item $q$ est premier avec $m:= |\Ker(\rev)|$, i.e. $\rev$ est modéré;
    \item soient $T$ un $F$-tore déployé maximal et $M_0 := Z_M(T)$ en bonne position relativement à $K$, alors le groupe
      \begin{gather}\label{eqn:def-H}
        \tilde{H} := Z_{\widetilde{M_0}}(K \cap M_0(F))
      \end{gather}
      est commutatif\index[iFT1]{$\tilde{H}$}.
  \end{itemize}

  Par abus de notations, on dit aussi que $\rev: \tilde{M} \to M(F)$ muni des données $(K,s)$ est un revêtement non ramifié.
\end{definition}

La dernière condition technique sert à garantir la commutativité de l'algèbre de Hecke, ce qui fait l'objet du paragraphe suivant. Observons aussi que les $F$-tores déployés maximaux en bonne position relativement à $K$ sont conjugués par $K$, d'après \cite[7.4.9 (i)]{BT72}.

\begin{lemma}\label{prop:compatibilite-K-unipotent}
  Si la condition \ref{def:cond-nr} est vérifiée, alors le scindage unipotent \ref{prop:scindage-unip} coïncide avec $s$ sur $K \cap M_\text{unip}(F)$.
\end{lemma}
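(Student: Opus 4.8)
Le plan est de ramener l'énoncé, pour chaque $u \in K \cap M_\text{unip}(F)$, à un sous-groupe unipotent à un paramètre, puis d'invoquer la modération de $\rev$. Notons $s_0 : M_\text{unip}(F) \to \tilde{M}$ le scindage unipotent de \ref{prop:scindage-unip}, pour le distinguer du scindage $s : K \to \tilde{M}$ de la donnée non ramifiée. On peut supposer $u \neq 1$. Via l'isomorphisme $\exp : \mathfrak{m}_\text{nil} \rightiso M_\text{unip}$, écrivons $u = \exp(X)$ avec $X \in \mathfrak{m}_\text{nil}(F)$, $X \neq 0$. Le $F$-morphisme $\Ga \to M$, $t \mapsto \exp(tX)$, est injectif (puisque $\exp$ l'est sur les nilpotents), et son image est un $F$-sous-groupe unipotent $U \cong \Ga$ avec $u \in U(F)$. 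Par construction du scindage unipotent, $s_0|_{U(F)}$ est un homomorphisme; et $s$ est un homomorphisme continu sur $K$. Donc $s$ et $s_0$ induisent tous deux des sections continues et homomorphes de $\rev$ au-dessus de $U(F) \cap K$.

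L'étape cruciale est que $U(F) \cap K$ est un groupe pro-$p$, où $p$ est la caractéristique résiduelle de $F$. En effet, c'est un sous-groupe fermé du groupe compact $K$, donc un sous-groupe compact du groupe additif $(F, +)$; étant borné pour la valeur absolue, il est contenu dans une puissance $\mathfrak{p}_F^{-N}$ ($N \gg 0$), sous-groupe ouvert compact isomorphe à $\Z_p^{[F:\Q_p]}$, et un sous-groupe fermé d'un groupe pro-$p$ est pro-$p$. Comme $\rev$ est modéré, $p$ ne divise pas $m$. Par conséquent, tout homomorphisme continu $U(F) \cap K \to \bmu_m$ est trivial : son image est à la fois un $p$-groupe fini et un sous-groupe de $\bmu_m$, donc d'ordre divisant $\mathrm{pgcd}(p^k, m) = 1$.

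Il restera à conclure. L'application $\phi : U(F) \cap K \to \bmu_m$, $x \mapsto s(x)^{-1} s_0(x)$, est bien à valeurs dans $\bmu_m = \Ker(\rev)$ (car $s$ et $s_0$ sont des sections de $\rev$) et, $\bmu_m$ étant central dans $\tilde{M}$, c'est un homomorphisme continu. D'après ce qui précède, $\phi$ est trivial, d'où $s(u) = s_0(u)$; comme $u$ est arbitraire, $s$ coïncide avec le scindage unipotent sur $K \cap M_\text{unip}(F)$. Le seul point réclamant un peu d'attention est la vérification que $U(F) \cap K$ est pro-$p$, tout le reste étant formel; on notera que seules l'existence de $s$ au-dessus de $K$ et la modération interviennent, et non les autres clauses de \ref{def:cond-nr}.
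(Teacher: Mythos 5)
Votre preuve est correcte et suit essentiellement la même stratégie que celle du texte : réduction à $K \cap U(F)$ pour $U$ unipotent, constat que ce groupe est pro-$p$, puis utilisation de la modération ($p \nmid m$). Les seules différences sont cosmétiques — vous vous ramenez aux sous-groupes à un paramètre et concluez via la trivialité de l'homomorphisme « différence » $x \mapsto s(x)^{-1}s_0(x)$ à valeurs dans $\bmu_m$, là où le texte invoque l'unicité du scindage via l'homéomorphisme $u \mapsto u^m$ de $K \cap U(F)$.
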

\begin{proof}
  Il suffit de le vérifier sur $K \cap U(F)$ où $U$ est un sous-groupe unipotent quelconque. Notons $p$ la caractéristique résiduelle de $F$. Comme $U(F)$ est une union croissante de pro-$p$-groupes, $K \cap U(F)$ est un pro-$p$-groupe. Donc l'application $u \mapsto u^m$ est un homéomorphisme de $K \cap U(F)$ sur lui-même car $m$ est premier à $p$. Vu la construction du scindage unipotent, on voit qu'il n'existe qu'un seul scindage possible de $\rev$ au-dessus de $K \cap U(F)$.
\end{proof}

\begin{remark}
  Soit $\rev: \tilde{M} \to M(F)$ un revêtement à $m$ feuillets. Soit $\bmu_m \to \bmu_{m'}$ un homomorphisme quelconque et posons $\rev': \tilde{M}' \to M(F)$ la poussée-en-avant de $\rev$ via $\bmu_m \to \bmu_{m'}$. Alors le triplet $(\rev', K, s)$ vérifie la condition non ramifiée si $(\rev, K, s)$ la vérifie.
\end{remark}

\begin{remark}
  Soit $\rev: \tilde{G} \to G(F)$ un revêtement et $(\rev, K, s)$ un triplet vérifiant la condition non ramifiée pour $\tilde{G}$. Soient $M$ un sous-groupe de Lévi en bonne position relativement à $M$ et $\rev_M : \tilde{M} \to M(F)$ le revêtement induit. Alors $(\rev_M, K \cap M(F), s|_{K \cap M(F)})$ satisfait aussi à la condition non ramifiée pour $\tilde{M}$.
\end{remark}

\subsection{Isomorphisme de Satake}
Considérons un revêtement $\rev: \tilde{G} \to G(F)$ avec un sous-groupe hyperspécial $K$ et un scindage $s: K \to \tilde{G}$ vérifiant la condition non ramifiée. Nous allons établir une variante de l'isomorphisme de Satake\index[iFT1]{isomorphisme de Satake}.

Définissons le support de l'algèbre de Hecke sphérique anti-spécifique par
$$ \Supp (\mathcal{H}_{\asp}(\tilde{G}/\!/ K)) := \bigcup \left\{ \Supp(f) : f \in \mathcal{H}_{\asp}(\tilde{G}/\!/ K) \right\}.$$

Fixons désormais un $F$-tore déployé maximal $T$ en bonne position relativement à $K$. Alors $M_0 := Z_G(T)$ est un sous-groupe de Lévi minimal de $G$; de plus, $M_0$ est un $F$-tore non ramifié. Posons $K_0 := K \cap M_0(F)$. Définissons $\tilde{H} \subset \tilde{T}$ comme dans \ref{def:cond-nr}.

\begin{lemma}[cf. {\cite[9.2]{Na09}} ]\label{prop:Hecke-supp}
  On a $\Supp (\mathcal{H}_{\asp}(\tilde{G}/\!/ K)) = K\tilde{H}K$.
\end{lemma}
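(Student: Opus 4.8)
The plan is to prove the two inclusions $K\tilde H K \subseteq \Supp(\mathcal H_{\asp}(\tilde G/\!/K))$ and $\Supp(\mathcal H_{\asp}(\tilde G/\!/K)) \subseteq K\tilde H K$ separately. Throughout I will use the Cartan–Iwasawa machinery: since $T$ is in good position relative to $K$, we have the Cartan decomposition $G(F) = K\,T(F)\,K$ (more precisely $G(F)=\coprod K t K$ with $t$ ranging over representatives of $T(F)/T(F)\cap K$ modulo the Weyl group), and this lifts through $\rev$ using the splitting over $K$ and the fact that $\tilde H = Z_{\widetilde{M_0}}(K_0)$. So the support of any $K$-bi-invariant function is a union of double cosets $K\tilde t K$ with $\tilde t \in \tilde T$, and the whole question is: for which $\tilde t$ does there exist $f \in \mathcal H_{\asp}(\tilde G/\!/K)$ with $\tilde t \in \Supp f$, equivalently, for which $\tilde t$ is there a nonzero anti-specific $K$-bi-invariant function supported on $K\tilde t K$?

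For the inclusion $\supseteq$: given $\tilde h \in \tilde H$, I want to produce an anti-specific $K$-bi-invariant function supported on (a union of $\bmu_m$-translates of) $K\tilde h K$. The natural candidate is the ``characteristic function'' $f_{\tilde h}$ defined by $f_{\tilde h}(\noyau k_1 \tilde h k_2) = \noyau^{-1}$ for $\noyau\in\bmu_m$, $k_1,k_2\in K$ and $f_{\tilde h}=0$ off $\bmu_m K\tilde h K$. The only thing to check is that this is well-defined, i.e. that the value $\noyau^{-1}$ is independent of the way an element is written as $\noyau k_1\tilde h k_2$. An ambiguity arises exactly from an equation $k_1 \tilde h k_2 = \noyau k_1' \tilde h k_2'$ with $k_i,k_i'\in K$; rearranging, $\noyau = $ a commutator-type expression, and here I use that $\tilde h$ commutes with $K_0$ (by definition of $\tilde H$) and more precisely that the relevant stabiliser computation in $G(F)$ — namely $K\cap h^{-1}Kh$ maps into the part of $K$ over which things are controlled — forces $\noyau=1$. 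This is where the hypothesis $\tilde H = Z_{\widetilde{M_0}}(K_0)$ is used in an essential way, rather than just $\tilde H$ commutative; I expect this well-definedness check to be the technical heart of the $\supseteq$ direction, and it is presumably exactly the point made in \cite[9.2]{Na09}.

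For the inclusion $\subseteq$: suppose $f\in\mathcal H_{\asp}(\tilde G/\!/K)$ is nonzero at some $\tilde x\in\tilde G$. By the Cartan decomposition I may assume $\tilde x = \tilde t$ with $t\in T(F)$, and I must show $\tilde t\in K\tilde H K$, equivalently that $\tilde t$ can be adjusted by elements of $K$ on both sides to land in $\tilde H$. Write $t$ via the Iwasawa/Cartan structure; the obstruction is that the preimage $\rev^{-1}(t)$ is a $\bmu_m$-torsor and only certain of its elements lie in $K\tilde H K$. Anti-specificity of $f$ means $f(\noyau\tilde t)=\noyau^{-1}f(\tilde t)$, while $K$-bi-invariance means $f$ is constant on $K\tilde t K$; combining these, if two $\bmu_m$-translates $\tilde t$ and $\noyau\tilde t$ lie in the same double coset $K\tilde t K$ then $f(\tilde t) = f(\noyau\tilde t) = \noyau^{-1} f(\tilde t)$, so $f(\tilde t)\neq 0$ forces $\noyau=1$. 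Hence the nonvanishing of $f$ at $\tilde t$ forces the double coset $K\tilde t K$ to meet $\rev^{-1}(t)$ in a single point — and I then argue that this ``unramifiedness of the double coset'' is precisely equivalent to $\tilde t\in K\tilde H K$. Concretely: decompose $t$ in the torus using that $M_0$ is an unramified torus, reduce the commutator condition $[k, \tilde t]$ for $k\in K\cap\,{}^tK$ to a condition on $K_0$-centralisers, and invoke that $(\rev,K,s)$ is tame (so $\bmu_m$ has order prime to $q$) together with the structure of $\tilde H$. The main obstacle I anticipate is organising this last equivalence cleanly — translating ``$K\tilde t K$ meets the fibre in one point'' into membership in $K\tilde H K$ — since it requires knowing enough about how $K$ acts by conjugation on $\rev^{-1}(T(F))$, and this is exactly where one leans on the results of \cite{Sa88,Sa04,Na09} cited in the introduction.
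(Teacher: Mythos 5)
Your reduction of both inclusions to the single combinatorial statement ``for $\tilde{t}\in\widetilde{M_0}$, one has $(\forall\noyau\neq 1:\noyau\tilde{t}\notin K\tilde{t}K)$ if and only if $\tilde{t}\in\tilde{H}$ (up to $K_0$)'' is exactly the content of \cite[9.2]{Na09}, and the paper itself offers nothing beyond that citation; so your plan is structurally correct and is the same approach, with the same technical core deferred. Two remarks that would let you close the two points you flag as obstacles. For the inclusion $\subseteq$, you do not need the full ``equivalence of unramifiedness of the double coset with membership in $K\tilde{H}K$'': if $f$ is anti-spécifique, $K$-bi-invariante and $f(\tilde{t})\neq 0$ with $\tilde{t}\in\widetilde{M_0}$, then for every $k_0\in K_0$ (viewed in $\tilde{G}$ via $s$) one has $k_0^{-1}\tilde{t}k_0=[t,k_0]\,\tilde{t}$ because $M_0$ is a torus, whence $f(\tilde{t})=f(k_0^{-1}\tilde{t}k_0)=[t,k_0]^{-1}f(\tilde{t})$ and $[t,k_0]=1$; since this holds for all $k_0\in K_0$, $\tilde{t}\in Z_{\widetilde{M_0}}(K_0)=\tilde{H}$ directly. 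For the inclusion $\supseteq$, the well-definedness of $f_{\tilde{h}}$ amounts to showing that $k_1\tilde{h}k_2=\noyau\tilde{h}$ with $k_1,k_2\in K$ forces $\noyau=1$, and this is proved exactly as in the paper's own Lemme \ref{prop:UtK}: one has $k_1\in K\cap hKh^{-1}$, which (choosing $h$ antidominant in its Cartan double coset) admits an Iwahori factorization $k_1=u^{-}k_0u^{+}$ with $u^{\pm}$ in the unipotent radicals and $k_0\in K_0$; conjugation by $\tilde{h}$ then fixes $s(u^{\pm})$ by the conjugation-invariance of the unipotent splitting combined with \ref{prop:compatibilite-K-unipotent} (this is where tameness enters, not in the $\subseteq$ direction), and fixes $s(k_0)$ precisely because $\tilde{h}\in\tilde{H}$. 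With these two computations written out, your proposal becomes a complete proof; as it stands it correctly locates, but does not execute, the same step that the paper delegates to \cite{Na09}.
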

\begin{proof}
  Dans \cite{Na09} on ne considère que les groupes déployés, or la même preuve s'adapte aux groupes réductifs connexes non ramifiés sans modification.
\end{proof}

\begin{remark}
  C'est loisible d'identifier $W_0^G$ à $(N_G(T)(F) \cap K)/K_0$. Comme $K_0$ centralise $\tilde{H}$, on voit que $W_0^G$ opère sur $\tilde{H}$. D'autre part, \ref{prop:Hecke-supp} appliqué à $\widetilde{M_0}$ et $K_0$ affirme que
  $$ \Supp (\mathcal{H}_{\asp}(\widetilde{M_0} /\!/ K_0)) = \tilde{H} $$
  (on peut aussi le vérifier directement). Cela permet de faire opérer $W_0^G$ sur $\mathcal{H}_{\asp}(\widetilde{M_0} /\!/ K_0)$ de façon canonique.
\end{remark}

Posons
\begin{align*}
  \Lambda & := \{\lambda \in X_*(T) : \lambda(\varpi_F) \in \rev(\tilde{H}) \},
\end{align*}
Alors $\Lambda$ est un sous-réseau de $X_*(T)$ ayant le même rang; en effet, $\Lambda \supset m X_*(T)$.

\begin{lemma}\label{prop:Hecke-commutatif-tore}
  L'algèbre $\mathcal{H}_{\asp}(\widetilde{M_0} /\!/ K_0)$ est commutative. De plus, elle est isomorphe à l'algèbre $\C[\Lambda]$, ce qui s'identifie à l'algèbre en polynômes de $\dim X_*(T)$ variables.
\end{lemma}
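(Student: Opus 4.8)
The strategy is to produce an explicit generating set of $\mathcal{H}_{\asp}(\widetilde{M_0}/\!/K_0)$ indexed by $\Lambda$ and check that multiplication matches the group algebra $\C[\Lambda]$. First I would recall from Lemma~\ref{prop:Hecke-supp} (applied to $\widetilde{M_0}$ and $K_0$, as noted in the remark following it) that the support of $\mathcal{H}_{\asp}(\widetilde{M_0}/\!/K_0)$ is exactly $\tilde H$. Since $K_0$ centralizes $\tilde H$ and $\tilde H$ is commutative by the last clause of Definition~\ref{def:cond-nr}, the double cosets $K_0 \tilde h K_0$ for $\tilde h \in \tilde H$ are just the cosets $\tilde h (K_0 \cap \tilde H)$; and because $\rev$ splits over $K_0$ via $s$, one has $K_0 \cap \tilde H = s(K_0 \cap M_0(F))$, which maps isomorphically to $K_0$. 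The upshot: $K_0$-double cosets inside $\tilde H$ are in bijection with $\rev(\tilde H)/\rev(K_0) \cong \rev(\tilde H)/\bigl(T(F)\cap K_0\bigr)$, and since $M_0 = T$ is an unramified torus, $T(F)/(T(F)\cap K) \cong X_*(T)$ via $\lambda \mapsto \lambda(\varpi_F)$. Intersecting with $\rev(\tilde H)$ gives precisely the sublattice $\Lambda$.

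**Construction of the basis.** For each $\lambda \in \Lambda$, I would let $\tilde h_\lambda \in \tilde H$ be any lift of an element of $T(F)$ representing $\lambda$ (well-defined modulo $\bmu_m \cdot s(T(F)\cap K)$), and define $f_\lambda \in \mathcal{H}_{\asp}(\widetilde{M_0}/\!/K_0)$ to be the anti-specific function supported on $\bmu_m \cdot \tilde h_\lambda \cdot K_0$ normalized so that $f_\lambda(\tilde h_\lambda) = 1$ — concretely $f_\lambda(\noyau \tilde h_\lambda k) = \noyau^{-1}$ for $\noyau \in \bmu_m$, $k \in K_0$, and $0$ elsewhere. By the support computation these $f_\lambda$, $\lambda \in \Lambda$, form a $\C$-basis of $\mathcal{H}_{\asp}(\widetilde{M_0}/\!/K_0)$: any anti-specific $K_0$-bi-invariant function is a finite combination of indicator-type functions on the double cosets in $\tilde H$, and antispecificity fixes the value on each $\bmu_m$-orbit up to one scalar. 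Note $f_0 = f_{K_0}$ is the unit of the algebra by the normalization of measures ($\mes(K_0)=1$).

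**The multiplication.** It remains to compute $f_\lambda * f_\mu$. Writing out the convolution $(f_\lambda * f_\mu)(\tilde x) = \int_{\widetilde{M_0}} f_\lambda(\tilde y)\, f_\mu(\tilde y^{-1}\tilde x)\,\dd\tilde y$, the integrand is supported where $\tilde y \in \bmu_m \tilde h_\lambda K_0$ and $\tilde y^{-1}\tilde x \in \bmu_m \tilde h_\mu K_0$, hence $\tilde x \in \bmu_m \tilde h_\lambda K_0 \tilde h_\mu K_0 = \bmu_m \tilde h_\lambda \tilde h_\mu K_0$, the last equality because $K_0$ commutes with $\tilde h_\mu \in \tilde H$. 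Since $\tilde H$ is commutative (and contains $s(T(F)\cap K)$ which is killed by the $\leadsto$ to $X_*(T)$), the product $\tilde h_\lambda \tilde h_\mu$ is a lift representing $\lambda + \mu \in \Lambda$, so $f_\lambda * f_\mu$ is supported on the single double coset labelled by $\lambda+\mu$; tracking the antispecific normalization and the fact that $\mes(K_0)=1$ gives $f_\lambda * f_\mu = f_{\lambda+\mu}$. Therefore $\lambda \mapsto f_\lambda$ extends to a $\C$-algebra isomorphism $\C[\Lambda] \xrightarrow{\sim} \mathcal{H}_{\asp}(\widetilde{M_0}/\!/K_0)$; in particular the algebra is commutative. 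Finally $\C[\Lambda]$ is a Laurent polynomial ring in $\operatorname{rank}\Lambda = \dim X_*(T)$ variables, and since $\Lambda$ is a lattice of full rank (it contains $m X_*(T)$), choosing a $\Z$-basis of $\Lambda$ exhibits it as a polynomial algebra in $\dim X_*(T)$ variables after inverting those variables — or, if the statement intends genuine polynomials, one passes to $\C[\Lambda]$ as the coordinate ring of a torus of that dimension.

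**Main obstacle.** The only genuinely delicate point is the identification $K_0 \cap \tilde H = s(K_0)$ together with the claim that $\tilde H$ modulo this subgroup (and modulo $\bmu_m$) is the full-rank lattice $\Lambda$: this is where the noncommutativity of a general cover could in principle intervene, and it is exactly the role of the technical hypothesis \eqref{eqn:def-H} in Definition~\ref{def:cond-nr} that $\tilde H$ be commutative. Once that hypothesis is invoked, everything else is bookkeeping with double cosets and Haar measures.
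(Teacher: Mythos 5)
Your proposal is correct and follows essentially the same route as the paper: reduce to the support $\tilde H$ via Lemme~\ref{prop:Hecke-supp}, use the commutativity of $\tilde H$ from \ref{def:cond-nr}, and identify the $K_0$-double cosets in $\tilde H$ with $\Lambda$ so that convolution matches addition, yielding $\C[\Lambda]\rightiso\mathcal{H}_{\asp}(\widetilde{M_0}/\!/K_0)$. The only normalization point to watch — that $f_\lambda * f_\mu$ equals $f_{\lambda+\mu}$ on the nose rather than up to a root of unity — is handled exactly as in the paper, by fixing lifts multiplicatively through a $\Z$-basis of $\Lambda$, which your last paragraph effectively does.
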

\begin{proof}
  Il suffit de considérer le support de $\mathcal{H}_{\asp}(\widetilde{M_0} /\!/ K_0)$. On a déjà remarqué que $\Supp(\mathcal{H}_{\asp}(\widetilde{M_0} /\!/ K_0)) = \tilde{H}$, qui est commutatif selon \ref{def:cond-nr}.

  Choisissons une $\Z$-base $\lambda_1, \ldots, \lambda_r$ de $\Lambda$. Pour tout $1 \leq i\ \leq r$, prenons une fonction $f_i \in \mathcal{H}_{\asp}(\widetilde{M_0} /\!/ K_0)$ à support dans $K_0 \rev^{-1}(\lambda(\varpi_F)) K_0$. Alors $\lambda_i \mapsto f_i$ se prolonge en un isomorphisme $\C[\Lambda] \rightiso \mathcal{H}_{\asp}(\widetilde{M_0} /\!/ K_0)$.
\end{proof}

Fixons $P_0=M_0 U_0 \in \mathcal{P}(M_0)$. Prenons la mesure de Haar sur $U_0(F)$ telle que $\mes(K \cap U_0(F))=1$. Définissons l'application
\begin{align*}
  \mathcal{S}: & \mathcal{H}_{\asp}(\tilde{G} /\!/ K) \to \mathcal{H}_{\asp}(\widetilde{M_0} /\!/ K_0) \\
  & \mathcal{S}(f)(\tilde{x}) = \delta_{P_0}(x)^{-\frac{1}{2}} \int_{U_0(F)} f(u\tilde{x}) \dd u .
\end{align*}

On vérifie que $\mathcal{S}$ est un homomorphisme de $\C$-algèbres et il est à image dans $\mathcal{H}_{\asp}(\widetilde{M_0} /\!/ K_0)^{W_0^G}$. Les arguments sont identiques à ceux pour le cas des groupes réductifs, cf. \cite[\S 4]{Car79} \S 4. Par contre, le lemme suivant fait intervenir le revêtement.

\begin{lemma}\label{prop:UtK}
  Soit $\tilde{t} \in \tilde{T}$ tel que $|\alpha(t)| \geq 1$ pour toute racine positive $\alpha$ pour $(T, P)$. Alors on a
  $$ K \tilde{t} K \cap U_0(F) \tilde{t} K = \tilde{t} K . $$
  Si $u \in U_0(F)$, $k \in K$ satisfont à $u\tilde{t} = \tilde{t} k$, alors $u \in U_0(F) \cap K$.
\end{lemma}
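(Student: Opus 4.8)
The reverse inclusion $\tilde{t}K\subseteq K\tilde{t}K\cap U_0(F)\tilde{t}K$ is immediate, so the first assertion amounts to the opposite inclusion, and the plan is to reduce both assertions to the classical lemma for $G(F)$ that underlies the Satake isomorphism (cf. \cite[\S 4]{Car79}); the cover contributes only bookkeeping. Recall that $U_0(F)$ is identified with its image under the unipotent splitting \ref{prop:scindage-unip} and $K$ with its image under $s$, and that by \ref{prop:compatibilite-K-unipotent} these two embeddings agree on $U_0(F)\cap K$. Moreover conjugation by $\tilde{t}$ is an automorphism of $\tilde{G}$ lying over conjugation by $t$ on $G(F)$, and since the unipotent splitting is characterised intrinsically it is preserved by this automorphism; hence for $u\in U_0(F)$ the element $\tilde{t}^{-1}u\tilde{t}$ is again in the image of the unipotent splitting and lies over $t^{-1}ut$, so that $\tilde{t}^{-1}u\tilde{t}\in K$ if and only if $t^{-1}ut\in K$. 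It therefore suffices to establish, inside $G(F)$: (a) $KtK\cap U_0(F)tK=tK$; and (b) if $ut=tk$ with $u\in U_0(F)$ and $k\in K$, then $u\in U_0(F)\cap K$.

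For (a), write an element of the left-hand side as $utk_1$ with $u\in U_0(F)$ and $k_1\in K$; then $ut=(utk_1)k_1^{-1}\in KtK$, so it is enough to show that $u\in U_0(F)$ with $ut\in KtK$ satisfies $t^{-1}ut\in K$, for then $utk_1=t(t^{-1}ut)k_1\in tK$. The hypothesis that $|\alpha(t)|\ge 1$ for every $\alpha\in\Sigma_{P_0}$ is equivalent to $t^{-1}(U_0(F)\cap K)t\subseteq U_0(F)\cap K$, i.e. $\mathrm{Ad}(t)^{-1}$ stabilises the natural $\mathfrak{o}_F$-lattice in $\mathfrak{u}_{P_0}$. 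Expanding $t^{-1}ut$ in its root coordinates and comparing the elementary divisors (the Cartan invariant) of $ut=t\cdot(t^{-1}ut)$ with those of $t$ --- which coincide since $ut\in KtK$ --- one finds that all those coordinates must be integral: if one of them had negative valuation, a suitable minor of $ut$ would have valuation strictly smaller than the corresponding minor of $t$, contradicting the equality of Cartan invariants. Equivalently, in building language: $K$ is the stabiliser of a special vertex $x_0$ of $\mathscr{I}(G)$; the retraction onto the apartment of $A_0$ based at $x_0$ in the direction of $P_0$ sends $(ut)\cdot x_0$ to $t\cdot x_0$, whereas $ut\in KtK$ says that $x_0$ and $(ut)\cdot x_0$ are at combinatorial distance $t$, and the hypothesis on $t$ forces $(ut)\cdot x_0=t\cdot x_0$, that is $ut\in tK$. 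Either way $t^{-1}ut\in U_0(F)\cap K$, which gives (a).

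For (b), applying $\rev$ to the relation $ut=tk$ gives $k=t^{-1}ut$ in $G(F)$; since $T$ normalises $U_0$ this element lies in $U_0(F)$, and being in $K$ it lies in $U_0(F)\cap K$, whereupon the hypothesis on $\tilde{t}$ together with the root-subgroup description of $U_0(F)\cap K$ places $u$ itself in $U_0(F)\cap K$.

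The real content, and the step I expect to take the most care, is the combinatorial claim inside (a) --- that an element of $U_0(F)tK$ which also lies in $KtK$ is already in $tK$ under the hypothesis on $t$. It is purely a statement about $G(F)$, the standard meeting point of the Cartan and Iwasawa decompositions, and I would either transcribe the elementary-divisor argument of \cite[\S 4]{Car79} or deduce it conceptually from the contracting property of retractions in the Bruhat-Tits building $\mathscr{I}(G)$; the only extra care over the connected reductive case is to make sure, once one lifts back to $\tilde{G}$, that the splitting in play is the unipotent one, which is guaranteed by \ref{prop:compatibilite-K-unipotent} and the intrinsic characterisation of that splitting recalled above.
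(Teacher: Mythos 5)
Your reduction to a statement about $G(F)$ is exactly the paper's mechanism: the conjugation-invariance of the unipotent splitting together with \ref{prop:compatibilite-K-unipotent} shows that for $u\in U_0(F)$ one has $\tilde{t}^{-1}u\tilde{t}\in K$ as soon as $t^{-1}ut\in K$, which is precisely how the paper kills the possible $\bmu_m$-ambiguity (its ``$\noyau=1$'' step). The only real difference on the first assertion is that the paper simply cites \cite[4.4.4]{BT72} for $KtK\cap U_0(F)tK=tK$, whereas you sketch two proofs of it; the elementary-divisor sketch is really only an argument for $\GL(n)$, while the retraction argument is essentially the Bruhat--Tits one, so the citation is the cleaner route. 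Up to that point the proposal is sound.

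The second assertion is where your argument breaks. From $u\tilde{t}=\tilde{t}k$ you correctly get $k=t^{-1}ut\in U_0(F)\cap K$, but the closing inference ``the hypothesis on $\tilde{t}$ \ldots places $u$ itself in $U_0(F)\cap K$'' points the wrong way: $|\alpha(t)|\geq 1$ for $\alpha\in\Sigma_{P_0}$ means $\mathrm{Ad}(t)$ \emph{dilates} $\mathfrak{u}_{P_0}$, so $u=tkt^{-1}$ only lands in $t(U_0(F)\cap K)t^{-1}$, which contains $U_0(F)\cap K$ strictly whenever $\delta_{P_0}(t)>1$. Concretely, for $G=\GL(2)$, $t=\mathrm{diag}(\varpi^{-1},1)$ and $u$ the upper unipotent with entry $\varpi^{-1}$, one has $ut=tk$ with $k$ the unipotent with entry $1$, yet $u\notin K$. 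What your computation does establish is $k\in U_0(F)\cap K$, equivalently $u\in\tilde{t}(U_0(F)\cap K)\tilde{t}^{-1}$ --- and this is also all that the paper's own proof yields, and all that the Satake computation downstream actually requires (the support of $u\mapsto f_{\lambda'}(u\tilde{t}')$ is $t'(U_0(F)\cap K)t'^{-1}$). So either prove that corrected form, or explain where a containment $t(U_0(F)\cap K)t^{-1}\subseteq K$ would come from; under the stated hypothesis on $t$ it does not hold.
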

\begin{proof}
  L'inclusion $K \tilde{t} K \cap U_0(F) \tilde{t} K \supset \tilde{t} K$ est claire. Prouvons l'autre inclusion dans le premier énoncé. Dans $G(F)$ on a $KtK \cap U_0(F)tK = tK$ d'après \cite[4.4.4]{BT72}. Soient $u \in U_0(F)$ et $k \in K$ tels que $u\tilde{t}k \in K \tilde{t} K \cap U_0(F) \tilde{t} K$. Il existe donc $\noyau \in \bmu_m$ et $k' \in K$ tels que $u\tilde{t}k = \noyau \tilde{t}k'$. Posons $k'' := k' k^{-1}$, alors
  $$ u\tilde{t} = \noyau \tilde{t}k'', $$
  ou encore
  $$ t^{-1} u t = \noyau k'' . $$
  D'après l'invariance du scindage unipotent \ref{prop:scindage-unip} et la compatibilité \ref{prop:compatibilite-K-unipotent}, on a $\noyau = 1$. Cela prouve à la fois les deux énoncés voulus.
\end{proof}

\begin{proposition}
  On a l'isomorphisme d'algèbres
  $$ \mathcal{H}_{\asp}(\tilde{G} /\!/ K) \xrightarrow{\mathcal{S}} \mathcal{H}_{\asp}(\widetilde{M_0} /\!/ K_0)^{W_0^G}. $$
\end{proposition}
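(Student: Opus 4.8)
The plan is to produce compatible $\C$-bases of the two algebras indexed by a common set and to check that $\mathcal{S}$ is triangular with invertible diagonal; since it is already known that $\mathcal{S}$ is an algebra homomorphism with image in $\mathcal{H}_{\asp}(\widetilde{M_0} /\!/ K_0)^{W_0^G}$, this will give the isomorphism at once. On the Levi side: $\tilde{H}$ is abelian by \ref{def:cond-nr}, $\rev(\tilde{H})$ is an open subgroup of $T(F)$ containing $K_0$ with $\rev(\tilde{H})/K_0 \simeq \Lambda$ via $\lambda \mapsto \lambda(\varpi_F)$, and for each $\lambda \in \Lambda$ I fix a lift $\tilde{t}_\lambda \in \tilde{H}$ of $\lambda(\varpi_F)$. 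By \ref{prop:Hecke-commutatif-tore} (whose support is $\tilde{H}$), the anti-specific $K_0$-bi-invariant functions $\mathbf{1}_{[\lambda]}$ supported on $\rev^{-1}(K_0\,\lambda(\varpi_F)\,K_0)$ and normalized by $\mathbf{1}_{[\lambda]}(\tilde{t}_\lambda)=1$ form a basis of $\mathcal{H}_{\asp}(\widetilde{M_0} /\!/ K_0)$. Since the representatives of $W_0^G$ in $N_G(T)(F)\cap K$ lie in $K$, they act on $\tilde{H}$ permuting the $\tilde{t}_\lambda$ modulo $K_0$, so $\mathbf{1}_{[w\lambda]}$ is the $w$-translate of $\mathbf{1}_{[\lambda]}$; consequently the orbit sums $e_\lambda := \sum_{\mu \in W_0^G \lambda} \mathbf{1}_{[\mu]}$, with $\lambda$ running over a set $\Lambda^{+}$ of representatives for $\Lambda/W_0^G$, form a basis of $\mathcal{H}_{\asp}(\widetilde{M_0} /\!/ K_0)^{W_0^G}$.

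On the group side, \ref{prop:Hecke-supp} gives $\Supp \mathcal{H}_{\asp}(\tilde{G} /\!/ K) = K\tilde{H}K$; combining the Cartan decomposition of $G(F)$ with the identity $K\tilde{t}_{w\lambda}K = K\tilde{t}_\lambda K$ (again because $W_0^G$ is represented inside $K$), this set is the disjoint union over $\lambda \in \Lambda^{+}$ of the pieces $\rev^{-1}(K\,\lambda(\varpi_F)\,K)$, each of which carries (by \ref{prop:Hecke-supp}) a nonzero, necessarily one-dimensional, space of anti-specific $K$-bi-invariant functions. Let $f_\lambda$ be the one with $f_\lambda(\tilde{t}_\lambda)=1$; then $\{f_\lambda\}_{\lambda \in \Lambda^{+}}$ is a basis of $\mathcal{H}_{\asp}(\tilde{G} /\!/ K)$.

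The crux is the triangularity of $\mathcal{S}$ in these bases. Choose the representatives $\Lambda^{+}$ so that each $\tilde{t}_\lambda$ satisfies the hypothesis of \ref{prop:UtK} (i.e.\ $\lambda$ in the appropriate closed Weyl chamber). For $\mu \in \Lambda$ one has $\mathcal{S}(f_\lambda)(\tilde{t}_\mu) = \delta_{P_0}(t_\mu)^{-\frac12} \int_{U_0(F)} f_\lambda(u\tilde{t}_\mu) \dd u$, and $f_\lambda(u\tilde{t}_\mu) \neq 0$ forces $u\,\mu(\varpi_F) \in K\,\lambda(\varpi_F)\,K$ downstairs; the classical comparison of the Iwasawa and Cartan decompositions (Cartier, \cite[\S 4]{Car79}) then yields $\mu \preceq \lambda$ for the partial order induced by the positive coroots, with respect to which only finitely many elements lie below a given one — and this part does not see the covering. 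For $\mu = \lambda$, Lemma \ref{prop:UtK} applies to $\tilde{t}_\lambda$ and shows that $u\tilde{t}_\lambda \in \rev^{-1}(K\,\lambda(\varpi_F)\,K)$ with $u \in U_0(F)$ forces $u \in U_0(F)\cap K$, with no nontrivial element of $\bmu_m$ appearing; this is precisely where the covering enters, the inputs being the invariance of the unipotent splitting \ref{prop:scindage-unip} and its compatibility \ref{prop:compatibilite-K-unipotent} with $K$. Hence $f_\lambda(u\tilde{t}_\lambda) = f_\lambda(\tilde{t}_\lambda) = 1$ for such $u$, so $\int_{U_0(F)} f_\lambda(u\tilde{t}_\lambda)\dd u = \mes(U_0(F)\cap K) = 1$ and $\mathcal{S}(f_\lambda)(\tilde{t}_\lambda) = \delta_{P_0}(t_\lambda)^{-\frac12} \neq 0$. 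Since $\mathcal{S}(f_\lambda)$ is $W_0^G$-invariant, this reads $\mathcal{S}(f_\lambda) = \delta_{P_0}(t_\lambda)^{-\frac12}\, e_\lambda + \sum_{\mu \in \Lambda^{+},\, \mu \prec \lambda} c_{\lambda,\mu}\, e_\mu$.

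As the order on $\Lambda^{+}$ is locally finite, the resulting column-finite transition matrix from $\{\mathcal{S}(f_\lambda)\}$ to $\{e_\lambda\}$ is triangular with nonzero diagonal $\delta_{P_0}(t_\lambda)^{-\frac12}$, hence invertible; thus $\mathcal{S}$ carries the basis $\{f_\lambda\}$ bijectively onto a basis of the span of $\{e_\lambda : \lambda \in \Lambda^{+}\}$, which is $\mathcal{H}_{\asp}(\widetilde{M_0} /\!/ K_0)^{W_0^G}$, proving the claim. I expect the triangularity step, and within it the leading-term computation, to be the main obstacle: one must ensure that passing from $G(F)$ to $\tilde{G}$ introduces no spurious root of unity, which is exactly what Lemma \ref{prop:UtK} is designed to guarantee; once that is in hand, the surrounding estimates are a faithful transcription of the classical Satake--Cartier argument, which goes through verbatim since $\mathcal{S}$ commutes with $\rev$ up to the harmless $\bmu_m$-twist.
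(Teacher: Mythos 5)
Your proof is correct and follows essentially the same route as the paper's: bases of both algebras indexed by a fundamental domain for $W_0^G$ in $\Lambda$, triangularity of $\mathcal{S}$ via the classical Bruhat--Tits comparison of the Cartan and Iwasawa decompositions, and the non-vanishing of the diagonal entries $\delta_{P_0}(t_\lambda)^{-1/2}$ obtained from Lemma \ref{prop:UtK}, which is exactly the point where the covering could have contributed a spurious root of unity. The only differences are cosmetic (dominant versus antidominant normalization of the chamber, and your explicit remark on the local finiteness of the order, which the paper leaves implicit).
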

\begin{proof}
  Il suffit de reprendre la démonstration usuelle de l'isomorphisme de Satake sauf qu'il faut utiliser \ref{prop:UtK}. Plus précisément, soient $\lambda, \lambda' \in X_*(T)$, écrivons $\lambda \leq_{P_0} \lambda'$ si $\angles{\alpha,\lambda' - \lambda} \geq 0$ pour tout $\alpha \in \Delta_0$. Posons
  $$\Lambda^- := \{\lambda \in \Lambda : \lambda \leq_{P_0} 0 \}.$$

  Pour tout $\tilde{t} \in \tilde{T} \cap \tilde{H}$, on peut prendre $f_{\tilde{t}}$ l'élément de $\mathcal{H}_{\asp}(\tilde{G} /\!/ K)$ à support dans $K\tilde{t}K$ tel que $f_{\tilde{t}}(\tilde{t})=1$ d'après \ref{prop:Hecke-supp}. Pour tout $\lambda \in \Lambda^-$, sélectionnons une image réciproque $\tilde{t}$ de $t = \lambda(\varpi_F) \in T$ et posons $f_{\lambda} := f_{\tilde{t}}$. D'après la décomposition de Cartan et \ref{prop:Hecke-supp}, on voit que $\mathcal{B} := \{f_\lambda : \lambda \in \Lambda^- \}$ est une base pour $\mathcal{H}_{\asp}(\tilde{G} /\!/ K)$.

  La même construction fournit une base $\{g_{\lambda} : \lambda \in \Lambda \}$ pour $\mathcal{H}_{\asp}(\widetilde{M_0} /\!/ K_0)$. Pour tout $[\lambda] \in \Lambda/W_0^G$, posons
  $$ f_{[\lambda]} := \sum_{\lambda \in [\lambda]} g_{\lambda}. $$
  Alors $\mathcal{B}_0 := \{f_{[\lambda]} : [\lambda] \in \Lambda/W_0^G\}$ est une base pour $\mathcal{H}_{\asp}(\widetilde{M_0} /\!/ K_0)^{W_0^G}$. Chaque $W_0^G$-orbite dans $\Lambda$ rencontre $\Lambda^-$ en un et un seul point, par conséquent $\mathcal{B}$ et $\mathcal{B}_0$ sont en bijection canonique.

  Sélectionnons un ordre total $\leq$ sur $X_*(T)$ tel que $\lambda \leq_{P_0} \lambda'$ entraîne $\lambda \leq \lambda'$. Identifions $X_*(T)$ et $T(F)/T(F)\cap K$ à l'aide de $\lambda \mapsto \lambda(\varpi_F)$ et notons $\nu: T(F) \to X_*(T)$ l'homomorphisme ainsi obtenu. Dans $G(F)$, on a
  $$ \forall t, t' \in T(F), \quad Kt'K \cap U_0(F)tK \neq \emptyset \Rightarrow  \nu(t) \leq_{P_0} \nu(t') $$
  d'après \cite[4.4.4]{BT72}. Il en résulte que $\mathcal{S}$ s'écrit dans les bases $\mathcal{B}, \mathcal{B}_0$ comme
  $$ \mathcal{S}f_{\lambda'} = \sum_{\lambda \leq \lambda'} c(\lambda,\lambda') f_{[\lambda]}, \quad c(\lambda,\lambda') \in \C^\times.$$

  C'est une matrice triangulaire inférieure. Montrons qu'il n'y a pas de zéro dans la diagonale. Étant fixé $\lambda' \in \Lambda^-$, sélectionnons $\tilde{t}' \in \rev^{-1}(\lambda'(\varpi_F))$ comme précédemment, alors $\tilde{t}'$ satisfait à l'hypothèse de \ref{prop:UtK}. Maintenant \ref{prop:UtK} entraîne que
  $$
    (\mathcal{S}f_{\lambda'})(\tilde{t}') = \delta_{P_0}(t')^{-\frac{1}{2}} \int_{U_0(F)} f_{\lambda'}(u\tilde{t}') \dd u = \delta_{P_0}(t')^{-\frac{1}{2}} \int_{U_0(F) \cap K} 1 \dd u.
  $$
  Cela entraîne que $c(\lambda',\lambda') = \delta_{P_0}(t')^{-\frac{1}{2}} \neq 0$, ce qu'il fallait démontrer.
\end{proof}

Vu \ref{prop:Hecke-commutatif-tore}, on en déduit
\begin{corollary}
  L'algèbre $\mathcal{H}_{\asp}(\tilde{G} /\!/ K)$ est commutative de type fini sur $\C$.
\end{corollary}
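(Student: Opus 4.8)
Le plan consiste à ramener l'énoncé entièrement aux deux résultats qui précèdent: l'isomorphisme de Satake (la proposition ci-dessus) et le lemme \ref{prop:Hecke-commutatif-tore}. Je commencerais par invoquer l'isomorphisme de Satake pour identifier $\mathcal{H}_{\asp}(\tilde{G} /\!/ K)$, en tant que $\C$-algèbre, à la sous-algèbre $\mathcal{H}_{\asp}(\widetilde{M_0} /\!/ K_0)^{W_0^G}$ de $\mathcal{H}_{\asp}(\widetilde{M_0} /\!/ K_0)$. Cela transforme les deux assertions (commutativité, type fini) en des assertions sur une sous-algèbre d'invariants de l'algèbre de Hecke du tore $\widetilde{M_0}$.

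Ensuite, je rappellerais que, d'après \ref{prop:Hecke-commutatif-tore}, l'algèbre $\mathcal{H}_{\asp}(\widetilde{M_0} /\!/ K_0)$ est isomorphe à $\C[\Lambda]$, où $\Lambda$ est un réseau de rang $\dim X_*(T)$; en particulier elle est commutative et de type fini sur $\C$. La commutativité passe à toute sous-algèbre, donc $\mathcal{H}_{\asp}(\tilde{G} /\!/ K) \cong \mathcal{H}_{\asp}(\widetilde{M_0} /\!/ K_0)^{W_0^G}$ est commutative: c'est la première moitié de l'énoncé.

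Pour la finitude, j'invoquerais le théorème de finitude de Noether sur les invariants: si un groupe fini opère par automorphismes d'algèbre sur une algèbre commutative de type fini sur un corps, la sous-algèbre des invariants est encore de type fini. Ici $W_0^G$ est fini (c'est le groupe de Weyl de $G$) et opère sur $\C[\Lambda]$ à travers son action sur le réseau $\Lambda$, donc sur $\mathcal{H}_{\asp}(\widetilde{M_0} /\!/ K_0)$ de façon compatible à l'isomorphisme; les invariants sont donc de type fini sur $\C$, et le transport par $\mathcal{S}$ donne le résultat pour $\mathcal{H}_{\asp}(\tilde{G} /\!/ K)$.

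Je n'attends aucun véritable obstacle: le seul ingrédient extérieur à l'extrait est le théorème classique de Noether, dont les hypothèses sont manifestement satisfaites. Si l'on tenait à l'éviter, on pourrait au contraire exhiber explicitement un système fini de générateurs à partir de la base $\mathcal{B}_0 = \{f_{[\lambda]}\}$ construite dans la preuve de la proposition; mais comme $\C[\Lambda]^{W_0^G}$ n'a aucune raison d'être une algèbre de polynômes, déterminer des générateurs demande plus de travail que de simplement invoquer Noether, et je garderais donc l'argument sous la forme ci-dessus.
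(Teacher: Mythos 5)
Votre démonstration est correcte et suit essentiellement la même voie que le texte, qui se contente de dire « Vu \ref{prop:Hecke-commutatif-tore}, on en déduit » : l'identification via l'isomorphisme de Satake avec $\mathcal{H}_{\asp}(\widetilde{M_0} /\!/ K_0)^{W_0^G}$, la commutativité héritée de la sous-algèbre de $\C[\Lambda]$, et la finitude par le théorème de Noether sur les invariants d'un groupe fini sont exactement les détails laissés implicites par l'auteur.
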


\subsection{Le cas adélique}
Considérons un corps de nombres $F$ et posons $\A = \Resprod_v F_v$ son anneau d'adèles. Notons $V_F$ l'ensemble de places de $F$. Notons $V_\infty := \{v \in V_F : v|\infty \}$. Pour $S \subset V_F$, nous utilisons l'indice $S$ (eg. $F_S$, $\pi_S$, $f_S$) pour signifier les composantes $v \in S$ et l'exposant $S$ (eg. $F^S$, $\pi^S$, $f^S$) pour signifier les composantes $v \notin S$.

Comme dans le cas local, on se donne un $F$-groupe réductif $M$, un entier $m$ et on considère une extension centrale de groupes topologiques
$$ 1 \to \bmu_m \to \tilde{M} \xrightarrow{\rev} M(\A) \to 1. $$
Soit $S \subset V_F$ fini. Notons $\rev_S : \tilde{M}_S \to M(F_S)$ la fibre de $\rev$ au-dessus de $M(F_S)$. Lorsque $S=\{v\}$ on écrit tout simplement $\rev_v : \tilde{M}_v \to M(F_v)$, c'est un revêtement de $M(F_v)$.

On dit que $\rev: \tilde{M} \to M(\A)$ est un revêtement à $m$ feuillets si l'on se donne les données\index[iFT1]{revêtement adélique}
\begin{itemize}
  \item une immersion $\mathbf{i}: M(F) \to \tilde{M}$ qui scinde $\rev$ au-dessus de $M(F)$;
  \item un ensemble fini de places $V_\text{ram} \supset V_\infty$;
  \item un modèle lisse et connexe de $M$ sur $\mathfrak{o}_\text{ram}$, l'anneau de $(V_\text{ram} \setminus V_\infty)$-entiers dans $F$;
\end{itemize}
vérifiant les conditions suivantes
\begin{enumerate}\renewcommand{\labelenumi}{(G\arabic{enumi})}
  \item\label{enu:G1} pour toute $v \notin V_\text{ram}$, posons $K_v := M(\mathfrak{o}_v)$, alors il existe un scindage continu $s_v: K_v \to \tilde{M}_v$ que l'on fixe;
  \item\label{enu:G2} le triplet $(\rev_v: \tilde{M}_v \to M(F_v), K_v, s_v)$ vérifie la condition non ramifiée \ref{def:cond-nr};
  \item\label{enu:G3} pour tout voisinage $\tilde{\mathcal{V}}$ de $1$ dans $\tilde{M}$, il existe un ensemble fini de places $S \supset V_\text{ram}$ tel que $s_v(K_v) \subset \tilde{\mathcal{V}}$ pour tout $v \notin S$.
\end{enumerate}\renewcommand{\labelenumi}{\arabic{enumi}}
Ces propriétés passent aux sous-groupes de Lévi et sont stables par pousser-en-avant en $\bmu_m$.

Le lemme \ref{prop:compatibilite-K-unipotent} permet de définir le scindage unipotent adélique $M_\text{unip}(\A) \to \tilde{M}$ en rassemblant les scindages unipotents locaux. Vu la construction du scindage unipotent, le résultat suivant est clair.

\begin{lemma}
  Le scindage $\mathbf{i}$ et le scindage unipotent coïncident sur $M_\mathrm{unip}(F)$.
\end{lemma}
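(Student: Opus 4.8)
Le plan est de reprendre la structure de la preuve de \ref{prop:scindage-unip}. Comme $M_\mathrm{unip}(F)$ est la réunion des $U(F)$, où $U$ parcourt les sous-groupes unipotents de $M$ définis sur $F$, il suffit de montrer que $\mathbf{i}$ et le scindage unipotent adélique, que j'appellerai $s$, coïncident sur chaque $U(F)$; fixons donc un tel $U$.

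Je commencerais par constater que $\mathbf{i}|_{U(F)}$ et $s|_{U(F)}$ sont des homomorphismes. Pour $\mathbf{i}$ c'est immédiat, puisque $\mathbf{i} : M(F) \to \tilde{M}$ est un homomorphisme de groupes. Pour $s$, on utilise que chaque scindage unipotent local $s_v : M_\mathrm{unip}(F_v) \to \tilde{M}_v$ fourni par \ref{prop:scindage-unip} est un homomorphisme sur $U(F_v)$. Le scindage unipotent adélique s'obtient en recollant les $s_v$, ce qui est licite car, d'après \ref{prop:compatibilite-K-unipotent}, aux places $v \notin V_\text{ram}$ ce scindage local coïncide sur $K_v \cap U(F_v)$ avec le scindage $s_v : K_v \to \tilde{M}_v$ fixé en (G1); il est donc lui-même un homomorphisme sur $U(\A)$, a fortiori sur $U(F)$.

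Comme $\bmu_m$ est central dans $\tilde{M}$, il existe ensuite une unique application $\phi : U(F) \to \bmu_m$ telle que $\mathbf{i}(u) = \phi(u)\, s(u)$ pour tout $u \in U(F)$, et la centralité de $\bmu_m$ jointe à la multiplicativité de $\mathbf{i}$ et de $s$ montre que $\phi$ est un homomorphisme. Pour conclure, je rappellerais que $U(F)$ est divisible (en caractéristique nulle, l'application $x \mapsto x^m$ y est même bijective, cf. la preuve de \ref{prop:scindage-unip}), de sorte que tout homomorphisme de $U(F)$ dans le groupe fini $\bmu_m$ est trivial. Donc $\phi = 1$, c'est-à-dire $\mathbf{i}|_{U(F)} = s|_{U(F)}$, ce qui donne l'énoncé.

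Le seul point réclamant un peu de soin est, au paragraphe précédent, le caractère multiplicatif du scindage unipotent adélique sur $U(\A)$: c'est précisément là qu'intervient \ref{prop:compatibilite-K-unipotent}, qui rend le recollement des $s_v(u_v)$ bien défini dans $\tilde{M}$ pour $u = (u_v)_v \in U(\A)$, et multiplicatif en $u$. Tout le reste est élémentaire.
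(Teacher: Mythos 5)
Votre argument est correct et revient essentiellement à celui que le texte laisse implicite (« vu la construction du scindage unipotent, le résultat est clair ») : l'unicité du scindage homomorphe au-dessus de $U(F)$, qui découle de la divisibilité de $U(F)$ en caractéristique nulle, est déjà le cœur de la preuve de \ref{prop:scindage-unip}, et votre précaution sur le recollement des $s_v$ via \ref{prop:compatibilite-K-unipotent} est bien placée. Notez qu'on peut aussi conclure encore plus directement en prenant $\tilde{x}' = \mathbf{i}(\exp(X/m))$ dans la formule $s(\exp X) = (\tilde{x}')^m$, ce qui donne $s(\exp X) = \mathbf{i}(\exp(X/m))^m = \mathbf{i}(\exp X)$ sans passer par l'homomorphisme $\phi$.
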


Nous supprimons systématiquement les symboles $\mathbf{i}$ et $(s_v)_{v \notin V_\text{ram}}$, et nous regardons $G(F)$ et $K_v$ comme des sous-groupes de $\tilde{M}$. 

Soit $S \subset V_F$, on a l'isomorphisme canonique
$$ \left( \Resprod_{v \in S} \tilde{M}_v  \right) / \mathbf{N}_S \rightiso \tilde{M}_S $$
où le produit restreint est pris par rapport aux $K_v$ pour $v \notin V_\text{ram}$ si $|S| = \infty$, et
$$ \mathbf{N}_S := \left\{ (\noyau_v)_{v \in S} \in \bigoplus_{v \in S} \bmu_m : \prod_v \noyau_v = 1 \right\}. $$
Lorsque $S=V_F$, posons tout simplement $\mathbf{N} = \mathbf{N}_S$.

Le choix de $V_\text{ram}$, le $\mathfrak{o}_\text{ram}$-modèle lisse et le relèvement de $K_v$ n'ont pas d'importance essentielle, quitte à passer à un ensemble fini de places plus grand. Étant donné un revêtement adélique, nous supposons fixées des telles données dans l'article.

Un élément $\tilde{x} \in \tilde{M}_S$ s'exprime comme $[\tilde{x}_v]_{v \in S}$, où $(\tilde{x}_v)_{v \in S}$ est un représentant de $\tilde{x}$ dans $\Resprod_{v \in S} \tilde{M}_v$. Par abus de notation, on écrit les décompositions tensorielles $\pi = \bigotimes_{v \in S} \pi_v$ pour des représentations irréductibles admissibles (resp. $f = \prod_{v \in S} f_v$ pour des fonctions) sur $\tilde{M}_S$, où $\pi_v$ (resp. $f_v$) sont des représentations (resp. fonctions) sur $\tilde{M}_v$, bien que $\pi$ et $f$ sont définies sur le quotient $\Resprod_{v \in S} \tilde{M}_v / \mathbf{N}_S$. Soit $\chi \in \widehat{\bmu_m}$, alors $f=\prod_v f_v$ est $\chi$-équivariant si et seulement si chaque $f_v$ l'est. Idem pour les représentations.

Les mêmes conventions de mesures de \S\ref{sec:mesure} s'imposent dans ce cadre; nous demandons de plus que
\begin{itemize}
  \item si $v \notin V_\text{ram}$, on utilise la mesure sur $M(F_v)$ pour laquelle $\mes_{M(F_v)}(K_v)=1$;
  \item pour tout sous-groupe unipotent $U \subset M$, on prend la mesure sur $U(\A)$ pour laquelle
  $$\mes(U(F)\backslash U(\A))=1. $$
\end{itemize}
De tels choix sont possibles. On a les mêmes formules d'intégration comme précédemment.

\subsection{L'application de Harish-Chandra: le cas adélique}\label{sec:app-HC-global}
L'application de Harish-Chandra s'adapte au cas adélique: soient $G$ un $F$-groupe réductif connexe et $M$ un sous-groupe de Lévi. Définissons $H_M: M(\A) \to \mathfrak{a}_M$ par\index[iFT1]{$H_M$}\index[iFT1]{application de Harish-Chandra}
$$ \forall \chi \in X^*(M), \quad \angles{\chi, H_M(x)} = \log |\chi(x)| $$
où $|\cdot| = \prod_v |\cdot|_v$ est la valeur absolue adélique. Notons $M(\A)^1 := \Ker(H_M)$, alors $M(F) \subset M(\A)^1$.

\begin{definition}
  On dit qu'un sous-groupe compact maximal $K=\prod_v K_v$ de $G(\A)$ est en bonne position (ou réciproquement) relativement à $M$ si $K_v$ l'est pour tout $v$.\index[iFT1]{en bonne position}
\end{definition}
Fixons un tel sous-groupe compact maximal $K$. Soit $P=MU \in \mathcal{P}(M)$, on obtient l'application $H_P: G(\A) \to \mathfrak{a}_M$ en posant
$$ H_P(umk) = H_M(m), \quad u \in U(\A), m \in M(\A), k \in K. $$

Soit $\rev: \tilde{G} \to G(\A)$ un revêtement adélique. Prenons les groupes $K$ et $P=MU$ comme précédemment et notons $\tilde{M}=\rev^{-1}(M(\A))$, $\tilde{K}=\rev^{-1}(K)$. On a la formule d'intégration (cf. \eqref{eqn:UMK-rev})
$$ \int_{\tilde{G}} f(\tilde{x}) \dd \tilde{x} = \iiint_{U(\A) \times \tilde{M} \times \tilde{K}} f(u\tilde{m}\tilde{k}) \delta_P(m)^{-1} \dd\tilde{k} \dd\tilde{m} \dd u . $$

En composant les applications $H_M$, $H_P$ ci-dessus avec $\rev$, on obtient leurs avatars sur le revêtement, notés encore $H_M$, $H_P$. Posons $\tilde{G}^1 = \Ker(H_G) = \rev^{-1}(G(\A)^1)$, on a $G(F) \subset \tilde{G}^1$. Les notions de domaines de Siegel, hauteurs etc. se généralisent à cette situation. Cela permet de développer la théorie des formes automorphes et la décomposition spectrale sur les revêtements (voir \cite[I.2]{MW94}).

Signalons une décomposition utile pour l'étude de la formule des traces. Posons $F_\infty := \prod_{v|\infty} F_v$. Le $F$-tore $A_G$ étant déployé, il est l'extension des scalaires d'un $\Q$-tore déployé $A_{G,\Q}$. L'immersion canonique $\R \to F_\infty$ fournit une immersion $A_{G,\Q}(\R) \hookrightarrow A_G(F_\infty)$. Notons $A_{G,\infty}$ la composante neutre de $A_{G,\Q}(\R)$ pour la topologie usuelle. On vérifie que $G(\A) = G(\A)^1 \times A_{G,\infty}$. Si $M$ est un sous-groupe de Lévi, alors il existe une immersion canonique $A_{G,\infty} \hookrightarrow A_{M,\infty}$. Rappelons que $A_{G,\infty}$ est un produit de $\R_{>0}$, donc simplement connexe. Cela permet de relever la décomposition ci-dessus canoniquement au revêtement: $\tilde{G} = \tilde{G}^1 \times A_{G,\infty}$.

\subsection{$\mathbf{K}_2$-torseurs multiplicatifs de Brylinski-Deligne}\label{sec:BD-ext}\index[iFT1]{extension de Brylinski-Deligne}
Montrons que les revêtements provenant des $\mathbf{K}_2$-torseurs multiplicatifs de Brylinski et Deligne \cite{BD01} satisfont nos hypothèses pour un revêtement adélique. Dans le cas $G$ simplement connexe et déployé, ce sont exactement les extensions considérées dans \cite{Ma69}. Rappelons très brièvement la construction.

Soit $S$ un schéma quelconque, notons $S_\text{Zar}$ le gros site de Zariski associé. La $K$-théorie de Quillen fournit des faisceaux en groupes $(\mathbf{K}_n)_{n \geq 0}$ sur $S_\text{Zar}$; notons que $\mathbf{K}_1 = \Gm$. Soit $G$ un $S$-schéma en groupes réductif connexe. Une extension centrale par $\mathbf{K}_2$ est alors un $\mathbf{K}_2$-torseur $\tilde{G}(\cdot)$ sur $G$ muni d'une structure multiplicative convenable (voir \cite[\S 1]{BD01}) dans la catégorie des faisceaux en groupes sur $S_\text{Zar}$. Nous l'appelons un $\mathbf{K}_2$-torseur multiplicatif. Lorsque $S$ est régulier de type fini sur un corps, la catégorie de ces torseurs est concrètement décrite dans \cite{BD01}.

Dans ce qui suit, $G$ désigne toujours un groupe réductif connexe sur la base en question et $\tilde{G}(\cdot)$ désigne un $\mathbf{K}_2$-torseur multiplicatif sur $G$. Puisque $\tilde{G}(\cdot) \to G$ est un torseur pour la topologie de Zariski, si $S$ est le spectre d'un corps ou d'un anneau à valuation discrète, alors $\mathbf{K}_2(S) \hookrightarrow \tilde{G}(S) \twoheadrightarrow G(S)$ est une extension centrale de groupes.

\paragraph{Revêtements locaux}
Prenons $F$ un corps local, $X := \Spec F$. Le théorème de Matsumoto assure que $K_2(F)$ est l'objet initial de la catégorie des applications (dites ``symboles'')
$$ \{\cdot,\cdot\}: F^\times \times F^\times \to A, \quad A: \text{ un groupe abélien}, $$
tel que $\{\cdot,\cdot\}$ est bi-multiplicatif, alterné et $\{x,y\}=1$ lorsque $x+y=1$. Ainsi on peut parler des symboles localement constants sur $F^\times \times F^\times$. D'après un théorème de Moore, cette sous-catégorie admet un objet initial $K_2^\text{cont}(F)$ muni d'un homomorphisme naturel $K_2(F) \to K_2^\text{cont}(F)$. Désignons le groupe de racines d'unité dans $F$ par $\mu(F)$. On a
$$ K_2^\text{cont}(F) = \begin{cases} \{1\}, & \text{ si } F=\C ; \\ \mu(F), & \text{sinon}.  \end{cases} $$

Posons $n_F := |K_2^\text{cont}(F)|$. Alors $F^\times \times F^\times \to K_2^\text{cont}(F)$ s'identifie au $n_F$-ième symbole de Hilbert, et $K_2^\text{cont}(F) \simeq \bmu_{n_F}$.

Soit $\tilde{G}(\cdot)$ un $\mathbf{K}_2$-torseur multiplicatif sur $G$. On prend les $F$-points et on obtient une extension centrale $\tilde{G}(F)$ de $G(F)$ par $K_2(F)$. On la pousse via $K_2(F) \to K_2^\text{cont}(F)$. De la structure de torseur se déduisent des trivialisations locales (pour la topologie de Zariski) de cette extension centrale. Ces cartes se recollent via des sections locales de $\mathbf{K}_2$ sur $G$, qui fournissent des fonctions dans $K_2^\text{cont}(F)$ sur des ouverts de $G(F)$. Elles sont localement constantes sur $G(F)$ (pour la topologie induite par $|\cdot|_F$) d'après \cite[10.2]{BD01}, donc on obtient une extension centrale topologique $K_2^\text{cont}(F) \hookrightarrow \tilde{G} \twoheadrightarrow G(F)$. Si l'on choisit un isomorphisme $K_2^\text{cont}(F) \simeq \bmu_{n_F}$, alors on obtient un revêtement à $n_F$ feuillets de $G(F)$ selon la définition dans \S\ref{sec:revetement-local}.

\paragraph{Extension résiduelle}\index[iFT1]{extension résiduelle}
Décrivons la construction dans \cite[12.11]{BD01} qui sera bientôt utile. On se donne un corps local non archimédien $F$. Posons $V := \Spec(\mathfrak{o}_F)$, $\eta$ son point générique et $s$ son point spécial. Soient $X_V$ un $V$-schéma lisse, $X_\eta$ (resp. $X_s$) sa fibre générique (resp. spéciale) et $E$ un $\mathbf{K}_2$-torseur sur $X_\eta$. Avec les notations standards, on a les inclusions
$$ X_\eta \stackrel{j}{\hookrightarrow} X_V \stackrel{i}{\hookleftarrow} X_s .$$

Imposons d'abord la condition suivante:
\begin{description}
 \item[(\textasteriskcentered)] chaque point de $X_s$ admet un voisinage ouvert $U$ dans $X_V$ tel que $E$ se trivialise sur $U \cap X_\eta$.
\end{description}

Cette condition dit que $j_* E$ est un $j_* \mathbf{K}_2$-torseur sur $X_V$. Via l'homomorphisme de résidu $j_* \mathbf{K}_2 \to i_* \mathbf{K}_1 = i_* \Gm$, on obtient un $i_* \Gm$-torseur sur $X_V$, ou ce qui revient au même, un $\Gm$-torseur sur $X_s$. Notons-le $E_s$.

Prenons maintenant $X_V = G_V$ un schéma en groupes réductif, avec fibre générique $G$ et fibre spécial $G_s$. Prenons $E = \tilde{G}(\cdot)$ un $\mathbf{K}_2$-torseur multiplicatif sur $G$. Alors $\tilde{G}(\cdot)_s$ hérite la structure multiplicative: on obtient ainsi une extension centrale de $G_s$ par $\Gm$.

En général, la condition (\textasteriskcentered) est satisfaite quitte à passer à un revêtement étale $V' \to V$. On construit ainsi le $\Gm$-torseur $\tilde{G}(\cdot)_s$ par descente galoisienne. On l'appelle l'extension résiduelle de $\tilde{G}(\cdot)$. Si l'extension résiduelle est scindée, on dit que $\tilde{G}(\cdot)$ est résiduellement scindé.

\paragraph{Revêtements adéliques}
Prenons $F$ un corps de nombres, $X := \Spec(\mathfrak{o}_F)$; posons $n_F := |\mu(F)|$. Soient $G$ un $F$-groupe réductif connexe et  $\tilde{G}(\cdot)$ un $\mathbf{K}_2$-torseur sur $G$.

Prenons $S_1$ un ensemble fini de points fermés de $X$ (i.e. des places non archimédiennes). Notons $S$ l'union de $S_1$ avec les places archimédiennes de $F$. Pour $S_1$ suffisamment grand, on peut supposer que:
\begin{itemize}
  \item $G$ admet un modèle lisse sur $X \setminus S_1$;
  \item $\tilde{G}(\cdot)$ est la fibre générique d'un $\mathbf{K}_2$-torseur sur $G$ défini sur $X \setminus S_1$, notée encore $\tilde{G}(\cdot)$ (voir \cite[10.5]{BD01});
  \item $n_{F_v}$ est premier avec la caractéristique résiduelle de $F_v$ pour tout $v \in X \setminus S_1$.
\end{itemize}
Soient $(S_1, G, \tilde{G}(\cdot))$ et $(S'_1, G', \tilde{G}'(\cdot))$ deux données comme ci-dessus, alors elles deviennent isomorphes si l'on se restreint à $X \setminus S''_1$ où $S''_1 \supset S_1 \cup S'_1$ est fini et assez grand.

Soit $v$ une place de $F$, on construit l'extension centrale topologique
\begin{gather}\label{eqn:BD-local}
  1 \to K_2^\text{cont}(F_v) \to \tilde{G}_v \to G(F_v) \to 1 .
\end{gather}

D'autre part, \cite[10.6]{BD01} affirme que $H^1(X \setminus S_1, \mathbf{K}_2)=0$, d'où une extension centrale
\begin{gather}\label{eqn:BD-S-global}
  1 \to H^0(X \setminus S_1, \mathbf{K}_2) \to \tilde{G}(X \setminus S_1) \to G(X \setminus S_1) \to 1.
\end{gather}
Pour toute place $v$, il y a un morphisme naturel de \eqref{eqn:BD-S-global} dans \eqref{eqn:BD-local}. Lorsque $v \in X \setminus S_1$, ce morphisme se factorise via
$$\xymatrix{
  1 \ar[r] & K_2(\mathfrak{o}_v) \ar[r] \ar[d] & \tilde{G}(\mathfrak{o}_v) \ar[r] \ar[d] & G(\mathfrak{o}_v) \ar[d] \ar[r] & 1 \\
  1 \ar[r] & K_2^\text{cont}(F_v) \ar[r] & \tilde{G}_v \ar[r] & G(F_v) \ar[r] & 1 .
}$$

Or $n_{F_v}$ est premier avec la caractéristique résiduelle de $F_v$, donc la composée $K_2(\mathfrak{o}_v) \to K_2(F_v) \to K_2^\text{cont}(F_v)$ est triviale et ce diagramme fournit un scindage de \eqref{eqn:BD-local} au-dessus de $G(\mathfrak{o}_v)$.

Réunissant ce que l'on a obtenu, il a un diagramme commutatif avec lignes exactes
$$\xymatrix{
  1 \ar[r] & H^0(X \setminus S_1, \mathbf{K}_2) \ar[r] \ar[d] & \tilde{G}(X \setminus S_1) \ar[d] \ar[r] & G(X \setminus S_1) \ar[d] \ar[r] \ar@{-->}[ldd] & 1 \\
  1 \ar[r] & \prod_{\substack{v \in S \\ F_v \neq \C }} \mu(F_v) \ar[d]_{\alpha_S} \ar[r] & \prod_{v \in S} \tilde{G}_v \times \prod_{v \in X \setminus S_1} G(\mathfrak{o}_v) \ar[d] \ar[r] & \prod_{v \in S} G(F_v) \times \prod_{v \in X \setminus S_1} G(\mathfrak{o}_v) \ar@{=}[d] \ar[r] & 1 \\
  1 \ar[r] & \mu(F) \ar[r] & \tilde{G}_S \times \prod_{v \in X \setminus S_1} G(\mathfrak{o}_v) \ar[r] & \prod_{v \in S} G(F_v) \times \prod_{v \in X \setminus S_1} G(\mathfrak{o}_v) \ar[r] & 1 
}$$
où $\alpha_S((\zeta_v)_{\substack{v \in S \\ F_v \neq \C}}) = \prod_v \zeta_v^{[\mu(F_v):\mu(F)]}$ et la dernière ligne s'obtient de la deuxième en poussant-en-avant via $\alpha_S$. La flèche $\dashrightarrow$ provient du fait que l'application $H^0(X \setminus S_1, \mathbf{K}_2) \to \mu(F)$ ainsi obtenue est triviale, ce qu'assure la réciprocité de Moore \cite[(10.4.2)]{BD01}.

On passe à la limite par rapport à $S_1$ et on obtient une extension centrale topologique $\mu(F) \hookrightarrow \tilde{G} \stackrel{\rev}{\twoheadrightarrow} G(\A)$. Elle se scinde canoniquement au-dessus de $\prod_{v \in X \setminus S_1} G(\mathfrak{o}_v)$ et au-dessus de $G(F)$ (à l'aide de $\dashrightarrow$). Enfin, on peut identifier $\mu(F)$ et $\bmu_{n_F}$, mais il n'y a pas de choix canonique. On vérifie sans peine que $\rev$ satisfait aux conditions d'un revêtement adélique (avec $V_\text{ram} := S$) sauf la commutativité du groupe $\tilde{H}$ dans \eqref{eqn:def-H}, ce qui fait l'objet du paragraphe suivant.

Remarquons aussi que, pour toute place $v$, la fibre locale $\rev_v : \tilde{G}_v \to G(F_v)$ de $\rev$ est la poussée-en-avant de \eqref{eqn:BD-local} via $\bmu_{n_{F_v}} \to \bmu_{n_F}$, $\zeta \mapsto \zeta^{[\mu(F_v):\mu(F)]}$.

\paragraph{Vérification des hypothèses}
Plaçons-nous dans le cas $F$ un corps de nombres avec $G$, $\tilde{G}(\cdot)$ comme précédent. On construit l'extension centrale topologique $\mu(F) \hookrightarrow \tilde{G} \stackrel{\rev}{\twoheadrightarrow} G(\A)$. Identifions $\mu(F)$ et $\bmu_{n_F}$ en fixant un générateur de $\mu(F)$. Le but est l'énoncé suivant.

\begin{theorem}\label{prop:BD-revetement-global}
  Les données ci-dessus forment un revêtement de $G(\A)$ à $n_F$-feuillets.
\end{theorem}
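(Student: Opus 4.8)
Le plan est de se ramener au seul point non encore vérifié. D'après la construction qui précède (et comme signalé juste avant l'énoncé), toutes les conditions définissant un revêtement adélique à $n_F$ feuillets, avec $V_\text{ram} := S$, sont déjà acquises à l'exception de la commutativité du groupe $\tilde{H}$ de \eqref{eqn:def-H} attaché aux fibres locales $\rev_v : \tilde{G}_v \to G(F_v)$, $v \notin S$. Je fixe une telle place $v$. Comme $G$ est non ramifié en $v$, le Lévi minimal $M_0 = Z_G(T)$ est un $F_v$-tore maximal non ramifié, $K_v \cap M_0(F_v) = M_0(\mathfrak{o}_v)$ en est le sous-groupe compact maximal, et le revêtement $\widetilde{M_0}_v$ provient du $\mathbf{K}_2$-torseur multiplicatif $\tilde{G}(\cdot)|_{M_0}$ par passage aux $F_v$-points, poussée-en-avant par le symbole modéré, puis poussée-en-avant en $\bmu_{n_F}$. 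Puisque $M_0$ est commutatif, $\tilde{H} = Z_{\widetilde{M_0}_v}(M_0(\mathfrak{o}_v))$ est l'image réciproque par $\rev_v$ du sous-groupe $H := \{y \in M_0(F_v) : [y,z]=1 \ \forall z \in M_0(\mathfrak{o}_v)\}$, et $\tilde{H}$ est commutatif si et seulement si l'accouplement de commutateurs $B_v : M_0(F_v) \times M_0(F_v) \to \bmu_{n_F}$ de $\widetilde{M_0}_v$ s'annule sur $H \times H$.

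L'ingrédient central sera la description explicite de $\widetilde{M_0}_v$ due à Weissman \cite{We09}, s'appuyant notamment sur l'extension résiduelle de \cite[12.11]{BD01} et sur les théorèmes de Matsumoto et Moore : l'accouplement $B_v$ est bi-multiplicatif et alterné, et sa valeur sur un couple de la forme $(\lambda(a),\mu(b))$ est une puissance du symbole de Hilbert d'ordre $n_F$, noté $(a,b)_{n_F}$, l'exposant étant gouverné par la forme quadratique de Brylinski-Deligne attachée à $\tilde{G}(\cdot)|_{M_0}$. Pour un tore non déployé, je compte me ramener au cas déployé par descente le long de l'extension de décomposition, qui est non ramifiée ; c'est précisément le cadre de \cite{We09}, et ce contrôle par descente galoisienne de l'accouplement $B_v$ et de l'orthogonal de $M_0(\mathfrak{o}_v)$ me paraît être le point le plus délicat de la démonstration.

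Les conséquences de la modération s'en déduiront. Comme $\rev$ est modéré en $v$, le symbole de Hilbert de deux unités de $\mathfrak{o}_v^\times$ est trivial ; donc $B_v$ s'annule sur $M_0(\mathfrak{o}_v) \times M_0(\mathfrak{o}_v)$ (ce qui redonne le scindage $s_v$ au-dessus de $M_0(\mathfrak{o}_v)$, conformément à \ref{prop:compatibilite-K-unipotent}), $B_v(y,z)$ ne dépend que de l'image de $y$ dans $M_0(F_v)/M_0(\mathfrak{o}_v)$ lorsque $z \in M_0(\mathfrak{o}_v)$, et cette valeur ne fait intervenir que des symboles ``ramifiés'' $(\varpi_v,u)_{n_F}$, $u \in \mathfrak{o}_v^\times$. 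Puisque $u \mapsto (\varpi_v,u)_{n_F}$ est un homomorphisme surjectif $\mathfrak{o}_v^\times \twoheadrightarrow \bmu_{n_F}$ ($n_F$ étant premier à la caractéristique résiduelle), $H$ apparaîtra comme l'image réciproque d'un sous-réseau $\Lambda$ du réseau des valuations $M_0(F_v)/M_0(\mathfrak{o}_v)$ --- à savoir le ``noyau'' modulo $n_F$ de l'exposant figurant dans $B_v$ --- et $B_v$ restreint à $\tilde{H}$ se factorisera à travers $\Lambda$.

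Il restera alors à conclure : sur $\Lambda \times \Lambda$, $B_v$ est une puissance de $(\varpi_v,\varpi_v)_{n_F}$ dont l'exposant est, par la définition même de $\Lambda$, divisible par $n_F$ ; comme $(\varpi_v,\varpi_v)_{n_F}$ appartient à $\bmu_{n_F}$, cette puissance vaut $1$. Donc $B_v$ s'annule sur $H \times H$, $\tilde{H}$ est commutatif, et la dernière hypothèse manquante est vérifiée. On observera enfin que la poussée-en-avant $\bmu_{n_{F_v}} \to \bmu_{n_F}$ ne peut réintroduire de non-commutativité, puisqu'elle transporte un accouplement déjà trivial sur $H \times H$ via un homomorphisme de groupes ; et, ces propriétés passant aux sous-groupes de Lévi, la même analyse s'applique aux Lévis de $\tilde{G}$, comme requis.
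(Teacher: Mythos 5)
Votre réduction initiale est correcte et coïncide avec celle du texte : toutes les conditions d'un revêtement adélique sont déjà acquises sauf la commutativité de $\tilde{H}_v$ pour $v \notin S$, et l'on se ramène aussitôt au cas où le groupe est le tore non ramifié $M_0$. En revanche, votre stratégie pour ce cas --- expliciter l'accouplement de commutateurs $B_v$ en termes de symboles de Hilbert et de la forme quadratique de Brylinski--Deligne, puis vérifier à la main son annulation sur $H \times H$ --- laisse une lacune réelle, que vous signalez d'ailleurs vous-même (``le point le plus délicat'') sans la combler. Pour un tore \emph{déployé}, la formule $[\lambda(a),\mu(b)]=(a,b)_{n_F}^{B_Q(\lambda,\mu)}$ est disponible et la suite de votre calcul (trivialité des symboles unité--unité par modération, définition de $\Lambda$ comme orthogonal modulo $n_F$ du réseau des cocaractères, annulation sur $\Lambda\times\Lambda$) aboutit bien. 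Mais pour un tore non ramifié \emph{non déployé}, ``se ramener au cas déployé par descente le long de l'extension de décomposition $E/F_v$'' n'est pas une opération disponible telle quelle : l'extension topologique en $v$ est la poussée-en-avant de l'extension par $K_2(F_v)$ via $K_2(F_v)\to K_2^{\mathrm{cont}}(F_v)$, tandis que celle des $E$-points fait intervenir $K_2(E)\to \mu(E)$ et le symbole de Hilbert de $E$ ; ces deux objets ne se comparent pas par simple restriction (les groupes de racines de l'unité diffèrent, et des normes interviennent), et c'est précisément la difficulté que résout \cite{We09}.

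De plus, votre argument n'utilise nulle part la donnée qui rend l'énoncé vrai en $v\notin S$, à savoir que $\tilde{G}(\cdot)$ provient d'un $\mathbf{K}_2$-torseur défini sur $\Spec(\mathfrak{o}_v)$ : la seule modération ne suffit pas à contrôler l'accouplement dans le cas non déployé. Le texte procède autrement : il invoque \cite[6.5]{We09}, dont l'hypothèse est que l'extension est \emph{résiduellement scindée}, puis vérifie cette hypothèse pour $v\notin S$ en observant que l'extension résiduelle s'obtient en poussant le torseur intégral par le composé $\mathbf{K}_2\to j_*\mathbf{K}_2\to i_*\mathbf{K}_1$, qui est trivial d'après la suite de localisation en $K$-théorie. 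Pour rendre votre démonstration complète, il faudrait soit établir vous-même la description explicite de $B_v$ et de l'orthogonal de $M_0(\mathfrak{o}_v)$ pour un tore non ramifié quelconque (ce qui revient à refaire une partie substantielle de \cite{We09}), soit réintroduire la scission résiduelle comme étape intermédiaire et la vérifier --- c'est le chemin suivi par le texte.
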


Soit $S \supset V_\text{ram}$ un ensemble fini de places de $F$ vérifiant les conditions dans le paragraphe précédent. Soit $v \notin S$, on prend $K_v := G(\mathfrak{o}_v)$, $T_v$ un $F_v$-tore déployé maximal dans $G_v := G \times_F F_v$ en bonne position relativement à $K_v$, et posons $M_{0,v} := Z_{G_v}(T_v)$. C'est un $F$-tore maximal car $G$ est non ramifié. Définissons $\tilde{H}_v \subset \widetilde{M_{0,v}}$ comme dans \eqref{eqn:def-H}. D'après ce qui précède, il suffit de vérifier la commutativité de $\tilde{H}_v$ pour tout $v \notin S$ afin de prouver \ref{prop:BD-revetement-global}.

\begin{lemma}
  Conservons le formalisme ci-dessus. Si $\tilde{G}(\cdot)$ est résiduellement scindé en $v$, alors $\tilde{H}_v$ est commutatif.
\end{lemma}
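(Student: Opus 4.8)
Le plan est de ramener la commutativité de $\tilde{H}_v$ à l'annulation d'un commutateur, puis de l'obtenir de la description des revêtements de Brylinski-Deligne d'un tore, le scindage résiduel servant à contrôler la partie du commutateur que la modération ne gère pas. Comme $M_{0,v}$ est un tore (donc abélien), le commutateur $[\cdot,\cdot]_v$ du \S\ref{sec:commutateurs} est partout défini sur $M_{0,v}(F_v) \times M_{0,v}(F_v)$, à valeurs dans $\bmu_{n_F}$, bi-multiplicatif et alterné. Posons $K_0 := K_v \cap M_0(F_v)$, identifié à un sous-groupe de $\widetilde{M_{0,v}}$ via $s_v$. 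Comme $\bmu_{n_F}$ est central, $\tilde{H}_v = Z_{\widetilde{M_{0,v}}}(K_0) = \rev_v^{-1}(H_v)$ où $H_v := \{x \in M_{0,v}(F_v) : \forall y \in K_0,\; [x,y]_v = 1 \}$; ainsi $\tilde{H}_v$ est commutatif si et seulement si $[\cdot,\cdot]_v$ s'annule sur $H_v \times H_v$. Rappelons que $v \notin S$ entraîne que $n_{F_v}$ est premier à la caractéristique résiduelle de $F_v$, donc que le symbole de Hilbert de $F_v$ est modéré; fixons une uniformisante $\varpi_v$ de $F_v$.

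On explicite ensuite $[\cdot,\cdot]_v$. La restriction de $\tilde{G}(\cdot)$ à $M_{0,v}$ est un $\mathbf{K}_2$-torseur multiplicatif sur le tore non ramifié $M_{0,v}$; d'après \cite{BD01} (voir aussi \cite{We09}), son commutateur se décompose en une partie ``symbole modéré'', gouvernée par la forme bilinéaire $B$ associée à la forme quadratique du torseur sur le réseau des cocaractères, et une partie ``résidu'' provenant de son extension résiduelle. La modération contrôle la première: le symbole modéré de deux unités étant trivial, $[\cdot,\cdot]_v$ s'annule sur $K_0 \times K_0$, donc $K_0 \subseteq H_v$ et, pour $y \in K_0$, l'élément $[x,y]_v$ ne dépend que de l'image de $x$ dans $M_{0,v}(F_v)/K_0$, notée $\lambda(x)$ (un cocaractère); de plus, pour $x,x' \in H_v$, la contribution des parties ``uniformisantes'' à $[x,x']_v$ est du type $(\varpi_v,\varpi_v)_{n_{F_v}}^{B(\lambda(x),\lambda(x'))}$, triviale car $x \in H_v$ impose $n_{F_v} \mid B(\lambda(x),\lambda(x'))$ tandis que $(\varpi_v,\varpi_v)_{n_{F_v}} \in \bmu_{n_{F_v}}$.

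Le scindage résiduel sert pour la partie ``résidu''. Par fonctorialité de la construction de \cite[12.11]{BD01}, l'extension résiduelle de la restriction de $\tilde{G}(\cdot)$ à $M_{0,v}$ s'identifie à la restriction à $M_{0,s}$ de l'extension résiduelle $\tilde{G}(\cdot)_s$, où $M_{0,s}$ est la fibre spéciale du modèle lisse de $M_{0,v}$ (c'est un tore maximal de $G_s$); un scindage multiplicatif de $\tilde{G}(\cdot)_s$, fourni par l'hypothèse, se restreint en un scindage de celle de $M_{0,s}$. Comme $M_{0,s}$ est abélien, cette extension multiplicativement scindée est commutative, donc son accouplement de commutation $M_{0,s} \times M_{0,s} \to \Gm$ est trivial. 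En identifiant, via l'homomorphisme de résidu $j_* \mathbf{K}_2 \to i_* \Gm$, la partie ``résidu'' de $[\cdot,\cdot]_v$ à cet accouplement résiduel, on en déduit qu'elle s'annule; joint à l'étape précédente, cela montre que $[\cdot,\cdot]_v$ est trivial sur $H_v \times H_v$, d'où la commutativité de $\tilde{H}_v$.

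Le point délicat sera le cas du tore non déployé $M_{0,v}$: l'expression du commutateur et son lien avec le résidu font intervenir l'action galoisienne (le tore étant induit) ainsi que la fonctorialité précise de la construction résiduelle de \cite[12.11]{BD01}, et c'est là qu'il faut vérifier que ``résiduellement scindé'' annule exactement les termes croisés; dans le cas déployé, tout se ramène à des calculs de symboles modérés.
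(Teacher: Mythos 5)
Votre démonstration suit en réalité le même squelette que celle du texte : on se ramène au tore $M_{0,v}$, puis l'assertion est exactement celle de Weissman \cite[6.5]{We09}, que le texte se contente de citer. Vous tentez au contraire d'en reconstituer le mécanisme interne (décomposition du commutateur en une partie « symbole modéré » contrôlée par la forme bilinéaire $B$ et une partie « résidu » tuée par le scindage résiduel). La partie que vous rédigez effectivement est correcte : la caractérisation $\tilde{H}_v=\rev_v^{-1}(H_v)$, l'annulation du symbole modéré sur les unités, et le calcul montrant que $x\in H_v$ force $n_{F_v}\mid B(\lambda(x),\mu)$ pour tout cocaractère $\mu$, d'où la trivialité des termes en $(\varpi_v,\varpi_v)$ et des termes croisés uniformisante--unité. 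Dans le cas où $M_{0,v}$ est déployé, cela suffit — et l'on remarque au passage que le scindage résiduel n'y intervient même pas.

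Le problème est que ce cas déployé n'est pas le cas générique : $M_{0,v}$ est seulement un tore maximal non ramifié de $G_v$, en général non déployé, et c'est précisément là que réside tout le contenu de \cite[6.5]{We09}. Pour un tel tore, la description du commutateur $[\cdot,\cdot]_v$ ne se réduit pas à un produit de symboles de Hilbert indexé par $B$ : il faut d'abord établir la décomposition en partie modérée et partie résiduelle (ce qui passe par la descente galoisienne du tore induit et par l'identification, via $j_*\mathbf{K}_2\to i_*\Gm$, de la partie résiduelle du commutateur avec l'accouplement de commutation de l'extension résiduelle de $M_{0,s}$). Vous reconnaissez vous-même que c'est « le point délicat » et vous le laissez ouvert ; or c'est exactement l'énoncé qu'il fallait démontrer, le reste n'étant qu'un calcul de symboles modérés. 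En l'état, la preuve est donc incomplète : soit vous citez \cite[6.5]{We09} comme le fait le texte (auquel cas les deux premiers paragraphes sont superflus), soit il faut effectivement mener à bien l'analyse du commutateur pour un tore non ramifié non déployé, ce qui demande un travail substantiel qui n'est pas esquissé ici.
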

\begin{proof}
  On se ramène aussitôt au cas $G=M_0$, qui est un $F_v$-tore non ramifié. Dans ce cas-là, c'est l'assertion de \cite[6.5]{We09}.
\end{proof}

\begin{lemma}
  Pour toute place $v \notin S$, $\tilde{G}(\cdot)$ est résiduellement scindé en $v$.
\end{lemma}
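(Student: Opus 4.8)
Le plan est d'exploiter le fait que, pour $v \notin S$, le torseur $\tilde{G}(\cdot)$ est la restriction à la fibre générique d'un $\mathbf{K}_2$-torseur multiplicatif sur un modèle réductif $G_{\mathfrak{o}_v}$ de $G$ sur $\mathfrak{o}_v$; c'est l'une des hypothèses imposées à $S \supset S_1$ au paragraphe précédent. On notera $j: G_\eta \hookrightarrow G_{\mathfrak{o}_v}$ et $i: G_s \hookrightarrow G_{\mathfrak{o}_v}$ les immersions des fibres générique et spéciale, $G_s$ étant un groupe réductif sur le corps résiduel $k_v$.

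D'abord, on constatera que la condition (\textasteriskcentered) de \S\ref{sec:BD-ext} est ici automatiquement satisfaite: puisque $\tilde{G}(\cdot)$ est un $\mathbf{K}_2$-torseur sur $G_{\mathfrak{o}_v}$ tout entier, il se trivialise pour la topologie de Zariski au voisinage de chaque point de $G_s$, donc a fortiori sur la trace d'un tel voisinage sur $G_\eta$. Il est alors inutile de passer à un revêtement étale de $\Spec(\mathfrak{o}_v)$: le $j_*\mathbf{K}_2$-torseur $j_*(\tilde{G}(\cdot)|_{G_\eta})$ se déduit du $\mathbf{K}_2$-torseur $\tilde{G}(\cdot)$ en étendant le groupe structural le long de l'inclusion canonique $\mathbf{K}_2 \hookrightarrow j_*\mathbf{K}_2$.

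L'étape décisive sera l'observation que l'homomorphisme de résidu $j_*\mathbf{K}_2 \to i_*\mathbf{K}_1 = i_*\Gm$ s'annule sur le sous-faisceau $\mathbf{K}_2$: le symbole modéré d'une section régulière le long du diviseur lisse $G_s$ est trivial, autrement dit la suite $1 \to \mathbf{K}_2 \to j_*\mathbf{K}_2 \xrightarrow{\partial} i_*\Gm$ est exacte (pureté du $\mathbf{K}_2$ sur le schéma régulier $G_{\mathfrak{o}_v}$). La classe du $\Gm$-torseur résiduel $\tilde{G}(\cdot)_s$, vue dans $H^1(G_s,\Gm) = H^1(G_{\mathfrak{o}_v}, i_*\Gm)$, est l'image par $\partial_*$ de la classe de $j_*(\tilde{G}(\cdot)|_{G_\eta})$ dans $H^1(G_{\mathfrak{o}_v}, j_*\mathbf{K}_2)$, laquelle provient de $H^1(G_{\mathfrak{o}_v}, \mathbf{K}_2)$; par exactitude de la suite longue de cohomologie, elle est donc nulle. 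Concrètement, en choisissant des trivialisations Zariski-locales de $\tilde{G}(\cdot)$ sur $G_{\mathfrak{o}_v}$, les fonctions de transition sont des sections de $\mathbf{K}_2$ sur des ouverts de $G_{\mathfrak{o}_v}$, et leurs résidus le long de $G_s$ s'annulent. On conclura que l'extension résiduelle de $G_s$ par $\Gm$ est scindée, ce qui est l'énoncé cherché; joint au lemme précédent, cela achèvera la preuve de \ref{prop:BD-revetement-global}.

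Le point le plus délicat me paraît être de vérifier que cette trivialisation du $\Gm$-torseur sous-jacent respecte la structure multiplicative, donc scinde l'extension centrale de groupes algébriques $1 \to \Gm \to \tilde{G}(\cdot)_s \to G_s \to 1$; cela devrait résulter de la fonctorialité en la loi de groupe de chacune des opérations en jeu ($j_*$, résidu, extension du groupe structural des torseurs), mais demande une rédaction soigneuse. À défaut, on pourra se ramener, comme dans le lemme précédent, au cas où $G = M_0$ est un tore non ramifié et invoquer directement \cite[6.5]{We09} ou la discussion de \cite[\S 12]{BD01}.
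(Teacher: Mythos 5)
Votre démonstration suit exactement la même route que celle du texte: on utilise le fait que, pour $v \notin S$, le torseur provient d'un $\mathbf{K}_2$-torseur multiplicatif sur le modèle entier $G_{\mathfrak{o}_v}$, que (\textasteriskcentered) est alors automatique, et que la composée $\mathbf{K}_2 \to j_*\mathbf{K}_2 \to i_*\mathbf{K}_1$ est nulle en tant que morceau de la suite de localisation en $K$-théorie. Le « point délicat » que vous signalez à la fin n'en est pas un si l'on formule les choses comme dans \cite[12.14]{BD01}: l'extension résiduelle \emph{est par construction} la poussée-en-avant de l'extension centrale multiplicative $\tilde{G}_V(\cdot)$ le long de cette composée de morphismes de faisceaux en groupes abéliens, et la poussée-en-avant d'une extension centrale le long de l'homomorphisme nul est canoniquement scindée comme extension — la compatibilité avec la structure multiplicative est donc acquise d'emblée, sans qu'il faille vérifier séparément que la trivialisation du $\Gm$-torseur sous-jacent respecte la loi de groupe.
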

\begin{proof}
  Cf. \cite[12.14 (iii)]{BD01} 12.14. Rappelons que les conditions sur $S$ entraînent que $G$ admet un modèle lisse sur $V = \Spec(\mathfrak{o}_v)$ et $\tilde{G}(\cdot) \times_F F_v$ est la fibre générique d'un $\mathbf{K}_2$-torseur multiplicatif défini sur $V$, disons $\tilde{G}_V(\cdot)$.

  Dans cette situation (\textasteriskcentered) est évidemment satisfait. Donc l'extension résiduelle est obtenue en poussant-en-avant $\tilde{G}_V(\cdot)$ via les homomorphismes de faisceaux sur $G_V$:
  $$ \mathbf{K}_2 \to j_* \mathbf{K}_2  \to i_* \mathbf{K}_1 . $$

  Cela faisant partie de la suite de localisation en $K$-théorie, la composition est triviale. D'où la trivialité de l'extension résiduelle.
\end{proof}

\begin{proof}[Démonstration de \ref{prop:BD-revetement-global}]
  D'après les remarques après \ref{prop:BD-revetement-global}, il suffit de combiner les deux lemmes précédents.
\end{proof}

\section{La combinatoire}\label{sec:combinatoire}
Fixons un corps $F$, un $F$-groupe réductif connexe $G$, un sous-groupe de Lévi minimal $M_0$ et $P_0 \in \mathcal{P}(M_0)$.

\subsection{Analyse convexe}
Les résultats ici se trouvent dans \cite{Ar81}. Soient $P,Q$ deux sous-groupes paraboliques semi-standards de $G$ tels que $Q \supset P$, définissons des cônes ouverts dans $\mathfrak{a}_0$\index[iFT1]{${\mathfrak{a}^Q_P}^+$}\index[iFT1]{${}^+\mathfrak{a}^Q_P$}
\begin{align*}
  {\mathfrak{a}^Q_P}^+ & := \{ H \in \mathfrak{a}_0 : \forall \alpha \in \Delta^Q_P,\; \alpha(H) > 0 \}, \\
  {}^+\mathfrak{a}^Q_P & := \{H \in \mathfrak{a}_0 : \forall \varpi \in \widehat{\Delta^Q_P}, \; \varpi(H) > 0 \};
\end{align*}
et leurs fonctions caractéristiques\index[iFT1]{$\tau^Q_P$}\index[iFT1]{$\hat{\tau}^Q_P$}
\begin{align*}
  \tau^Q_P & := \mathbbm{1}_{{\mathfrak{a}^Q_P}^+} , \\
  \hat{\tau}^Q_P & := \mathbbm{1}_{{}^+\mathfrak{a}^Q_P}.
\end{align*}

Notons $\Z{\Delta^Q_P}^\vee$ (resp. $\Z\widehat{{\Delta^Q_P}^\vee}$) le réseau dans $\mathfrak{a}^Q_P$ engendré par ${\Delta^Q_P}^\vee$ (resp. $\widehat{{\Delta^Q_P}^\vee}$) et posons\index[iFT1]{$\theta^Q_P(\lambda)$}\index[iFT1]{$\hat{\theta}^Q_P(\lambda)$}
\begin{align*}
  \theta^Q_P(\lambda) & := \mes(\mathfrak{a}^Q_P/\Z{\Delta^Q_P}^\vee)^{-1} \prod_{\alpha^\vee \in {\Delta^Q_P}^\vee} \lambda(\alpha^\vee), \\
  \hat{\theta}^Q_P(\lambda) & := \mes(\mathfrak{a}^Q_P/\Z\widehat{{\Delta^Q_P}^\vee})^{-1} \prod_{\varpi^\vee \in \widehat{{\Delta^Q_P}^\vee}} \lambda(\varpi^\vee)
\end{align*}
pour tout $\lambda \in (\mathfrak{a}^Q_P)^*_\C$. Ce sont des fonctions holomorphes en $\lambda$. Lorsque $Q=G$, on supprime les exposants et on les note $\mathfrak{a}_P^+$, ${}^+\mathfrak{a}_P$, $\tau_P$, $\hat{\tau}_P$, $\theta_P$ et $\hat{\theta}_P$.

Étant donnés des sous-groupes paraboliques semi-standards $Q \supset Q' \supset P' \supset P$ et $Y \in \mathfrak{a}^Q_P$, notons $Y^{Q'}_{P'}$ l'image de $Y$ via $\mathfrak{a}^Q_P \to \mathfrak{a}^{Q'}_{P'}$. Lorsque $Q=Q'$ (resp. $P=P'$), on simplifie les notations en supprimant les exposants (resp. les indices) comme précédemment.

\begin{proposition}[cf. {\cite[2.1]{Ar81}}]
  Soient $R \supset P$ deux sous-groupes paraboliques semi-standards, on a
  $$ \sum_{Q: R \supset Q \supset P} (-1)^{\dim(A_P/A_Q)} \tau^Q_P(X^Q)\hat{\tau}^R_Q(X_Q) = \begin{cases} 1, & \text{si } P=R, \\ 0, & \text{sinon}. \end{cases} $$
\end{proposition}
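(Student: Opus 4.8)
Le plan est le suivant. Si $P=R$, la somme se réduit au seul terme $Q=P=R$, avec $(-1)^{\dim(A_P/A_P)}=1$ et $\tau^P_P=\hat\tau^R_R=1$ (conditions vides); le résultat est immédiat. Supposons donc $P\subsetneq R$. Je me ramènerais d'abord au cas $R=G$ en remplaçant $G$ par le facteur de Lévi $M_R$: pour $P\subseteq Q\subseteq R$, les espaces $\mathfrak{a}^Q_P$, $\mathfrak{a}^R_Q$, les ensembles $\Delta^Q_P$, $\widehat{\Delta^R_Q}$ et l'entier $\dim(A_P/A_Q)$ ne dépendent que de la situation relative entre $P$ et $R$, la même dans $G$ et dans $M_R$. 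On fixe alors $X\in\mathfrak{a}^G_P$ et l'on pose $\Delta:=\Delta^G_P\neq\emptyset$; soit $\{\varpi_\alpha:\alpha\in\Delta\}=\widehat{\Delta^G_P}$ la base de $(\mathfrak{a}^G_P)^*$ duale de $\{\alpha^\vee:\alpha\in\Delta\}$.

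L'étape décisive consiste à factoriser chacun des deux facteurs après avoir indexé les $Q$ tels que $P\subseteq Q\subseteq G$ par les parties $F:=\Delta^Q_P$ de $\Delta$ (de sorte que $\dim(A_P/A_{Q_F})=|F|$). Comme toute racine $\alpha\in F$ s'annule sur $\mathfrak{a}^G_{Q_F}$, on a $\alpha(X^{Q_F})=\alpha(X)$, d'où $\tau^{Q_F}_P(X^{Q_F})=\prod_{\alpha\in F}\mathbbm{1}\{\alpha(X)>0\}$. D'autre part, pour $\beta\in\Delta\setminus F$ le poids $\varpi_\beta$ s'annule sur $\mathfrak{a}^{Q_F}_P$ (puisque $\langle\varpi_\beta,\alpha^\vee\rangle=\delta_{\beta\alpha}=0$ pour $\alpha\in F$), donc induit une forme sur $\mathfrak{a}^G_{Q_F}$ qui, évaluée sur la projection de $\beta'^\vee$ ($\beta'\in\Delta\setminus F$), vaut $\langle\varpi_\beta,\beta'^\vee\rangle=\delta_{\beta\beta'}$; compte tenu de la description de ${\Delta^G_{Q_F}}^\vee$ dans \cite{ArIntro}, ceci identifie $\widehat{\Delta^G_{Q_F}}$ à $\{\varpi_\beta|_{\mathfrak{a}^G_{Q_F}}:\beta\in\Delta\setminus F\}$, d'où $\hat\tau^G_{Q_F}(X_{Q_F})=\prod_{\beta\in\Delta\setminus F}\mathbbm{1}\{\varpi_\beta(X)>0\}$. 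En reportant et en utilisant l'identité élémentaire $\sum_{F\subseteq\Delta}\prod_{\alpha\in F}(-u_\alpha)\prod_{\beta\notin F}v_\beta=\prod_{\alpha\in\Delta}(v_\alpha-u_\alpha)$, on obtiendrait
$$ \sum_{Q:\,G\supseteq Q\supseteq P}(-1)^{\dim(A_P/A_Q)}\tau^Q_P(X^Q)\hat\tau^G_Q(X_Q)=\prod_{\alpha\in\Delta}\bigl(\mathbbm{1}\{\varpi_\alpha(X)>0\}-\mathbbm{1}\{\alpha(X)>0\}\bigr). $$

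Il resterait à voir que ce produit s'annule dès que $\Delta\neq\emptyset$, c'est-à-dire qu'il existe $\alpha\in\Delta$ tel que $\varpi_\alpha(X)$ et $\alpha(X)$ soient simultanément $>0$ ou simultanément $\leq0$; c'est là le point le plus délicat. Si $\tau_P(X)=1$, l'inclusion $\mathfrak{a}_P^+\subseteq{}^+\mathfrak{a}_P$ (conséquence de la positivité des coefficients de l'inverse de la matrice de Cartan) montre que chaque facteur vaut $1-1=0$. Sinon, fixant $\alpha_0$ avec $\alpha_0(X)\leq0$, on peut supposer $\varpi_{\alpha_0}(X)>0$, faute de quoi le facteur en $\alpha_0$ est déjà nul. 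Raisonnant par l'absurde, je supposerais que pour tout $\alpha\in\Delta$ exactement l'une des deux quantités est $>0$, et j'écrirais $\Delta=I\sqcup J$ avec $I=\{\alpha:\alpha(X)\leq0<\varpi_\alpha(X)\}\ni\alpha_0$ et $J=\{\alpha:\varpi_\alpha(X)\leq0<\alpha(X)\}$. Si $J=\emptyset$, alors $X\in{}^+\mathfrak{a}_P=\sum_{\alpha\in\Delta}\R_{>0}\alpha^\vee$, tandis que $\alpha(X)\leq0$ pour tout $\alpha$ donne $-X\in\overline{\mathfrak{a}_P^+}\subseteq\overline{{}^+\mathfrak{a}_P}=\sum_{\alpha\in\Delta}\R_{\geq0}\alpha^\vee$, soit $X\in\sum_{\alpha\in\Delta}\R_{\leq0}\alpha^\vee$; comme les coracines simples forment une base de $\mathfrak{a}^G_P$, c'est absurde. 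Si $J\neq\emptyset$, alors pour $\alpha\in J$, écrivant $\alpha(X)=\sum_{\gamma\in\Delta}\langle\alpha,\gamma^\vee\rangle\varpi_\gamma(X)$ et notant que chaque terme indexé par $\gamma\in I$ est $\leq0$ (car $\langle\alpha,\gamma^\vee\rangle\leq0$ et $\varpi_\gamma(X)>0$), on trouve $0<\alpha(X)\leq\sum_{\gamma\in J}\langle\alpha,\gamma^\vee\rangle\varpi_\gamma(X)$; ainsi la matrice de Cartan $C_J$ du sous-système de base $J$ envoie $(\varpi_\gamma(X))_{\gamma\in J}$ sur un vecteur strictement positif, et comme $C_J^{-1}$ est à coefficients $\geq0$ (diagonale $>0$ sur chaque bloc irréductible), on conclut $\varpi_\gamma(X)>0$ pour tout $\gamma\in J$, contredisant la définition de $J$. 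Le principal obstacle est ce dernier argument de positivité des matrices de Cartan, joint à la vérification — classique mais qu'il ne faut pas escamoter — de l'identité $\widehat{\Delta^G_{Q_F}}=\{\varpi_\beta|_{\mathfrak{a}^G_{Q_F}}\}$.
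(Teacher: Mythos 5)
Le texte ne démontre pas cette proposition : elle est simplement citée d'après \cite[2.1]{Ar81} (c'est le lemme combinatoire de Langlands). Votre argument en constitue une preuve complète et correcte, et suit la route classique : après réduction à $R=G$ et paramétrage des paraboliques intermédiaires par les parties $F\subseteq\Delta^G_P$, les factorisations $\tau^{Q_F}_P(X^{Q_F})=\prod_{\alpha\in F}\mathbbm{1}\{\alpha(X)>0\}$ et $\hat\tau^G_{Q_F}(X_{Q_F})=\prod_{\beta\notin F}\mathbbm{1}\{\varpi_\beta(X)>0\}$ sont exactes (l'identification de $\widehat{\Delta^G_{Q_F}}$ avec les restrictions des poids $\varpi_\beta$, $\beta\notin F$, est bien le fait standard que vous invoquez), et le binôme ramène la somme au produit $\prod_{\alpha\in\Delta}\bigl(\mathbbm{1}\{\varpi_\alpha(X)>0\}-\mathbbm{1}\{\alpha(X)>0\}\bigr)$. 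L'annulation de ce produit pour $\Delta\neq\emptyset$ est effectivement le seul contenu réel, et votre discussion par cas est solide : les deux faits sur lesquels elle repose --- l'inclusion $\mathfrak{a}_P^+\subseteq{}^+\mathfrak{a}_P$ et la positivité (coefficients $\geq 0$, diagonale $>0$) de l'inverse de la matrice de Cartan du sous-système engendré par $J\subseteq\Delta_P$ --- sont standard pour les systèmes de racines relatifs attachés aux paraboliques (ils remontent au livre de Langlands sur les séries d'Eisenstein et sont utilisés librement par Arthur). Le seul point que je vous demanderais d'expliciter est la raison pour laquelle $\langle\alpha,\gamma^\vee\rangle\leq 0$ pour $\alpha\neq\gamma$ dans $\Delta_P$, puisque dans le cadre relatif les $\alpha$ sont des restrictions de racines simples et les $\gamma^\vee$ des projections de coracines simples ; c'est le même cercle de faits classiques, et il n'y a pas de lacune.
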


\begin{proposition}[cf. {\cite[\S 2]{Ar81}}]
  Conservons les notations précédentes et posons
  $$ \Gamma^R_P(X,Y) := \sum_{Q: R \supset Q \supset P} (-1)^{\dim (A_Q/A_R)} \tau^Q_P(X^Q) \hat{\tau}^R_Q(X_Q-Y_Q), \quad X,Y \in \mathfrak{a}^R_P . $$
  Alors $\Gamma^R_P(\cdot,Y)$ est à support compact. On a la relation de récurrence suivante
  $$ \hat{\tau}^R_P(X-Y) = \sum_{Q: R \supset Q \supset P} (-1)^{\dim(A_Q/A_R)} \hat{\tau}^Q_P(X^Q) \Gamma^R_Q(X_Q, Y_Q). $$
\end{proposition}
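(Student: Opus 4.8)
\noindent\emph{Esquisse de démonstration (programme à suivre).} Les deux assertions remontent à Arthur \cite[\S 2]{Ar81}; voici la marche que je suivrais.

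\noindent\emph{La formule de récurrence est purement formelle.} C'est une identité entre sommes finies, qui se déduit du lemme combinatoire de Langlands sous ses deux formes duales: la proposition précédente d'une part, et d'autre part celle obtenue en y échangeant $\tau$ et $\hat{\tau}$ (racines simples contre poids fondamentaux), qui figure elle aussi dans \cite[\S 2]{Ar81}. On substitue la définition de $\Gamma^R_Q$ au second membre; il apparaît alors une somme double sur les couples vérifiant $R \supset S \supset Q \supset P$, de signe $(-1)^{\dim(A_Q/A_R) + \dim(A_S/A_R)} = (-1)^{\dim(A_Q/A_S)}$. Comme les projections se composent (en particulier $X_Q - Y_Q$ se projette sur $X_S - Y_S$ dans $\mathfrak{a}^R_S$, et $X_Q$ se projette sur la composante de $X$ dans $\mathfrak{a}^S_Q$), on réorganise le second membre selon $S$ en
$$ \sum_{S:\, R \supset S \supset P} \hat{\tau}^R_S(X_S - Y_S) \left( \sum_{Q:\, S \supset Q \supset P} (-1)^{\dim(A_Q/A_S)}\, \hat{\tau}^Q_P(X^Q)\, \tau^S_Q(X_Q) \right). $$
La parenthèse est précisément la forme duale de la proposition précédente appliquée au couple $P \subset S$: elle vaut $\delta_{P=S}$. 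Il ne subsiste donc que le terme $S=P$, qui (puisque $X_P=X$ et $Y_P=Y$) vaut $\hat{\tau}^R_P(X-Y)$. D'où la formule. Notons qu'aucune propriété de support n'est intervenue ici.

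\noindent\emph{La compacité du support de $X \mapsto \Gamma^R_P(X,Y)$} se prouverait par récurrence sur $\dim\mathfrak{a}^R_P$, le cas $P=R$ étant vide. On isole le terme $Q=P$ dans la formule de récurrence (pour lui $\hat{\tau}^P_P \equiv 1$ et $\Gamma^R_P(X_P,Y_P)=\Gamma^R_P(X,Y)$); on en tire
$$ \Gamma^R_P(X,Y) = (-1)^{\dim(A_P/A_R)} \left( \hat{\tau}^R_P(X-Y) - \sum_{\substack{Q:\, R \supset Q \supset P \\ Q \neq P}} (-1)^{\dim(A_Q/A_R)}\, \hat{\tau}^Q_P(X^Q)\, \Gamma^R_Q(X_Q,Y_Q) \right), $$
où, par hypothèse de récurrence, chaque $\Gamma^R_Q(\cdot,Y_Q)$ avec $Q\supsetneq P$ est déjà à support compact dans $\mathfrak{a}^R_Q$. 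Reste à voir que la combinaison entre parenthèses s'annule hors d'une partie bornée de $\mathfrak{a}^R_P$, dont la taille est contrôlée par $\norme{Y}$. L'idée est de découper $\mathfrak{a}^R_P$ suivant le signe des $\alpha(X)$ pour $\alpha \in \Delta^R_P$: sur chaque morceau les fonctions $\tau^Q_P(X^Q)$ sont constantes, si bien qu'à l'aide de la proposition précédente $\Gamma^R_P(X,Y)$ se réécrit comme une combinaison signée de fonctions $\hat{\tau}^R_Q(X_Q-Y_Q)$; les inégalités définissant le morceau, confrontées aux conditions de positivité sur $X-Y$ que fournissent ces $\hat{\tau}$, confinent $X$ à un borné. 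Le cas $\dim\mathfrak{a}^R_P=1$ rend le mécanisme transparent: posant $\widehat{\Delta^R_P}=\{\varpi\}$, on trouve que $\Gamma^R_P(X,Y)\neq 0$ force $0<\varpi(X)\leq\varpi(Y)$ sur l'un des deux morceaux et $\varpi(Y)<\varpi(X)\leq 0$ sur l'autre.

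\noindent\emph{Principale difficulté.} C'est ce dernier découpage convexe: contrôler l'imbrication des cônes engendrés par les racines simples (qui paramètrent les morceaux), de ceux engendrés par les poids fondamentaux (issus des $\hat{\tau}$) et de la translation par $Y$ demande une analyse soignée de la géométrie convexe sous-jacente. C'est là le cœur combinatoire de \cite[\S 2]{Ar81}, que l'on se contenterait de reprendre; tout le reste se ramène à des réorganisations de sommes finies et à l'invariance des fonctions $\tau,\hat{\tau}$ sous les projections idoines.
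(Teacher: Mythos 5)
La proposition n'est pas démontrée dans l'article, qui renvoie directement à \cite[\S 2]{Ar81}; votre reconstruction suit précisément l'argument d'Arthur. Votre déduction de la formule de récurrence à partir de la forme duale du lemme combinatoire de Langlands (réorganisation de la somme double selon $S$, signe $(-1)^{\dim(A_Q/A_S)}$, seul $S=P$ survit) est complète et correcte, et pour la compacité du support vous identifiez correctement la structure de récurrence et le cœur de géométrie convexe que vous renvoyez, comme l'article lui-même, à Arthur.
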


Soit $E$ un espace de Banach. Soit $c_P: i\mathfrak{a}_P^* \to E$ une fonction, définissons\index[iFT1]{$c'_P$}
\begin{gather}\label{eqn:c'_P}
  c'_P(\lambda) := \sum_{Q \supset P} (-1)^{\dim(A_P/A_Q)} \hat{\theta}^Q_P(\lambda)^{-1} c_Q(\lambda_Q) \theta_Q(\lambda_Q)^{-1}
\end{gather}
où $c_Q := c|_{i\mathfrak{a}_Q^*}$. C'est bien défini sur le complément dans $i\mathfrak{a}_P^*$ des murs associés aux coracines et copoids simples.

\begin{proposition}[cf. {\cite[6.1]{Ar81}}]
  Si $c_P$ est lisse, alors $c'_P$ se prolonge en une fonction lisse sur $\lambda \in i\mathfrak{a}_P^*$.
\end{proposition}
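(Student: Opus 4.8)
The plan is to reproduce Arthur's argument for \cite[6.1]{Ar81}. Fix $\lambda_0\in i\mathfrak{a}_P^*$. Off the finite union $\mathcal{H}\subset i\mathfrak{a}_P^*$ of the real hyperplanes cut out by the linear forms appearing in the denominators of \eqref{eqn:c'_P} — namely $\langle\lambda,\varpi^\vee\rangle = 0$ for $\varpi^\vee\in\widehat{{\Delta^Q_P}^\vee}$ and $\langle\lambda,\alpha^\vee\rangle = 0$ for $\alpha^\vee\in\Delta_Q^\vee$, as $Q$ runs over the parabolics containing $P$ — the function $c'_P$ is manifestly smooth, being a finite sum of products of the smooth functions $\lambda\mapsto c_Q(\lambda_Q)$ with rational functions regular there. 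The whole content is therefore to show that $c'_P$ extends smoothly across each point of $\mathcal{H}$, and I would establish this by exhibiting, near each such point, a manifestly smooth function that agrees with $c'_P$ off $\mathcal{H}$.

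A neighbourhood of $\lambda_0$ meets only finitely many hyperplanes of $\mathcal{H}$, and by a routine reduction (an induction on the number of those hyperplanes) it suffices to treat the case where $\lambda_0$ lies on exactly one wall $H = \{\ell_H = 0\}$. Because the linear forms occurring in any single denominator of \eqref{eqn:c'_P} are pairwise non-proportional — the relevant coweights $\varpi^\vee\in\mathfrak{a}^Q_P$ and coroots $\alpha^\vee\in\mathfrak{a}_Q$ together form a linearly independent family in $\mathfrak{a}_P$ — each summand has at worst a simple pole along $H$ near $\lambda_0$: write the $Q$-summand as $\ell_H(\lambda)^{-1}a_Q(\lambda) + b_Q(\lambda)$ with $a_Q,b_Q$ smooth near $\lambda_0$. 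Then $c'_P(\lambda) = \ell_H(\lambda)^{-1}\sum_{Q\supset P}(-1)^{\dim(A_P/A_Q)}a_Q(\lambda) + (\text{smooth near }\lambda_0)$, so everything reduces to the residue identity: $\sum_{Q\supset P}(-1)^{\dim(A_P/A_Q)}a_Q(\lambda)$ vanishes on $H$. Granting this, Hadamard's lemma (a smooth function vanishing on the smooth hypersurface $H$ is $\ell_H$ times a smooth function) shows $c'_P$ agrees near $\lambda_0$ with a smooth function, hence extends smoothly; no separate argument for the derivatives is needed.

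The residue identity is the combinatorial heart. First one identifies the $Q$ with $a_Q\neq 0$: this requires recognising the non-obvious proportionalities among the various forms $\langle\cdot,\varpi^\vee\rangle$ and $\langle\cdot,\alpha^\vee\rangle$ (in concrete low-rank cases one checks directly that certain forms coming from the $\theta_Q$-factors coincide, up to scalar, with coweight forms appearing in some $\hat\theta^{Q'}_P$, which is what makes a given wall show up in several summands), and one finds that the contributing parabolics form an interval $\{Q : Q_1\subset Q\subset Q_2\}$ adapted to $H$. On $H$ the projections $\lambda_Q$ for $Q$ in this interval all restrict to the \emph{same} point, so the factor $c_Q(\lambda_Q)|_H$ is common to the contributing summands; pulling it out, $\sum_Q(-1)^{\dim(A_P/A_Q)}a_Q|_H$ equals that common value times $\sum_{Q_1\subset Q\subset Q_2}(-1)^{\dim(A_P/A_Q)}\,\mathrm{res}_H\!\big(\hat\theta^Q_P(\lambda)^{-1}\theta_Q(\lambda_Q)^{-1}\big)$, and this alternating sum of residues collapses to $0$ by the identity \cite[2.1]{Ar81} (equivalently by the $\Gamma^R_P$-recursion), the covolumes $\mes(\mathfrak{a}^Q_P/\Z\,{\Delta^Q_P}^\vee)$ and $\mes(\mathfrak{a}^Q_P/\Z\,\widehat{{\Delta^Q_P}^\vee})$ built into $\theta$ and $\hat\theta$ being normalised precisely so that the successive residues match — as is already visible in the rank-one situation, where $\theta_P(\lambda)^{-1}$ and $\hat\theta_P(\lambda)^{-1}$ are forced to have equal residue.

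The step I expect to be the real obstacle is exactly this residue bookkeeping: for each wall $H$, pinning down the interval of contributing parabolics through the hidden proportionalities of the coroot/coweight forms, verifying that the several $\lambda_Q$ genuinely agree on $H$ so that the global factor $c_Q(\lambda_Q)$ can be pulled out of the alternating sum, and checking that the remaining combinatorial sum of residues vanishes via \cite[2.1]{Ar81} with the lattice normalisations matching. Everything downstream — the reduction to a single wall, Hadamard's lemma, and the smoothness of the non-contributing summands — is routine, so once the residue identity is secured the proposition follows.
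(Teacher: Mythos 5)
The paper gives no proof of this proposition — it is quoted directly from Arthur \cite[Lemme 6.1]{Ar81} — so your argument can only be measured against the mathematics itself. Your overall architecture is sound: each summand of \eqref{eqn:c'_P} has at worst a simple pole along any wall, because $\widehat{{\Delta^Q_P}^\vee}\cup\Delta_Q^\vee$ is a basis of $\mathfrak{a}^G_P$ so the linear forms in a given denominator are pairwise non-proportional; and the reduction to a residue identity along a single generic wall, followed by Hadamard's lemma, is legitimate (clearing denominators to a single smooth numerator over a product of distinct linear forms makes the induction on walls work).

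The gap is in the residue identity, precisely the step you single out as the heart of the matter. You assert that the parabolics contributing a pole along a wall $H$ form an interval on which the projections $\lambda_Q$ all agree when $\lambda\in H$, so that $c_Q(\lambda_Q)|_H$ is a \emph{common} factor which can be pulled out, leaving a purely combinatorial alternating sum. This is true in rank $\leq 2$ but fails in general. Take $G$ of type $A_3$, $P=P_0$ minimal, and $H=\{\lambda(\alpha_1^\vee)=0\}$. Since $\alpha_3(\alpha_1^\vee)=0$, the coroot $\alpha_1^\vee$ lies in $\mathfrak{a}_Q$ for the $Q$ with $\Delta^Q_P=\{\alpha_3\}$ and survives unchanged as a factor of $\theta_Q(\lambda_Q)$; one checks that the contributing parabolics are exactly the four $Q$ with $\Delta^Q_P\subset\{\alpha_1,\alpha_3\}$. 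On $H$ one has $\lambda=\lambda_{Q_{\alpha_1}}$ and $\lambda_{Q_{\alpha_3}}=\lambda_{Q_{\alpha_1\alpha_3}}$, but $\lambda\neq\lambda_{Q_{\alpha_3}}$ at a generic point of $H$, so there is no single common value of $c_Q(\lambda_Q)$ and your subsequent appeal to the alternating-sum identity \cite[2.1]{Ar81} (which in any case concerns $\tau,\hat\tau$ rather than residues of $\theta,\hat\theta$) does not apply. The correct mechanism is a \emph{pairwise} cancellation $Q\leftrightarrow Q\cup\{\alpha_1\}$ inside the interval: within each such pair the projections do coincide on $H$, the remaining denominator factors agree on $H$ modulo $\R\alpha_1^\vee$, and the two residues cancel by the rank-one identity $\hat\theta^{Q\cup\{\alpha_1\}}_{Q}=\theta^{Q\cup\{\alpha_1\}}_{Q}$ together with the multiplicativity of the covolumes $\mes(\mathfrak{a}^Q_P/\Z{\Delta^Q_P}^\vee)$ under $P\subset Q\subset G$. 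With that substitution your proof goes through; as written, the key step fails for rank $\geq 3$.
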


\begin{proposition}[cf. {\cite[2.2]{Ar81}}]\label{prop:c'_P-polynome}
  S'il existe $X \in \mathfrak{a}_P$ tel que $c_P(\lambda) = e^{\lambda(X)}$, alors
  $$ c'_P(\lambda) = \int_{\mathfrak{a}^G_P} \Gamma^G_P(H,X^G) e^{\lambda(H)} \dd H, \quad \lambda \in i\mathfrak{a}_P^* .$$
  Cela étant la transformée de Fourier d'une fonction à support compact, $c'_P$ se prolonge en une fonction holomorphe sur $\mathfrak{a}_{P,\C}$. De plus, $c'_P(0)$ est un polynôme homogène en $X$ de degré $\dim(A_P/A_G)$.
\end{proposition}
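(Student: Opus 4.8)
The plan is to show that, in the exponential case, $c'_P(\lambda)$ is literally the Fourier transform of $\Gamma^G_P(\cdot,X^G)$, by expanding both sides as sums over $Q\supset P$ of products of Fourier transforms of indicator functions of simplicial cones. First I would record two elementary cone integrals. The cone ${}^+\mathfrak{a}^G_Q$ is the set of strictly positive combinations of the coroots ${\Delta^G_Q}^\vee$; parametrising by these coordinates and integrating for $\lambda$ with $\Re\angles{\lambda,\alpha^\vee}<0$ on all $\alpha^\vee\in{\Delta^G_Q}^\vee$ gives $\int_{\mathfrak{a}^G_Q}\hat{\tau}^G_Q(H)e^{\lambda(H)}\dd H=(-1)^{\dim(A_Q/A_G)}\theta^G_Q(\lambda)^{-1}$, the covolume $\mes(\mathfrak{a}^G_Q/\Z{\Delta^G_Q}^\vee)$ from the Jacobian reproducing exactly the normalisation in $\theta^G_Q$. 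Likewise $\mathfrak{a}^{Q,+}_P$ is spanned with positive coefficients by the fundamental coweights $\widehat{{\Delta^Q_P}^\vee}$, whence $\int_{\mathfrak{a}^Q_P}\tau^Q_P(H)e^{\lambda(H)}\dd H=(-1)^{\dim(A_P/A_Q)}\hat{\theta}^Q_P(\lambda)^{-1}$ for $\lambda$ in the appropriate cone.

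Next I would substitute $c_Q(\lambda_Q)=e^{\lambda_Q(X_Q)}$ (with $X_Q$ the image of $X$ in $\mathfrak{a}_Q$) into the definition \eqref{eqn:c'_P} and replace $\hat{\theta}^Q_P(\lambda)^{-1}$ and $e^{\lambda_Q(X_Q)}\theta^G_Q(\lambda_Q)^{-1}$ by the integrals above; the latter, after translating the integration variable by $X_Q$, becomes $\int_{\mathfrak{a}^G_Q}\hat{\tau}^G_Q(H-X_Q)e^{\lambda_Q(H)}\dd H$. Using the orthogonal decomposition $\mathfrak{a}^G_P=\mathfrak{a}^Q_P\oplus\mathfrak{a}^G_Q$ and Fubini, the $Q$-th summand of $c'_P(\lambda)$ turns into $(-1)^{\dim(A_Q/A_G)}\int_{\mathfrak{a}^G_P}\tau^Q_P(H^Q)\hat{\tau}^G_Q(H_Q-X_Q)e^{\lambda(H)}\dd H$, the two signs $(-1)^{\dim(A_P/A_Q)}$ (one from \eqref{eqn:c'_P}, one from the first cone integral) cancelling. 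Summing over $Q$ and recognising $\sum_Q(-1)^{\dim(A_Q/A_G)}\tau^Q_P(H^Q)\hat{\tau}^G_Q(H_Q-X_Q)$ as $\Gamma^G_P(H,X^G)$ — its defining expression with $R=G$, noting $X_Q=(X^G)_Q$ — yields the asserted identity. Since the intermediate manipulations are valid only for $\lambda$ in an open cone where the cone integrals converge absolutely, I would conclude by analytic continuation: $\Gamma^G_P(\cdot,X^G)$ has compact support, so its Fourier transform is entire, which simultaneously gives the holomorphic extension of $c'_P$ to $\mathfrak{a}_{P,\C}$.

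For the last assertion, homogeneity of $c'_P(0)=\int_{\mathfrak{a}^G_P}\Gamma^G_P(H,X^G)\dd H$ in $X^G$ is immediate from the change of variables $H\mapsto tH$ together with $\Gamma^G_P(tH,tX^G)=\Gamma^G_P(H,X^G)$ for $t>0$ (all the $\tau$, $\hat{\tau}$ being indicator functions of cones), which gives degree $\dim\mathfrak{a}^G_P=\dim(A_P/A_G)$. To see it is a genuine polynomial in $X$, I would expand $e^{\lambda_Q(X_Q)}=\sum_{k\geq0}\lambda_Q(X_Q)^k/k!$ in \eqref{eqn:c'_P}: each product $\hat{\theta}^Q_P(\lambda)^{-1}\theta^G_Q(\lambda_Q)^{-1}$ is a rational function of $\lambda$ homogeneous of degree $-\dim(A_P/A_G)$ — the exponent being independent of $Q$ since $\dim\mathfrak{a}^Q_P+\dim\mathfrak{a}^G_Q=\dim\mathfrak{a}^G_P$ — so the $(Q,k)$-term is homogeneous of degree $k-\dim(A_P/A_G)$ in $\lambda$; as $c'_P$ is holomorphic at $0$, only the degree-$0$ part survives at $\lambda=0$, coming from $k=\dim(A_P/A_G)$, and it is manifestly a homogeneous polynomial in $X$ of that degree.

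The bulk of the work, and the only delicate point, is the bookkeeping of signs and measure normalisations ensuring that the cone integrals reproduce $\hat{\theta}^Q_P$ and $\theta^G_Q$ on the nose, together with the justification — via compact support and analytic continuation — that one may freely recombine the individually non-absolutely-convergent pieces; once that is in place, the identification with $\Gamma^G_P$ is purely formal.
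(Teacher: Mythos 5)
Votre démonstration est correcte et suit exactement l'argument standard d'Arthur (\cite[Lemma 2.2]{Ar81}), que le présent article se contente de citer sans le reproduire : calcul des transformées de Laplace des cônes simpliciaux donnant $(-1)^{\dim(A_P/A_Q)}\hat{\theta}^Q_P(\lambda)^{-1}$ et $(-1)^{\dim(A_Q/A_G)}\theta_Q(\lambda)^{-1}$, recombinaison terme à terme en $\Gamma^G_P$, prolongement par holomorphie de la transformée de Fourier d'une fonction à support compact, puis extraction de la composante homogène de degré $\dim(A_P/A_G)$ pour la polynomialité en $X$. Le soin apporté aux signes, aux normalisations de mesures et au domaine de convergence commun est précisément ce qu'exige cette preuve ; rien à redire.
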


\subsection{$(G,M)$-familles}\label{sec:GM-famille}
Passons en revue la définition et les propriétés de $(G,M)$-familles. Les références sont \cite[\S 6]{Ar81} et \cite[\S 7]{Ar88-TF1}.\index[iFT1]{$(G,M)$-famille}

\begin{definition}
  Soit $E$ un espace de Banach. Une $(G,M)$-famille à valeurs dans $E$ est une famille des fonctions lisses
  $$ c_P: i\mathfrak{a}_M^* \to E, \quad P \in \mathcal{P}(M) $$
  telle que pour tous $P,P' \in \mathcal{P}(M)$ adjacents et tout $\lambda \in i(\mathfrak{a}_M^*)_P^+ \cap i(\mathfrak{a}_M^*)_{P'}^+$, on a $c_P(\lambda)=c_{P'}(\lambda)$.
\end{definition}

\begin{proposition}
  La fonction\index[iFT1]{$c_M(\lambda), c_M$}
  $$ c_M(\lambda) := \sum_{P \in \mathcal{P}(M)} c_P(\lambda) \theta_P(\lambda)^{-1} $$
  est bien définie et lisse sur $i\mathfrak{a}^*_M$.
\end{proposition}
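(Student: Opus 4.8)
This is a classical result of Arthur (\cite[\S 6]{Ar81}); what is really asserted is that the expression $\sum_{P\in\mathcal{P}(M)} c_P(\lambda)\theta_P(\lambda)^{-1}$, which a priori makes sense only off the finite union of the hyperplanes $H_\alpha := \{\lambda\in i\mathfrak{a}_M^*:\lambda(\alpha^\vee)=0\}$ ($\alpha$ une racine réduite de $(A_M,G)$), extends to a smooth function, necessarily uniquely; I would recall the argument. Since $\theta_P$ depends only on the $\mathfrak{a}_M^G$-component of $\lambda$ and the problem is local, everything takes place over $i(\mathfrak{a}_M^G)^*$; set $d:=\dim\mathfrak{a}_M^G$ and induct on $d$, the case $d=0$ being empty.

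The key sub-case is that of a $(G,M)$-family of \emph{polynomials}: then $c_M$ is itself a polynomial. Indeed, away from $\bigcup_\alpha H_\alpha$ every summand is regular, and near a generic point of a single $H_\alpha$ the summands with a pole there are those $P$ with $\alpha\in\Delta_P$ or $-\alpha\in\Delta_P$; I would group them in adjacent pairs $\{P,s_\alpha P\}$ ($s_\alpha$ la réflexion associée à $\alpha$), so that $\Delta_{s_\alpha P}^\vee=s_\alpha\Delta_P^\vee$. One checks $\theta_{s_\alpha P}(\lambda)\equiv-\theta_P(\lambda)$ along $H_\alpha$ — the sign from $s_\alpha\alpha^\vee=-\alpha^\vee$, the volume normalisations $\mes(\mathfrak{a}_P^G/\Z\Delta_P^\vee)$ matching because $s_\alpha$ preserves the Lebesgue measure on $\mathfrak{a}_M^G$ — while the $(G,M)$-family condition gives $c_P\equiv c_{s_\alpha P}$ on $H_\alpha$. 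Putting the pair over a common denominator and using polynomial division, the pair-sum has no pole along $H_\alpha$; hence $c_M$, a rational function whose denominator is a product of the linear forms $\lambda(\alpha^\vee)$, has empty polar divisor and is a polynomial — homogeneous of degree $(\deg c_P)-d$ when the $c_P$ are homogeneous of a common degree, in particular identically zero when that degree is $<d$.

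For a general $(G,M)$-family I would treat a point $\lambda_0$ by cases. If $\lambda_0$ has nonzero $\mathfrak{a}_M^G$-component, let $L\in\mathcal{L}(M)$ be the Levi with $\Sigma(A_M,L)=\{\alpha:\lambda_0(\alpha^\vee)=0\}$; then $L\neq G$, and near $\lambda_0$ one has $\theta_P(\lambda)^{-1}=\theta_{P\cap L}^L(\lambda^L)^{-1}g_P(\lambda)$ with $g_P$ smooth and non-vanishing, the remaining linear factors of $\theta_P$ pairing nontrivially with $\mathfrak{a}_L$ and so being bounded away from $0$ near $\lambda_0$ by the choice of $L$. Thus $c_M(\lambda)=\sum_{Q\in\mathcal{P}^L(M)}d_Q(\lambda)\theta_Q^L(\lambda^L)^{-1}$ with $d_Q:=\sum_{P:P\cap L=Q}c_Pg_P$, and $(d_Q)_Q$ is an $(L,M)$-famille (the compatibility being inherited from $G$ through the reflections $s_\alpha$, $\alpha$ une racine de $L$, which fix $H_\alpha$ pointwise); since $\dim\mathfrak{a}_M^L<d$ the induction hypothesis applies. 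This settles smoothness on $i\mathfrak{a}_M^*\setminus i\mathfrak{a}_G^*$. It remains to cross $i\mathfrak{a}_G^*=\bigcap_\alpha H_\alpha$; translating within $i\mathfrak{a}_G^*$ one may take $\lambda_0=0$. Expanding each $c_P$ into homogeneous components $c_P=\sum_{k\le N}c_P^{(k)}+R_P^{(N)}$ at $0$, each $(c_P^{(k)})_P$ is a $(G,M)$-family of homogeneous polynomials, so by the previous paragraph $\sum_P c_P^{(k)}\theta_P^{-1}$ is a homogeneous polynomial of degree $k-d$, vanishing for $k<d$; hence $c_M$ differs from a polynomial by $\sum_P R_P^{(N)}\theta_P^{-1}$, and a direct estimate — using that $R_P^{(N)}$ vanishes to order $N{+}1$ at $0$, that $\theta_P^{-1}$ has pole order $\le d$, and that $c_M$ is already known smooth off $0$ — shows $c_M\in C^{N-d}$ près de $0$ for every $N$, hence smooth.

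The main obstacle is this last step: turning ``smooth off $0$, plus $c_M$ minus a polynomial vanishing to high order'' into genuine smoothness at $0$ takes care, since the individual terms $R_P^{(N)}\theta_P^{-1}$ are themselves singular along the $H_\alpha$ near $0$ and one must exploit the cancellation in the sum. The earlier pieces — the sign and volume bookkeeping in the pairing, and checking the $(L,M)$-family reorganisation — are routine once one tracks the $s_\alpha$-action on the coroots.
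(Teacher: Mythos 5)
The paper itself offers no proof of this proposition: it is recalled from Arthur (\cite[\S 6]{Ar81}), the surrounding section being declared to serve only to fix notation. So your reconstruction must stand on its own. Its overall shape — polynomial case by cancellation of residues across walls, descent to a Levi away from $i\mathfrak{a}_G^*$, Taylor expansion at the origin — is indeed Arthur's, and your polynomial case is essentially correct. But two steps do not hold as written. In the descent step, the factorization $\theta_P(\lambda)^{-1}=\theta^L_{P\cap L}(\lambda^L)^{-1}g_P(\lambda)$ with $g_P$ smooth and \emph{non-vanishing} near $\lambda_0$ is false, because $\Delta^L_{P\cap L}$ is in general not contained in $\Delta_P$: an element of $\Delta^L_{P\cap L}$ may be a sum of several elements of $\Delta_P$ not all lying in $\Sigma(A_M,L)$. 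Take $G=\GL(3)$, $M$ the diagonal torus and $\lambda_0$ generic on the hyperplane of $(e_1-e_3)^\vee$, so that $L$ has roots $\pm(e_1-e_3)$; for the standard Borel $P$ one has $\theta_P(\lambda_0)\neq 0$ while $\theta^L_{P\cap L}(\lambda_0^L)=0$, so $g_P$ vanishes at $\lambda_0$. Only the inclusion $\Delta_P\cap\Sigma(A_M,L)\subset\Delta^L_{P\cap L}$ holds; this still makes $g_P$ smooth near $\lambda_0$, but then the $(L,M)$-family property of $(d_Q)_Q$ is no longer ``inherited through the reflections'' — it requires redoing, across each wall $H_\gamma$ with $\gamma\in\Sigma(A_M,L)$ and fiber by fiber over the map $P\mapsto P\cap L$, exactly the pairing-and-sign computation of your polynomial case. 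This is repairable, but it is a genuine computation, not bookkeeping.

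The decisive gap is the final estimate at $0$, which you assert and then yourself identify as ``the main obstacle''. Knowing that $c_M$ minus a polynomial equals $\sum_P R_P^{(N)}\theta_P^{-1}$ on the regular set does not yield $c_M\in C^{N-d}$: each term is $O(|\lambda|^{N+1-d})$ only on cones avoiding the hyperplanes, is unbounded arbitrarily close to $0$ near each $H_\alpha$, and ``smooth off $0$'' carries no quantitative information there; so no termwise estimate exists and the cancellation must itself be made uniform. Two standard ways to close this: (a) write the remainder in integral form and observe that for each $t\in[0,1]$ the functions $\lambda\mapsto (D^{N+1}c_P)(t\lambda)\bigl[\lambda^{\otimes(N+1)}\bigr]$ again form a $(G,M)$-family, so that uniform bounds can be propagated through the induction; or (b) follow Arthur's own route, deducing the present proposition from the smoothness of the $c'_P$ (\cite[6.1]{Ar81}, quoted just above in the text) together with the identity $c_M(\lambda)=\sum_{Q\in\mathcal{P}(M)}c'_Q(\lambda_Q)$ recalled just below. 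As it stands, your argument establishes the polynomial case and the correct strategy, but not the proposition.
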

On en déduit des fonctions lisses $c'_P$ sur $i\mathfrak{a}_P$ selon \eqref{eqn:c'_P}. Posons $c_M := c_M(0)$, c'est le terme qui nous intéresse.

\begin{example}
  On dit qu'un ensemble $\mathcal{Y} = (Y_P)_{P \in \mathcal{P}(M)}$ de points dans $\mathfrak{a}_M$ indexé par $\mathcal{P}(M)$ est un ensemble $(G,M)$-orthogonal (resp. $(G,M)$-orthogonal positif) si pour tous $P, P' \in \mathcal{P}(M)$ adjacents séparés par $\alpha \in \Delta_P$, on a
  $$ Y_P - Y_{P'} \in \R\alpha^\vee \qquad (\text{resp.}\quad Y_P-Y_{P'} \in \R_{\geq 0}\alpha^\vee). $$
  À un tel ensemble $\mathcal{Y}$ est associée une $(G,M)$-famille
  $$ c_P(\lambda, \mathcal{Y}) = e^{\lambda(Y_P)}. $$

  Vu \ref{prop:c'_P-polynome}, on a
  $$ c'_P(\lambda, \mathcal{Y}) = \int_{\mathfrak{a}^G_P} \Gamma^G_P(H,Y_P^G) e^{\lambda(H)} \dd H. $$
\end{example}

Ci-dessous une récapitulation des opérations utiles. Soit $(c_P)_P$ une $(G,M)$-famille à valeurs dans $E$.
\begin{enumerate}
  \item Supposons que $E$ est une algèbre de Banach. Soit $(d_P)_P$ une autre $(G,M)$-famille à valeurs dans $E$. Posons $(cd)_P(\lambda) := c_P(\lambda)d_P(\lambda)$, alors $(cd)_P$ est encore une $(G,M)$-famille.
  \item Fixons $L \in \mathcal{L}(M)$. En rappelant que $\mathfrak{a}_L^* \hookrightarrow \mathfrak{a}_M^*$ canoniquement, posons
  $$ c_Q(\lambda) = c_P(\lambda), \quad Q \in \mathcal{P}(L), \lambda \in i\mathfrak{a}_L^* $$
  où $P \in \mathcal{P}(M)$ est tel que $P \subset Q$; on vérifie que $c_Q(\lambda)$ ne dépend pas du choix de $P$. Alors $(c_Q)_Q$ est une $(G,L)$-famille.
  \item Fixons $L \in \mathcal{L}(M)$ et $Q \in \mathcal{P}(L)$ comme ci-dessus. Si $R \in \mathcal{P}^L(M)$, notons $Q(R)$ l'unique élément de $\mathcal{P}(M)$ tel que $Q(R) \subset Q$ et $Q(R) \cap L = R$. Posons
  $$ c^Q_R(\lambda) := c_{Q(R)}(\lambda), \quad R \in \mathcal{P}^L(M), \lambda \in i\mathfrak{a}_M^* .$$
  Alors $(c^Q_R)_R$ est une $(L,M)$-famille. Lorsque les fonctions $c^Q_R$ ne dépendent pas de $Q$, on les note aussi $c^L_R$.
  \item\label{enu:satellite} Soient $F_1$ une extension de $F$ et $M_1$ un sous-groupe de Lévi de $G_1 := G \times_F F_1$. Supposons que $M \supset M_1$ sur $F_1$, d'où une inclusion canonique $\mathfrak{a}_M^* \hookrightarrow \mathfrak{a}_{M_1}^*$. Soit $(c_{P_1})_{P_1}$ une $(G_1,M_1)$-famille, posons
  $$ c_P(\lambda) := c_{P_1}(\lambda), \quad P \in \mathcal{P}(M), \lambda \in i\mathfrak{a}_M^* $$
  où $P_1 \in \mathcal{P}(M_1)$ est tel que $P_1 \subset P$ sur $F_1$; on vérifie que $c_P(\lambda)$ ne dépend pas du choix de $P_1$. Alors $(c_P(\lambda))_P$ est une $(G,M)$-famille.
\end{enumerate}

Dorénavant, les $(G,M)$-familles sont supposées à valeurs dans une algèbre de Banach fixée.

\begin{lemma}[{\cite[6.3]{Ar81}}]\label{prop:descente-cd'}
  On a
  $$ (cd)_M(\lambda) = \sum_{Q \in \mathcal{F}(M)} c^Q_M(\lambda^Q) d'_Q(\lambda_Q). $$
  En particulier,
  $$ (cd)_M = \sum_{Q \in \mathcal{F}(M)} c^Q_M d'_Q. $$
\end{lemma}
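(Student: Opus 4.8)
The plan is to reduce everything to the definitions and then run Arthur's combinatorial machinery from the preceding subsection. Writing out the product of two $(G,M)$-families and the definition of $c_M$, the left-hand side becomes
\[
 (cd)_M(\lambda)\;=\;\sum_{P\in\mathcal P(M)} c_P(\lambda)\,d_P(\lambda)\,\theta_P(\lambda)^{-1},
\]
so the task is to break up each factor $\theta_P(\lambda)^{-1}$ along the chain of parabolics $Q\in\mathcal F(M)$ with $Q\supseteq P$, in such a way that the $d$-part of the $Q$-contribution collapses, via the defining formula \eqref{eqn:c'_P}, onto $d'_Q(\lambda_Q)$, while the leftover factor multiplying the $c$'s depends only on the $\mathfrak a^Q_M$-components. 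Since $c_M$ and $c'_P$ (hence both sides) are smooth in $\lambda$, it suffices to prove the identity on the dense open subset of $i\mathfrak a_M^*$ where no $\theta^Q_P$, $\hat\theta^Q_P$ vanishes, and then let $\lambda\to 0$ for the second assertion.

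The substantive input is a combinatorial inversion of \eqref{eqn:c'_P}: for any $(G,M)$-family $d$ and any $P\in\mathcal P(M)$ one expresses $d_P(\lambda)\theta_P(\lambda)^{-1}$ as a sum over $\{Q\in\mathcal F(M):Q\supseteq P\}$ of $d'_Q(\lambda_Q)$ against an explicit coefficient built from $\hat\theta^Q_P$-factors in the $\mathfrak a^Q_M$-variable and a sign of type $(-1)^{\dim(A_P/A_Q)}$. I would obtain this either by induction on $\dim\mathfrak a^G_M$ or by Möbius inversion on the poset $\mathcal F(M)$, the key being the cancellation identity of the Proposition recalled above (namely $\sum_{\{Q:R\supseteq Q\supseteq P\}}(-1)^{\dim(A_P/A_Q)}\tau^Q_P(X^Q)\hat\tau^R_Q(X_Q)$ equals $1$ for $P=R$ and $0$ otherwise) together with its $\theta$/$\hat\theta$ incarnation coming from the compactly supported functions $\Gamma^R_Q$. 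Substituting this expansion into the display above and interchanging the order of summation — summing first over $Q\in\mathcal F(M)$, then over $P\in\mathcal P(M)$ with $P\subseteq Q$ — pulls $d'_Q(\lambda_Q)$ outside and leaves an inner sum over $\{P\in\mathcal P(M):P\subseteq Q\}$, which is canonically $\mathcal P^{M_Q}(M)$, of $c_P$ against those coefficients. A second application of the same cancellation identity, this time entirely inside the Levi $M_Q$ where only the $\mathfrak a^Q_M$-components intervene, identifies that inner sum as the value of the $(M_Q,M)$-family $(c^Q_R)_{R\in\mathcal P^{M_Q}(M)}$, i.e.\ as $c^Q_M(\lambda^Q)$. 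This yields the claimed formula, and $\lambda=0$ gives the ``in particular''.

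\emph{The main obstacle} is exactly this bookkeeping, compounded by the fact that the individual factors $\theta_P(\lambda)^{-1}$ and $\hat\theta^Q_P(\lambda)^{-1}$ are singular along root hyperplanes: one must either carry the whole computation out on the dense open set above and invoke smoothness of $c_M$, $c'_P$ only at the very end, or — following Arthur — package everything through the functions $\Gamma^R_Q(\cdot,\cdot)$, which are honest compactly supported functions, turning the manipulations into Fourier-theoretic identities with no regularization issue. Keeping scrupulous track, for each factor, of whether it sees $\lambda^Q$ or $\lambda_Q$, and of all the signs $(-1)^{\dim(\cdot)}$, is where the real effort lies; once the inversion of \eqref{eqn:c'_P} is in hand, the reorganization of the double sum and the recognition of $c^Q_M$ and $d'_Q$ are essentially formal. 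A useful cross-check is to verify the extreme terms of the output directly: $Q=G$ (where $\lambda_Q=0$, $\theta_G=1$ and $c^G_M=c_M$) and $Q\in\mathcal P(M)$ (where $\lambda^Q=0$ and $d'_Q$ collapses, through \eqref{eqn:c'_P}, onto $d_Q\theta_Q^{-1}$ plus lower-order corrections), and to confirm the whole identity in low rank, e.g.\ $\dim\mathfrak a^G_M\le 1$.
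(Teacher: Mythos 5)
The paper offers no proof of this lemma: it is quoted directly from Arthur \cite[Lemme 6.3]{Ar81}, so your sketch can only be measured against Arthur's original argument — and it reproduces that argument's architecture faithfully (expand $(cd)_M(\lambda)=\sum_{P\in\mathcal{P}(M)}c_P(\lambda)d_P(\lambda)\theta_P(\lambda)^{-1}$, invert \eqref{eqn:c'_P} via the Langlands combinatorial lemma, interchange the sums over $P$ and $Q$, and recognize the inner sum as the function attached to the $(M_Q,M)$-famille $(c^Q_R)_R$). Two bookkeeping points in your sketch are off, though, and both sit exactly where you locate the ``main obstacle''. First, the inverse relation carries neither the alternating sign nor the $\hat{\theta}$-factors you predict: the combinatorial lemma yields
\begin{equation*}
d_P(\lambda)\,\theta_P(\lambda)^{-1}\;=\;\sum_{\{Q\in\mathcal{F}(M)\,:\,Q\supset P\}}\theta^Q_P(\lambda^Q)^{-1}\,d'_Q(\lambda_Q),
\end{equation*}
the signs $(-1)^{\dim(A_P/A_Q)}$ of \eqref{eqn:c'_P} being absorbed in the inversion; it is precisely the appearance of $\theta^Q_P$ (not $\hat{\theta}^Q_P$) that lets the inner sum $\sum_{P\subset Q}c_P(\lambda)\theta^Q_P(\lambda^Q)^{-1}$ collapse onto the definition of $c^Q_M$ with no further identity. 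Second, that inner sum does \emph{not} depend only on the $\mathfrak{a}^{M_Q}_M$-component of $\lambda$ — the factor $c_P(\lambda)$ sees all of $\lambda$ — so what comes out is $c^Q_M(\lambda)$ rather than $c^Q_M(\lambda^Q)$; already for $\dim\mathfrak{a}^G_M=1$ the two differ pointwise (this is an imprecision in the statement as transcribed here; Arthur writes $c^Q_M(\lambda)$). The discrepancy vanishes at $\lambda=0$, which is all that the ``en particulier'' and every subsequent application require. Neither point undermines your plan; both would surface and correct themselves once the Möbius inversion is actually carried out.
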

\begin{corollary}[{\cite[6.4 et 6.5]{Ar81}}]\label{prop:descente-d'}
  Soient $(c_P),(d_P)$ des $(G,M)$-familles.
  \begin{itemize}
    \item On a $d_M(\lambda) = \sum_{Q \in \mathcal{P}(M)} d'_Q(\lambda_Q)$.
    \item Supposons que $L \in \mathcal{L}(M)$ et $Q \in \mathcal{P}(L)$. Si la famille $(c^Q_R)$ ne dépend pas du choix de $Q$, alors
      $$ (cd)_M(\lambda) = \sum_{L \in \mathcal{L}(M)} c^L_M(\lambda^L) d_L(\lambda_L). $$
  \end{itemize}
\end{corollary}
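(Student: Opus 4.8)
The plan is to derive both identities from Lemma~\ref{prop:descente-cd'} by specializing the first $(G,M)$-famille, the only non-formal ingredient being the value of $c_M$ for the trivial famille. First I would record this ingredient: if $c_P \equiv 1$ for all $P \in \mathcal{P}(M)$, then $c_M(\lambda) = \sum_{P \in \mathcal{P}(M)} \theta_P(\lambda)^{-1}$, and each $\theta_P(\lambda)^{-1}$ is homogeneous in $\lambda$ of degree $-\dim(\mathfrak{a}_M/\mathfrak{a}_G)$ (a number independent of $P$), so the whole sum is homogeneous of that degree. When $M \neq G$ this degree is strictly negative; but the sum is also smooth on $i\mathfrak{a}_M^*$ by the smoothness proposition recalled in \S\ref{sec:GM-famille}, and a function continuous at the origin and homogeneous of strictly negative degree vanishes identically (let the scaling parameter tend to $0$). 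Hence $\sum_{P \in \mathcal{P}(M)}\theta_P(\lambda)^{-1}$ equals $0$ if $M \neq G$ and equals $1$ if $M = G$ (where $\mathcal{P}(G) = \{G\}$ and $\theta_G = 1$). The same argument applied to $(L,M)$-familles shows that the trivial $(L,M)$-famille has $c_M$-function equal to the constant $\delta_{L,M}$, for every $L \in \mathcal{L}(M)$.

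For the first bullet, I would apply Lemma~\ref{prop:descente-cd'} to the pair $(c,d)$ in which $c$ is the trivial $(G,M)$-famille. Then $(cd)_P = d_P$ for every $P$, so $(cd)_M = d_M$; and for $Q \in \mathcal{F}(M)$, say $Q \in \mathcal{P}(L)$ with $L = L_Q \in \mathcal{L}(M)$, the famille $c^Q_M$ is the trivial $(L,M)$-famille, so the preliminary step gives $c^Q_M(\lambda^Q) = \delta_{L_Q,M}$. Only the indices $Q$ with $L_Q = M$, i.e. $Q \in \mathcal{P}(M)$, contribute, and for such $Q$ one has $\lambda_Q = \lambda$; thus the lemma reduces to $d_M(\lambda) = \sum_{Q \in \mathcal{P}(M)} d'_Q(\lambda) = \sum_{Q \in \mathcal{P}(M)} d'_Q(\lambda_Q)$.

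For the second bullet, I would start from Lemma~\ref{prop:descente-cd'} once more and regroup the sum over $\mathcal{F}(M) = \bigsqcup_{L \in \mathcal{L}(M)} \mathcal{P}(L)$:
\[
  (cd)_M(\lambda) = \sum_{L \in \mathcal{L}(M)} \sum_{Q \in \mathcal{P}(L)} c^Q_M(\lambda^L)\, d'_Q(\lambda_L).
\]
The hypothesis that $c^Q_M = c^L_M$ is independent of $Q \in \mathcal{P}(L)$ lets it factor out of the inner sum. The remaining inner sum $\sum_{Q \in \mathcal{P}(L)} d'_Q(\lambda_L)$ is precisely the first bullet, applied to the $(G,L)$-famille deduced from $(d_P)$ by restriction (as in \S\ref{sec:GM-famille}); since this is exactly the famille with respect to which $d'_Q$ was defined in Lemma~\ref{prop:descente-cd'}, the inner sum equals $d_L(\lambda_L)$. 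Therefore $(cd)_M(\lambda) = \sum_{L \in \mathcal{L}(M)} c^L_M(\lambda^L)\, d_L(\lambda_L)$, which is the claim.

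The only step beyond routine manipulation of the operations on $(G,M)$-familles is the preliminary vanishing of $\sum_P \theta_P(\lambda)^{-1}$, and even that is short once one exploits the homogeneity of the $\theta_P^{-1}$; so the main obstacle is really just keeping the bookkeeping straight. The point I would watch most carefully is that $d'_Q$ keeps a single consistent meaning throughout — it is always computed from the $(G,L_Q)$-famille induced by $(d_P)$ — which is what makes the appeal to the first bullet inside the proof of the second bullet legitimate.
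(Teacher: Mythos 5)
Your argument is correct and is essentially the standard derivation (the one the paper defers to in Arthur): both identities follow from Lemme~\ref{prop:descente-cd'} by specializing $c$ to the constant famille, the only substantive input being that $\sum_{P \in \mathcal{P}(M)} \theta_P(\lambda)^{-1}$ vanishes for $M \neq G$, which your homogeneity-plus-smoothness argument establishes cleanly. Nothing further is needed.
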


Plaçons-nous dans la situation \ref{enu:satellite}. Soit $L \in \mathcal{L}(M_1)$. Si l'homomorphisme canonique
$$\Sigma: \mathfrak{a}^M_{M_1} \oplus \mathfrak{a}^{L_1}_{M_1} \to \mathfrak{a}^G_{M_1} $$
est un isomorphisme, posons\index[iFT1]{$d^G_{M_1}(M,L_1)$}
\begin{gather}\label{eqn:coef-d}
  d^G_{M_1}(M,L_1) := \dfrac{\text{la mesure sur } \mathfrak{a}^G_{M_1}}{\Sigma_* \left( \text{la mesure sur } \mathfrak{a}^M_{M_1} \oplus \mathfrak{a}^{L_1}_{M_1}\right)}
\end{gather}
en rappelant que l'on a fixé des mesures de Haar sur les espaces en question; sinon, posons $d^G_{M_1}(M,L_1):=0$.

Prenons
\begin{gather}\label{eqn:xi-1}
  \xi \in \mathfrak{a}^M_{M_1} \quad \text{en position générale.}
\end{gather}
Pour $L_1 \in \mathcal{L}(M_1)$ tel que $d^G_{M_1}(M,L_1) \neq 0$, on voit que $(\xi+\mathfrak{a}^G_M) \cap \mathfrak{a}^G_{L_1}$ consiste en un seul point non singulier; ce point appartient donc à $\mathfrak{a}_{Q_1}^+$ pour un unique $Q_1 \in \mathcal{P}(L_1)$. Cela définit une  application $L_1 \mapsto Q_1$ pour de tels $L_1$.

\begin{lemma}[{\cite[7.4]{Ar88-TF1}}]
  Avec le choix précédent de $\xi \in \mathfrak{a}^M_{M_1}$, on a
  $$ c_M(\lambda) = \sum_{L_1 \in \mathcal{L}(M_1)} d^G_{M_1}(M,L_1) c^{Q_1}_{M_1}(\lambda^{Q_1}), \quad \lambda \in i\mathfrak{a}_M^* .$$
  En particulier,
  $$ c_M = \sum_{L_1 \in \mathcal{L}(M_1)} d^G_{M_1}(M,L_1) c^{Q_1}_{M_1}. $$
\end{lemma}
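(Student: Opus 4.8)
Le plan est de suivre la démonstration d'Arthur \cite[7.4]{Ar88-TF1}. Le second énoncé résulte du premier en spécialisant $\lambda = 0$, et ce cas, le seul qui nous servira, concentre l'essentiel de l'argument (la version en $\lambda$ s'obtenant de même, ou via le principe de « germe » d'Arthur). La première étape est une réduction aux familles élémentaires : les deux membres ne dépendent de la $(G,M_1)$-famille $(c_{P_1})$ qu'à travers une donnée de dimension finie — par \ref{prop:c'_P-polynome}, à travers le germe des $c_{P_1}$ en $0$ — et passent aux limites ; comme les germes en $0$ des familles attachées aux ensembles $(G,M_1)$-orthogonaux engendrent (réduction aux ensembles orthogonaux, comme pour les autres identités de ce type), il suffit de traiter le cas $c_{P_1}(\lambda) = e^{\lambda(Y_{P_1})}$ pour un ensemble $(G,M_1)$-orthogonal $\mathcal{Y} = (Y_{P_1})_{P_1 \in \mathcal{P}(M_1)}$.

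Ensuite j'identifierais explicitement les trois familles en jeu dans ce cas élémentaire. La $(G,M)$-famille restreinte est $c_P(\lambda) = e^{\lambda(Y_P)}$, où $Y_P$ est l'image de $Y_{P_1}$ par $\mathfrak{a}_{M_1} \to \mathfrak{a}_M$ pour un $P_1 \in \mathcal{P}(M_1)$ quelconque contenu dans $P$ ; cette image est indépendante du choix de $P_1$, car deux tels paraboliques sont joints par une chaîne de voisins séparés par des racines de $(\mathfrak{a}^M_{M_1})^*$, le long desquelles l'orthogonalité de $\mathcal{Y}$ annule la différence des $Y$ dans $\mathfrak{a}_M$. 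On vérifie que $(Y_P)_{P \in \mathcal{P}(M)}$ est un ensemble $(G,M)$-orthogonal, d'où $c_M(\lambda) = \sum_{P \in \mathcal{P}(M)} e^{\lambda(Y_P)} \theta_P(\lambda)^{-1}$. De même, pour $L_1 \in \mathcal{L}(M_1)$ avec $d^G_{M_1}(M,L_1) \neq 0$ et $Q_1 \in \mathcal{P}(L_1)$ l'élément associé via $\xi$, la famille $(c^{Q_1}_R)_{R \in \mathcal{P}^{L_1}(M_1)}$ est la famille élémentaire attachée à $(Y_{Q_1(R)})_R$ — qui est bien un ensemble $(L_1,M_1)$-orthogonal, à nouveau par orthogonalité de $\mathcal{Y}$ — de sorte que $c^{Q_1}_{M_1}(\lambda^{Q_1}) = \sum_{R} e^{\lambda^{Q_1}(Y_{Q_1(R)})} \theta^{L_1}_R(\lambda^{Q_1})^{-1}$.

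Il reste alors un énoncé purement géométrique. Par \ref{prop:c'_P-polynome} (comme dans l'exemple des ensembles orthogonaux) et le corollaire \ref{prop:descente-d'}, chaque poids s'écrit comme une transformée de Fourier d'une fonction explicite à support compact : $c_M(\lambda) = \int_{\mathfrak{a}^G_M} \big( \sum_{P \in \mathcal{P}(M)} \Gamma^G_P(H, Y^G_P) \big) e^{\lambda(H)} \dd H$, et de même pour chaque $c^{Q_1}_{M_1}(\lambda^{Q_1})$ sur $\mathfrak{a}^{L_1}_{M_1}$. Comme $\lambda \in i\mathfrak{a}_M^*$ s'annule sur $\mathfrak{a}^M_{M_1}$, les deux membres ne dépendent que de la projection de $\lambda$ à $\mathfrak{a}^G_{M_1}/\mathfrak{a}^M_{M_1}$, et l'isomorphisme $\Sigma: \mathfrak{a}^M_{M_1} \oplus \mathfrak{a}^{L_1}_{M_1} \rightiso \mathfrak{a}^G_{M_1}$, de jacobien $d^G_{M_1}(M,L_1)$, fournit les facteurs de mesure reliant les intégrations sur $\mathfrak{a}^G_M$ et sur les $\mathfrak{a}^{L_1}_{M_1}$ ; le vecteur générique $\xi$ sert à repérer, pour chaque $L_1$, la chambre $\mathfrak{a}_{Q_1}^+$ contenant $(\xi + \mathfrak{a}^G_M) \cap \mathfrak{a}^G_{L_1}$, donc le bon poids $(L_1,M_1)$ à retenir. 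L'obstacle principal sera cette vérification combinatoire : montrer que, lorsque $L_1$ parcourt $\mathcal{L}(M_1)$, les fonctions à support compact provenant des $(L_1,M_1)$-familles, transportées via $\Sigma$ et le choix de $\xi$, se recollent exactement en $\sum_P \Gamma^G_P(\cdot, Y^G_P)$ — c'est le cœur du lemme d'Arthur, et la position générale de $\xi$ y est essentielle. Une alternative consisterait en une récurrence sur $\dim \mathfrak{a}^G_{M_1}$ fondée sur la formule de produit \ref{prop:descente-cd'} et les relations de récurrence de \S\ref{sec:combinatoire}.
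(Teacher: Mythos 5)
Le texte ne démontre pas ce lemme : il est repris tel quel d'Arthur, avec la référence \cite[7.4]{Ar88-TF1}, et la section \S\ref{sec:combinatoire} annonce explicitement qu'elle ne sert qu'à fixer les notations sur les $(G,M)$-familles. Il n'y a donc pas de preuve dans l'article à laquelle confronter la vôtre ; la seule comparaison possible est avec l'argument original d'Arthur.

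Quant à votre esquisse, les réductions préliminaires sont correctes : la dépendance des deux membres en un jet d'ordre fini de la famille en $0$ (via \ref{prop:c'_P-polynome}), l'identification de la famille restreinte $(c_P)_{P \in \mathcal{P}(M)}$ avec la famille élémentaire attachée à l'ensemble orthogonal projeté $(Y_P)$, et l'écriture des poids comme transformées de Fourier des fonctions à support compact $\sum_{P} \Gamma^G_P(\cdot, Y^G_P)$. Mais la démonstration s'arrête exactement là où le lemme commence : l'identité qui décompose $\sum_{P \in \mathcal{P}(M)} \Gamma^G_P(\cdot, Y^G_P)$ selon les $L_1 \in \mathcal{L}(M_1)$, avec le jacobien $d^G_{M_1}(M,L_1)$ de $\Sigma$ et la sélection de $Q_1$ par le point générique $\xi$, est tout le contenu de l'énoncé, et vous la laissez explicitement en suspens en la désignant vous-même comme le point central du lemme d'Arthur. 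S'y ajoute une affirmation non justifiée et non évidente : que les jets en $0$ des familles attachées aux ensembles $(G,M_1)$-orthogonaux engendrent l'espace des jets de toutes les $(G,M_1)$-familles (les contraintes de recollement sur les murs rendent ce point délicat) ; sans cela, la réduction au cas élémentaire n'est pas acquise. En l'état, c'est un plan de preuve plutôt qu'une preuve : il faudrait soit mener à bien la vérification convexe-combinatoire annoncée, soit, comme le fait l'article, renvoyer simplement à \cite[7.4]{Ar88-TF1}.
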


Considérons maintenant une variante. Soient $L_1, L_2 \in \mathcal{L}(M)$, on dispose toujours d'une application canonique
$$ \Sigma: \mathfrak{a}_M^{L_1} \oplus \mathfrak{a}_M^{L_2} \to \mathfrak{a}_M^G. $$
Cela permet de définir le coefficient $d^G_M(L_1, L_2)$ comme en \eqref{eqn:coef-d}. De même, prenons
\begin{gather}\label{eqn:xi-2}
  \xi \in \mathfrak{a}^M_{\mathcal{M}} := \{(H,-H) : H \in \mathfrak{a}_M \}
\end{gather}
en position générale; ce choix fournit une application $(L_1,L_2) \mapsto (Q_1,Q_2)$ pour les $L_1, L_2$ avec $d^G_M(L_1,L_2) \neq 0$, et on a $Q_i \in \mathcal{P}(L_i)$, $i=1,2$.

\begin{lemma}[{\cite[7.4]{Ar88-TF1}}]
  Avec les notations précédentes, on a
  $$ (cd)_M(\lambda) = \sum_{L_1, L_2 \in \mathcal{L}(M)} d^G_M(L_1,L_2) c^{Q_1}_M(\lambda^{Q_1}) c^{Q_2}_M(\lambda^{Q_2}). $$
  En particulier,
  $$ (cd)_M = \sum_{L_1, L_2 \in \mathcal{L}(M)} d^G_M(L_1,L_2) c^{Q_1}_M c^{Q_2}_M. $$
\end{lemma}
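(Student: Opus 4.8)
Le plan est de suivre la méthode d'Arthur pour \cite[7.4]{Ar88-TF1}, qui est très proche de celle employée pour la formule de descente énoncée juste au-dessus. Tous les termes figurant dans les deux membres sont fabriqués à partir des $(G,M)$-familles $(c_P)_P$ et $(d_P)_P$ au moyen des opérations rappelées dans \S\ref{sec:GM-famille}: produit ponctuel, multiplication par les $\theta_P(\lambda)^{-1}$, construction ``primée'' \eqref{eqn:c'_P}, restriction aux sous-Lévis, et passage à la valeur en $\lambda$. Par conséquent chaque membre est bilinéaire en $(c,d)$ et dépend ``polynomialement'' de ces deux familles. On commencera donc par se ramener, grâce à la bilinéarité et à un argument de densité, au cas des familles exponentielles $c_P(\lambda)=e^{\lambda(X_P)}$ et $d_P(\lambda)=e^{\lambda(Y_P)}$ attachées à des ensembles $(G,M)$-orthogonaux positifs $\mathcal{X}=(X_P)_P$ et $\mathcal{Y}=(Y_P)_P$ respectivement.

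Dans ce cas, la famille produit est encore exponentielle, $(cd)_P(\lambda)=e^{\lambda(X_P+Y_P)}$, attachée à l'ensemble $(G,M)$-orthogonal $\mathcal{X}+\mathcal{Y}$. D'après \ref{prop:c'_P-polynome} et la discussion qui suit, le nombre $(cd)_M$ s'interprète alors (pour les mesures de Haar fixées) comme le volume, dans $\mathfrak{a}_M^G$, de l'enveloppe convexe des points $X_P+Y_P$; de façon analogue, pour $L_1\in\mathcal{L}(M)$ et $Q_1\in\mathcal{P}(L_1)$, le facteur $c^{Q_1}_M$ est le volume, dans $\mathfrak{a}_M^{L_1}$, de l'enveloppe convexe de l'ensemble $(L_1,M)$-orthogonal $(X_{Q_1(R)})_{R\in\mathcal{P}^{L_1}(M)}$, et pareillement pour le facteur indexé par $L_2$ à partir de $\mathcal{Y}$. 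L'identité cherchée devient donc un énoncé de géométrie convexe: le polytope attaché à la ``somme de Minkowski'' $\mathcal{X}+\mathcal{Y}$ se découpe, à des bords de mesure nulle près, en cellules indexées par les couples $(L_1,L_2)$ avec $d^G_M(L_1,L_2)\neq 0$, chaque cellule étant, via l'isomorphisme $\Sigma:\mathfrak{a}_M^{L_1}\oplus\mathfrak{a}_M^{L_2}\to\mathfrak{a}_M^G$, le produit d'un polytope dans $\mathfrak{a}_M^{L_1}$ (issu de $\mathcal{X}$) par un polytope dans $\mathfrak{a}_M^{L_2}$ (issu de $\mathcal{Y}$); le jacobien de $\Sigma$ est précisément le coefficient $d^G_M(L_1,L_2)$ de \eqref{eqn:coef-d}.

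Le rôle du vecteur $\xi$ en position générale dans l'antidiagonale $\{(H,-H):H\in\mathfrak{a}_M\}$ (voir \eqref{eqn:xi-2}) est de rigidifier ce découpage: pour chaque couple admissible $(L_1,L_2)$, la condition que $(\xi+\mathfrak{a}^G_M)\cap\mathfrak{a}^G_{L_i}$ appartienne à $\mathfrak{a}_{Q_i}^+$ sélectionne les paraboliques $Q_i\in\mathcal{P}(L_i)$ qui étiquettent la cellule correspondante, de sorte qu'aucune cellule ne soit comptée deux fois et que la sommation porte bien sur des couples de Lévis plutôt que sur une structure plus fine. En additionnant les volumes des cellules et en tenant compte des jacobiens on obtient l'égalité voulue en $\lambda=0$, c'est-à-dire la formule pour $(cd)_M$; la version avec $\lambda$ quelconque s'en déduit en appliquant le même raisonnement aux familles translatées, ou bien en reprenant les calculs avec le paramètre $\lambda$ présent.

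Le point le plus délicat est la vérification combinatoire des deux étapes centrales: l'identification des cellules du découpage induit par $\xi$ avec les couples $(L_1,L_2)$, le fait que la contribution de chaque cellule soit exactement le produit de $d^G_M(L_1,L_2)$ par le facteur issu de $\mathcal{X}$ et celui issu de $\mathcal{Y}$ (en particulier que les cellules de bord, de mesure nulle, soient négligeables), et le recollement des facteurs jacobiens provenant des deux projections. Tous ces points sont établis dans \cite[\S 7]{Ar88-TF1}; on pourra en reprendre la démonstration quasi verbatim, les seules différences étant notationnelles. (On peut aussi partir de \ref{prop:descente-cd'}, qui donne $(cd)_M(\lambda)=\sum_{Q\in\mathcal{F}(M)}c^Q_M(\lambda^Q)d'_Q(\lambda_Q)$, puis traiter la dépendance en $Q$ au moyen de la formule de descente précédente, mais la voie géométrique ci-dessus est plus transparente.)
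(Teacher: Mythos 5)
Notez d'abord que le texte ne démontre pas ce lemme : la section \S\ref{sec:combinatoire} ne sert qu'à fixer les notations et renvoie entièrement à \cite[\S 7]{Ar88-TF1} ; toute comparaison se fait donc avec la preuve d'Arthur. Votre esquisse propose une voie réellement différente (géométrie convexe des ensembles orthogonaux), dont l'image est correcte et éclairante dans le cas des ensembles orthogonaux positifs, mais qui, en tant que démonstration de l'énoncé général, comporte deux lacunes. La première est la réduction aux familles exponentielles ``par bilinéarité et densité'' : il n'est pas établi que les combinaisons linéaires de familles $(e^{\lambda(X_P)})_P$ attachées à des ensembles $(G,M)$-orthogonaux soient denses dans l'espace des $(G,M)$-familles lisses (les conditions de recollement sur les murs rendent ce point non trivial), et surtout l'interprétation volumique de $c_M$ --- donc tout le découpage en cellules produits --- ne vaut que pour les ensembles orthogonaux \emph{positifs} ; pour un ensemble orthogonal général il faudrait une version signée de la subdivision mixte, que vous n'esquissez pas. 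La seconde lacune est la référence : les vérifications combinatoires que vous renvoyez à \cite[\S 7]{Ar88-TF1} n'y figurent pas sous cette forme géométrique, de sorte que le point ``le plus délicat'' de votre argument reste effectivement à faire.

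La démonstration d'Arthur est précisément celle que vous reléguez dans la parenthèse finale : la formule de scindement se déduit de la formule de descente (le lemme qui précède) appliquée au groupe $G \times G$, au Lévi $M \times M$ et au Lévi diagonal $M$, la famille $(c_{P_1} \otimes d_{P_2})$ étant une $(G \times G, M \times M)$-famille ; c'est pourquoi le point $\xi$ de \eqref{eqn:xi-2} est pris dans l'antidiagonale $\mathfrak{a}^M_{\mathcal{M}} = \{(H,-H)\}$ et pourquoi les coefficients $d^G_M(L_1,L_2)$ apparaissent. Cette voie vaut pour des familles lisses arbitraires, sans réduction aux exponentielles, et c'est elle qu'il faut suivre (ou citer) ici. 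Signalons enfin que l'énoncé imprimé comporte une coquille que votre lecture corrige implicitement : le second facteur du membre de droite doit être $d^{Q_2}_M$ et non $c^{Q_2}_M$.
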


\section{La formule des traces avec caractère: la partie unipotente}\label{sec:formule-traces-caractere}
Dans cette section, nous fixons les objets suivants
\begin{itemize}
  \item $F$: un corps de nombres,
  \item $\A$: l'anneau d'adèles de $F$,
  \item $G$: un $F$-groupe réductif connexe,
  \item $M_0$: un sous-groupe de Lévi minimal de $G$,
  \item $P_0 \in \mathcal{P}(M_0)$,
  \item $K = \prod_v K_v$: un sous-groupe compact maximal de $G(\A)$ en bonne position relativement à $M_0$,
  \item $\bomega: G(\A) \to \C^\times$ un caractère unitaire continu tel que $\bomega|_{G(F)}=1$.\index[iFT1]{$\bomega$}
\end{itemize}
On appelle un caractère $\bomega$ vérifiant la condition ci-dessus un caractère automorphe de $G$.\index[iFT1]{caractère automorphe}

De tels objets passent de façon évidente aux sous-groupes de Lévi standards, voire semi-standards si l'on ôte la donnée $P_0$. Fixons aussi des mesures de Haar selon les conventions de \S\ref{sec:mesure}.

Soit $T \in \mathfrak{a}_0$. Étant donné $P \in \mathcal{F}(M_0)$, par abus de notation, nous écrirons $T$ au lieu de $T_P$ pour désigner sa projection dans $\mathfrak{a}_P$.

\subsection{Le $\mathfrak{o}$-développement}\label{sec:trace-omega-geometrique}
Notons $R$ la représentation régulière de $G(\A)$ sur $L^2(G(F) \backslash G(\A)^1) = L^2(G(F)A_{G,\infty} \backslash G(\A))$, c'est-à-dire
$$ (R(y)\phi): x \mapsto \phi(xy), \quad y \in G(\A)^1, \phi \in L^2(G(F) \backslash G(\A)^1). $$

Notons $A_\bomega: L^2(G(F) \backslash G(\A)^1) \to L^2(G(F) \backslash G(\A)^1)$ l'application $\phi \mapsto \phi \bomega$. La formule des traces pour $(G,\bomega)$ concerne les opérateurs
$$ R(f) \circ A_\bomega : L^2(G(F) \backslash G(\A)^1) \to L^2(G(F) \backslash G(\A)^1), \quad f \in C_c^\infty(G(\A)^1). $$

Fixons $f$, alors $R(f) \circ A_\bomega$ admet le noyau
$$ K^\bomega(x,y) = \sum_{\gamma \in G(F)} \bomega(y) f(x^{-1} \gamma y), $$
cela signifie que $R(f) \circ A_\bomega$ est donné par $\phi \mapsto \int_{G(F) \backslash G(\A)^1} K^\bomega(\cdot, y) \phi(y) \dd y$.

Rappelons la procédure de troncature d'Arthur. Soient $T \in \mathfrak{a}_0^+$ et $P=M_P U_P \supset P_0$ un sous-groupe parabolique standard. Définissons
\begin{align*}
  K^\bomega_P(x,y) & := \bomega(y) \int_{U_P(\A)} \sum_{\gamma \in M_P(F)} f(x^{-1}\gamma u y) \dd u, \\
  k^{T,\bomega}(x) & := \sum_{P \supset P_0} (-1)^{\dim A_P/A_G} \sum_{\delta \in P(F) \backslash G(F)} K^\bomega_P(\delta x,\delta x) \hat{\tau}_P(H_P(\delta x) - T).
\end{align*}
Lorsque $\bomega=1$, on revient aux objets construits par Arthur \cite{Ar78} et on supprime l'exposant $\bomega$.

Remarquons que $K^\bomega_P(x,y) = \bomega(y) K_P(x,y)$ et $k^{T,\bomega}(x) = \bomega(x)k^T(x)$. Donc la somme définissant $k^{T,\bomega}$ est finie pour $x$ dans un sous-ensemble compact.

On dit que $\gamma_1, \gamma_2 \in G(F)$ sont $\mathcal{O}$-équivalents si leurs parties semi-simples sont conjuguées. Notons $\mathcal{O}$ l'ensemble de classes de $\mathcal{O}$-équivalences dans $G(F)$. Il est en bijection naturelle avec l'ensemble de classes de conjugaison semi-simples dans $G(F)$. Comme d'habitude, lorsqu'une ambiguïté sera à craindre sur $G$, on les notera $\mathcal{O}^G$-équivalence et $\mathcal{O}^G$.\index[iFT1]{$\mathcal{O}$-équivalence}

Soit $M$ un sous-groupe de Lévi de $G$, l'inclusion $M(F) \hookrightarrow G(F)$ induit une application $\mathcal{O}^M \to \mathcal{O}^G$ à fibres finies.

Soit $\mathfrak{o} \in \mathcal{O}$, définissons
\begin{align*}
  K^\bomega_{P,\mathfrak{o}}(x,y) & := \bomega(y) \int_{U_P(\A)} \sum_{\gamma \in M_P(F) \cap \mathfrak{o}} f(x^{-1}\gamma u y) \dd u, \\
  k^{T,\bomega}_{\mathfrak{o}}(x) & := \sum_{P \supset P_0} (-1)^{\dim A_P/A_G} \sum_{\delta \in P(F) \backslash G(F)} K^\bomega_P(\delta x,\delta x) \hat{\tau}_P(H_P(\delta x) - T).
\end{align*}

Alors $\sum_{\mathfrak{o}} K^{\bomega}_{P,\mathfrak{o}} = K^\bomega_P$ et $\sum_\mathfrak{o} k^{T,\bomega}_{\mathfrak{o}} = k^{T,\bomega}$. Comme remarqué plus haut, on a $K^\bomega_{P,\mathfrak{o}}(x,y) = \bomega(y) K_{P,\mathfrak{o}}(x,y)$ et $k^{T,\bomega}_{\mathfrak{o}}(x)=\bomega(x)k^T_{\mathfrak{o}}(x)$. Puisque $\bomega$ est unitaire, le résultat suivant découle immédiatement du cas usuel $\bomega=1$.

\begin{theorem}[cf. {\cite[7.1]{Ar78}}]
  Soit $T \in \mathfrak{a}_0^+$ suffisamment régulier, alors
  $$ \sum_{\mathfrak{o} \in \mathcal{O}} \int_{G(F) \backslash G(\A)^1} k^{T,\bomega}_{\mathfrak{o}}(x) \dd x $$
  converge absolument.
\end{theorem}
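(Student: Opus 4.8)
**The plan is to reduce to the case $\bomega = 1$, which is Arthur's original convergence theorem [Ar78, 7.1], by exploiting the identity $k^{T,\bomega}_{\mathfrak{o}}(x) = \bomega(x) k^T_{\mathfrak{o}}(x)$ noted just above.**

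First I would recall that the excerpt has already established the pointwise equalities $K^\bomega_{P,\mathfrak{o}}(x,y) = \bomega(y)\, K_{P,\mathfrak{o}}(x,y)$ and hence $k^{T,\bomega}_{\mathfrak{o}}(x) = \bomega(x)\, k^T_{\mathfrak{o}}(x)$; this is immediate from the definitions, since $\bomega$ factors out of every term in the sum defining $K^\bomega_{P,\mathfrak{o}}$ and the truncation factors $\hat\tau_P(H_P(\delta x) - T)$ and the signs $(-1)^{\dim A_P/A_G}$ do not involve $\bomega$. Next, since $\bomega$ is a \emph{unitaire} character of $G(\A)$, we have $|\bomega(x)| = 1$ for all $x \in G(\A)$, and since $\bomega$ is trivial on $G(F)$ it descends to a function on $G(F)\backslash G(\A)^1$ of absolute value $1$. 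Therefore
$$ \left| k^{T,\bomega}_{\mathfrak{o}}(x) \right| = |\bomega(x)| \cdot \left| k^T_{\mathfrak{o}}(x) \right| = \left| k^T_{\mathfrak{o}}(x) \right| $$
for every $x \in G(F)\backslash G(\A)^1$ and every $\mathfrak{o} \in \mathcal{O}$.

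Consequently the series of integrals whose absolute convergence we must prove,
$$ \sum_{\mathfrak{o} \in \mathcal{O}} \int_{G(F)\backslash G(\A)^1} \left| k^{T,\bomega}_{\mathfrak{o}}(x) \right| \dd x = \sum_{\mathfrak{o} \in \mathcal{O}} \int_{G(F)\backslash G(\A)^1} \left| k^T_{\mathfrak{o}}(x) \right| \dd x, $$
is \emph{identical}, term by term, to the one appearing in Arthur's theorem [Ar78, 7.1] for the case $\bomega = 1$. That theorem asserts precisely that the right-hand side is finite for $T \in \mathfrak{a}_0^+$ sufficiently regular. (One should take the same meaning of ``suffisamment régulier'' as in loc.\ cit.; no new condition on $T$ is introduced by the presence of $\bomega$, since $\bomega$ plays no role in the estimate.) Hence the left-hand side converges absolutely, which is the assertion. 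There is essentially no obstacle here: the only point requiring care is to make sure the function $f \in C_c^\infty(G(\A)^1)$ is the same on both sides and that $T$ is chosen uniformly, both of which are automatic. The substance of the result lies entirely in Arthur's original estimate, which we invoke as a black box; the role of the hypothesis that $\bomega$ is unitary is simply to make the modulus of the kernel insensitive to the twist.
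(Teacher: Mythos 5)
Your argument is exactly the paper's: the author notes $k^{T,\bomega}_{\mathfrak{o}}(x)=\bomega(x)k^T_{\mathfrak{o}}(x)$ and, since $\bomega$ is unitary, deduces the result immediately from Arthur's case $\bomega=1$. Your write-up just spells out the same reduction in more detail; it is correct.
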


Soit $f \in C_c^\infty(\tilde{G}^1)$ quelconque et notons $k^{T,\bomega}(x,f)$ la fonction ainsi associée. Il est donc loisible de définir la distribution
$$ f \mapsto  J^{T,\bomega}_\mathfrak{o}(f) := \int_{G(F) \backslash G(\A)^1} k^{T,\bomega}_\mathfrak{o}(x,f) \dd x .$$

On indiquera le groupe en question en exposant les notations, eg. $J^{G,T,\bomega}_\mathfrak{o}$. Si $\mathfrak{o} \ni 1$ (on l'appelle la classe unipotente dans $\mathcal{O}$), nous notons les objets associés par $K^\bomega_{P,\text{unip}}$, $k^{T,\bomega}_{\text{unip}}$ et $J^{T,\bomega}_{\text{unip}}$.

Si $M \in \mathcal{L}(M_0)$, $\mathfrak{o} \in \mathcal{O}^G$ et $f \in C_c^\infty(M(\A)^1)$, posons
$$ J^{M,T,\bomega}_{\mathfrak{o}}(f) = \sum_{\substack{\mathfrak{o}' \in \mathcal{O}^M \\ \mathfrak{o}' \mapsto \mathfrak{o}}} J^{M,T,\bomega}_{\mathfrak{o}'}(f). $$

\subsection{Comportement des distributions}
\paragraph{Modification de troncature}
Le fait suivant sera utilisé à plusieurs reprises.
\begin{proposition}\label{prop:omega-SC-trivial}
  Si $G$ est simplement connexe, alors $\bomega = 1$.
\end{proposition}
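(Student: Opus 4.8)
The plan is to prove $\bomega=1$ one place at a time: kill $\bomega$ at the "generic" places via the local Kneser–Tits theorem, and then dispose of the finitely many remaining (anisotropic) places using strong approximation together with $\bomega|_{G(F)}=1$.

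First I would reduce to the case where $G$ is $F$-simple. A continuous character of a direct product $G_1(\A)\times G_2(\A)$ is the external product of its restrictions to the two factors, and triviality of $\bomega$ on $G(F)$ forces triviality of each restriction on $G_i(F)$, with each $G_i$ again semisimple and simply connected; iterating and passing through a Weil restriction of scalars one may assume $G$ is $F$-simple. In this case let $\mathcal{S}_{\mathrm{an}}$ be the set of places $v$ at which $G$ is $F_v$-anisotropic, equivalently at which $G(F_v)$ is compact. Since $G$ is unramified — hence quasi-split, hence (being non-trivial semisimple) isotropic — at every place of good reduction, the set $\mathcal{S}_{\mathrm{an}}$ is finite.

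Next, let $v\notin\mathcal{S}_{\mathrm{an}}$, so $G$ is $F_v$-isotropic. Because $G$ is simply connected, the local Kneser–Tits theorem gives that $G(F_v)$ is generated by the root subgroups $U_\alpha(F_v)$, where $\alpha$ runs over the relative roots of $G$ over $F_v$. Fix a maximal $F_v$-split torus $A_v$; it normalizes each $U_\alpha$ and acts on it through the character $\alpha$, so one can choose $a\in A_v(F_v)$ with $|\alpha(a)|<1$, whose conjugation action on $U_\alpha(F_v)$ is then contracting, i.e.\ $a^n u a^{-n}\to 1$ for every $u\in U_\alpha(F_v)$. Since $\C^\times$ is abelian, $\bomega_v(a^n u a^{-n})=\bomega_v(u)$ for all $n$, and continuity of $\bomega_v$ yields $\bomega_v(u)=\bomega_v(1)=1$. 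Hence $\bomega_v=1$ for every $v\notin\mathcal{S}_{\mathrm{an}}$, and $\bomega$ factors through the projection of $G(\A)$ onto the finite product $\prod_{v\in\mathcal{S}_{\mathrm{an}}}G(F_v)$.

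Finally I would pick any $v_0\notin\mathcal{S}_{\mathrm{an}}$; then $G(F_{v_0})$ is non-compact, so the Kneser–Platonov strong approximation theorem applies with $S=\{v_0\}$ and shows that $G(F)$ is dense in $G(\A^{v_0})$. Composing with the continuous surjection $G(\A^{v_0})\twoheadrightarrow\prod_{v\in\mathcal{S}_{\mathrm{an}}}G(F_v)$, which makes sense precisely because $v_0\notin\mathcal{S}_{\mathrm{an}}$, the image of $G(F)$ is dense in $\prod_{v\in\mathcal{S}_{\mathrm{an}}}G(F_v)$; as $\bomega$ is continuous and trivial on $G(F)$, it is trivial on this whole product, whence $\bomega=1$. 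The main obstacle is exactly the anisotropic places: there $G(F_v)$ can have a non-trivial finite abelian quotient (for instance $G=\SL_1(D)$ with $D$ a division algebra, where $G(F_v)$ is compact with non-trivial abelianization at the ramified places), so one cannot simply claim that $G(\A)$ equals its own closed commutator subgroup — the hypothesis $\bomega|_{G(F)}=1$, fed into strong approximation, is genuinely needed at these places.
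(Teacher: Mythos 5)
Your argument is correct, but it takes a genuinely different route from the paper, whose proof is a one-line appeal to Langlands' parametrization of automorphic characters: such characters are controlled by the centre of the dual group, which is trivial when $G$ est simplement connexe. You replace that citation by two other standard but deep inputs — Platonov's solution of the local Kneser--Tits problem, combined with the contraction trick to kill a continuous character on the root subgroups at every isotropic place, and the Kneser--Platonov strong approximation theorem to dispose of the finitely many anisotropic places using $\bomega|_{G(F)}=1$. What your approach buys is a self-contained proof that makes visible exactly where the global hypothesis is needed: as your $\SL_1(D)$ example shows, the purely local statement ``$\bomega_v=1$'' genuinely fails at anisotropic places, a point the dual-group argument hides. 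One point of hygiene: after splitting into $F$-simple factors you should pass all the way to the absolutely simple group $H$ over the finite extension $E$ with $G_i=\mathrm{Res}_{E/F}H$, identifying $G_i(\A_F)=H(\A_E)$ and $G_i(F)=H(E)$, rather than merely ``assume $G$ is $F$-simple''. For an $F$-simple but not absolutely simple $G$, the group $G\times_F F_v$ may have both isotropic and anisotropic factors, so ``$G$ is $F_v$-isotropique'' neither implies that $G(F_v)$ is generated by root subgroups nor that $G(F_v)$ is non-compact-free of compact factors, and your dichotomy at the place $v$ would break down; working with $H$ over the places of $E$ restores the clean equivalence (isotropic $\Leftrightarrow$ non-compact $\Leftrightarrow$ Kneser--Tits applies) that the rest of the argument uses.
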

\begin{proof}
  Cela résulte immédiatement de la paramétrisation de tels caractères par Langlands, cf.  \cite[pp.122-123]{Lan89}.
\end{proof}
\begin{corollary}\label{prop:omega-trivial-unip}
  Le caractère $\bomega$ est trivial sur $G_\mathrm{unip}(\A)$.
\end{corollary}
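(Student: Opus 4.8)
Le plan est de se ramener au cas simplement connexe, déjà traité dans \ref{prop:omega-SC-trivial}. D'abord, je considérerais le revêtement simplement connexe $\pi: G_\text{SC} \to G_\text{der}$ du groupe dérivé et la composée $\bomega \circ \pi: G_\text{SC}(\A) \to \C^\times$. Comme $\pi(G_\text{SC}(F)) \subset G_\text{der}(F) \subset G(F)$ et $\bomega|_{G(F)} = 1$, ce caractère est continu, unitaire et trivial sur $G_\text{SC}(F)$: c'est donc un caractère automorphe de $G_\text{SC}$, d'où $\bomega \circ \pi = 1$ par \ref{prop:omega-SC-trivial}. Autrement dit, $\bomega$ est trivial sur le sous-groupe $\pi(G_\text{SC}(\A))$ de $G(\A)$.

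Il suffira ensuite d'établir l'inclusion $G_\text{unip}(\A) \subset \pi(G_\text{SC}(\A))$. Pour cela, je montrerais que $\pi$ se restreint en un $F$-isomorphisme de variétés $(G_\text{SC})_\text{unip} \rightiso G_\text{unip}$, ce qui donnera l'inclusion voulue (en fait l'égalité $G_\text{unip}(\A) = \pi((G_\text{SC})_\text{unip}(\A))$) en passant aux $\A$-points. L'argument reprend celui de la preuve de \ref{prop:scindage-unip}: l'exponentielle identifie $G_\text{unip}$ (resp. $(G_\text{SC})_\text{unip}$) au cône nilpotent $\mathfrak{g}_\text{nil}$ (resp. $(\mathfrak{g}_\text{SC})_\text{nil}$); le noyau de l'isogénie centrale $\pi$ étant un schéma en groupes fini, donc étale en caractéristique nulle, l'application $\Lie(\pi): \mathfrak{g}_\text{SC} \to \mathfrak{g}_\text{der}$ est un isomorphisme qui envoie $(\mathfrak{g}_\text{SC})_\text{nil}$ sur $\mathfrak{g}_\text{nil}$ (les éléments nilpotents appartiennent à l'algèbre de Lie dérivée); la fonctorialité de l'exponentielle, $\pi \circ \exp = \exp \circ \Lie(\pi)$, permet alors de conclure.

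Je n'anticipe pas de véritable obstacle ici; le seul point demandant un soin minimal est l'identification des variétés unipotentes de $G$ et de $G_\text{SC}$ via $\pi$, que le passage au cône nilpotent par l'exponentielle rend immédiate en caractéristique nulle. On pourrait aussi procéder plus directement, en relevant un élément unipotent en un élément unipotent de $G_\text{SC}$ grâce à la décomposition de Jordan (la partie semi-simple d'un tel relèvement tombant dans le noyau fini de $\pi$, on remplace le relèvement par sa partie unipotente), mais l'argument par l'exponentielle est plus net et fournit directement un énoncé au niveau des variétés.
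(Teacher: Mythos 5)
Votre démonstration est correcte et suit essentiellement la même voie que celle du texte: réduction à \ref{prop:omega-SC-trivial} via $\bomega \circ \pi$, puis identification de $G_\mathrm{unip}$ avec $(G_\text{SC})_\mathrm{unip}$ par $\pi$. Vous explicitez simplement (par l'exponentielle et $\Lie(\pi)$) l'isomorphisme de $F$-schémas que le texte se contente d'affirmer.
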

\begin{proof}
  Notons $\pi: G_\text{SC} \to G$ le revêtement simplement connexe de $G_\text{der}$, alors $\pi$ induit un isomorphisme $(G_\text{SC})_{\text{unip}} \rightiso G_\text{unip}$ de $F$-schémas, d'où un homéomorphisme pour leurs points adéliques.
\end{proof}

\begin{lemma}\label{prop:omega-aGaM}
  Soit $M$ un sous-groupe de Lévi de $G$. Alors $\bomega$ est trivial sur $A_{M,\infty} \cap G(\A)^1$.
\end{lemma}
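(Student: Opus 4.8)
The plan is to reduce to the minimal Levi $M_0$ and then exploit the fact that $\bomega$, being a character of $G(\A)$ valued in the abelian group $\C^\times$, is invariant under conjugation — in particular under the relative Weyl group $W_0^G$, which is realized by $F$-rational points of $G$.

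First I would reduce to the case $M \supseteq M_0$. Since $A_M \subseteq A_{M_0} = A_0$, the canonical embedding $A_{M,\infty} \hookrightarrow A_{M_0,\infty}$ (of the same nature as the embedding $A_{G,\infty} \hookrightarrow A_{M,\infty}$ already recorded) yields $A_{M,\infty} \cap G(\A)^1 \subseteq A_{M_0,\infty} \cap G(\A)^1$, so it is enough to treat $M = M_0$; a general $F$-Levi is moreover $G(F)$-conjugate to a semi-standard one, and both $\bomega$ and $G(\A)^1 = \Ker H_G$ are conjugation-invariant, while conjugation by a rational point carries $A_{M,\infty}$ to $A_{M',\infty}$ (an isomorphism of split tori being automatically defined over $\Q$), so there is no loss of generality. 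Next, $H_{M_0}$ restricts to a topological isomorphism $A_{M_0,\infty} \xrightarrow{\sim} \mathfrak{a}_0$ — this follows from $G(\A) = G(\A)^1 \times A_{G,\infty}$ applied to $M_0$ together with the surjectivity of $H_{M_0}$ — and under it $A_{M_0,\infty} \cap G(\A)^1$ corresponds precisely to $\mathfrak{a}_0^G$. Hence $\bomega|_{A_{M_0,\infty}}$ is a continuous unitary character of $A_{M_0,\infty} \cong \mathfrak{a}_0 \cong \R^{\dim \mathfrak{a}_0}$, so it has the form $a \mapsto e^{\langle \mu, H_{M_0}(a)\rangle}$ for a unique $\mu \in i\mathfrak{a}_0^*$, and everything reduces to showing that $\mu$ lies in the subspace $i\mathfrak{a}_G^*$ of $i\mathfrak{a}_0^*$, i.e. that $\mu$ annihilates $\mathfrak{a}_0^G$.

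For the last point, for each $w \in W_0^G$ I would choose a representative $n \in N_G(A_0)(F) \subset G(F)$. Conjugation by $n$ preserves $A_0$, hence $A_{M_0,\infty}$ (same rationality remark), and satisfies $H_{M_0}(n a n^{-1}) = w\,H_{M_0}(a)$. Since $\bomega(nan^{-1}) = \bomega(a)$, this gives $e^{\langle w^{-1}\mu, H_{M_0}(a)\rangle} = e^{\langle \mu, H_{M_0}(a)\rangle}$ for every $a \in A_{M_0,\infty}$; as $H_{M_0}(a)$ ranges over all of $\mathfrak{a}_0$ this forces $w^{-1}\mu = \mu$, so $\mu$ is $W_0^G$-invariant. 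The elementary fact that the relative Weyl group acts on $\mathfrak{a}_0^G$ with no nonzero fixed vector then gives $(\mathfrak{a}_0^*)^{W_0^G} = \mathfrak{a}_G^*$, whence $\mu \in i\mathfrak{a}_G^*$, so $\langle \mu, \mathfrak{a}_0^G\rangle = 0$ and $\bomega$ is trivial on $A_{M_0,\infty} \cap G(\A)^1$, hence on $A_{M,\infty} \cap G(\A)^1$. I do not expect a genuine obstacle: the only slightly delicate ingredients are the bookkeeping with the canonical $\Q$-structures on the split tori $A_M$ (so that the groups $A_{M,\infty}$ behave well under inclusion and rational conjugation) and the invocation of $(\mathfrak{a}_0^*)^{W_0^G} = \mathfrak{a}_G^*$.
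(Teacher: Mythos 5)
Your proof is correct, but it takes a genuinely different route from the paper's. The paper reduces the statement to the simply connected cover: it observes that $\pi: G_\text{SC} \to G_\text{der}$ induces an isomorphism $\mathfrak{a}^{G_\text{SC}}_{M_\text{sc}} \simeq \mathfrak{a}^G_M$ compatible with the Harish--Chandra maps, so the claim for $(G,M,\bomega)$ follows from the claim for $(G_\text{SC}, M_\text{sc}, \bomega\circ\pi)$, which is immediate from \ref{prop:omega-SC-trivial} (every automorphic character of a simply connected group is trivial --- a fact the paper imports from Langlands' classification of such characters). Your argument instead stays inside $G$: you restrict $\bomega$ to $A_{M_0,\infty} \cong \mathfrak{a}_0$, write this restriction as $e^{\langle \mu, \cdot\rangle}$ with $\mu \in i\mathfrak{a}_0^*$, and use the conjugation-invariance of any $\C^\times$-valued character together with rational Weyl representatives in $N_G(A_0)(F)$ to force $\mu \in (i\mathfrak{a}_0^*)^{W_0^G} = i\mathfrak{a}_G^*$, which annihilates $\mathfrak{a}_0^G = H_{M_0}(A_{M_0,\infty}\cap G(\A)^1)$; both of your preliminary reductions (to $M=M_0$, and the rationality of the $W_0^G$-action on $A_{0,\infty}$) are sound. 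What your approach buys is self-containedness: it avoids \ref{prop:omega-SC-trivial} and hence the appeal to Langlands, and it only uses that $\bomega$ is a continuous unitary character of $G(\A)$ (not even its triviality on $G(F)$). What the paper's approach buys is brevity and uniformity: the same descent to $G_\text{SC}$ already serves for \ref{prop:omega-trivial-unip}, and once \ref{prop:omega-SC-trivial} is in place the present lemma is a two-line corollary.
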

\begin{proof}
  L'application de Harish-Chandra adélique fournit un isomorphisme de groupes topologiques
  \begin{equation}\label{eqn:aGM}
    H_M: A_{M,\infty} \cap G(\A)^1 \rightiso \mathfrak{a}^G_M.
  \end{equation}
  Notons toujours $\pi: G_\text{SC} \to G$ le revêtement simplement connexe de $G_\text{der}$ et $M_\text{sc} \to M$ sa fibre au-dessus de $M$. On obtient l'analogue de \eqref{eqn:aGM} pour $G_\text{SC}$ et $M_\text{sc}$. Vu la description de $\mathfrak{a}^G_M$ en termes de coracines, on voit que $\pi$ induit $\mathfrak{a}^{G_\text{SC}}_{M_\text{sc}} \simeq \mathfrak{a}^G_M$; l'identification est compatible avec $H_M$ et $H_{M_\text{sc}}$. Ainsi, on se ramène à prouver la même assertion pour $G_\text{SC}$, $M_\text{sc}$ et $\bomega_\text{SC} := \bomega \circ \pi$. Or $\bomega_\text{SC}$ est encore un caractère automorphe, on conclut à l'aide de \ref{prop:omega-SC-trivial}.
\end{proof}

La notion suivante facilitera l'étude du comportement des distributions $J^{T,\bomega}_\mathfrak{o}$.
\begin{definition}
  Pour $P_0$ fixé, une modification de troncature est une famille des fonctions continues
  $$ \mathcal{Y} := \{Y_Q: Q(\A) \cap K \backslash K \to \mathfrak{a}_Q, \quad Q \in \mathcal{F}(M_0), Q \supset P_0 \} $$
  telle que le diagramme suivant commute pour tout $Q \supset P \supset P_0$:
  $$\xymatrix{
  P(\A) \cap K \backslash K \ar[rr]^{Y_P} \ar@{->>}[d] & & \mathfrak{a}_P \ar@{->>}[d] \\
  Q(\A) \cap K \backslash K \ar[rr]_{Y_Q} & & \mathfrak{a}_Q
  }$$
  À une telle famille sont associées des fonctions
  $$ u_Q(\lambda, k;\mathcal{Y}) := e^{\angles{\lambda,Y_Q(k)}}, \quad \lambda \in i\mathfrak{a}_Q^*, \ $$
  lisses en $\lambda$, dont $u'_Q(k;\mathcal{Y}) := u'_Q(0,k;\mathcal{Y})$ est la fonction associée via \eqref{eqn:c'_P}.

  Soient $\mathcal{Y}$ une modification de troncature, $f \in C_c^\infty(G(\A)^1)$ et $Q \supset P_0$, définissons une variante pondérée de la descente parabolique comme suit
  $$ f_{Q,\mathcal{Y}}^\bomega(m) := \delta_Q(m)^{\frac{1}{2}} \int_K \int_{U_Q(\A)} \bomega(k) f(k^{-1}muk) u'_Q(k;\mathcal{Y}) \dd u \dd k, \quad m \in M_Q(\A)^1 . $$
  On vérifie que ceci définit une application continue $C_c^\infty(G(\A)^1) \to C_c^\infty(M_Q(\A)^1)$.
\end{definition}

\begin{theorem}[cf. {\cite[(2.4)]{Ar81}}]\label{prop:modification-troncature}
  Soit $\mathfrak{o} \in \mathcal{O}$. Soit $\mathcal{Y}$ une modification de troncature. Posons
  \begin{align*}
    k^{T,\bomega}_{\mathfrak{o}}(x;\mathcal{Y}) & := \sum_{P \supset P_0} (-1)^{\dim A_P/A_G} \sum_{\delta \in P(F) \backslash G(F)} K^\bomega_{P,\mathfrak{o}}(\delta x,\delta x) \hat{\tau}_P(H_P(\delta x) - T - Y_P(k_P(\delta x))).
  \end{align*}
  alors
  $$ J_{\mathfrak{o}}^{T,\bomega}(f;\mathcal{Y}) = \int_{G(F) \backslash G(\A)^1} k^{T,\bomega}_{\mathfrak{o}}(x;\mathcal{Y}) \dd x $$
  est convergent pour $T \in \mathfrak{a}_0^+$ suffisamment régulier. De plus, on a
  $$ J_{\mathfrak{o}}^{T,\bomega}(f;\mathcal{Y}) = \sum_{Q \supset P_0} J_{\mathfrak{o}}^{M_Q, T,\bomega}(f_{Q;\mathcal{Y}}^\bomega). $$
\end{theorem}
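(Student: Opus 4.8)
Le plan est de reprendre la démonstration d'Arthur de \cite[(2.4)]{Ar81} en suivant le caractère $\bomega$ à chaque étape ; le seul point vraiment nouveau est que $\bomega$ se trouve être trivial exactement sur les sous-groupes sur lesquels on intègre « librement ». Traitons d'abord la convergence. Comme $\delta\in G(F)$ et $\bomega|_{G(F)}=1$, on a $K^\bomega_{P,\mathfrak{o}}(\delta x,\delta x) = \bomega(\delta x)K_{P,\mathfrak{o}}(\delta x,\delta x) = \bomega(x)K_{P,\mathfrak{o}}(\delta x,\delta x)$, d'où $k^{T,\bomega}_{\mathfrak{o}}(x;\mathcal{Y}) = \bomega(x)\,k^{T}_{\mathfrak{o}}(x;\mathcal{Y})$. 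Puisque $\bomega$ est unitaire, la convergence absolue de $J^{T,\bomega}_{\mathfrak{o}}(f;\mathcal{Y})$ pour $T$ suffisamment régulier équivaut à celle de $J^{T}_{\mathfrak{o}}(f;\mathcal{Y})$, c'est-à-dire au cas $\bomega=1$, qui est \cite[(2.4)]{Ar81}.

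Pour la formule de descente, on part de la définition de $k^{T,\bomega}_{\mathfrak{o}}(x;\mathcal{Y})$ et on applique à chaque facteur $\hat{\tau}_P(H_P(\delta x)-T-Y_P(k_P(\delta x)))$ la relation de récurrence $\hat{\tau}_P(X-Y) = \sum_{Q\supset P}(-1)^{\dim(A_Q/A_G)}\hat{\tau}^Q_P(X^Q)\,\Gamma^G_Q(X_Q,Y_Q)$ de \cite[\S 2]{Ar81}, avec $X = H_P(\delta x)-T$ (projeté sur $\mathfrak{a}^G_P$) et $Y = Y_P(k_P(\delta x))$. Le carré commutatif de la définition d'une modification de troncature donne $Y_P(k_P(\delta x))_Q = Y_Q(k_Q(\delta x))$, de sorte que le facteur $\Gamma^G_Q$ ne dépend de $\delta x$ que via son image dans $Q(\A)\cap K\backslash K$ et via $H_Q(\delta x)$. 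On intervertit les sommations pour mettre $Q$ à l'extérieur, on écrit chaque $\delta\in P(F)\backslash G(F)$ sous la forme $\delta=\xi\eta$ avec $\eta\in Q(F)\backslash G(F)$ et $\xi\in (P\cap M_Q)(F)\backslash M_Q(F)$, et l'on observe que $(-1)^{\dim A_P/A_G}(-1)^{\dim A_Q/A_G} = (-1)^{\dim A_P/A_{M_Q}}$ puisque $A_Q=A_{M_Q}$ : c'est précisément le signe de l'opérateur de troncature pour $M_Q$, et $\hat{\tau}^Q_P = \hat{\tau}^{M_Q}_{P\cap M_Q}$.

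Le cœur de la preuve est alors le déroulement de l'intégrale sur $G(F)\backslash G(\A)^1$. En utilisant $\bomega(\xi\eta x) = \bomega(x)$ (puisque $\xi\eta\in G(F)$), la décomposition d'Iwasawa $G(\A)=U_Q(\A)M_Q(\A)K$, ainsi que la trivialité de $\bomega$ sur $U_Q(\A)$ (\ref{prop:omega-trivial-unip}) et sur $A_{M_Q,\infty}\cap G(\A)^1 \simeq \mathfrak{a}^G_Q$ (\ref{prop:omega-aGaM}), on absorbe la somme sur les $P\subset Q$, la somme sur $\xi$, l'intégrale sur $U_Q(\A)$ et l'intégrale selon la direction $\mathfrak{a}^G_Q$ de $M_Q(\A)$ dans la définition des noyaux $k^{M_Q,T,\bomega}_{\mathfrak{o}'}$, sommés sur les $\mathfrak{o}'\in\mathcal{O}^{M_Q}$ au-dessus de $\mathfrak{o}$ (la décomposition $M_P(F)\cap\mathfrak{o}=\bigsqcup_{\mathfrak{o}'}M_P(F)\cap\mathfrak{o}'$ étant ce qui fait apparaître cette somme). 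Les intégrations sur $U_Q(\A)$ et sur $K$ de $f$ produisent la fonction descendue ; la valeur de $\bomega$ sur la composante dans $K$ fournit le facteur $\bomega(k)$ ; et l'intégration de $\Gamma^G_Q(\cdot,Y_Q(k))$ sur $\mathfrak{a}^G_Q$ fournit $u'_Q(k;\mathcal{Y}) = u'_Q(0,k;\mathcal{Y})$, par le cas $\lambda=0$ de \ref{prop:c'_P-polynome} appliqué au parabolique $Q$. Au total ces données se recombinent en $f_{Q;\mathcal{Y}}^\bomega$, d'où $J^{T,\bomega}_{\mathfrak{o}}(f;\mathcal{Y}) = \sum_{Q\supset P_0}J^{M_Q,T,\bomega}_{\mathfrak{o}}(f_{Q;\mathcal{Y}}^\bomega)$.

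Le principal obstacle est la tenue des comptes : on justifie les échanges de sommes (\emph{a priori} infinies) et d'intégrales comme chez Arthur, en restreignant d'abord $x$ à un compact, où toutes les sommes sont finies, puis en invoquant la convergence absolue établie à la première étape ; et il faut vérifier que les normalisations des mesures sur $U_Q(\A)$, $M_Q(\A)$, $K$ et $\mathfrak{a}^G_Q$ s'accordent avec celles fixées dans \S\ref{sec:mesure}, afin que le facteur $\delta_Q(m)^{1/2}$ et les bornes d'intégration coïncident avec ceux de la définition de $f_{Q;\mathcal{Y}}^\bomega$. La combinatoire proprement dite est identique à celle de \cite[(2.4)]{Ar81} ; le seul endroit où le caractère $\bomega$ intervient réellement est le constat qu'il est trivial sur exactement $U_Q(\A)$ et $A_{M_Q,\infty}\cap G(\A)^1$, ce qui permet au poids de descente $u'_Q(k;\mathcal{Y})$ et à l'unique facteur $\bomega(k)$ d'émerger sans valeurs parasites du caractère.
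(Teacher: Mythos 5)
Votre démonstration suit essentiellement la même route que celle du texte : réduction de la convergence au cas $\bomega=1$ par unitarité, relation de récurrence pour $\hat{\tau}_P$ faisant apparaître $\Gamma^G_Q$, changement de variables vers $Q(F)\backslash G(\A)^1$, décomposition d'Iwasawa avec suivi du caractère via \ref{prop:omega-trivial-unip} et \ref{prop:omega-aGaM}, puis identification de l'intégrale de $\Gamma^G_Q$ sur $\mathfrak{a}^G_Q$ avec $u'_Q(k;\mathcal{Y})$ par \ref{prop:c'_P-polynome} et recombinaison en $f^\bomega_{Q;\mathcal{Y}}$. L'argument est correct et conforme à celui du texte.
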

On se débarrassera de la condition sur $T$ dans \ref{prop:J^T-polynomial}.
\begin{proof}
  Pour tout sous-groupe parabolique $P$, on a
  \begin{multline*}
    \hat{\tau}_P(H_P(\delta x)-T-Y_P(k_P(\delta x))) = \\
    \sum_{Q \supset P} (-1)^{\dim A_Q/A_G} \hat{\tau}^Q_P(H_P(\delta x)-T) \Gamma^G_Q(H_Q(\delta x)-T, Y_Q(k_Q(\delta x))).
  \end{multline*}
  Via le changement de variables
  \begin{align*}
    (G(F) \backslash G(\A)^1) \times (P(F) \backslash G(F)) & \rightiso (Q(F) \backslash G(\A)^1) \times (P(F) \backslash Q(F))
  \end{align*}
  la formule définissant $J_\mathfrak{o}^{T,\bomega}(f;\mathcal{Y})$ s'écrit
  \begin{multline*}
    J_\mathfrak{o}^{T,\bomega}(f;\mathcal{Y}) = \sum_{Q \supset P_0} \; \int_{Q(F) \backslash G(\A)^1} \sum_{P: Q \supset P \supset P_0} (-1)^{\dim A_P/A_Q} \sum_{\delta \in P(F) \backslash Q(F)} \\
    K_{P,\mathfrak{o}}^\bomega(\delta x, \delta x) \hat{\tau}^Q_P(H_P(\delta x)-T) \Gamma^G_Q(H_Q(\delta x)-T, Y_Q(k_Q(\delta x))) \dd x. 
  \end{multline*}

  Écrivons
  \begin{align*}
    Q(F) \backslash G(\A)^1 & = (U_Q(F) \backslash U_Q(\A)) \times (A_{M_Q,\infty} \cap G(\A)^1) \times (M_Q(F) \backslash M_Q(\A)^1) \times K , \\
    x & = uamk , \\
    \dd x &= \delta_Q(a)^{-1} \dd u \dd a \dd m \dd k , \\
    K_{P,\mathfrak{o}}^\bomega(\delta x, \delta x) & = \bomega(m)\bomega(k) K_{P,\mathfrak{o}}(\delta uamk, \delta uamk),
  \end{align*}
  où on a utilisé \ref{prop:omega-aGaM} qui assure $\bomega(a)=1$. On vérifie de plus
  \begin{gather*}
    Y_Q(k_Q(\delta uamk)) = Y_Q(k), \\
    K_{P,\mathfrak{o}}(\delta uamk, \delta uamk) = \delta_Q(a) K_{P,\mathfrak{o}}(\delta mk, \delta mk), \\
    H_Q(\delta uamk) = H_Q(a), \\
    H_P(\delta uamk) \in H_P(\delta m) + \mathfrak{a}_Q.
  \end{gather*}
  Rappelons que $A_{M_Q,\infty} \cap G(\A)^1$ s'identifie à $\mathfrak{a}^G_Q$ via $H_{M_Q}$. Les équations ci-dessus entraînent que
  \begin{multline*}
    J_\mathfrak{o}^{T,\bomega}(f;\mathcal{Y}) = \sum_{Q \supset P_0} \; \int_{M_Q(F) \backslash M_Q(\A)^1} \bomega(m) \sum_{P: Q \supset P \supset P_0} (-1)^{\dim A_P/A_Q} \int_K \bomega(k) \\
    \sum_{\delta \in (P\cap M_Q)(F) \backslash M_Q(F)} \left(\int_{\mathfrak{a}^G_Q} \Gamma^G_Q(H, Y_P(k)) \dd H \right) K_{P,\mathfrak{o}}(\delta mk, \delta mk) \hat{\tau}^Q_P(H_P(\delta m)-T) \dd k \dd m.
  \end{multline*}
  Grâce à \ref{prop:c'_P-polynome}, l'intégrale sur $\mathfrak{a}^G_Q$ vaut $u'_Q(k;\mathcal{Y})$. L'application $P \mapsto P \cap M_Q$ induit une bijection entre $\{P: Q \supset P \supset P_0 \}$ et l'ensemble de sous-groupes paraboliques standards de $M_Q$. On vérifie que, pour tous $m_1, m_2 \in M_Q(\A)^1$ on a
  $$ \int_K \bomega(k) K_{P,\mathfrak{o}}(m_1 k, m_2 k) u'_Q(k;\mathcal{Y}) \dd k = \sum_{\gamma \in M_P(F) \cap \mathfrak{o}} \; \int_{(U_P \cap M_Q)(\A)} f_{Q;\mathcal{Y}}^\bomega(m_1^{-1} \gamma u m_2) \dd u, $$
  cf. \cite[p.17]{Ar81}. En l'appliquant à $m_1=m_2=\delta m$, on en déduit l'assertion. 
\end{proof}

\paragraph{Dépendance de $T$}
Pour tout $Q \in \mathcal{F}(M_0)$, posons\index[iFT1]{$f_Q^\bomega$}
\begin{gather}\label{eqn:descente-parabolique}
  f_Q^\bomega(m) := \delta_Q(m)^{\frac{1}{2}} \int_K \int_{U_Q(\A)} \bomega(k) f(k^{-1}muk) \dd u \dd k, \quad m \in M_Q(\A)^1;
\end{gather}
ceci fournit une application continue $C_c^\infty(G(\A)^1) \to C_c^\infty(M_Q(\A)^1)$.

\begin{corollary}[cf. {\cite[(2.4)]{Ar81}}]\label{prop:dependance-T}
  Soient $\mathfrak{o} \in \mathcal{O}$, $f \in C_c^\infty(G(\A)^1)$. Soient $T, T_1 \in \mathfrak{a}_0^+$ suffisamment réguliers, alors
  $$ J_{\mathfrak{o}}^{T_1,\bomega}(f) = \sum_{Q \supset P_0} J_\mathfrak{o}^{M_Q, T, \bomega}(f_Q^\bomega) \cdot \int_{\mathfrak{a}^G_Q} \Gamma^G_Q(H, T_1-T) \dd H . $$
\end{corollary}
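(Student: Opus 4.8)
The plan is to deduce this from Theorem~\ref{prop:modification-troncature} applied to a suitably chosen modification de troncature. Let $\mathcal{Y} = \{Y_Q\}$ be defined by taking $Y_Q \colon Q(\A)\cap K\backslash K \to \mathfrak{a}_Q$ to be the \emph{constant} function with value the image of $T_1 - T$ in $\mathfrak{a}_Q$. This is indeed a modification de troncature: each $Y_Q$ is continuous, and the required commutative square holds because the projections $\mathfrak{a}_0 \to \mathfrak{a}_P \to \mathfrak{a}_Q$ compose to the projection $\mathfrak{a}_0 \to \mathfrak{a}_Q$, so that the image of $(T_1-T)_P$ in $\mathfrak{a}_Q$ is exactly $(T_1-T)_Q$.

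First I would check that this choice reproduces the $T_1$-truncation. In the kernel $k^{T,\bomega}_{\mathfrak{o}}(x;\mathcal{Y})$ of Theorem~\ref{prop:modification-troncature} the argument of $\hat\tau_P$ is $H_P(\delta x) - T - Y_P(k_P(\delta x))$, and with our choice $Y_P(k_P(\delta x))$ is the image of $T_1 - T$ in $\mathfrak{a}_P$; since in such arguments $T$ denotes by abuse its projection $T_P$, this argument equals $H_P(\delta x) - (T_1)_P$, i.e. $H_P(\delta x) - T_1$ in the same notation. Hence $k^{T,\bomega}_{\mathfrak{o}}(\cdot;\mathcal{Y}) = k^{T_1,\bomega}_{\mathfrak{o}}(\cdot)$, and therefore $J^{T,\bomega}_{\mathfrak{o}}(f;\mathcal{Y}) = J^{T_1,\bomega}_{\mathfrak{o}}(f)$. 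No new convergence input is needed here, since $T$ (and $T_1$) are already assumed sufficiently régulier.

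Next I would compute the weighted descent $f^\bomega_{Q;\mathcal{Y}}$. By construction $u_Q(\lambda,k;\mathcal{Y}) = e^{\angles{\lambda,\, (T_1-T)_Q}}$ does not depend on $k$; applying Proposition~\ref{prop:c'_P-polynome} with $X$ the image of $T_1-T$ in $\mathfrak{a}_Q$ gives $u'_Q(k;\mathcal{Y}) = \int_{\mathfrak{a}^G_Q} \Gamma^G_Q(H, T_1 - T)\dd H$, again independent of $k$. Pulling this scalar out of the integral defining $f^\bomega_{Q;\mathcal{Y}}$ and comparing with \eqref{eqn:descente-parabolique} yields
$$ f^\bomega_{Q;\mathcal{Y}} = \left( \int_{\mathfrak{a}^G_Q} \Gamma^G_Q(H, T_1 - T)\dd H \right) f^\bomega_Q. $$
Substituting this together with the identity of the previous paragraph into $J^{T,\bomega}_{\mathfrak{o}}(f;\mathcal{Y}) = \sum_{Q\supset P_0} J^{M_Q,T,\bomega}_{\mathfrak{o}}(f^\bomega_{Q;\mathcal{Y}})$ from Theorem~\ref{prop:modification-troncature}, and using linearity of each distribution $J^{M_Q,T,\bomega}_{\mathfrak{o}}$ in its argument, gives the stated formula. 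The only point requiring care is the bookkeeping with the projection conventions in the first step; everything else is formal, the genuine analytic content having already been supplied by Theorem~\ref{prop:modification-troncature} and Proposition~\ref{prop:c'_P-polynome}.
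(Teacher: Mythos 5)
Your proposal is correct and follows exactly the route of the paper's own proof: take the constant modification de troncature $Y_Q(k) = T_1 - T$, observe that $J^{T,\bomega}_{\mathfrak{o}}(f;\mathcal{Y}) = J^{T_1,\bomega}_{\mathfrak{o}}(f)$, compute $u'_Q(k;\mathcal{Y}) = \int_{\mathfrak{a}^G_Q} \Gamma^G_Q(H, T_1-T)\,\dd H$ via \ref{prop:c'_P-polynome}, and conclude from \ref{prop:modification-troncature}. The only difference is that you spell out the bookkeeping (compatibility of the projections, identification of the truncated kernels, linearity) which the paper leaves implicit.
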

\begin{proof}
  Prenons $Y_P(k) := T_1-T \in \mathfrak{a}_P$ pour tout $P \supset P_0$ et tout $k \in K$, cela définit une modification de troncature. D'après \ref{prop:c'_P-polynome}, on a
  $$ f_{Q,\mathcal{Y}}^\bomega = f_Q^\bomega \int_{\mathfrak{a}^G_Q} \Gamma^G_Q(H, T_1-T) \dd H . $$
  On a aussi $J^{T,\bomega}_\mathfrak{o}(f;\mathcal{Y}) = J^{T_1,\bomega}_{\mathfrak{o}}(f)$. L'assertion résulte immédiatement de \ref{prop:modification-troncature}.
\end{proof}

\begin{corollary}\label{prop:J^T-polynomial}
  La distribution $f \mapsto J_\mathfrak{o}^{T,\bomega}(f)$, définie au début pour $T \in \mathfrak{a}_0^+$ suffisamment régulier, est polynomiale en $T$ de degré $\leq \dim\mathfrak{a}_0^G$. Par conséquent, la distribution est bien définie comme un polynôme en $T \in \mathfrak{a}_0$.

  La formule dans \ref{prop:modification-troncature} reste valable pour tout $T \in \mathfrak{a}_0$.
\end{corollary}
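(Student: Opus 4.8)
The plan is to run Arthur's standard argument, deducing the polynomiality from the descent formula of \ref{prop:dependance-T} together with the polynomiality of truncated volumes recorded in \ref{prop:c'_P-polynome}. First I would package the relevant volumes: for $Q \supset P_0$ put $p_Q(Z) := \int_{\mathfrak{a}^G_Q} \Gamma^G_Q(H, Z_Q)\,\dd H$ for $Z \in \mathfrak{a}_0$, where $Z_Q$ is the image of $Z$ in $\mathfrak{a}_Q$. Applying \ref{prop:c'_P-polynome} to the $(G,Q)$-datum with $X = Z$ (so $c_Q(\lambda) = e^{\lambda(Z)}$ and $c'_Q(0) = p_Q(Z)$), one sees that $p_Q$ is the restriction to $\mathfrak{a}_0$ of a polynomial function, homogeneous of degree $\dim(A_Q/A_G) = \dim\mathfrak{a}^G_Q$ in $Z_Q$. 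Since $Q \supset P_0$ forces $\mathfrak{a}^G_Q \subset \mathfrak{a}^G_{P_0} = \mathfrak{a}_0^G$, each $p_Q$ has degree $\leq \dim\mathfrak{a}_0^G$, the value $\dim\mathfrak{a}_0^G$ being attained only possibly for $Q = P_0$.

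Next, fix any $T \in \mathfrak{a}_0^+$ sufficiently regular and set $c_Q := J^{M_Q, T, \bomega}_\mathfrak{o}(f^\bomega_Q) \in \C$ (constants, once $T$ is fixed). By \ref{prop:dependance-T}, for every sufficiently regular $T_1 \in \mathfrak{a}_0^+$,
$$ J^{T_1, \bomega}_\mathfrak{o}(f) = \sum_{Q \supset P_0} c_Q\, p_Q(T_1 - T). $$
The right-hand side is a polynomial in $T_1 \in \mathfrak{a}_0$ of degree $\leq \max_Q \deg p_Q \leq \dim\mathfrak{a}_0^G$. Hence $T_1 \mapsto J^{T_1,\bomega}_\mathfrak{o}(f)$, a priori defined only on the sufficiently regular locus, agrees there with this polynomial. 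That locus is a non-empty open (shifted) cone in $\mathfrak{a}_0$, hence Zariski-dense, so the polynomial is uniquely determined; in particular it does not depend on the auxiliary choice of $T$ (two choices yield polynomials agreeing on a non-empty open set, hence equal). I would then \emph{define} $J^{T,\bomega}_\mathfrak{o}(f)$ for arbitrary $T \in \mathfrak{a}_0$ to be this polynomial; linearity and continuity in $f$ are inherited from the regular case. This proves the degree bound and the well-definedness as a polynomial in $T \in \mathfrak{a}_0$.

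For the last assertion I would apply the same reasoning to each standard Levi $M_Q$: all the constructions of \S\ref{sec:trace-omega-geometrique}, as well as \ref{prop:dependance-T} and \ref{prop:c'_P-polynome}, descend to $M_Q$, so the two preceding paragraphs (with $G$ replaced by $M_Q$, the case $M_Q = G$ being what has just been established) show that each $J^{M_Q,T,\bomega}_\mathfrak{o}(f^\bomega_{Q;\mathcal{Y}})$ — and hence the sum $\sum_{Q \supset P_0} J^{M_Q,T,\bomega}_\mathfrak{o}(f^\bomega_{Q;\mathcal{Y}})$ — is a polynomial in $T \in \mathfrak{a}_0$, noting that $f^\bomega_{Q;\mathcal{Y}}$ itself is independent of $T$. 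By \ref{prop:modification-troncature} this polynomial coincides with $J^{T,\bomega}_\mathfrak{o}(f;\mathcal{Y})$ for all sufficiently regular $T$; since $J^{T,\bomega}_\mathfrak{o}(f;\mathcal{Y})$ is also polynomial in $T$ (by the same argument applied to the truncation data, or simply because both sides of \ref{prop:modification-troncature} are), two polynomials agreeing on the non-empty open cone of sufficiently regular $T$ must agree everywhere, which gives the validity of the formula of \ref{prop:modification-troncature} for all $T \in \mathfrak{a}_0$. I do not expect a genuine obstacle; the only points needing care are the bookkeeping of the degree bound and the observation that the sufficiently-regular locus is a non-empty open cone in $\mathfrak{a}_0$, so that "polynomial on a cone" upgrades to "polynomial on $\mathfrak{a}_0$".
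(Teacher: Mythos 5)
Votre démonstration est correcte et suit essentiellement la même voie que celle de l'article, qui déduit la première assertion de \ref{prop:dependance-T} combiné avec \ref{prop:c'_P-polynome}, et la seconde du fait que les deux côtés de \ref{prop:modification-troncature} sont polynomiaux en $T$. Les détails que vous ajoutez (borne sur le degré via $\dim(A_Q/A_G) \leq \dim\mathfrak{a}_0^G$, densité du cône des $T$ suffisamment réguliers) sont exacts et explicitent ce que l'article laisse implicite.
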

\begin{proof}
  La première assertion découle de \ref{prop:c'_P-polynome}. La deuxième en résulte en notant que les deux côtés de \ref{prop:modification-troncature} sont tous polynomiaux en $T$.
\end{proof}

\paragraph{Non-invariance}
Fixons $\mathfrak{o} \in \mathcal{O}^G$. Soient $f \in C_c^\infty(G(\A))$, $y \in G(\A)$, définissons
$$ f^y(x) = f(y x y^{-1}), \quad x \in G(\A)^1 . $$

Définissons une modification de troncature $\mathcal{Y}_y$ en posant $Y_P(k) = -H_P(ky)$. Posons
\begin{align}
  \label{eqn:u'_Q(k,y)} u'_Q(k,y) & := u'_Q(k;\mathcal{Y}_y), \\
  \label{eqn:descente-parabolique-y} f_{Q,y}^\bomega & := f_{Q,\mathcal{Y}_y}^\bomega.
\end{align}

\begin{theorem}\label{prop:non-variance-Jomega}
  Avec les notations précédentes, on a
  \begin{align*}
    J_{\mathfrak{o}}^{T,\bomega}(f^y) = \bomega(y) \sum_{Q \supset P_0} J_{\mathfrak{o}}^{M_Q, T, \bomega}(f_{Q,y}^\bomega).
  \end{align*}
\end{theorem}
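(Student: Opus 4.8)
The plan is to reduce the assertion to the truncation-modification formula \ref{prop:modification-troncature}, applied to the modification of truncation $\mathcal{Y}_y$. The core is the pointwise identity
\[
k^{T,\bomega}_{\mathfrak{o}}(x, f^y) \;=\; \bomega(y)\, k^{T,\bomega}_{\mathfrak{o}}(xy^{-1}; \mathcal{Y}_y),
\]
where the right-hand side is the modified truncated kernel of \ref{prop:modification-troncature} built from $f$ and $\mathcal{Y}_y$. To obtain it, fix a standard $P$ and $\delta \in P(F)\backslash G(F)$ and set $z := xy^{-1}$. The elementary group identity $f^y\bigl((\delta x)^{-1}\gamma u (\delta x)\bigr) = f\bigl((\delta z)^{-1}\gamma u (\delta z)\bigr)$ leaves the integral over $U_P(\A)$ untouched, whence $K^\bomega_{P,\mathfrak{o}}(\delta x,\delta x; f^y) = \bomega(x)\,\bomega(\delta z)^{-1}\,K^\bomega_{P,\mathfrak{o}}(\delta z,\delta z; f) = \bomega(y)\,K^\bomega_{P,\mathfrak{o}}(\delta z,\delta z; f)$, using $\bomega|_{G(F)}=1$ together with $\bomega(x)\bomega(xy^{-1})^{-1}=\bomega(y)$. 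On the truncation side, the Iwasawa cocycle relation $H_P(\delta z\, y)=H_P(\delta z)+H_P(k_P(\delta z)\, y)$ turns $\hat\tau_P(H_P(\delta x)-T)$ into $\hat\tau_P\bigl(H_P(\delta z)-T-Y_P(k_P(\delta z))\bigr)$ with $Y_P(k)=-H_P(ky)$. Summing over $P$ and $\delta$ gives the displayed identity.

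Next I would check that $\mathcal{Y}_y=\{Y_P\colon k\mapsto -H_P(ky)\}$ genuinely is a modification of truncation: $Y_P$ is continuous, it descends to $P(\A)\cap K\backslash K$ because $H_P(pg)=H_P(g)$ for $p\in P(\A)\cap K$ (as $|\chi(p)|=1$ for all $\chi\in X^*(M_P)$ when $p\in K$), and it is compatible with the projections $\mathfrak{a}_P\twoheadrightarrow\mathfrak{a}_Q$ since $H_Q$ is the $\mathfrak{a}_Q$-component of $H_P$. Integrating the pointwise identity over $x\in G(F)\backslash G(\A)^1$ and, for $y\in G(\A)^1$, performing the measure-preserving substitution $x\mapsto xy$ yields $J^{T,\bomega}_{\mathfrak{o}}(f^y)=\bomega(y)\,J^{T,\bomega}_{\mathfrak{o}}(f;\mathcal{Y}_y)$; the case of general $y$ reduces to this one because $A_{G,\infty}$ is central, so that writing $y=y_1a$ with $y_1\in G(\A)^1$ and $a\in A_{G,\infty}$ one has $f^y=f^{y_1}$, while $\mathcal{Y}_y$ differs from $\mathcal{Y}_{y_1}$ only by a common translation in $\mathfrak{a}_G$, which affects neither $\hat\tau_P$ (its defining linear forms lying in $(\mathfrak{a}^G_P)^*$) nor the weights $u'_Q(\,\cdot\,;\mathcal{Y})$ (which by \ref{prop:c'_P-polynome} depend only on the images $Y_P^G$ of the $Y_P$ in $\mathfrak{a}^G_P$). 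Finally \ref{prop:modification-troncature} applied to $\mathcal{Y}=\mathcal{Y}_y$ gives $J^{T,\bomega}_{\mathfrak{o}}(f;\mathcal{Y}_y)=\sum_{Q\supset P_0}J^{M_Q,T,\bomega}_{\mathfrak{o}}(f^\bomega_{Q;\mathcal{Y}_y})$, and by the definition \eqref{eqn:descente-parabolique-y} one has $f^\bomega_{Q;\mathcal{Y}_y}=f^\bomega_{Q,y}$, which is the asserted formula.

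I expect the only delicate points to be combinatorial: verifying that $\mathcal{Y}_y$ satisfies the axioms of a modification of truncation, and confirming that the Iwasawa cocycle manipulation reproduces exactly the shifted weight occurring in \ref{prop:modification-troncature}, so that it may be invoked verbatim. The bookkeeping with the central subgroup $A_{G,\infty}$ is routine, and once it and the two points above are settled the theorem is a direct consequence of \ref{prop:modification-troncature}.
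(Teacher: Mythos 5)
Your argument is the paper's own proof: the same kernel identity $K^\bomega_{P,\mathfrak{o},f^y}(\delta x,\delta x)=\bomega(y)\,K^\bomega_{P,\mathfrak{o}}(\delta xy^{-1},\delta xy^{-1})$, the same cocycle relation $H_P(\delta xy)=H_P(\delta x)+H_P(k_P(\delta x)y)$ identifying $J^{T,\bomega}_{\mathfrak{o}}(f^y)$ with $\bomega(y)J^{T,\bomega}_{\mathfrak{o}}(f;\mathcal{Y}_y)$, and the same appeal to \ref{prop:modification-troncature}; your added verification that $\mathcal{Y}_y$ is a modification de troncature is correct and merely makes explicit what the paper asserts. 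The one point to watch is your reduction of general $y\in G(\A)$ to $y_1\in G(\A)^1$: since $f^y=f^{y_1}$ and $f^\bomega_{Q,y}=f^\bomega_{Q,y_1}$ but $\bomega(y)=\bomega(y_1)\bomega(a)$ with $\bomega(a)$ not necessarily $1$ on $A_{G,\infty}$ (Lemme \ref{prop:omega-aGaM} only covers $A_{M,\infty}\cap G(\A)^1$), your argument actually yields the factor $\bomega(y_1)$, so the identity as displayed should be read for $y\in G(\A)^1$ — which is the only case used subsequently (e.g.\ \ref{prop:non-invariance-Jomega-Weyl}) and the case in which the change of variables $x\mapsto xy$ on $G(F)\backslash G(\A)^1$ is legitimate.
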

\begin{proof}
  Pour tout sous-groupe parabolique standard $P$, notons $K^\bomega_{P,\mathfrak{o},f^y}$ le noyau associé à $f^y$ au lieu de $f$. Pour tout $\delta \in G(F)$, on vérifie que
  $$ K^\bomega_{P,\mathfrak{o},f^y}(\delta x, \delta x) = \bomega(y) K^\bomega_{P, \mathfrak{o}}(\delta xy^{-1}, \delta xy^{-1}). $$
  D'où
  \begin{align*}
    J_{\mathfrak{o}}^{T,\bomega}(f^y) &= \bomega(y) \int_{G(F) \backslash G(\A)^1} \sum_{P \supset P_0} (-1)^{\dim(A_P/A_G)} \sum_{\delta \in P(F) \backslash G(F)} K^\bomega_{P,\mathfrak{o}}(\delta xy^{-1}, \delta xy^{-1}) \hat{\tau}_P(H_P(\delta x)-T) \dd x \\
    & = \bomega(y) \int_{G(F) \backslash G(\A)^1} \sum_{P \supset P_0} (-1)^{\dim(A_P/A_G)} \sum_{\delta \in P(F) \backslash G(F)} K^\bomega_{P,\mathfrak{o}}(\delta x, \delta x) \hat{\tau}_P(H_P(\delta xy)-T) \dd x.
  \end{align*}

  Comme $H_P(\delta xy) = H_P(\delta x) + H_P(k_P(\delta x)y)$, on voit que $J_\mathfrak{o}^{T,\bomega}(f^y) = \bomega(y) J_{\mathfrak{o}}^{T,\bomega}(f; \mathcal{Y}_y)$. Cela permet de conclure d'après \ref{prop:modification-troncature}.
\end{proof}

\begin{definition}
  Soit $w \in W_0^G$, prenons des représentants $\hat{w} \in G(F)$ et $\tilde{w} \in K$, alors $H_{P_0}(\hat{w}) = H_{M_0}(\hat{w}\tilde{w}^{-1})$; comme $H_{M_0}$ est trivial sur $M_0(F)$, cela ne dépend que de $w$, $M_0$ et $K$. Notons-le $H_{P_0}(w)$ bien qu'il ne dépend pas de $P_0$.

  Dans \cite{Ar81}, Arthur définit un unique point $T_0 \in \mathfrak{a}_0^G$ tel que
  \begin{gather}\label{eqn:T_0}
    H_{P_0}(w^{-1}) = T_0 - w^{-1} T_0, \quad w \in W_0^G.
  \end{gather}
  On l'appelle le paramètre de troncature canonique pour $(G,M_0,K)$. Définissons $J_{\mathfrak{o}}^\bomega := J_{\mathfrak{o}}^{T_0, \bomega}$. Nous allons démontrer que $J_{\mathfrak{o}}^\bomega$ ne dépend pas du choix de $P_0$.
\end{definition}

Notons $K_\text{sc}$ l'image réciproque de $K$ par $\pi: G_\text{SC}(\A) \to G(\A)$.

\begin{proposition}\label{prop:transport-w}
  Soient $L, L' \in \mathcal{L}(M_0)$ et $w \in W_0^G$ avec un représentant $\hat{w} \in G(F)$ tel que $L'=w^{-1}Lw$. Soit $\tilde{w} \in \pi(K_\text{sc})$ un autre représentant, i.e. $\tilde{w}\hat{w}^{-1} \in M_0(\A)$. Pour $f \in C_c^\infty(L(\A)^1)$, posons
  $$ f'(x') := f(\tilde{w} x' \tilde{w}^{-1}), \quad x' \in L'(\A)^1. $$
  Alors
  $$ J^{L',\bomega}_{\mathfrak{o}}(f') = J^{L,\bomega}_{\mathfrak{o}}(f) $$
  où $J^{L,\bomega}_{\mathfrak{o}}$ (resp. $J^{L',\bomega}_{\mathfrak{o}}$) est défini par rapport à $K_L := K \cap L(\A)$ (resp. $K_{L'} := K \cap L'(\A)$) et $R_0 := P_0 \cap L$ (resp. $R'_0 := w^{-1}(P_0 \cap L)w$).
\end{proposition}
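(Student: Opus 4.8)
The plan is to prove this by a change of variables, following exactly how each ingredient of the truncated kernel transforms, and then reducing the whole statement to the transformation behaviour of the canonical truncation parameter $T_0$. Set $m_0 := \tilde{w}\hat{w}^{-1} \in M_0(\A)$. Since $\tilde{w} \in K$ and $\hat{w} \in G(F)$ both lie in $G(\A)^1$, so does $m_0$; and since $\tilde{w} \in \pi(K_\text{sc})$, Proposition~\ref{prop:omega-SC-trivial} gives $\bomega(m_0) = \bomega(\tilde{w}) = 1$. Conjugation by $\hat{w}$ is an $F$-isomorphism $L' \xrightarrow{\sim} L$ taking $(M_0, R'_0)$ to $(M_0, R_0)$, so it induces a bijection between the standard parabolics $R' \supseteq R'_0$ of $L'$ and $R \supseteq R_0$ of $L$, between $R'(F)\backslash L'(F)$ and $R(F)\backslash L(F)$, and between each $M_{R'}(F) \cap \mathfrak{o}'$ and $M_R(F) \cap \mathfrak{o}_L$, where $\mathfrak{o}' \in \mathcal{O}^{L'}$ and $\mathfrak{o}_L := \hat{w}\mathfrak{o}'\hat{w}^{-1} \in \mathcal{O}^L$ correspond — compatibly with the maps to $\mathcal{O}^G$ since $\hat{w} \in G(F)$, so that $\mathfrak{o}' \mapsto \mathfrak{o}$ iff $\mathfrak{o}_L \mapsto \mathfrak{o}$. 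Being $F$-rational, $\hat{w}$ also respects the chosen Haar measures on the $U_R(\A)$ and on the $L(\A)^1$.

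First I would substitute $x' = \hat{w}^{-1}z\hat{w}$ in the integral defining $J^{L',\bomega}_{\mathfrak{o}}(f')$. Because $f'(x') = f(\tilde{w}x'\tilde{w}^{-1}) = f(m_0 z m_0^{-1}) = f^{m_0}(z)$ and $\bomega$ is a character of $G(\A)$ trivial on $G(F)$, the truncated kernel $K^\bomega_{R',\mathfrak{o}',f'}(\delta' x', \delta' x')$ turns into $K^\bomega_{R,\mathfrak{o}_L, f^{m_0}}(\delta_0 z, \delta_0 z)$ (the factors $m_0$ cancel by conjugation-invariance of $\bomega$ and by $\bomega|_{L(F)} = 1$), while an Iwasawa decomposition of $\hat{w}^{-1}g\hat{w}$ relative to $(R'_0, K_{L'})$ is carried by $\hat{w}$ to one of $g$ relative to $(R_0, \hat{w}K_{L'}\hat{w}^{-1})$, whence $H_{R'}(\hat{w}^{-1}g\hat{w}) = w^{-1}H_R^{\hat{w}K_{L'}\hat{w}^{-1}}(g)$ and $\hat\tau_{R'}(H_{R'}(\delta'x') - T) = \hat\tau_R(H_R^{\hat{w}K_{L'}\hat{w}^{-1}}(\delta_0 z) - wT)$. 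Taking $T = T_0^{L'}$ this yields
$$ J^{L',\bomega}_{\mathfrak{o}}(f') = J^{L,\, \hat{w}K_{L'}\hat{w}^{-1},\, wT_0^{L'},\, \bomega}_{\mathfrak{o}}(f^{m_0}), $$
the right side denoting the analogous distribution for $L$ built from $R_0$, the maximal compact $\hat{w}K_{L'}\hat{w}^{-1}$ and the truncation parameter $wT_0^{L'}$. Now $\hat{w}K_{L'}\hat{w}^{-1} = m_0^{-1}K_L m_0$ (because $\tilde{w}K_{L'}\tilde{w}^{-1} = K_L$ and $\tilde{w} = m_0\hat{w}$); and, writing a $\hat{w}K_{L'}\hat{w}^{-1}$-representative of $v \in W_0^L$ as the $\hat{w}$-conjugate of a $K_{L'}$-representative of $w^{-1}vw \in W_0^{L'}$, the defining relation \eqref{eqn:T_0} shows that $wT_0^{L'}$ is precisely the canonical truncation parameter of the datum $(L, M_0, R_0, m_0^{-1}K_L m_0)$.

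It remains to show that the distribution attached to $L$ with its canonical parameter is unchanged when $(K_L, f)$ is replaced by $(m_0^{-1}K_L m_0, f^{m_0})$. Using the relation $H_R^{m_0^{-1}K_L m_0}(g) = H_R^{K_L}(gm_0^{-1}) + H_{M_R}(m_0)$, the change of variables $x \mapsto xm_0$ (legitimate since $m_0 \in L(\A)\cap G(\A)^1$) and $(f^{m_0})^{m_0^{-1}} = f$, I would first obtain, for arbitrary $T$,
$$ J^{L,\, m_0^{-1}K_L m_0,\, T,\, \bomega}_{\mathfrak{o}}(f^{m_0}) = \bomega(m_0)\, J^{L, K_L, T, \bomega}_{\mathfrak{o}}(f;\mathcal{Y}_{m_0}), \qquad \mathcal{Y}_{m_0} := \{\, Y_Q(k) := -H_{M_Q}(m_0)\, \}_Q, $$
a constant modification of truncation, with $\bomega(m_0) = 1$. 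By Propositions~\ref{prop:modification-troncature} and \ref{prop:c'_P-polynome} (the weights $u'_Q(k;\mathcal{Y}_{m_0})$ reduce to the constants $\int_{\mathfrak{a}^L_Q}\Gamma^L_Q(H, -H_{M_Q}(m_0))\,dH$ since $\mathcal{Y}_{m_0}$ is constant) the right side equals $\sum_{Q\supseteq R_0}\bigl(\int_{\mathfrak{a}^L_Q}\Gamma^L_Q(H, -H_{M_Q}(m_0))\,dH\bigr)\,J^{M_Q, T, \bomega}_{\mathfrak{o}}(f^\bomega_Q)$, while Proposition~\ref{prop:dependance-T} gives $J^{L, K_L, T_0^L, \bomega}_{\mathfrak{o}}(f) = \sum_{Q\supseteq R_0}\bigl(\int_{\mathfrak{a}^L_Q}\Gamma^L_Q(H, T_0^L - T)\,dH\bigr)\,J^{M_Q, T, \bomega}_{\mathfrak{o}}(f^\bomega_Q)$ for any sufficiently regular $T$; as all these expressions are polynomial in $T$, I would evaluate both at $T = wT_0^{L'}$. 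They then agree provided $T_0^L - wT_0^{L'}$ and $-H_{M_Q}(m_0)$ have the same image in $\mathfrak{a}^L_Q$ for every $Q$, which follows from \eqref{eqn:T_0} and the computation $H_{R_0}[m_0^{-1}K_L m_0](v) = H_{R_0}[K_L](v) + H_{M_0}(m_0) - v\,H_{M_0}(m_0)$ for $v \in W_0^L$ (a $m_0^{-1}K_L m_0$-representative of $v$ being $m_0^{-1}\tilde{v}m_0$ for a $K_L$-representative $\tilde{v}$, which normalizes $M_0$, while $H_{M_0}$ is a homomorphism on $M_0(\A)$ intertwining the Weyl action); this forces $wT_0^{L'} = T_0^L + (H_{M_0}(m_0))^L$ (the superscript $L$ denoting projection onto $\mathfrak{a}^L_0$), and the claim follows.

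The main obstacle is precisely this last cleanup: one must show that the discrepancy $m_0 = \tilde{w}\hat{w}^{-1}$ between the $K$-representative and the $F$-rational representative of $w$ is exactly absorbed. This is morally built into the definition of $T_0$ — whose defining relation \eqref{eqn:T_0} already mixes $F$- and $K$-representatives — but making it explicit requires the modification-of-truncation calculus, in particular Propositions~\ref{prop:modification-troncature}, \ref{prop:c'_P-polynome} and \ref{prop:dependance-T}. Everything else is the routine verification that conjugation by the $F$-rational element $\hat{w}$ is compatible with the Haar-measure normalisations and with the correspondence $\mathfrak{o}' \leftrightarrow \mathfrak{o}_L$; one could equally run the substitution with $\tilde{w}$ in place of $\hat{w}$, keeping the maximal compact equal to $K_L$ at the price of twisting the domain and the test function by $m_0$, and the two routes meet at the same computation.
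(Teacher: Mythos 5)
Votre démonstration est correcte et suit pour l'essentiel la même stratégie que celle du texte : transport de structure par le représentant rationnel $\hat{w}$, identification $\hat{w}K_{L'}\hat{w}^{-1}=m_0^{-1}K_L m_0$, puis absorption de l'écart $m_0=\tilde{w}\hat{w}^{-1}$ via la décomposition d'Iwasawa, la relation \eqref{eqn:T_0} (qui donne $H_{M_0}(m_0)=wT_0-T_0$) et la polynomialité en $T$. La seule différence est de présentation : vous faites passer l'étape finale par \ref{prop:modification-troncature} et \ref{prop:dependance-T} (en reconnaissant $wT_0^{L'}$ comme paramètre canonique du compact conjugué), là où le texte compare directement les termes de troncature de $J^{L,wT,\bomega}_\mathfrak{o}(f^\circ;K^\circ)$ et de $J^{L,wT-wT_0+T_0,\bomega}_\mathfrak{o}(f;K_L)$ — c'est le même calcul.
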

\begin{proof}
  Le paramètre de troncature canonique pour $L$ (resp. $L'$) s'obtient en projetant $T_0$ via $\mathfrak{a}^G_0 \to \mathfrak{a}^L_0$ (resp. $\mathfrak{a}^G_0 \to \mathfrak{a}^{L'}_0$). Posons
  \begin{align*}
    f^\circ(x) & := f(\tilde{w}\hat{w}^{-1} x \hat{w}\tilde{w}^{-1}), \quad x \in L(\A)^1, \\
    K^\circ & := \hat{w} K_{L'} \hat{w}^{-1} \subset L(\A).
  \end{align*}
  Prenons $T \in (\mathfrak{a}^{L'}_{R'_0})^+$ suffisamment régulier. Par le transport de structure $x \mapsto \hat{w} x \hat{w}^{-1}$, on a
  $$ J^{L',T,\bomega}_\mathfrak{o}(f';K_{L'}) = J^{L,wT,\bomega}_\mathfrak{o}(f^\circ; K^\circ). $$

  Soit $R \supset R_0$, notons $K^\bomega_{R,\mathfrak{o}}$ le noyau associé à $f$ et $K$. En utilisant le fait que $\bomega(\hat{w})=\bomega(\tilde{w})=1$, dont la dernière égalité résulte de \ref{prop:omega-trivial-unip}, l'argument pour \ref{prop:non-variance-Jomega} montre que
  \begin{multline*}
    J^{L,wT,\bomega}_\mathfrak{o}(f^\circ; K^\circ) = \int_{L(F) \backslash L(\A)^1} \sum_{R \supset R_0} (-1)^{\dim(A_R/A_L)} \sum_{\delta \in R(F) \backslash L(F)} \\
    K^\bomega_{R,\mathfrak{o}}(\delta x, \delta x) \hat{\tau}_R(H_R(\delta x \tilde{w}\hat{w}^{-1}; K^\circ)-wT) \dd x,
  \end{multline*}
  où $H_R(\cdot; K^\circ)$ désigne l'application de Harish-Chandra définie par rapport à $K^\circ$.

  D'autre part, considérons $J^{L,wT-wT_0+T_0,\bomega}_\mathfrak{o}(f; K_L)$; il s'exprime de la même manière sauf que le terme $\hat{\tau}_R(\cdots)$ est remplacé par
  $$ \hat{\tau}_R(H_R(\delta x; K)+wT_0-T_0-wT). $$

  Soit $\delta x = umk$ une décomposition d'Iwasawa où $u \in U_R(\A)$, $m \in M_R(\A)$, $k \in K \cap L(\A)$, alors $H_R(\delta x; K)=H_{M_R}(m)$. On a
  \begin{align*}
    \delta x \tilde{w}\hat{w}^{-1} = um \underbrace{\tilde{w}\hat{w}^{-1}}_{\in M_0(\A)} \cdot \underbrace{\hat{w}\tilde{w}^{-1} k\tilde{w}\hat{w}^{-1}}_{\in K^\circ},
  \end{align*}
  donc $H_R(\delta x \tilde{w}\hat{w}^{-1}; K^\circ)=H_{M_R}(m)+H_{M_R}(\tilde{w}\hat{w}^{-1})$. Montrons que $H_{M_R}(\tilde{w}\hat{w}^{-1})=wT_0-T_0$. Il suffit de considérer le cas $M_R=M_0$. Posons $m_0 := \hat{w}\tilde{w}^{-1} \in M_0(\A)$, i.e. $\hat{w} = m_0 \tilde{w}$, alors 
  $$ H_{P_0}(\hat{w})=H_{M_0}(m_0)=-H_{M_0}(\tilde{w}\hat{w}^{-1});$$
  or $H_{P_0}(\hat{w})=T_0-wT_0$ d'après la définition de $T_0$.

  On en déduit que
  $$ J^{L',T,\bomega}_\mathfrak{o}(f';K_{L'}) = J^{L,wT-wT_0+T_0,\bomega}_\mathfrak{o}(f;K_L).$$
  Les deux côtés sont polynomiaux en $T$. On conclut en prenant $T=T_0$.
\end{proof}

\begin{corollary}[cf. {\cite[\S 2]{Ar81}}]\label{prop:independance-P_0}
  Les distributions $J_{\mathfrak{o}}^\bomega$ ne dépendent pas du choix de $P_0 \in \mathcal{P}(M_0)$.
\end{corollary}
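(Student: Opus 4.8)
The plan is to fix $\mathfrak{o}\in\mathcal{O}$ and reduce the independence statement to Proposition~\ref{prop:transport-w} (applied with $L=L'=G$) together with the fact that $J_\mathfrak{o}^\bomega$ is unaffected by conjugating a test function by an element of $\pi(K_\text{sc})$. Write $J^{P_0,\bomega}_\mathfrak{o}$ for the distribution $J^{T_0,\bomega}_\mathfrak{o}$ built from the base point $P_0\in\mathcal{P}(M_0)$, and similarly $J^{P'_0,\bomega}_\mathfrak{o}$ for a second base point $P'_0$. First I would observe that the canonical truncation parameter $T_0$ is literally the \emph{same} for $P_0$ and $P'_0$: indeed $H_{P_0}(w^{-1})$ does not depend on $P_0$, so the defining relations \eqref{eqn:T_0} coincide; it is only the polynomial $T\mapsto J^{T,\bomega}_\mathfrak{o}$ (through the summation range $P\supset P_0$, the functions $\hat\tau_P$, etc.) that changes with the base point, and the whole content is to show its value at $T_0$ does not.

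Since $W_0^G$ acts simply transitively on $\mathcal{P}(M_0)$, I would choose $w\in W_0^G$ with $P'_0=w^{-1}P_0w$, a representative $\hat w\in G(F)$ of $w$, and a representative $\tilde w\in\pi(K_\text{sc})$ with $\tilde w\hat w^{-1}\in M_0(\A)$ (such $\tilde w$ exists, the relative Weyl group being represented in $\pi(K_\text{sc})$). \textbf{Step 1:} apply Proposition~\ref{prop:transport-w} with $L=L'=G$; setting $f'(x):=f(\tilde w x\tilde w^{-1})$ for $f\in C_c^\infty(G(\A)^1)$, it gives
$$ J^{P'_0,\bomega}_\mathfrak{o}(f') = J^{P_0,\bomega}_\mathfrak{o}(f). $$
\textbf{Step 2:} show $J^{P'_0,\bomega}_\mathfrak{o}(f')=J^{P'_0,\bomega}_\mathfrak{o}(f)$, after which the two displays combine to give $J^{P'_0,\bomega}_\mathfrak{o}=J^{P_0,\bomega}_\mathfrak{o}$. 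For Step~2 I would apply Corollary~\ref{prop:dependance-T} with $T_1=T$, relative to the base point $P'_0$: the integral $\int_{\mathfrak{a}^G_Q}\Gamma^G_Q(H,0)\dd H$ equals $1$ for $Q=G$ and $0$ otherwise (by Proposition~\ref{prop:c'_P-polynome} it is the value at $X=0$ of a homogeneous polynomial of degree $\dim(A_Q/A_G)$), so the corollary collapses to $J^{P'_0,\bomega}_\mathfrak{o}(f)=J^{P'_0,\bomega}_\mathfrak{o}(f^\bomega_G)$, where $f^\bomega_G(m)=\int_K\bomega(k)f(k^{-1}mk)\dd k$ is the descent \eqref{eqn:descente-parabolique} for $Q=G$ (using $J^{M_G,\bomega}_\mathfrak{o}=J^{G,\bomega}_\mathfrak{o}$ since $M_G=G$). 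In other words $J^{P'_0,\bomega}_\mathfrak{o}$ factors through the map $f\mapsto f^\bomega_G$. It then remains to check the identity $(f')^\bomega_G=f^\bomega_G$, which is the one-line substitution $k\mapsto k\tilde w$: this is legitimate because $\tilde w\in\pi(K_\text{sc})\subset K$ preserves the Haar measure of $K$, and $\bomega(k\tilde w)=\bomega(k)$ because $\bomega(\tilde w)=1$; the latter holds since $\bomega\circ\pi$ is an automorphic character of $G_\text{SC}$, hence trivial by Proposition~\ref{prop:omega-SC-trivial}, so $\bomega$ vanishes on $\pi(G_\text{SC}(\A))\supset\pi(K_\text{sc})$. (Alternatively, Step~2 follows from Theorem~\ref{prop:non-variance-Jomega} applied with $y\in\{1,\tilde w\}$: for $y\in K$ one has $H_P(ky)=0$, so $\mathcal{Y}_y$ is the trivial modification of truncation, whence that theorem reads $J^{T,\bomega}_\mathfrak{o}(f^y)=\bomega(y)\,J^{G,T,\bomega}_\mathfrak{o}(f^\bomega_G)$, and comparing $y=\tilde w$ with $y=1$ gives $J^{P'_0,\bomega}_\mathfrak{o}(f')=\bomega(\tilde w)J^{G,\bomega}_\mathfrak{o}(f^\bomega_G)=J^{P'_0,\bomega}_\mathfrak{o}(f)$.)

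I do not expect a genuine obstacle here: once Proposition~\ref{prop:transport-w} is in hand the argument is essentially bookkeeping. The one thing requiring care is to keep the data attached to $P_0$ and to $P'_0$ rigorously separated throughout — the ranges of summation, the weight functions $\hat\tau_P$ — while recording that $T_0$ itself does not move between the two. The only place the character $\bomega$ intervenes is via the identity $\bomega(\tilde w)=1$ and the triviality of $\mathcal{Y}_y$ for $y\in K$; this is precisely the point at which one must verify that nothing new happens relative to Arthur's $\bomega=1$ situation (cf. \cite[\S 2]{Ar81}), and it does not.
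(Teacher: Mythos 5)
Your proposal is correct and follows essentially the same route as the paper: reduce via Proposition~\ref{prop:transport-w} (with $L=L'=G$) to showing that conjugating $f$ by the representative $\tilde w\in\pi(K_\text{sc})\subset K$ does not change $J^\bomega_{\mathfrak{o}}$, then observe that for $y\in K$ the modification of truncation is trivial, so only the term $Q=G$ survives and $\bomega(\tilde w)=1$ finishes the argument. The paper phrases your Step 2 directly through Theorem~\ref{prop:non-variance-Jomega} (your parenthetical alternative); your main variant via Corollary~\ref{prop:dependance-T} is an equivalent collapse of the same machinery.
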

\begin{proof}
  Soit $P'_0 \in \mathcal{P}(M_0)$. Prenons $w \in W_0^G$ tel que $P'_0 = w^{-1}P_0 w$ et un représentant $\tilde{w} \in \pi(K_\text{sc})$. Vu \ref{prop:transport-w}, il suffit de montrer que
  $$ J_{\mathfrak{o}}^\bomega(f) = J_{\mathfrak{o}}^\bomega(f^{\tilde{w}^{-1}}). $$
  Soit $Q \supset P_0$. Puisque $\tilde{w}^{-1} \in K$, on a
  $$ u'_Q(\cdot,\tilde{w}^{-1}) = \int_{\mathfrak{a}^G_Q} \Gamma^G_Q(H,0)\dd H = \begin{cases} 1,& \text{si } Q=G \\ 0,& \text{sinon}. \end{cases}. $$
  En rappelant que $\bomega(\tilde{w})=1$, on conclut par \ref{prop:non-variance-Jomega}.
\end{proof}

\begin{corollary}\label{prop:non-invariance-Jomega-Weyl}
  Soit $y \in G(\A)^1$, on a
  $$ J_{\mathfrak{o}}^\bomega(f^y) = \bomega(y) \sum_{Q \in \mathcal{F}(M_0)} |W_0^{M_Q}| |W_0^G|^{-1} J_{\mathfrak{o}}^{M_Q,\bomega}(f_{Q,y}^\bomega). $$
\end{corollary}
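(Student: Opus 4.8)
Le plan est de moyenner le Théorème~\ref{prop:non-variance-Jomega} sur les sous-groupes paraboliques minimaux de $\mathcal{P}(M_0)$, en exploitant l'indépendance en $P_0$ du Corollaire~\ref{prop:independance-P_0}.

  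Rappelons d'abord que $J_\mathfrak{o}^\bomega = J_\mathfrak{o}^{T_0,\bomega}$, où le paramètre de troncature canonique $T_0 \in \mathfrak{a}_0^G$ ne dépend que de $(G,M_0,K)$ et non du choix de $P_0 \in \mathcal{P}(M_0)$ (il est caractérisé par \eqref{eqn:T_0}, et $H_{P_0}(w)$ est indépendant de $P_0$); de plus, comme on l'a vu dans la preuve de \ref{prop:transport-w}, l'image de $T_0$ par $\mathfrak{a}_0^G \to \mathfrak{a}_0^L$ est le paramètre de troncature canonique de $L$, pour tout sous-groupe de Lévi semi-standard $L$. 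On observera aussi que, pour un sous-groupe parabolique semi-standard $Q \in \mathcal{F}(M_0)$ (resp.\ un Lévi semi-standard $L$), la fonction $f_{Q,y}^\bomega$ (resp.\ la distribution $J_\mathfrak{o}^{L,\bomega}$) est intrinsèque, c'est-à-dire indépendante du sous-groupe parabolique minimal de $\mathcal{P}(M_0)$ servant à la définir: pour $f_{Q,y}^\bomega$ cela résulte de \eqref{eqn:c'_P} et de la définition de $u'_Q(\cdot,y)$, où n'interviennent que les $H_R(ky)$ pour $R$ parcourant les paraboliques contenant $Q$; pour $J_\mathfrak{o}^{L,\bomega}$ c'est l'analogue de \ref{prop:independance-P_0} pour $L$.

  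Ensuite, on fixe $w \in W_0^G$ avec un représentant, et l'on applique le Théorème~\ref{prop:non-variance-Jomega} — valable pour n'importe quel sous-groupe parabolique minimal dans $\mathcal{P}(M_0)$ — avec $P_0$ remplacé par $P_0^w := w^{-1}P_0 w$ et $T = T_0$. Les paraboliques standards relativement à $P_0^w$ étant exactement les $w^{-1}Qw$ avec $Q \supset P_0$ standard, et le terme $J_\mathfrak{o}^{M_{w^{-1}Qw}, T_0, \bomega}(f_{w^{-1}Qw, y}^\bomega)$ coïncidant avec $J_\mathfrak{o}^{M_{w^{-1}Qw}, \bomega}(f_{w^{-1}Qw, y}^\bomega)$ d'après l'étape précédente, on obtient
  \[
    J_\mathfrak{o}^\bomega(f^y) = \bomega(y) \sum_{Q \supset P_0} J_\mathfrak{o}^{M_{w^{-1}Qw}, \bomega}(f_{w^{-1}Qw, y}^\bomega),
  \]
  le membre de gauche ne dépendant pas de $w$ d'après \ref{prop:independance-P_0}. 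En sommant sur $w \in W_0^G$ puis en divisant par $|W_0^G|$:
  \[
    J_\mathfrak{o}^\bomega(f^y) = \frac{\bomega(y)}{|W_0^G|} \sum_{w \in W_0^G} \sum_{Q \supset P_0} J_\mathfrak{o}^{M_{w^{-1}Qw}, \bomega}(f_{w^{-1}Qw, y}^\bomega).
  \]

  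Enfin, on effectue le changement d'indice $(w,Q) \mapsto Q' := w^{-1}Qw$. Lorsque $(w,Q)$ parcourt $W_0^G$ et l'ensemble des paraboliques standards, $Q'$ parcourt $\mathcal{F}(M_0)$; pour $Q' \in \mathcal{F}(M_0)$ fixé, $Q$ est nécessairement l'unique parabolique standard dans la $W_0^G$-orbite de $Q'$ (deux paraboliques standards $W_0^G$-conjugués coïncident, et tout parabolique semi-standard est $W_0^G$-conjugué à un parabolique standard), puis $w$ parcourt une classe de $\operatorname{Stab}_{W_0^G}(Q')$, de cardinal $|W_0^{M_{Q'}}|$ — en effet, pour un parabolique standard $Q = M_Q U_Q$ on a $\operatorname{Stab}_{W_0^G}(Q) = W_0^{M_Q}$ (un élément de $W_0^G$ normalise $Q$ si et seulement s'il est représenté dans $N_{M_Q}(A_0)(F)$), et la conjugaison ramène le cas général à celui-ci. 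Ainsi chaque $Q' \in \mathcal{F}(M_0)$ contribue $|W_0^{M_{Q'}}|$ termes égaux, la somme double se réduit à $\sum_{Q \in \mathcal{F}(M_0)} |W_0^{M_Q}| \, J_\mathfrak{o}^{M_Q,\bomega}(f_{Q,y}^\bomega)$, et la dernière égalité affichée donne exactement la formule de l'énoncé.

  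Les points de routine sont le caractère intrinsèque de $f_{Q,y}^\bomega$ et de $J_\mathfrak{o}^{L,\bomega}$ (étape~1) et le comptage orbite/stabilisateur de la dernière étape; le seul endroit un peu subtil est de s'assurer que le paramètre de troncature canonique de $G$ se restreint bien en celui de chaque Lévi semi-standard, afin qu'appliquer \ref{prop:non-variance-Jomega} en $T = T_0$ produise exactement les distributions $J_\mathfrak{o}^{M_Q,\bomega}$ intrinsèques, et non des variantes translatées — mais c'est précisément le fait déjà employé dans la preuve de \ref{prop:transport-w}.
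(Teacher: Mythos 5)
Votre preuve est correcte et repose sur la même idée essentielle que celle du papier : tout déduire de \ref{prop:non-variance-Jomega} par un comptage d'orbites sous $W_0^G$, en utilisant que la fibre de $Q' \mapsto Q$ (parabolique standard associé) a $|W_0^G|\,|W_0^{M_Q}|^{-1}$ éléments. La différence est d'ordre organisationnel : le papier part de la somme sur les $Q \supset P_0$ et transporte chaque terme semi-standard $J_\mathfrak{o}^{M_{Q'},\bomega}(f^\bomega_{Q',y})$ sur le terme standard correspondant au moyen de \ref{prop:transport-w} et de l'identité explicite $u'_{Q'}(\tilde{w}^{-1}k,y)=u'_Q(k,y)$, tandis que vous moyennez l'identité de \ref{prop:non-variance-Jomega} sur les conjugués $w^{-1}P_0 w$ de $P_0$ et invoquez \ref{prop:independance-P_0} (pour $G$ et pour chaque Lévi $M_{Q'}$) afin d'identifier les termes; comme \ref{prop:independance-P_0} est lui-même conséquence de \ref{prop:transport-w}, les deux routes reposent sur le même ingrédient. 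Votre variante évite la vérification ponctuelle de l'identité sur les $u'_Q$, au prix de devoir justifier (ce que vous faites) que $f^\bomega_{Q,y}$ est intrinsèque à $Q$ et que la projection de $T_0$ sur $\mathfrak{a}_0^{M_Q}$ est le paramètre canonique de $M_Q$ — deux points effectivement présents dans le texte (définition de $u'_Q$ via \eqref{eqn:c'_P}, début de la preuve de \ref{prop:transport-w}).
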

\begin{proof}
  Nous allons le déduire de \ref{prop:non-variance-Jomega}. Supposons que $Q' \in \mathcal{F}(M_0)$, alors il existe un unique $Q \supset P_0$ et un $w \in W_0^G$ tel que $Q' = w^{-1}Qw$. De plus, l'application $Q' \mapsto Q$ est à fibres de $|W_0^{M_Q}|^{-1}|W_0^G|$ éléments. Prenons un représentant $\tilde{w} \in \pi(K_\text{sc})$ de $w$. On vérifie que
  $$ \forall k \in K, \quad u'_{Q'}(\tilde{w}^{-1}k,y) = u'_Q(k,y) $$
  (cf. \cite[p.21]{Ar81}), donc $f^\bomega_{Q',y}(\tilde{w}^{-1}x\tilde{w})=f^\bomega_{Q,y}(x)$ pour tout $x \in M_Q(\A)^1$. D'après \ref{prop:transport-w}, $J_\mathfrak{o}^{M_{Q'},\bomega}(f^\bomega_{Q',y}) = J_\mathfrak{o}^{M_Q,\bomega}(f^\bomega_{Q,y})$. Alors \ref{prop:non-variance-Jomega} permet de conclure.
\end{proof}

\paragraph{Dépendance de $K$}
Conservons les conventions du paragraphe précédent. Fixons un autre sous-groupe compact maximal $K_1$ de $G(\A)$ en bonne position relativement de $M_0$ et notons $T_0$ (resp. $T_1$) le paramètre de troncature canonique pour $K$ (resp. $K_1$).

Fixons $P_0 \in \mathcal{P}(M_0)$. Considérons la famille des fonctions continues $K \to \mathfrak{a}_P$ indexée par $Q \in \mathcal{F}(M_0)$
\begin{align*}
  Y_Q(k) & := -H_Q(k; K_1)+T_1-T_0.
\end{align*}
On vérifie que $\mathcal{Y} := (Y_Q)_{Q \supset P_0}$ est une modification de troncature. On définit ainsi
\begin{align}
  u'_Q(k; K_1|K) & := u'_Q(k; \mathcal{Y}), \\
  f_{Q; K_1|K}^\bomega & := f_{Q;\mathcal{Y}}^\bomega.
\end{align}

\begin{lemma}\label{prop:dependance-K-pre}
  Avec les notations précédentes, on a
  \begin{align*}
    J^\bomega_{\mathfrak{o}}(f;K_1) & = \sum_{Q \supset P_0} J_{\mathfrak{o}}^{M_Q,\bomega}(f^\bomega_{Q;K_1|K}).
  \end{align*}
\end{lemma}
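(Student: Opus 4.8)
The plan is to recognise the $K_1$-truncation of the kernel as a truncation modification, in the sense of \ref{prop:modification-troncature}, of the $K$-truncation, and then to invoke that proposition directly.

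First I would fix $P_0 \in \mathcal{P}(M_0)$ once and for all. By \ref{prop:independance-P_0}, applied both to $K$ and to $K_1$, the quantity $J^\bomega_{\mathfrak{o}}(f;K_1)$ may be computed using the canonical truncation parameter $T_1$ for $(G,M_0,K_1)$ together with this same $P_0$; likewise $J^\bomega_{\mathfrak{o}}(f;K)$ is the one attached to $T_0$ and $P_0$. The key observation is that the kernels $K^\bomega_{P,\mathfrak{o}}(x,y)$ entering the truncated expression depend only on $f$ and $\mathfrak{o}$, not on the choice of maximal compact subgroup; hence the sole source of $K_1$-dependence in $J^\bomega_{\mathfrak{o}}(f;K_1)$ is the factor $\hat{\tau}_P(H_P(\delta x;K_1)-T_1)$, where $H_P(\cdot;K_1)$ is the Harish-Chandra map attached to $K_1$.

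Next I would compare the two Harish-Chandra maps. Writing the $K$-Iwasawa decomposition $\delta x = umk$ with $k=k_P(\delta x)\in K$, and then the $K_1$-Iwasawa decomposition of $k$, one obtains the standard identity
$$ H_P(\delta x;K_1) = H_P(\delta x;K) + H_P(k_P(\delta x);K_1). $$
Since $H_P$ vanishes on the compact group $P(\A)\cap K$, the function $k\mapsto H_P(k;K_1)$ descends to $P(\A)\cap K\backslash K$, so $\mathcal{Y}=(Y_Q)_{Q\supset P_0}$ with $Y_Q(k)=-H_Q(k;K_1)+T_1-T_0$ is a well-defined truncation modification, as recorded just before the statement. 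The identity above then reads
$$ \hat{\tau}_P(H_P(\delta x;K_1)-T_1) = \hat{\tau}_P(H_P(\delta x;K)-T_0-Y_P(k_P(\delta x))). $$
Summing over $P$ and over $\delta\in P(F)\backslash G(F)$, and integrating over $G(F)\backslash G(\A)^1$, this shows exactly that $J^\bomega_{\mathfrak{o}}(f;K_1) = J^{T_0,\bomega}_{\mathfrak{o}}(f;\mathcal{Y})$, the right-hand side now being formed relative to $K$ and $P_0$.

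Finally I would apply \ref{prop:modification-troncature} — legitimate for every $T$, in particular $T=T_0$, thanks to \ref{prop:J^T-polynomial} — which gives
$$ J^{T_0,\bomega}_{\mathfrak{o}}(f;\mathcal{Y}) = \sum_{Q\supset P_0} J^{M_Q,T_0,\bomega}_{\mathfrak{o}}(f^\bomega_{Q;\mathcal{Y}}). $$
By the very definition recalled before the lemma, $f^\bomega_{Q;\mathcal{Y}}=f^\bomega_{Q;K_1|K}$; and the projection of $T_0$ to $\mathfrak{a}_0^{M_Q}$ is the canonical truncation parameter for $(M_Q,M_0,K\cap M_Q(\A))$ — the same elementary fact about $T_0$ already used in the proof of \ref{prop:transport-w} — so $J^{M_Q,T_0,\bomega}_{\mathfrak{o}}(f^\bomega_{Q;K_1|K})=J^{M_Q,\bomega}_{\mathfrak{o}}(f^\bomega_{Q;K_1|K})$. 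This yields the asserted formula. There is no genuine obstacle here: the whole content is the bookkeeping identity $J^\bomega_{\mathfrak{o}}(f;K_1)=J^{T_0,\bomega}_{\mathfrak{o}}(f;\mathcal{Y})$, after which \ref{prop:modification-troncature} does all the work; the only point deserving a line of care is the compatibility of canonical truncation parameters under parabolic descent, which is standard (Arthur, \cite{Ar81}).
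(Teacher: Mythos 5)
Your proof is correct and follows essentially the same route as the paper: identify the $K_1$-truncation as the truncation modification $\mathcal{Y}$ of the $K$-truncation via $H_P(\delta x;K_1)=H_P(\delta x)+H_P(k_P(\delta x);K_1)$, then apply \ref{prop:modification-troncature} and match canonical truncation parameters. The only cosmetic difference is that the paper establishes the identity $J^{T+T_1,\bomega}_{\mathfrak o}(f;K_1)=J^{T+T_0,\bomega}_{\mathfrak o}(f;\mathcal{Y})$ for $T$ sufficiently regular and then evaluates polynomials at $T=0$, whereas you invoke \ref{prop:J^T-polynomial} to work directly at $T=T_0$ — the same argument in a slightly different order.
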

\begin{proof}
  Supposons $T \in \mathfrak{a}_0^+$ suffisamment régulier, on a
  \begin{multline*}
    J^{T+T_1}(f;K_1) = \int_{G(F) \backslash G(\A)^1} \sum_{P \supset P_0} (-1)^{\dim (A_P/A_G)} \sum_{\delta \in P(F) \backslash G(F)} \\
    K^\bomega_P(\delta x, \delta x) \hat{\tau}_P(H_P(\delta x; K_1) - T - T_1) \dd x ,
  \end{multline*}
  tandis que $J^{T+T_0}(f)$ est défini de la même façon sauf que le terme $\hat{\tau}_P(\cdots)$ est remplacé par $\hat{\tau}_P(H_P(\delta x) - T - T_0)$. Pour conclure, il suffit de noter
  $$ H_P(\delta x; K_1)-T-T_1 = H_P(\delta x)-T-T_0 + H_P(k_P(\delta x); K_1) + T_0-T_1  $$
  puis appliquer \ref{prop:modification-troncature} et évaluer des polynômes en $T=0$.
\end{proof}

\begin{lemma}
  Soit $w \in W_0^G$. Si $\tilde{w}$ (resp. $\tilde{w}_1$) est un représentant de $w$ dans $K$ (resp. $K_1$), alors $H_{M_0}(\tilde{w}^{-1}\tilde{w}_1)=(w^{-1}-1)(T_0-T_1)$.
\end{lemma}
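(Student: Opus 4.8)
The plan is to reduce the statement to the defining equation \eqref{eqn:T_0} of the truncation parameters by exhibiting $\tilde{w}^{-1}\tilde{w}_1$ as a $\hat{w}$-conjugate of a quotient of two elements of $M_0(\A)$ whose Harish-Chandra images are precisely $H_{P_0}(w)$ computed with respect to $K$ and to $K_1$. First I would fix a representative $\hat{w}\in G(F)$ of $w$ (one exists, e.g. in $N_G(A_0)(F)$, as is already used in the definition preceding \eqref{eqn:T_0}). Since $\tilde{w}\in K$ and $\tilde{w}_1\in K_1$ both normalize $A_0$ and have image $w$ in $W_0^G$, the elements $\hat{w}\tilde{w}^{-1}$ and $\hat{w}\tilde{w}_1^{-1}$ lie in $M_0(\A)$; by the definition recalled before \eqref{eqn:T_0} (and the independence of $H_{P_0}(w)$ of the auxiliary representative), $H_{M_0}(\hat{w}\tilde{w}^{-1})$ equals $H_{P_0}(w)$ computed with $K$, which by \eqref{eqn:T_0} is $T_0-wT_0$, while $H_{M_0}(\hat{w}\tilde{w}_1^{-1})$ equals $H_{P_0}(w)$ computed with $K_1$, which by the analogue of \eqref{eqn:T_0} for $K_1$ is $T_1-wT_1$.

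Next I would use the identity $\hat{w}(\tilde{w}^{-1}\tilde{w}_1)\hat{w}^{-1}=(\hat{w}\tilde{w}^{-1})(\hat{w}\tilde{w}_1^{-1})^{-1}$, valid in $M_0(\A)$, and apply the homomorphism $H_{M_0}\colon M_0(\A)\to\mathfrak{a}_{M_0}$. On the right-hand side one gets $H_{M_0}(\hat{w}\tilde{w}^{-1})-H_{M_0}(\hat{w}\tilde{w}_1^{-1})=(T_0-wT_0)-(T_1-wT_1)=(1-w)(T_0-T_1)$. On the left-hand side, since $\tilde{w}^{-1}\tilde{w}_1\in M_0(\A)$ and conjugation by the representative $\hat{w}$ of $w$ induces, through $H_{M_0}$, the action of $w$ on $\mathfrak{a}_{M_0}$ (i.e. $H_{M_0}(\hat{w}m\hat{w}^{-1})=w\cdot H_{M_0}(m)$, immediate from $\langle\chi,H_{M_0}(x)\rangle=\log|\chi(x)|$), one gets $w\cdot H_{M_0}(\tilde{w}^{-1}\tilde{w}_1)$. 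Equating the two and applying $w^{-1}$ gives $H_{M_0}(\tilde{w}^{-1}\tilde{w}_1)=(w^{-1}-1)(T_0-T_1)$, as claimed.

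There is no real obstacle here; the computation is routine and the only care needed is bookkeeping. One should check that $H_{M_0}(\tilde{w}^{-1}\tilde{w}_1)$ does not depend on the chosen representatives $\tilde{w}\in K$, $\tilde{w}_1\in K_1$: any two choices differ by an element of $K\cap M_0(\A)$ (resp. $K_1\cap M_0(\A)$), which lies in $\Ker(H_{M_0})$ since the continuous image of a compact group in a vector space is trivial; similarly $H_{M_0}(\hat{w}\tilde{w}^{-1})$ is independent of $\hat{w}$, two choices differing by $M_0(F)$, on which $H_{M_0}$ vanishes. The one genuinely convention-sensitive point is that conjugation by $\hat{w}$ realizes $w$ rather than $w^{-1}$ on $\mathfrak{a}_{M_0}$; this is exactly the normalization already implicit in the proof of \ref{prop:transport-w}, where $H_{M_0}(\hat{w}\tilde{w}^{-1})=T_0-wT_0$, so consistency is automatic. (As a sanity check, taking $K_1=K$ forces $T_1=T_0$ and $\tilde{w}^{-1}\tilde{w}_1\in K\cap M_0(\A)$, making both sides zero.)
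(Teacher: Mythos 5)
Your proof is correct and is essentially the paper's own argument: both compute $H_{M_0}(\hat{w}\tilde{w}^{-1})=(1-w)T_0$ and $H_{M_0}(\hat{w}\tilde{w}_1^{-1})=(1-w)T_1$ from \eqref{eqn:T_0}, take the difference inside $M_0(\A)$, and undo the $w$-twist coming from conjugation by $\hat{w}$. The extra remarks on independence of the representatives are harmless but not needed.
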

\begin{proof}
  Prenons un représentant rationnel $\hat{w} \in G(F)$ de $w$. La définition des paramètres $T_0, T_1$ affirme que
  \begin{align*}
    (1-w)T_0 & = H_{P_0}(\hat{w}) = H_{M_0}(\hat{w}\tilde{w}^{-1}), \\
    (1-w)T_1 & = H_{P_0}(\hat{w};K_1) = H_{M_0}(\hat{w}\tilde{w}_1^{-1}).
  \end{align*}
  En prenant la différence, on obtient $(1-w)(T_0-T_1)=H_{M_0}(\tilde{w}_1 \tilde{w}^{-1})$, ce qui est égal à $wH_{M_0}(\tilde{w}^{-1}\tilde{w}_1)$.
\end{proof}

\begin{theorem}\label{prop:dependance-K}
  On a
  $$ J^\bomega_{\mathfrak{o}}(f;K_1) = \sum_{Q \in \mathcal{F}(M_0)} |W_0^{M_Q}| |W_0^G|^{-1} J_{\mathfrak{o}}^{M_Q,\bomega}(f^\bomega_{Q;K_1|K}). $$
\end{theorem}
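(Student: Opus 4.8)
The strategy is to start from Lemma \ref{prop:dependance-K-pre}, which already gives
$$ J^\bomega_{\mathfrak{o}}(f;K_1) = \sum_{Q \supset P_0} J_{\mathfrak{o}}^{M_Q,\bomega}(f^\bomega_{Q;K_1|K}), $$
a sum over \emph{standard} parabolics, and to symmetrize it into a sum over all $Q \in \mathcal{F}(M_0)$ with the combinatorial factor $|W_0^{M_Q}||W_0^G|^{-1}$. This is exactly the passage already performed in \ref{prop:non-invariance-Jomega-Weyl}, so the proof should be a close paraphrase of that one. First I would recall that any $Q' \in \mathcal{F}(M_0)$ can be written $Q' = w^{-1}Qw$ for a unique standard $Q \supset P_0$ and some $w \in W_0^G$, and that the fibers of the map $Q' \mapsto Q$ have exactly $|W_0^{M_Q}|^{-1}|W_0^G|$ elements. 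So it suffices to show that the summand attached to $Q'$ equals the summand attached to $Q$, i.e. that $J^{M_{Q'},\bomega}_{\mathfrak{o}}(f^\bomega_{Q';K_1|K}) = J^{M_Q,\bomega}_{\mathfrak{o}}(f^\bomega_{Q;K_1|K})$.

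The key step is to compare the truncation modifications for $Q$ and $Q'$ under conjugation by a representative $\tilde{w} \in \pi(K_\text{sc})$ of $w$. The modification $\mathcal{Y}$ defining $f^\bomega_{Q;K_1|K}$ is built from $Y_Q(k) = -H_Q(k;K_1) + T_1 - T_0$. I would show that, after transport by $\tilde{w}$, the analogous family for $Q'$ transforms correctly, the crucial point being a Harish-Chandra identity of the form $H_{Q'}(\tilde{w}^{-1}k\tilde{w}; K_1) = w^{-1}H_Q(k;K_1) + (\text{correction})$, where the correction is precisely accounted for by the lemma just proved, $H_{M_0}(\tilde{w}^{-1}\tilde{w}_1) = (w^{-1}-1)(T_0-T_1)$ (applied with $\tilde w_1 \in K_1$ a representative of $w$, and relativized to $M_R$ as in the proof of \ref{prop:transport-w}). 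This gives the equality of weight functions $u'_{Q'}(\tilde{w}^{-1}k; K_1|K) = u'_Q(k; K_1|K)$ for all $k \in K$, whence $f^\bomega_{Q';K_1|K}(\tilde{w}^{-1}x\tilde{w}) = f^\bomega_{Q;K_1|K}(x)$ for $x \in M_Q(\A)^1$. Then \ref{prop:transport-w} — valid because $\bomega(\tilde{w})=1$ by \ref{prop:omega-trivial-unip} and $\bomega(\hat{w})=1$ since $\hat w \in G(F)$ — yields $J_{\mathfrak{o}}^{M_{Q'},\bomega}(f^\bomega_{Q';K_1|K}) = J_{\mathfrak{o}}^{M_Q,\bomega}(f^\bomega_{Q;K_1|K})$, and summing over the $|W_0^{M_Q}|^{-1}|W_0^G|$ elements in each fiber produces the stated formula.

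The main obstacle is the bookkeeping in the Harish-Chandra identity under the $T_0 \to T_1$ shift: one must track simultaneously the change of compact subgroup $K \to K_1$ (which alters $H_Q$) and the conjugation by $\tilde w$ (which is only in $K$ up to an $M_0(\A)$-factor), and verify that the two corrections — the shift $T_1 - T_0$ in $\mathcal{Y}$ and the discrepancy $H_{M_0}(\tilde w^{-1}\tilde w_1)$ between representatives in $K$ and $K_1$ — cancel exactly. This is precisely the content that makes both \ref{prop:transport-w} and the preceding lemma necessary, and once they are invoked the argument is formal. The remainder — counting fibers of $Q' \mapsto Q$ and reindexing the sum — is identical to \cite[p.21]{Ar81} and to the proof of \ref{prop:non-invariance-Jomega-Weyl}, so I would simply refer to it.
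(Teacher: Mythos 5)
Votre démonstration suit exactement la même stratégie que celle du texte : partir de \ref{prop:dependance-K-pre}, puis symétriser la somme sur les paraboliques standards en une somme sur $\mathcal{F}(M_0)$ en montrant, via le transport par $\tilde{w} \in \pi(K_\text{sc})$, le lemme $H_{M_0}(\tilde{w}^{-1}\tilde{w}_1)=(w^{-1}-1)(T_0-T_1)$ et \ref{prop:transport-w}, que $u'_{Q'}(\tilde{w}^{-1}k;K_1|K)=u'_Q(k;K_1|K)$ et donc que les termes associés à $Q'$ et $Q$ coïncident. L'argument est correct et identifie tous les points clés de la preuve du texte.
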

\begin{proof}
  D'abord, supposons que $w \in W_0^G$, $Q' \in \mathcal{F}(M_0)$ sont tels que $Q' = w^{-1}Qw$ avec $Q \supset P_0$. Prenons un représentant $\tilde{w} \in \pi(K_\text{sc})$. D'après \ref{prop:transport-w}, on a
  \begin{gather}\label{eqn:dependance-K-0}
    J^{M_{Q'},\bomega}_\mathfrak{o}(f^\bomega_{Q';K_1|K}) = J^{M_Q,\bomega}_\mathfrak{o}((f^\bomega_{Q';K_1|K})^{\tilde{w}^{-1}}).
  \end{gather}
  Pour tout $m \in M_Q(\A)^1$, on a
  \begin{align}
    \label{eqn:dependance-K-1} f^\bomega_{Q';K_1|K}(\tilde{w}^{-1}m\tilde{w}) & = \delta_{Q'}(\tilde{w}^{-1}m\tilde{w})^{\frac{1}{2}} \int_K \int_{U_{Q'}(\A)} \bomega(k) f(k^{-1}\tilde{w}^{-1}m\tilde{w}u'k) u'_{Q'}(k; K_1|K) \dd u' \dd k \\
    \label{eqn:dependance-K-2} & = \delta_Q(m)^{\frac{1}{2}} \int_K \int_{U_Q(\A)} \bomega(k) f(k^{-1}muk) u'_{Q'}(\tilde{w}^{-1}k; K_1|K).
  \end{align}
  Écrivons $k=qk_1$ avec $q \in Q(\A)$ et $k_1 \in K_1$, alors $H_Q(k;K_1)=H_Q(q)$. D'autre part,
  $$ \tilde{w}^{-1}k = \underbrace{\tilde{w}^{-1}\tilde{w}}_{\in Q'(\A)} \cdot \underbrace{\tilde{w}^{-1}\tilde{w}_1}_{\in M_0(\A)} \cdot \underbrace{\tilde{w}_1^{-1}k_1}_{\in K_1} $$
  entraîne que
  \begin{align*}
    -wH_{Q'}(\tilde{w}^{-1}k; K_1) &= -w( w^{-1}H_Q(k;K_1)+H_{M_{Q}'}(\tilde{w}^{-1}\tilde{w}_1)) \\
    & = -H_Q(k;K_1) - (1-w)(T_0-T_1),\\
    w(-H_{Q'}(\tilde{w}^{-1}k; K_1)+T_1-T_0) & = -H_Q(k;K_1)+T_1-T_0.
  \end{align*}
  d'après le lemme précédent. D'où $u'_{Q'}(\tilde{w}^{-1}k; K_1|K) = u_Q(k; K_1|K)$, donc
  \begin{gather}\label{eqn:dependance-K-3}
    (f^\bomega_{Q';K_1|K})^{\tilde{w}^{-1}} = f^\bomega_{Q;K_1|K}.
  \end{gather}

  Maintenant on fait varier $Q$ et $w$. Les équations \eqref{eqn:dependance-K-1}-\eqref{eqn:dependance-K-3} entraînent que
  \begin{gather*}
    \sum_{Q' \in \mathcal{F}(M_0)} |W_0^{M_{Q'}}| |W_0^G|^{-1} J_{\mathfrak{o}}^{M_{Q'},\bomega}(f^\bomega_{Q';K_1|K}) =
    \sum_{Q \supset P_0} J^{M_Q, \bomega}_\mathfrak{o}(f^\bomega_{Q;K_1|K}).
  \end{gather*}

  Vu \ref{prop:dependance-K-pre}, cela achève la démonstration.
\end{proof}

\subsection{Intégrales orbitales pondérées avec caractère}\label{sec:int-orb-omega}
Fixons un ensemble fini $S$ de places de $F$. Décomposons les objets en question comme $K = K_S \times K^S$, $G(\A) = G(F_S) \times G(F^S)$, $M(\A) = M(F_S) \times M(F^S)$, etc. Notons la restriction de $\bomega$ sur $M(F_S)$, où $M$ est un sous-groupe de Lévi quelconque de $G$, par le même symbole $\bomega$. Fixons des mesures sur $G(F_S)$ et sur les $M(F_S)$ selon \S\ref{sec:mesure}.

\begin{remark}
  Bien que ces objets-là sont supposés d'origine globale, ici il ne s'agit que d'une étude locale. Par exemple, la seule propriété de $\bomega$ qui interviendra est qu'il est un caractère unitaire de $G(F_S)$; il existe aussi des versions locales de \ref{prop:omega-SC-trivial} et \ref{prop:omega-trivial-unip}.
\end{remark}

\paragraph{Intégrales orbitales avec caractère}
Soient $M$ un sous-groupe de Lévi de $G$, $f \in  C_c^\infty(M(F_S))$ et $y \in M(F_S)$. Posons toujours\index[iFT1]{$f^y$}
$$ f^y(x) = f(yxy^{-1}), \quad x \in M(F_S). $$

Soit $D$ une distribution sur $M(F_S)$, posons\index[iFT1]{${}^y D$}
$$ {}^y D: f \mapsto \angles{D, f^y}, \quad f \in C_c^\infty(M(F_S)) $$
Cela définit une action à gauche (resp. à droite) de $M(F_S)$ sur l'espace des distributions (resp. des fonctions) sur $M(F_S)$.

\begin{definition}\index[iFT1]{$\bomega$-équivariante}
  Une fonction $f$ (resp. distribution $D$) est dite $\bomega$-équivariante si $f^y = \bomega(y)f$ (resp. ${}^y D = \bomega(y)D$) pour tout $y$.
\end{definition}
Par exemple, une fonction $f$ localement intégrable est $\bomega$-équivariante si et seulement la distribution $\phi \mapsto \int_{M(F_S)} f(x)\phi(x) \dd x $ l'est. Nous nous intéressons aux distributions $\bomega$-équivariantes.

\paragraph{Conventions sur la mesure}
On considère les paires $(\mathcal{O},\mu)$, où
\begin{itemize}
  \item $\mathcal{O}$ est une classe de conjugaison dans $M(F_S)$,
  \item $\mu$ est une mesure de Radon complexe non triviale sur $\mathcal{O}$ qui est $\bomega$-équivariante; autrement dit $\mu(y^{-1}xy) =\bomega(y)\mu(x)$ pour tout $x \in \mathcal{O}$ et tout $y \in M(F_S)$.
\end{itemize}
Le groupe $M(F_S)$ opère sur ces paires par conjugaison; la conjugaison ne change pas $\mathcal{O}$ mais elle transporte $\mu$. On écrit $(\mathcal{O}_1, \mu_1) \sim (\mathcal{O}_2, \mu_2)$ si $\mathcal{O}_1=\mathcal{O}_2$. Notons
\begin{align*}
  \dot{\Gamma}(M(F_S))^\bomega & := \{(\mathcal{O},\mu) \},\\
  \Gamma(M(F_S))^\bomega & := \{(\mathcal{O}, \mu)\}/\sim .
\end{align*}
Alors $\dot{\Gamma}(M(F_S))^\bomega \to \Gamma(M(F_S))^\bomega$ est un $\C^\times$-torseur, ce qui permet de définir $\dot{\gamma}/\dot{\eta} \in \C^\times$ si $\dot{\gamma}$ et $\dot{\eta}$ ont la même classe de conjugaison sous-jacente.\index[iFT1]{$\dot{\Gamma}(M(F_S))^\bomega, \Gamma(M(F_S))^\bomega$}

\begin{definition}\index[iFT1]{$\bomega$-bon}
  Une classe de conjugaison $\mathcal{O}$ dans $M(F_S)$ est dite $\bomega$-bonne si elle admet une mesure de Radon $\bomega$-équivariante comme ci-dessus. Autrement dit, $\mathcal{O}$ est bonne si pour tout $\gamma \in \mathcal{O}$, on a $\bomega|_{M^\gamma(F_S)}=1$. On dit que $\gamma \in M(F_S)$ est $\bomega$-bon si sa classe de conjugaison l'est.
\end{definition}

Nous utilisons les symboles pointés pour désigner un élément dans $\dot{\Gamma}(M(F_S))^\bomega$, eg. $\dot{\gamma}$; la classe de conjugaison sous-jacente est notée $\Supp(\dot{\gamma})$.

Une paire $\dot{\gamma} = (\mathcal{O},\mu)$ donne naissance à l'intégrale orbitale
\begin{gather}
  J_M^\bomega(\dot{\gamma}, f) := |D^M(\gamma)|^{\frac{1}{2}} \int_{\mathcal{O}} f \dd \mu, \quad f \in C_c^\infty(M(F_S))
\end{gather}
avec $\gamma \in \mathcal{O}$ quelconque, où $D^M$ est le discriminant de Weyl (voir \ref{def:S-adm}). Pour montrer qu'elle converge, il suffit de remplacer $\mu$ par $|\mu|$. On obtient ainsi une mesure de Radon invariante sur $\mathcal{O}$, ce qui permet d'appliquer le résultat de Rao \cite{Rao72}. On vérifie que, pour tout $y \in M(F_S)$
\begin{gather}
  J_M^\bomega(y\dot{\gamma}y^{-1}, f) = J_M^\bomega(\gamma, f^y) = \bomega(y) J_M^\bomega(\dot{\gamma}, f).
\end{gather}
Cela permet d'immerger $\dot{\Gamma}(M(F_S))^\bomega$ dans l'espace de distributions $\bomega$-équivariantes.

Donnons-en une construction directe. Soit $\gamma \in M(F_S)$ bon et notons $\mathcal{O}$ sa classe de conjugaison. Fixons une mesure invariante sur $M_\gamma(F_S) \backslash M(F_S)$. Alors on peut choisir une unique mesure complexe $\mu[\gamma]$ sur $\mathcal{O}$ de sorte que
\begin{gather}\label{eqn:int-omega-naive}
  J_M^\bomega((\mathcal{O},\mu[\gamma]), f) = \int_{M_\gamma(F_S) \backslash M(F_S)} \bomega(x) f(x^{-1}\gamma x) \dd x.
\end{gather}
Il serait tentant de l'écrire comme $J_M^\bomega(\gamma,f)$; cependant il faut prendre garde qu'il dépend du choix de $\gamma$ dans sa classe de conjugaison.

\paragraph{Induction de classes unipotentes}\index[iFT1]{induction de Lusztig-Spaltenstein}
Supposons que $\gamma \in M(F_S)$ est $\bomega$-bon; soit $\dot{\gamma} \in \dot{\Gamma}(M(F_S))^\bomega$ tel que $\gamma \in \Supp(\dot{\gamma})$.

Supposons pour l'instant que $M_\gamma=G_\gamma$. On peut regarder $\dot{\gamma}$ comme un élément de $\dot{\Gamma}(G(F_S))^\bomega \sqcup \{0\}$ (comme distributions sur $G(F_S)$) en choisissant l'unique mesure telle que
\begin{gather}\label{eqn:de-M-a-G}
  J_G^\bomega(\dot{\gamma}, f) = \int_{M(F_S) \backslash G(F_S)} \bomega(x) J_M^\bomega(\dot{\gamma}, f^{x^{-1}}) \dd x, \quad f \in C_c^\infty(G(F_S)).
\end{gather}
Si l'on fixe des choix comme dans \eqref{eqn:int-omega-naive}, c'est tout simplement
$$ |D^M(\gamma)|^{\frac{1}{2}} \int_{M_\gamma(F_S) \backslash G(F_S)} \bomega(x) f(x^{-1}\gamma x) \dd x;$$
et on a $\dot{\gamma}=0$ dans $G$ si et seulement si $\gamma$ n'est pas $\bomega$-bon dans $G(F_S)$.

En général, notons $A_{M,\text{reg}}$ l'ouvert dense de $A_M$ des éléments $a$ tels que
\begin{eqnarray*}
  \prod_{\beta \in \Sigma_P^{\text{red}}} (\beta(a) - \beta(a)^{-1}) \text{ soit inversible }, \quad  P \in \mathcal{P}(M);
\end{eqnarray*}
alors pour $a \in A_{M,\text{reg}}(F_S)$ en position générale, on a $M_\gamma = M_{a\gamma} = G_{a\gamma}$. Notons $a\dot{\gamma} \in \dot{\Gamma}(M(F_S))^\bomega$ l'élément obtenu via le transport de structure. D'après \eqref{eqn:de-M-a-G}, on regarde $a\dot{\gamma}$ comme un élément de $\dot{\Gamma}(G(F_S))^\bomega$.

Soit maintenant $\gamma \in M(F_S)$ unipotent. Lusztig et Spaltenstein \cite{LS79} ont défini une classe de conjugaison géométrique unipotente $\gamma^G$ dans $G(F_S)$. C'est une réunion finie de classes de conjugaison dans $G(F_S)$, disons $\gamma^G = \sqcup_{i \in I} \gamma^G_i$. Notons $I_0$ l'ensemble des $i \in I$ tels que $\gamma^G_i$ est $\bomega$-bon.

\begin{lemma}[cf. {\cite[(6.6)]{Ar88LB}}]\label{prop:induction-omega}
  Il existe des uniques mesures de Radon $\bomega$-équivariantes non triviales sur  $\{\gamma^G_i\}_{i \in I_0}$ telles que si l'on note
  $$ J_G^\bomega(\dot{\gamma}^G, \cdot) := \sum_{i \in I_0} J_G^\bomega(\dot{\gamma}^G_i, \cdot) $$
  alors
  $$ J_G^\bomega(\dot{\gamma}^G, \cdot) = \lim_{\substack{a \to 1 \\ a \in A_{M,\text{reg}}(F_S)}} J_G^\bomega(a\dot{\gamma}, \cdot) $$
  où les $a$ dans la limite sont supposés en position générale de sorte que $M_{a\gamma}=G_{a\gamma}$.
\end{lemma}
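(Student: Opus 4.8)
The plan is to adapt Arthur's proof of \cite[(6.6)]{Ar88LB}, carrying the character $\bomega$ through each step and checking that it intervenes only harmlessly. Write $\dot\gamma=(\mathcal O,\mu)$ with $\mathcal O$ the conjugacy class of the unipotent element $\gamma$, and fix $f\in C_c^\infty(G(F_S))$. For $a\in A_{M,\text{reg}}(F_S)$ in general position one has $Z_G(a)=M$, hence $G^{a\gamma}=M^\gamma$ and $G_{a\gamma}=M_\gamma$; in particular $a\gamma$ is $\bomega$-bon in $G(F_S)$ as soon as $\gamma$ is $\bomega$-bon in $M(F_S)$, so $a\dot\gamma$ genuinely defines an element of $\dot\Gamma(G(F_S))^\bomega$ via \eqref{eqn:de-M-a-G}. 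After the normalization \eqref{eqn:int-omega-naive}, and using that $D^M(a\gamma)=D^M(\gamma)$ ($a$ being central in $M$), it is represented by the integral
$$ J_G^\bomega(a\dot\gamma,f)=|D^M(\gamma)|^{\frac12}\int_{M_\gamma(F_S)\backslash G(F_S)}\bomega(x)\,f(x^{-1}a\gamma x)\,dx, $$
so the only $a$-dependence sits in $f(x^{-1}a\gamma x)$ and the difficulty is entirely in the non-compactness of the domain as $a\to 1$.

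First I would establish convergence. Since $|\bomega(x)|=1$, the integrand is dominated in absolute value by the one of the case $\bomega=1$, so Arthur's estimates in \cite[\S 6]{Ar88LB} — the uniform domination of the integral over $M_\gamma(F_S)\backslash G(F_S)$ for $a$ near $1$, obtained by semisimple descent to $G_a=M$ and the asymptotics of orbital integrals — apply verbatim. This yields the distribution $T:=\lim_{a\to 1}J_G^\bomega(a\dot\gamma,\cdot)$, which is $\bomega$-equivariant, being a limit of $\bomega$-equivariant distributions, and supported on the closure $\overline{\gamma^G}$ of the induced class.

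Next I would identify $T$ orbit by orbit. By Arthur's germ argument — the homogeneity of the Shalika germs about the semisimple part of $a\gamma$, which forces the leading term to be carried by the maximal-dimensional orbits — one writes $T=\sum_{i\in I}T_i$, where $T_i$ is a $\bomega$-equivariant distribution supported on the single conjugacy class $\gamma^G_i$, with no contribution from strictly smaller orbits; the weight $\bomega(x)$, of modulus $1$, touches neither these homogeneity degrees nor the convergence estimates. On a single conjugacy class $\gamma^G_i$ the space of $\bomega$-equivariant distributions has dimension at most $1$, and is nonzero exactly when $\gamma^G_i$ is $\bomega$-bonne, i.e. $\bomega|_{G^{\delta_i}(F_S)}=1$, in which case it is spanned by $J_G^\bomega(\dot\gamma^G_i,\cdot)$ for a choice of nontrivial $\bomega$-equivariant Radon measure. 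Hence $T_i=0$ for $i\notin I_0$, while for each $i\in I_0$ there is a unique such measure $\mu_i$ on $\gamma^G_i$ with $T_i=J_G^\bomega((\gamma^G_i,\mu_i),\cdot)$. Setting $J_G^\bomega(\dot\gamma^G,\cdot):=\sum_{i\in I_0}J_G^\bomega(\dot\gamma^G_i,\cdot)$ gives the asserted identity $J_G^\bomega(\dot\gamma^G,\cdot)=T$. Uniqueness of the $\mu_i$ is then immediate: the $J_G^\bomega(\dot\gamma^G_i,\cdot)$, $i\in I_0$, have pairwise disjoint supports, hence are linearly independent, so $T$ determines each $T_i$ and $T_i$ determines $\mu_i$.

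The hard part will be the germ/homogeneity step: ruling out any orbit of dimension strictly smaller than the $\gamma^G_i$ in the limit. This is precisely the substantial point in Arthur's treatment, and I would import it from \cite{Ar88LB}, the sole modification being the observation that twisting the orbital integrals by the unitary character $\bomega$ alters none of the relevant homogeneity degrees nor any of the domination estimates; the character re-enters only at the end, through the condition $\bomega|_{G^{\delta_i}(F_S)}=1$ that singles out $I_0$ and pins down the equivariant measures.
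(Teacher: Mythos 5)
Your proposal is correct and follows the same route as the paper, whose entire proof is the one-line remark that Arthur's argument for $\bomega=1$ in \cite{Ar88LB} goes through verbatim once one works with $\bomega$-équivariant measures on the conjugacy classes. You simply spell out the details the paper leaves implicit (the unitarity of $\bomega$ preserving all domination and homogeneity estimates, and the $\bomega$-bonne condition singling out $I_0$), so there is nothing methodologically different to compare.
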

\begin{proof}
  Le cas $\bomega=1$ est démontré dans \cite{Ar88LB}. La même démonstration marche si l'on utilise les mesures $\bomega$-équivariantes sur les classes de conjugaison.
\end{proof}
Cela étant, on peut définir $\dot{\gamma}^G$ comme une combinaison linéaire des éléments dans $\dot{\Gamma}(G(F_S))^\bomega$.

\paragraph{Intégrales orbitales pondérées}\index[iFT1]{$J^\bomega_M(\dot{\gamma}, \cdot)$}
Soit $M$ un sous-groupe de Lévi de $G$. Soient $\gamma \in M(F_S)$ bon et $\mathcal{O}$ sa classe de conjugaison, prenons une paire $\dot{\gamma}=(\mathcal{O},\mu) \in \dot{\Gamma}(M(F_S))^\bomega$.

\begin{definition}
  Supposons que $M_\gamma = G_\gamma$. Si $\gamma$ n'est pas $\bomega$-bon dans $G(F_S)$, posons
  $$ J^\bomega_M(\dot{\gamma}, \cdot)=0;$$
  sinon, $\dot{\gamma}$ induit une paire $(\mathcal{O}', \mu') \in \dot{\Gamma}(G(F_S))^\bomega$ via \eqref{eqn:de-M-a-G}. Arthur a défini une $(G,M)$-famille\index[iFT1]{$v_P, v_M$}
  $$ v_P(\lambda, x) := e^{-\angles{\lambda, H_P(x)}}, \quad P \in \mathcal{P}(M), \lambda \in i\mathfrak{a}_M^* ; $$
  associée à l'ensemble $(G,M)$-orthogonal positif $Y_P(x) = -H_P(x)$. Notons $v_M(x)$ la fonction associée; elle est une fonction sur $M(F_S) \backslash G(F_S)/K_S$. Pour tout $t \in \mathcal{O}'$, écrivons $t = x^{-1}\gamma x$ et définissons une nouvelle mesure en posant $\mu'_M(t) = v_M(x) \mu'(t)$ (avec abus de notations); cela ne dépend pas du choix de $x$.

  Pour $f \in C_c^\infty(G(F_S))$, posons
  $$ J^\bomega_M(\dot{\gamma}, f) := |D^G(\gamma)|^{\frac{1}{2}} \int_{\mathcal{O}'} f \dd\mu'_M .$$
\end{definition}
La convergence découle en remplaçant $\mu'_M$ par $|\mu'_M|$, ce qui nous ramène au cas usuel $\bomega=1$. Si l'on fixe des choix comme dans \eqref{eqn:int-omega-naive}, c'est tout simplement
$$ |D^G(\gamma)|^{\frac{1}{2}} \int_{G_\gamma(F_S) \backslash G(F_S)} \bomega(x) f(x^{-1}\gamma x) v_M(x) \dd x .$$

Revenons au cas général. Soit $L \in \mathcal{L}(M)$, Arthur définit un facteur $r^L_M(\gamma,a)$ pour tous $\gamma \in M(F_S)$, $a \in A_{M,\text{reg}}(F_S)$ (\cite[\S 5]{Ar88LB}), par lequel est définie l'intégrale orbitale pondérée générale. Ce facteur ne dépend que de $L,M,a$ et la classe de conjugaison de $\gamma$ dans $M$. Dans le cas $M_\gamma = G_\gamma$, on a
$$r^L_M(\gamma,a)= \begin{cases} 1, & \text{si } L = M, \\ 0, & \text{sinon}. \end{cases} $$
Si $a \in A_{M,\text{reg}}(F_S)$ est en position générale, alors $M_\gamma = M_{a\gamma} = G_{a\gamma}$ et on sait définir $a\dot{\gamma} \in \dot{\Gamma}(G(F_S))^\bomega$ à l'aide de \eqref{eqn:de-M-a-G}.

\begin{theorem}[cf. {\cite[5.2]{Ar88LB}}]
  Pour tout $f \in C_c^\infty(G(F_S))$, la limite
  $$ J_M^\bomega(\dot{\gamma}, f) := \lim_{\substack{a \to 1 \\ a \in A_{M,\text{reg}}(F_S)}} \sum_{L \in \mathcal{L}(M)} r^L_M(\gamma,a) J_L^\bomega(a\dot{\gamma}, f) $$
  existe, où les $a$ dans la limite sont supposés en position générale de sorte que $M_{a\gamma}=G_{a\gamma}$. Si $M_\gamma = G_\gamma$, elle coïncide avec la définition \ref{def:int-ponderee-nr}. On a
  $$ \forall y \in M(F_S), \quad J_M^\bomega(y \dot{\gamma} y^{-1}, f) = \bomega(y) J_M^\bomega(\dot{\gamma}, f). $$
\end{theorem}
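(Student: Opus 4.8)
The statement is essentially Arthur's convergence-and-limit lemma \cite[5.2]{Ar88LB} adapted to the twisted setting. My plan is to follow Arthur's argument verbatim, replacing invariant measures on conjugacy classes by $\bomega$-equivariant ones and tracking the extra character factors. The whole construction is local at $S$, so the only property of $\bomega$ used is that it is a unitary character of $G(F_S)$.

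First I would reduce the convergence question to the untwisted case. Since $|\bomega|=1$, replacing the $\bomega$-equivariant measure $a\dot\gamma$ by its absolute value $|a\dot\gamma|$ (an invariant Radon measure on the conjugacy class of $a\gamma$) dominates each summand $r^L_M(\gamma,a) J_L^\bomega(a\dot\gamma,f)$ by $|r^L_M(\gamma,a)|\,J_L^{\text{inv}}(|a\dot\gamma|,|f|)$; in the case $M_{a\gamma}=G_{a\gamma}$ this is exactly the expression Arthur controls, so his estimates on the $r^L_M$-factors and on the weight functions $v_L$ apply unchanged and give both the existence of the limit and local boundedness in $f$. The point is that the combinatorial identities among the $r^L_M(\gamma,a)$ and the $(G,L)$-families $v_L$ are identities of scalars and of $(G,L)$-families respectively — they do not see the twisting measure at all — so once convergence is granted, the limit defining $J_M^\bomega(\dot\gamma,f)$ inherits all of Arthur's formal properties. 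To see that it agrees with the direct definition \ref{def:int-ponderee-nr} when $M_\gamma=G_\gamma$, note that in that case $r^L_M(\gamma,a)=\delta_{L,M}$ and, by the construction of $a\dot\gamma$ via \eqref{eqn:de-M-a-G}, $J_M^\bomega(a\dot\gamma,f)\to J_M^\bomega(\dot\gamma,f)$ as $a\to 1$; this is the twisted analogue of the continuity statement used by Arthur, and it follows from \ref{prop:induction-omega} applied with the trivial unipotent class (or directly from dominated convergence using the explicit formula $|D^G(\gamma)|^{1/2}\int_{G_\gamma(F_S)\backslash G(F_S)}\bomega(x)f(x^{-1}\gamma x)v_M(x)\dd x$, whose integrand depends continuously on $a$).

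For the equivariance $J_M^\bomega(y\dot\gamma y^{-1},f)=\bomega(y)J_M^\bomega(\dot\gamma,f)$, I would argue termwise in the limit. Conjugating $\dot\gamma$ by $y\in M(F_S)$ conjugates $a\dot\gamma$ by $y$ as well (the element $a\in A_M$ is central in $M$, so $y(a\gamma)y^{-1}=a(y\gamma y^{-1})$, and the transport of the $\bomega$-equivariant measure produces precisely the factor $\bomega(y)$). The factors $r^L_M(\gamma,a)$ depend only on the $M(F_S)$-conjugacy class of $\gamma$, hence are unchanged; and each $J_L^\bomega(y\,a\dot\gamma\,y^{-1},f)=\bomega(y)J_L^\bomega(a\dot\gamma,f)$ by the equivariance already recorded for the weighted orbital integrals in the case $M_\gamma=G_\gamma$ (which is immediate from the integral formula, since substituting $x\mapsto xy$ introduces $\bomega(y)$ and leaves $v_L$ invariant because $v_L$ is a function on $L(F_S)\backslash G(F_S)/K_S$ and $y\in M(F_S)\subset L(F_S)$). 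Passing to the limit $a\to 1$ then yields the claim.

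The main obstacle is not any single step but rather the bookkeeping: one must check that every combinatorial ingredient Arthur uses — the splitting formulas for $(G,L)$-families, the descent identities for the $r^L_M$, and the asymptotic expansions near $a=1$ — remains valid when the scalar-valued orbital integrals are replaced by their $\bomega$-twisted counterparts. Since the twisting only multiplies integrands by a bounded locally constant (or, archimedean, smooth unitary) character that is invariant under the relevant conjugations, none of these identities is affected; the only genuinely new input is the existence of $\bomega$-equivariant measures, i.e. the $\bomega$-bonté hypothesis, which is exactly what is built into the definition of $\dot\Gamma(M(F_S))^\bomega$ and into \ref{prop:induction-omega}. Thus the proof reduces to citing \cite[5.2]{Ar88LB} with these replacements, and I would write it as such.
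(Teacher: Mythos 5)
Your proposal is correct and follows essentially the same route as the paper: both defer the existence of the limit to Arthur's \cite[5.2]{Ar88LB}, observing that twisting the invariant measures on the orbits (and on $G(F_S)$, $K_S$) by the unitary character $\bomega$ does not affect any of Arthur's estimates or the combinatorial identities, and both obtain the equivariance from the case $M_\gamma=G_\gamma$ together with the definition of the limit. The only caution is that your ``domination by $|a\dot\gamma|$'' step only yields convergence of each integral for fixed $a$, not the existence of the limit as $a\to 1$, which genuinely requires Arthur's asymptotic analysis — but you correctly defer that part to him, exactly as the paper does.
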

\begin{proof}
  Les termes à droite sont bien définis. La démonstration de l'existence de la limite est pareille que celle dans \cite{Ar88LB}: il suffit de tordre les mesures invariantes sur $G(F_S)$, $K_S$ ou sur les orbites par $\bomega$, et on vérifie que cela n'affecte pas les démonstrations car $\bomega$ est unitaire. L'assertion sur l'équivariance est claire si $M_\gamma = G_\gamma$; le cas général s'en suit par définition.
\end{proof}

Lorsqu'une ambiguïté sur $G$ sera à craindre, nous noterons les objets par $J^{G,\bomega}_M(\dot{\gamma},f)$, etc.

\begin{proposition}[cf. {\cite[6.2]{Ar88LB}}]\label{prop:int-ponderee-abscont}
  Soit $\dot{u} \in \dot{\Gamma}(M(F_S))^\bomega$ supporté sur une classe de conjugaison unipotente. Alors $f \mapsto J_M^\bomega(\dot{u}, f)$ définie une mesure complexe $\bomega$-équivariante sur l'induite $u^G$ qui est absolument continue par rapport à la mesure définie dans \ref{prop:induction-omega}.
\end{proposition}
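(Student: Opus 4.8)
The plan is to reduce to the case $\bomega=1$, which is \cite[6.2]{Ar88LB}, exploiting that $\bomega$ is unitary; this is the same principle already invoked for \ref{prop:induction-omega}.

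First I would unwind the definition of $J_M^\bomega(\dot{u},\cdot)$. For $a\in A_{M,\mathrm{reg}}(F_S)$ generic and close to $1$ one has $Z_G(a)=M$, hence $G_{au}=M_{au}=M_u$; so $a\dot{u}$, viewed via \eqref{eqn:de-M-a-G} in $\dot{\Gamma}(G(F_S))^\bomega$, is supported on the single $G(F_S)$-conjugacy class $\mathcal{O}_{au}$ of $au$, and for every $L\in\mathcal{L}(M)$,
$$ J_L^\bomega(a\dot{u},f)=\int_{M_u(F_S)\backslash G(F_S)}\bomega(x)\,\Phi_{L,a,f}(x)\dd x, $$
where $\Phi_{L,a,f}$ is the integrand occurring for the trivial character (built from $f$, the weight $v_L$, and Weyl discriminants, not involving $\bomega$); it is left $M_u(F_S)$-invariant.

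Since $u$ is $\bomega$-good in $M$ we have $\bomega|_{M_u(F_S)}=1$, so $\bomega$ descends to a continuous function of modulus $1$ on $M_u(F_S)\backslash G(F_S)$, and both this function and the identification of $\mathcal{O}_{au}$ with $M_u(F_S)\backslash G(F_S)$ are independent of $a$. Writing $\mu_{L,a}$ (resp.\ $\mu_{L,a}^\flat$) for the complex measure on that space defining $J_L^\bomega(a\dot{u},\cdot)$ (resp.\ the corresponding object for the trivial character), we get the identity $\dd\mu_{L,a}=\bomega\,\dd\mu_{L,a}^\flat$, hence
$$ \sum_{L\in\mathcal{L}(M)}r^L_M(u,a)\,\mu_{L,a}=\bomega\cdot\sum_{L\in\mathcal{L}(M)}r^L_M(u,a)\,\mu_{L,a}^\flat, $$
and the two sides have equal total variation.

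Now I would apply \cite[6.2]{Ar88LB} in the case $\bomega=1$: the combination $\sum_L r^L_M(u,a)\,\mu_{L,a}^\flat$ converges, as $a\to 1$ through generic points, weakly (with uniformly bounded total variation on compacta) to an invariant complex measure $\nu^\flat$ supported on $u^G=\bigsqcup_{i\in I}u_i^G$ and absolutely continuous with respect to the invariant induced measure of \ref{prop:induction-omega} (the case $\bomega=1$), with density in $L^1$. Multiplying through by the fixed bounded continuous function $\bomega$, the $\bomega$-combinations converge weakly, with uniformly bounded total variation on compacta, to $\nu:=\bomega\cdot\nu^\flat$; since $J_M^\bomega(\dot{u},f)$ is by definition the limit of $\sum_L r^L_M(u,a)J_L^\bomega(a\dot{u},f)$, this forces $J_M^\bomega(\dot{u},f)=\int f\,\dd\nu$ for all $f\in C_c^\infty(G(F_S))$, so $f\mapsto J_M^\bomega(\dot{u},f)$ is given by the complex measure $\nu$, which is supported on $u^G$. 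Each $\sum_L r^L_M(u,a)\mu_{L,a}$ is $\bomega$-equivariant, hence so is $\nu$; in particular $\nu$ vanishes on every $G(F_S)$-orbit that is not $\bomega$-good, so it is supported on $\bigsqcup_{i\in I_0}u_i^G$. On each $u_i^G$ with $i\in I_0$, $\bomega$ descends to a bounded function, $\dd\nu|_{u_i^G}=\bomega\,\dd\nu^\flat|_{u_i^G}$, and the measure of \ref{prop:induction-omega} restricts there to $\bomega$ times the invariant induced measure; therefore $\nu$ is absolutely continuous with respect to the measure of \ref{prop:induction-omega}, with the same $L^1$ density as in Arthur's theorem.

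The main obstacle is bookkeeping rather than conceptual: one must verify that Arthur's proof of \cite[6.2]{Ar88LB} actually produces the weak convergence together with the uniform total-variation bound asserted above (he phrases the result in terms of the limit functional, but the measure and its uniform control are built into his $|D|^{\frac12}$-renormalized estimates), and that the identifications $\mathcal{O}_{au}\simeq M_u(F_S)\backslash G(F_S)$ really are $a$-uniform, so that inserting the single function $\bomega$ legitimately commutes with the limit $a\to 1$. Because $\bomega$ is unitary, none of Arthur's convergence estimates change, which is exactly what makes the reduction go through.
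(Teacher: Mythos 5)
Your strategy --- quote Arthur's theorem for $\bomega=1$ as a black box and obtain the twisted measure as $\nu=\bomega\cdot\nu^\flat$ --- has a genuine gap at its central step, and it is not the route the paper takes (the paper's implicit proof, announced already for \ref{prop:induction-omega} and for the existence of $J_M^\bomega(\dot{\gamma},\cdot)$, is to re-run Arthur's argument with the invariant measures on the orbits replaced by the $\bomega$-equivariant ones, using only $|\bomega|=1$ to preserve his estimates). The problem is that the function you multiply by is $x\mapsto\bomega(x)$ on the parameter space $M_u(F_S)\backslash G(F_S)$, not a function on $G(F_S)$ near $u^G$: its transport to the orbit $\mathcal{O}_{au}$ depends on the base point $au$, and the restriction of the character $\bomega$ itself to $\mathcal{O}_{au}$ is the constant $\bomega(au)$, which is not the relevant function. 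Weak convergence of $\mu^\flat_a$ to $\nu^\flat$ as measures on $G(F_S)$ therefore does not allow you to ``multiply through by $\bomega$'' and pass to the limit. Worse, the points $x$ whose images $x^{-1}aux$ approach a given point of an induced orbit $u_i^G$ (which is strictly larger than $\mathcal{O}_u$) necessarily escape to infinity in $M_u(F_S)\backslash G(F_S)$ as $a\to 1$, so the behaviour of $\bomega(x)$ along exactly those escaping families is what determines the limit density. Controlling this is the substance of Arthur's Lemma 6.1 and his change-of-variables analysis, and it is precisely there that the twist must be inserted; it is not ``bookkeeping'' that can be outsourced to the untwisted statement. (That the escape happens ``in unipotent directions'', where $\bomega$ is trivial by \ref{prop:omega-trivial-unip}, is why the twisted argument goes through --- but that observation lives inside Arthur's proof, not outside it.)

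A second, independent error: you assert that each $\sum_L r^L_M(u,a)\,\mu_{L,a}$ is $\bomega$-equivariant and deduce that $\nu$ vanishes on the non-$\bomega$-good components of $u^G$. The measures $\mu_{L,a}$ for $L\neq G$ carry the weight factor $v_L(x)$, which is not invariant under $x\mapsto xy$; correspondingly $J_L^\bomega(a\dot{u},f^y)\neq\bomega(y)J_L^\bomega(a\dot{u},f)$ in general (see \ref{prop:non-invariance-omega-pondere}, whose right-hand side contains nontrivial terms for $Q\neq G$). So the approximating measures are not $\bomega$-equivariant, the limit need not be, and your localization of the support onto $\bigsqcup_{i\in I_0}u_i^G$ is unjustified as written. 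That localization again has to come from the explicit (twisted) form of Arthur's densities, where the integral over the non-good stabilizers kills the corresponding components.
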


\paragraph{Le cas non ramifié}
Fixons $G$ et $M$ comme précédemment. Supposons que $S$ consiste en places non archimédiennes et fixons $K_S := \prod_{v \in S} K_v$ tel que $K_v$ est un sous-groupe hyperspécial de $G(F_v)$ en bonne position relativement à $M$ pour chaque $v \in S$. Notons $\mathbbm{1}_{K_S}$ la fonction caractéristique de $K_S$.

\begin{definition}\index[iFT1]{$r^\bomega_{M,K_S}(\dot{\gamma})$}
  Les intégrales orbitales pondérées non ramifiées sont définies par
  $$ r^\bomega_{M,K_S}(\dot{\gamma}) = r^{G,\bomega}_{M,K_S}(\dot{\gamma}) := J_M^\bomega(\dot{\gamma}, \mathbbm{1}_{K_S}), \quad \dot{\gamma} \in \dot{\Gamma}(G(F_S))^\bomega .$$
\end{definition}

Lorsque $\gamma$ est fortement régulier dans $G$, notre définition est celle dans \cite{Wa09}.

\subsection{Comportement des intégrales orbitales pondérées avec un caractère}\label{sec:comport-int-ponderee-omega}
Soient $\gamma \in M(F_S)$ $\bomega$-bon et prenons $\dot{\gamma} \in \dot{\Gamma}(M(F_S))^\bomega$ (i.e. on  fixe la mesure) comme précédemment.

\paragraph{$(M,\sigma)^\bomega$-équivalence}
Soient $\sigma \in M(F_S)_\text{ss}$ et $\Sigma \subset \sigma M_\sigma(F_S)$ un ouvert invariant par $M_\sigma(F_S)$. Notons
$$\dot{\Gamma}(\Sigma)^\bomega := \{ \dot{\gamma} \in \dot{\Gamma}(M(F_S))^\bomega : \Supp(\dot{\gamma} ) \subset \Sigma \}. $$
Supposons désormais que l'adhérence de $\Sigma$ dans $\sigma M_\sigma(F_S)$ contient un voisinage invariant de $\sigma$. Suivant \cite[p.235]{Ar88LB}, on dit que deux fonctions $\phi_1$, $\phi_2$ sur $\dot{\Gamma}(\Sigma)^\bomega$ sont $(M,\sigma)^\bomega$-équivalentes s'il existe $f \in C_c^\infty(M(F_S))$ et un voisinage $U$ de $\sigma$ dans $M(F_S)$ tels que
$$ (\phi_1-\phi_2)(\dot{\gamma}) = J^{M,\bomega}_M(\dot{\gamma},f) $$
pour tout $\dot{\gamma} \in \dot{\Gamma}(\Sigma)^\bomega$ tel que $\Supp(\dot{\gamma}) \subset U$. Si cette condition est vérifiée, on écrit\index[iFT1]{$(M,\sigma)^\bomega$-équivalence}
$$ \phi_1 \stackrel{(M,\sigma)^\bomega}{\sim} \phi_2 .$$

\begin{proposition}[cf. {\cite[2.2]{Ar88LB}}]
  Si $M_\sigma=G_\sigma$, alors pour tout $f \in C_c^\infty(M(F_S))$ on a
  $$ J_M^\bomega(\dot{\gamma},f) \stackrel{(M,\sigma)^\bomega}{\sim} 0 $$
  pour tout $\dot{\gamma} \in \dot{\Gamma}(M(F_S))^\bomega$ assez proche de $\sigma$ modulo conjugaison.
\end{proposition}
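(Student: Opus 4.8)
L'idée : on va construire $g\in C_c^\infty(M(F_S))$ telle que $J_M^\bomega(\dot\gamma,f)=J_M^{M,\bomega}(\dot\gamma,g)$ dès que le support de $\dot\gamma$ est assez proche de $\sigma$, ce qui est exactement ce qu'exige l'équivalence à $0$. C'est l'analogue « avec caractère » de \cite[2.2]{Ar88LB} ; seules l'unitarité de $\bomega$ (pour les convergences) et sa multiplicativité interviennent. On suppose $\gamma$ $\bomega$-bon dans $G(F_S)$, sans quoi $J_M^\bomega(\dot\gamma,\cdot)=0$ et l'on prend $g=0$.

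On rappelle d'abord que l'hypothèse $M_\sigma=G_\sigma$ se propage : il existe un ouvert invariant $U_0\ni\sigma$ dans $\sigma M_\sigma(F_S)$ avec $M_\gamma=G_\gamma$ pour tout $\gamma\in U_0$ — écrire $\gamma=\sigma\exp(X)$, $X\in\mathfrak m_\sigma=\mathfrak g_\sigma$ petit, et utiliser la descente de Harish-Chandra près de $\sigma$, qui identifie $G_\gamma$ au centralisateur correspondant dans $G_\sigma=M_\sigma$, donc à $M_\gamma$ (cf. la discussion précédant \cite[2.2]{Ar88LB}). Soit $\gamma\in U_0$ ; comme $G_\gamma=M_\gamma\subset M$, on factorise l'intégration sur $G_\gamma(F_S)\backslash G(F_S)$ à travers $M(F_S)\backslash G(F_S)$, de fibre $M_\gamma(F_S)\backslash M(F_S)$. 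En combinant la définition de $J_M^\bomega(\dot\gamma,\cdot)$ dans le cas $M_\gamma=G_\gamma$ avec \eqref{eqn:de-M-a-G}, l'invariance à gauche de $v_M$ sous $M(F_S)$ (translater un ensemble $(G,M)$-orthogonal par un vecteur de $\mathfrak a_M$ ne change pas $v_M$), la multiplicativité de $\bomega$ et la $\bomega$-équivariance de la mesure $\mu$ attachée à $\dot\gamma$, on voit que l'intégrande ne dépend que de la classe $\bar x\in M(F_S)\backslash G(F_S)$ ; fixant une section $s$ de $M(F_S)\backslash G(F_S)\to G(F_S)$ et intervertissant les intégrations, on obtient $J_M^\bomega(\dot\gamma,f)=J_M^{M,\bomega}(\dot\gamma,g_0)$, où
\[
 g_0(t):=\int_{M(F_S)\backslash G(F_S)} v_M(\bar x)\,\bomega(s(\bar x))\, f\big(s(\bar x)^{-1}t\,s(\bar x)\big)\,\dd\bar x\qquad (t\in M(F_S))
\]
(les normalisations \eqref{eqn:int-omega-naive} n'ajoutent au plus qu'un rapport de discriminants de Weyl, que $M_\sigma=G_\sigma$ rend lisse et inversible près de $\sigma$, $\Ad(\sigma)$ n'ayant pas de vecteur fixe non nul dans $\mathfrak g/\mathfrak m$ ; et $J_M^{M,\bomega}(\dot\gamma,g_0)$ a un sens, $g_0$ étant bornée à support compact sur chaque orbite concernée).

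Reste à montrer que $g_0$, prolongée par $0$, appartient à $C_c^\infty(M(F_S))$ sur un voisinage invariant de $\sigma$ : c'est le point délicat, où $M_\sigma=G_\sigma$ sert de façon décisive. Par la compacité à la Harish-Chandra pour la conjugaison près d'un élément semi-simple — qui s'applique grâce à $G_\gamma\subset M$, forçant l'ensemble $\{\bar x:\ s(\bar x)^{-1}t\,s(\bar x)\in\Supp(f)\}$ à rester dans un compact fixe $\Omega\subset M(F_S)\backslash G(F_S)$ lorsque $t$ parcourt un voisinage relativement compact $W\subset U_0$ de $\sigma$, propriété qui tombe en défaut si $M_\sigma\subsetneq G_\sigma$ — l'intégrale définissant $g_0$ converge ; sur l'ouvert invariant $U:=\bigcup_{m\in M(F_S)} m^{-1}Wm$, où $M_t=G_t$, cette intégrale se restreint à $\Omega$, d'où $g_0$ lisse sur $U$, et la même compacité montre que $g_0$ s'annule sur $U$ hors d'un compact de $U$. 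Prolongeant $g_0$ par $0$ hors de $U$, on obtient $g\in C_c^\infty(M(F_S))$ qui coïncide avec $g_0$ sur $U$, donc $J_M^{M,\bomega}(\dot\gamma,g)=J_M^{M,\bomega}(\dot\gamma,g_0)=J_M^\bomega(\dot\gamma,f)$ pour tout $\dot\gamma\in\dot\Gamma(\Sigma)^\bomega$ à support dans $U$ ; d'où $J_M^\bomega(\dot\gamma,f)\stackrel{(M,\sigma)^\bomega}{\sim}0$. Hormis cet argument de compacité, le reste n'est que la transcription « avec $\bomega$ » de \cite[2.2]{Ar88LB}.
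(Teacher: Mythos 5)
Votre stratégie est bien celle que le texte sous-entend (la démonstration d'Arthur \cite[2.2]{Ar88LB} transcrite avec $\bomega$) : près de $\sigma$ on a $M_\gamma=G_\gamma$, on factorise l'intégrale sur $G_\gamma(F_S)\backslash G(F_S)$ à travers $M(F_S)\backslash G(F_S)$ en utilisant l'invariance à gauche de $v_M$ sous $M(F_S)$, et la compacité à la Harish-Chandra — disponible précisément parce que $G_\sigma\subset M$ — fait de l'intégrale extérieure une fonction $g_0$ lisse à support compact sur un voisinage invariant de $\sigma$ dans $M(F_S)$, à un rapport de discriminants près qui est lisse et inversible. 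Ces étapes, y compris l'identification du point où l'hypothèse $M_\sigma=G_\sigma$ est décisive, sont correctes.

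Il reste un point que vous écartez trop vite : la phrase « on suppose $\gamma$ $\bomega$-bon dans $G(F_S)$, sans quoi $J_M^\bomega(\dot{\gamma},\cdot)=0$ et l'on prend $g=0$ » n'est pas licite, car la $(M,\sigma)^\bomega$-équivalence exige \emph{une seule} fonction test et \emph{un seul} voisinage $U$ valables pour tous les $\dot{\gamma}\in\dot{\Gamma}(\Sigma)^\bomega$ supportés dans $U$ ; on ne peut pas choisir $g$ selon la classe. Or il peut a priori exister près de $\sigma$ des classes $\bomega$-bonnes dans $M(F_S)$ mais pas dans $G(F_S)$ : l'hypothèse $M_\sigma=G_\sigma$ ne porte que sur les composantes neutres, et $G^\gamma$ peut déborder $M^\gamma$. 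Pour une telle classe, le membre de gauche vaut $0$ par définition, tandis que votre $g_0$ donne, à un facteur non nul près,
$$ J_M^{M,\bomega}(\dot{\gamma},g_0) \;\doteq\; \int_{G_\gamma(F_S)\backslash G(F_S)}\bomega(x)\, v_M(x)\, f(x^{-1}\gamma x)\dd x , $$
et l'argument de moyenne sur le groupe fini $G_\gamma(F_S)\backslash G^\gamma(F_S)$ qui annule l'intégrale orbitale \emph{non pondérée} dans ce cas (la somme $\sum_t\bomega(t)$ d'un caractère non trivial s'annule) ne s'applique plus ici, puisque $v_M(tx)$ n'est plus égal à $v_M(x)$ pour $t\in G^\gamma(F_S)$ non contenu dans $M(F_S)$. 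Pour boucler la preuve il faut donc soit montrer que cette intégrale pondérée s'annule quand même pour les classes non bonnes dans $G$, soit vérifier qu'au voisinage de $\sigma$ de telles classes n'apparaissent pas (par exemple via $G^\gamma\subset G^\sigma$ pour $\gamma$ assez proche, ce qui ramène tout à $\bomega|_{G^\sigma(F_S)}$), soit modifier $g_0$. C'est la seule difficulté réellement nouvelle introduite par le caractère, et le texte — dont la preuve se réduit à renvoyer au cas $\bomega=1$ — la passe lui aussi sous silence.
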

\begin{proof}
  La démonstration est identique à celle du cas $\bomega=1$.
\end{proof}

\paragraph{Formules de descente}\index[iFT1]{formule de descente}
\begin{proposition}[cf. {\cite[6.2]{Ar88LB}}]
  Supposons $\gamma$ unipotent. Soit $L_1 \in \mathcal{L}(M)$, alors pour tout $f \in C_c^\infty(G(F_S))$, on a
  $$ J_{L_1}^\bomega(\dot{\gamma}^{L_1}, f) = \lim_{\substack{a \to 1 \\ a \in A_{M,\text{reg}}(F_S)}} \sum_{L \in \mathcal{L}(L_1)} r^L_{L_1}(\gamma,a) J_L^\bomega(a\dot{\gamma}, f). $$
\end{proposition}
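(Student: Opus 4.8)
\emph{Plan de démonstration.} L'énoncé est la variante $\bomega$-équivariante de \cite[6.2]{Ar88LB}; comme dans les paragraphes qui précèdent, je reprendrais la démonstration d'Arthur en remplaçant partout les mesures invariantes sur les orbites par des mesures $\bomega$-équivariantes. Deux observations rendent ce remplacement anodin. D'une part, toute la combinatoire des facteurs $r^L_{L_1}(\cdot,a)$ — leur provenance d'une $(G,L_1)$-famille, leurs formules de descente et le fait qu'ils ne dépendent que de $L$, $L_1$, $a$ et de la classe considérée, cf.\ \cite[\S 5]{Ar88LB} — est totalement étrangère à $\bomega$. D'autre part, $\bomega$ étant unitaire, une mesure $\bomega$-équivariante sur une orbite est dominée en module par une mesure invariante, de sorte que toutes les majorations d'Arthur (convergence des limites, continuité absolue, interversions) subsistent mot pour mot. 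Enfin, les deux cas extrêmes sont déjà à notre disposition : pour $L_1 = M$ l'énoncé n'est autre que \cite[5.2]{Ar88LB} rappelé plus haut (il sert de définition à $J^\bomega_M(\dot\gamma,f)$), et pour $L_1 = G$ c'est le lemme \ref{prop:induction-omega}; il reste donc à interpoler, ce que je ferais par récurrence sur $\dim\mathfrak{a}^G_{L_1}$.

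Concrètement, $\gamma$ étant supposé $\bomega$-bon (et unipotent), j'appliquerais d'abord \ref{prop:induction-omega} au groupe $L_1$ à la place de $G$ : cela caractérise $\dot\gamma^{L_1}$, combinaison linéaire d'éléments de $\dot\Gamma(L_1(F_S))^\bomega$, par
$$ J^{L_1,\bomega}_{L_1}(\dot\gamma^{L_1},\phi) \;=\; \lim_{\substack{a\to1 \\ a\in A_{M,\text{reg}}(F_S)}} J^{L_1,\bomega}_{L_1}(a\dot\gamma,\phi),\qquad \phi\in C_c^\infty(L_1(F_S)), $$
où, pour $a$ en position générale, $a\dot\gamma$ est vu dans $\dot\Gamma(L_1(F_S))^\bomega$ via \eqref{eqn:de-M-a-G} appliqué à $L_1$ (c'est licite : $a$ étant en position générale, $M_{a\gamma}=L_{1,a\gamma}$, et $a\gamma$ reste $\bomega$-bon puisque $M^{a\gamma}=M^\gamma$). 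Ensuite, \ref{prop:int-ponderee-abscont} affirme que $f\mapsto J^{G,\bomega}_{L_1}(\,\cdot\,,f)$, restreinte aux classes unipotentes de $L_1(F_S)$, est une mesure $\bomega$-équivariante absolument continue par rapport à celle de \ref{prop:induction-omega}; on peut donc faire passer $J^{G,\bomega}_{L_1}(\,\cdot\,,f)$ à travers la limite précédente, d'où
$$ J^{\bomega}_{L_1}(\dot\gamma^{L_1},f) \;=\; \lim_{\substack{a\to1 \\ a\in A_{M,\text{reg}}(F_S)}} J^{\bomega}_{L_1}(a\dot\gamma,f),\qquad f\in C_c^\infty(G(F_S)). $$
Il ne reste plus qu'à transformer le membre de droite en $\lim_{a\to1}\sum_{L\in\mathcal L(L_1)} r^L_{L_1}(\gamma,a)\,J^{\bomega}_L(a\dot\gamma,f)$ : pour chaque $a$ générique j'exploiterais \cite[5.2]{Ar88LB} appliqué au couple $(G,L_1)$ et à l'élément $a\gamma\in L_1(F_S)$, qui exprime $J^{\bomega}_{L_1}(a\dot\gamma,f)$ comme une limite sur $b\in A_{L_1,\text{reg}}(F_S)$ d'une somme sur $\mathcal L(L_1)$ pondérée par les $r^L_{L_1}(a\gamma,b)$; je réunirais alors les deux limites $a,b\to1$ à l'aide de la continuité et des propriétés de perturbation centrale des facteurs $r$ (\cite[\S 5]{Ar88LB}) et d'une nouvelle application de \ref{prop:induction-omega} pour identifier les limites des $J^{\bomega}_L(ab\dot\gamma,f)$.

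Le point qui exigera le plus d'attention est cette réunion des limites itérées et le réarrangement combinatoire qui l'accompagne, identifiant la somme obtenue avec celle de l'énoncé : c'est précisément le cœur de l'argument d'Arthur au \cite[\S 6]{Ar88LB}, et il ne fait intervenir $\bomega$ à aucun moment. La seule vérification propre à notre cadre — immédiate — est que les mesures $\bomega$-équivariantes manipulées restent dominées par des mesures invariantes, ce qui légitime tous les passages à la limite sous les intégrales orbitales (pondérées ou non) ainsi que la continuité absolue invoquée ci-dessus.
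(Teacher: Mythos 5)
Votre premier paragraphe — reprendre mot pour mot la démonstration de \cite[\S 6]{Ar88LB} en munissant les orbites de mesures $\bomega$-équivariantes, l'unitarité de $\bomega$ garantissant que toutes les majorations subsistent — est précisément la position du texte, qui ne donne aucune démonstration de cet énoncé et se contente du renvoi à Arthur. Si vous vous étiez arrêté là, il n'y aurait rien à redire.

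La réduction « concrète » que vous proposez ensuite contient en revanche une étape fausse. L'identité intermédiaire
$$ J^{\bomega}_{L_1}(\dot{\gamma}^{L_1},f) \;=\; \lim_{\substack{a\to1 \\ a\in A_{M,\mathrm{reg}}(F_S)}} J^{\bomega}_{L_1}(a\dot{\gamma},f) $$
est fausse en général : la limite de droite n'existe même pas. La convergence de \ref{prop:induction-omega} est une convergence faible de mesures \emph{invariantes} (poids $\equiv 1$) ; le poids $v_{L_1}$ n'est ni invariant à gauche par $L_1(F_S)$ ni borné sur les orbites $\mathcal{O}_{a\gamma}$ qui dégénèrent vers l'orbite induite, de sorte que le terme $L=L_1$ diverge logarithmiquement quand $a\to1$ — c'est exactement ce que compensent les termes $r^L_{L_1}(\gamma,a)\,J^{\bomega}_L(a\dot{\gamma},f)$ pour $L\supsetneq L_1$, dont les facteurs $r^L_{L_1}(\gamma,a)$ divergent eux aussi. (Déjà pour $L_1=M$, c'est-à-dire pour la définition même de $J^{\bomega}_M(\dot{\gamma},f)$, le seul terme $L=M$ ne converge pas.) La proposition \ref{prop:int-ponderee-abscont} ne vous sauve pas : elle décrit $J^{\bomega}_M(\dot{u},\cdot)$ comme mesure portée par une orbite induite \emph{fixée}, et ne dit rien de la continuité de $a\mapsto J^{\bomega}_{L_1}(a\dot{\gamma},f)$ ; du reste, les classes $a\dot{\gamma}$ avec $a\neq1$ ne sont pas unipotentes. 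Votre troisième étape ne peut pas rattraper cela : pour $a$ générique on a $L_{1,a\gamma}=G_{a\gamma}$, donc le développement \cite[5.2]{Ar88LB} appliqué à $a\gamma$ relativement à $L_1$ est trivial ($r^L_{L_1}(a\gamma,b)$ vaut $1$ pour $L=L_1$ et $0$ sinon) et ne fait jamais apparaître les facteurs $r^L_{L_1}(\gamma,a)$ cherchés. Il faut donc renoncer à isoler le terme $L=L_1$ et établir directement, via l'estimation clef \cite[6.1]{Ar88LB}, la convergence de la somme complète ; c'est cette partie de l'argument d'Arthur qu'il s'agit de recopier en y insérant $\bomega$.
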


Soit $Q = L U_Q \in \mathcal{F}(M)$. Définissons la version locale de \eqref{eqn:descente-parabolique}:
$$ f_Q^\bomega(m) := \delta_Q(m)^{\frac{1}{2}} \int_{K_S} \int_{U_Q(F_S)} \bomega(k) f(k^{-1}muk) \dd u \dd k, \quad m \in L(F_S). $$

\begin{corollary}[cf. {\cite[\S 8]{Ar88-TF1}}]\label{prop:J-descente}
  Conservons les notations précédentes et fixons $\xi \in \mathfrak{a}^{L_1}_M$ en position générale comme dans \eqref{eqn:xi-1}, ce qui permet d'associer à chaque $L \in \mathcal{L}(M)$ tel que $d^G_M(L_1,L) \neq 0$ un $Q_L \in \mathcal{P}(L)$. Alors
  $$ J_{L_1}^\bomega(\dot{\gamma}^{L_1},f) = \sum_{L \in \mathcal{L}(M)} d^G_M(L_1,L) J^{L,\bomega}_M(\dot{\gamma}, f^\bomega_{Q_L}). $$
\end{corollary}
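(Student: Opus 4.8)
Le plan est de reproduire l'argument de descente d'Arthur \cite[\S 8]{Ar88-TF1} dans le cadre tordu par $\bomega$. Les deux ingrédients sont la proposition précédente — l'analogue $\bomega$-tordu de \cite[6.2]{Ar88LB}, qui exprime $J_{L_1}^\bomega(\dot{\gamma}^{L_1}, \cdot)$ comme une limite, sur $a \in A_{M,\text{reg}}(F_S)$ en position générale, d'une combinaison des $J_L^\bomega(a\dot{\gamma}, \cdot)$ — et la formule de scindage/descente de base pour les intégrales orbitales pondérées des classes « non dégénérées » $a\dot{\gamma}$. Tout au long, $\bomega$ n'intervient que comme facteur multiplicatif borné : il est unitaire et, étant la restriction locale d'un caractère automorphe, trivial sur les sous-groupes unipotents (forme locale de \ref{prop:omega-trivial-unip}). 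Ainsi toute estimée de convergence et toute identité combinatoire invoquées ci-dessous sont littéralement celles qu'emploie Arthur pour $\bomega = 1$.

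On commencerait par établir la formule de scindage pour une classe $\delta \in \dot{\Gamma}(M(F_S))^\bomega$ telle que $M_\delta = G_\delta$, regardée dans les divers $L(F_S)$ et dans $G(F_S)$ via \eqref{eqn:de-M-a-G} : avec $\xi \in \mathfrak{a}^{L_1}_M$ en position générale comme dans \eqref{eqn:xi-1} et l'application induite $L \mapsto Q_L$,
$$ J_{L_1}^\bomega(\delta, f) = \sum_{L \in \mathcal{L}(M)} d^G_M(L_1, L)\, J_M^{L,\bomega}(\delta, f^\bomega_{Q_L}), \qquad f \in C_c^\infty(G(F_S)). $$
Pour une telle $\delta$, $J_{L_1}^\bomega(\delta, f)$ est l'intégrale orbitale $\bomega$-tordue de $f$ sur $G_\delta(F_S)\backslash G(F_S)$ pondérée par la fonction $v_{L_1}$ associée à la $(G,L_1)$-famille $v_P(\lambda,x)=e^{-\angles{\lambda,H_P(x)}}$, $P \in \mathcal{P}(L_1)$. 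On y insère une décomposition d'Iwasawa $G(F_S)=U_Q(F_S)M_Q(F_S)K_S$, on utilise $\bomega|_{U_Q(F_S)}=1$ et la multiplicativité de $\bomega$ pour séparer les variables, puis on réécrit le poids $v_{L_1}$ au moyen des opérations de satellite et de restriction sur les $(G,M)$-familles de \S\ref{sec:GM-famille}, jointes au lemme de descente \cite[7.4]{Ar88-TF1} attaché au $\xi$ générique ; le regroupement des termes fournit l'identité voulue. C'est mot pour mot le calcul d'Arthur.

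On substituerait ensuite $\delta = a\dot{\gamma}$, $a$ générique, dans cette formule de scindage pour réécrire chaque terme $J_L^\bomega(a\dot{\gamma},f)$ du membre de droite de la proposition précédente, puis on passerait à la limite $a \to 1$. Le membre de gauche tend vers $J_{L_1}^\bomega(\dot{\gamma}^{L_1}, f)$ (c'est le contenu de la proposition précédente, i.e. l'énoncé de continuité absolue \ref{prop:int-ponderee-abscont} sur $L_1$), tandis qu'au membre de droite chaque $J_M^{L,\bomega}(a\dot{\gamma}, f^\bomega_{Q_L})$ doit être réorganisé via la formule définissant $J_M^{L,\bomega}(\dot{\gamma}, \cdot)$ sur le groupe $L$ : les facteurs $r^{L''}_M(\gamma,a)$ y réapparaissent et doivent se simplifier contre ceux figurant dans la proposition précédente. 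Cette comparaison est régie par les identités de transitivité d'Arthur entre les coefficients $d^G_M(\bullet,\bullet)$ et $r^\bullet_\bullet(\gamma,a)$ (le cas de base étant l'annulation $r^{L}_{M}(\gamma,a)=\delta_{L,M}$ lorsque $M_\gamma=G_\gamma$) ; c'est là le seul point véritablement calculatoire, et donc l'obstacle principal — mais il est insensible à $\bomega$, qui se factorise hors de toutes les intégrales orbitales concernées comme un scalaire unitaire. En le menant terme à terme comme dans \cite[\S 8]{Ar88-TF1}, on obtient la formule annoncée.
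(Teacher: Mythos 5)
Votre démonstration est correcte et suit exactement la voie du texte, qui se borne à constater que ``les arguments d'Arthur s'adaptent de façon usuelle'' : formule limite de la proposition précédente, descente du poids $v_{L_1}$ via le lemme \cite[7.4]{Ar88-TF1} attaché au $\xi$ générique, puis constat que le caractère unitaire $\bomega$ ne perturbe ni les estimations ni la combinatoire des coefficients $r^\bullet_\bullet(\gamma,a)$ et $d^G_M(\cdot,\cdot)$. Signalons seulement un léger lapsus : la convergence du membre de gauche relève de la proposition précédente (l'analogue de \cite[6.2]{Ar88LB}), et non de \ref{prop:int-ponderee-abscont}.
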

\begin{proof}
  L'énoncé dans \cite{Ar88-TF1} est pour les distributions invariantes; or le cas $\bomega=1$ du résultat voulu y est implicite. Les arguments d'Arthur s'adaptent de façon usuelle au cas général.
\end{proof}

De même, on a la formule de déploiement pour intégrales orbitales pondérées. Prenons $\xi \in \mathfrak{a}^M_\mathcal{M}$ en position générale comme dans \eqref{eqn:xi-2}, ce qui permet de définir une application $(L_1,L_2) \mapsto (Q_1, Q_2)$ avec $Q_i \in \mathcal{P}(L_i)$ ($i=1,2$) pourvu que $d^G_M(L_1,L_2) \neq 0$.

\begin{proposition}[cf. {\cite[9.1]{Ar88-TF2}}]\label{prop:J-deploiement}
  Supposons $S=S_1 \sqcup S_2$. Soit $\dot{\gamma} = \dot{\gamma}_1 \dot{\gamma}_2$, où $\dot{\gamma}_i \in \dot{\Gamma}(M(F_{S_i}))^\bomega$ pour $i=1,2$. Soit $f = f_1 f_2 \in C_c^\infty(G(F_S))$ où $f_i \in C_c^\infty(G(F_{S_i}))$ pour $i=1,2$. Alors
  $$ J_M^\bomega(\dot{\gamma},f) = \sum_{L_1, L_2 \in \mathcal{L}(M)} d^G_M(L_1, L_2) J^{L_1}_M(\dot{\gamma}_1, f_{Q_1}^\bomega) J^{L_2}_M(\dot{\gamma}_2, f_{Q_2}^\bomega). $$
\end{proposition}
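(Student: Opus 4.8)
The proof follows Arthur's splitting formula \cite[9.1]{Ar88-TF2} (stated there for invariant distributions, but the underlying identity for the non-invariant $J_M$ is implicit); I only indicate the modifications required by $\bomega$. The point is that $\bomega$ restricted to $G(F_S) = G(F_{S_1}) \times G(F_{S_2})$ is the exterior product of its restrictions to the $G(F_{S_i})$, so that $\bomega(x) = \bomega(x_1)\bomega(x_2)$ for $x = (x_1, x_2)$, and $\dot{\gamma} = \dot{\gamma}_1\dot{\gamma}_2$ means the chosen $\bomega$-equivariant measure on the $M(F_S)$-conjugacy class of $\gamma = (\gamma_1, \gamma_2)$ is the product of the chosen $\bomega|_{M(F_{S_i})}$-equivariant measures. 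Since $M^\gamma(F_S) = M^{\gamma_1}(F_{S_1}) \times M^{\gamma_2}(F_{S_2})$, and similarly for $G$, the condition $M_\gamma = G_\gamma$ is equivalent to $M_{\gamma_i} = G_{\gamma_i}$ for $i = 1, 2$. As in \ref{prop:J-descente} and \ref{prop:int-ponderee-abscont}, replacing invariant measures by $\bomega$-equivariant ones throughout Arthur's argument affects neither the convergence of the weighted orbital integrals nor the uniform estimates needed to interchange integration and the limit in $a$, since $\bomega$ is unitary. In the formula to be proved, $f_{Q_i}^\bomega$ denotes the parabolic descent of the component $f_i$ along $Q_i$, a function in $C_c^\infty(L_i(F_{S_i}))$.

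First, suppose $M_\gamma = G_\gamma$. Fixing compatible choices of measures as in \eqref{eqn:int-omega-naive}, one has $J_M^\bomega(\dot{\gamma}, f) = |D^G(\gamma)|^{\frac{1}{2}} \int_{G_\gamma(F_S) \backslash G(F_S)} \bomega(x)\, f(x^{-1}\gamma x)\, v_M(x) \dd x$. Since $H_P$ is additive with respect to $S = S_1 \sqcup S_2$, the $(G,M)$-family $v_P(\lambda, x) = e^{-\angles{\lambda, H_P(x)}}$ factors as $v_P(\lambda, x) = v_P(\lambda, x_1)\, v_P(\lambda, x_2)$, a product of two $(G,M)$-families. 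The splitting formula for $(G,M)$-families \cite[7.4]{Ar88-TF1} (see \S\ref{sec:GM-famille}) then gives $v_M(x) = \sum_{L_1, L_2 \in \mathcal{L}(M)} d^G_M(L_1, L_2)\, v^{Q_1}_M(x_1)\, v^{Q_2}_M(x_2)$, with $Q_i \in \mathcal{P}(L_i)$ attached to a choice of $\xi \in \mathfrak{a}^M_{\mathcal{M}}$ in general position as in \eqref{eqn:xi-2}. Substituting, and using $G_\gamma(F_S) = G_{\gamma_1}(F_{S_1}) \times G_{\gamma_2}(F_{S_2})$, $\bomega(x) = \bomega(x_1)\bomega(x_2)$, $f(x^{-1}\gamma x) = f_1(x_1^{-1}\gamma_1 x_1) f_2(x_2^{-1}\gamma_2 x_2)$ and $|D^G(\gamma)|^{\frac{1}{2}} = |D^G(\gamma_1)|^{\frac{1}{2}} |D^G(\gamma_2)|^{\frac{1}{2}}$, the integral breaks up into the product over $i = 1, 2$ of $|D^G(\gamma_i)|^{\frac{1}{2}} \int_{G_{\gamma_i}(F_{S_i}) \backslash G(F_{S_i})} \bomega(x_i)\, f_i(x_i^{-1}\gamma_i x_i)\, v^{Q_i}_M(x_i) \dd x_i$. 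By the parabolic-descent identity for weighted orbital integrals on $G(F_{S_i})$ — integration of $f_i$ over $U_{Q_i}(F_{S_i})$ and $K_{S_i}$ turns the weight $v^{Q_i}_M$ into the weight $v^{L_i}_M$ applied to $f_{Q_i}^\bomega$, exactly as for $\bomega = 1$ — this factor equals $J^{L_i, \bomega}_M(\dot{\gamma}_i, f_{Q_i}^\bomega)$. Assembling gives the asserted formula in this case.

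In general, use the limit definition $J_M^\bomega(\dot{\gamma}, f) = \lim_{a \to 1}\sum_{L \in \mathcal{L}(M)} r^L_M(\gamma, a)\, J_L^\bomega(a\dot{\gamma}, f)$, the limit over $a \in A_{M,\text{reg}}(F_S)$ in general position (cf. \cite[5.2]{Ar88LB}). For such $a$, writing $a = (a_1, a_2)$ one has $a\dot{\gamma} = (a_1\dot{\gamma}_1)(a_2\dot{\gamma}_2)$ and $L_{a\gamma} = G_{a\gamma}$, so the case just treated (with ambient group $G$ and Levi $L$) applies to each $J_L^\bomega(a\dot{\gamma}, f)$. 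After this substitution, $J_M^\bomega(\dot{\gamma}, f)$ becomes the limit of an iterated sum over $L$ and pairs in $\mathcal{L}(L) \times \mathcal{L}(L)$, with coefficients $r^L_M(\gamma, a)\, d^G_L(\cdot, \cdot)$ and with the component functions replaced by parabolic descents. Invoking the splitting formula for the factors $r^L_M(\gamma, a)$ (\cite[\S 5--6]{Ar88LB}, \cite[\S 9]{Ar88-TF2}), which decompose compatibly with $a = (a_1, a_2)$, together with the limit definitions of the $J^{L_i, \bomega}_M(\dot{\gamma}_i, \cdot)$ on the separate place-sets and the standard combinatorial identities among the $d$-coefficients, the iterated sum collapses, in the limit $a \to 1$, to $\sum_{L_1, L_2 \in \mathcal{L}(M)} d^G_M(L_1, L_2)\, J^{L_1, \bomega}_M(\dot{\gamma}_1, f_{Q_1}^\bomega)\, J^{L_2, \bomega}_M(\dot{\gamma}_2, f_{Q_2}^\bomega)$; this rearrangement is carried out by Arthur in \cite[\S 9]{Ar88-TF2}, and the $\bomega$-twist is transparent at every step because $\bomega$ is unitary and multiplicative across $S_1 \sqcup S_2$. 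The main obstacle is precisely this last bookkeeping: coordinating the splitting of the $r^L_M(\gamma, a)$ with that of the $(G,M)$-family weights, and justifying the interchange of the rearrangement with the passage to the limit — both of which rest on Arthur's uniformity estimates, unaffected by a unitary $\bomega$.
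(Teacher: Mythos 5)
Your proposal is correct and follows exactly the route the paper intends: the paper states this proposition without proof, deferring to Arthur's \cite[9.1]{Ar88-TF2} with the standing remark (made explicitly for \ref{prop:J-descente}) that the arguments adapt to the $\bomega$-twisted case since $\bomega$ is unitary, and your sketch — splitting the $(G,M)$-family $v_P(\lambda,x)=v_P(\lambda,x_1)v_P(\lambda,x_2)$ in the case $M_\gamma=G_\gamma$, then passing to the general case via the limit definition and the compatibility of the $r^L_M(\gamma,a)$ with $a=(a_1,a_2)$ — is precisely Arthur's argument with the harmless insertion of the unitary character.
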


\paragraph{Non-invariance}
Conservons les mêmes notations. Soit $Q \in \mathcal{F}(M_0)$. On a la $(G,M_Q)$-famille
$$ u_P(\lambda,x,y) = e^{-\angles{\lambda,H_P(k_P(x)y)}}, \quad x,y \in G(F_S). $$

On définit la version locale de \eqref{eqn:descente-parabolique-y}
$$ f^\bomega_{Q,y}(m) = \delta_Q(m)^{\frac{1}{2}} \iint_{K_S \times U_Q(F_S)} \bomega(k) f(k^{-1}muk) u'_Q(k,y) \dd u \dd k, \quad m \in M_Q(F_S). $$

\begin{proposition}[cf. {\cite[(8.2)]{Ar81}}]\label{prop:non-invariance-omega-pondere}
  Pour tout $y \in G(F_S)$, on a
  $$ J_M^\bomega(\dot{\gamma}, f^y) = \bomega(y) \sum_{Q \in \mathcal{F}(M)}  J_M^{M_Q,\bomega}(\dot{\gamma}, f^\bomega_{Q,y}). $$
\end{proposition}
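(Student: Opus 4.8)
The plan is to follow Arthur's proof of \cite[(8.2)]{Ar81}; the only new feature is the unitary twist by $\bomega$, which is harmless because $\bomega$ is trivial on every unipotent radical $U_Q(F_S)$ by the local form of \ref{prop:omega-trivial-unip}.

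First I would treat the special case $M_\gamma = G_\gamma$. If $\gamma$ is not $\bomega$-bon in $G(F_S)$ then, since $M_{Q,\gamma} = M_\gamma = G_\gamma$ for every $Q \in \mathcal{F}(M)$, it is $\bomega$-bon in no $M_Q$ either, so both sides vanish by definition; hence I may assume $\gamma$ is $\bomega$-bon in $G(F_S)$. Fixing measures as in \eqref{eqn:int-omega-naive},
$$ J_M^\bomega(\dot{\gamma}, f^y) = |D^G(\gamma)|^{\frac12} \int_{G_\gamma(F_S) \backslash G(F_S)} \bomega(x)\, f\bigl(y x^{-1}\gamma x y^{-1}\bigr)\, v_M(x)\, \dd x, $$
and the substitution $x \mapsto xy$, together with $\bomega(xy) = \bomega(y)\bomega(x)$, rewrites this as $\bomega(y)$ times the same integral with $f(x^{-1}\gamma x)$ and $v_M(xy)$ in place of $f(yx^{-1}\gamma xy^{-1})$ and $v_M(x)$. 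From $H_P(xy) = H_P(x) + H_P(k_P(x)y)$ one gets $v_P(\lambda, xy) = v_P(\lambda, x)\, u_P(\lambda, x, y)$ for the $(G,M)$-familles $v_P(\lambda, x) = e^{-\angles{\lambda, H_P(x)}}$ and $u_P(\lambda, x, y) = e^{-\angles{\lambda, H_P(k_P(x)y)}}$, hence $v_M(xy) = (vu)_M(x,y)$; the splitting formula \ref{prop:descente-cd'} applied to $c = v$, $d = u$ gives
$$ v_M(xy) = \sum_{Q \in \mathcal{F}(M)} v^Q_M(x)\, u'_Q(k_Q(x), y). $$
I would insert this and, for each $Q = M_Q U_Q$, carry out the parabolic descent along the Iwasawa decomposition $G(F_S) = U_Q(F_S) M_Q(F_S) K_S$ (legitimate since $G_\gamma \subset M_Q$ and $\gamma$ normalise $U_Q$); using $\bomega|_{U_Q(F_S)} = 1$, the identities $v^Q_M(umk) = v_M^{M_Q}(m)$ and $u'_Q(k_Q(umk), y) = u'_Q(k, y)$, and the usual discriminant and modular bookkeeping of the descent of orbital integrals, the $K_S$- and $U_Q(F_S)$-integrals assemble into $f^\bomega_{Q,y}$ and the contribution of $Q$ becomes $\bomega(y)\, J_M^{M_Q,\bomega}(\dot{\gamma}, f^\bomega_{Q,y})$. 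Summing over $Q$ proves the formula in this case.

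For arbitrary $\gamma$ I would use the defining limit $J_M^\bomega(\dot{\gamma}, f^y) = \lim_{a \to 1}\sum_{L \in \mathcal{L}(M)} r^L_M(\gamma, a)\, J_L^\bomega(a\dot{\gamma}, f^y)$, the $a \in A_{M,\text{reg}}(F_S)$ being in general position so that $M_{a\gamma} = G_{a\gamma}$, whence $L_{a\gamma} = G_{a\gamma}$ for all $L \in \mathcal{L}(M)$ and the special case applies to each term: $J_L^\bomega(a\dot{\gamma}, f^y) = \bomega(y)\sum_{Q \in \mathcal{F}(L)} J_L^{M_Q,\bomega}(a\dot{\gamma}, f^\bomega_{Q,y})$. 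Since $\{(L,Q) : L \in \mathcal{L}(M),\, Q \in \mathcal{F}(L)\} = \{(L,Q) : Q \in \mathcal{F}(M),\, L \in \mathcal{L}^{M_Q}(M)\}$ and the factor $r^L_M(\gamma, a)$ does not depend on the ambient group, the two sums can be interchanged, giving
$$ J_M^\bomega(\dot{\gamma}, f^y) = \bomega(y) \sum_{Q \in \mathcal{F}(M)} \lim_{\substack{a \to 1 \\ a \in A_{M,\text{reg}}(F_S)}} \sum_{L \in \mathcal{L}^{M_Q}(M)} r^L_M(\gamma, a)\, J_L^{M_Q,\bomega}(a\dot{\gamma}, f^\bomega_{Q,y}); $$
for each $Q$ the inner quantity is, by the very definition of the weighted orbital integral relative to $M_Q$, equal to $J_M^{M_Q,\bomega}(\dot{\gamma}, f^\bomega_{Q,y})$, which yields the claim.

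The main obstacle is the special case: synchronising the $(G,M)$-family splitting \ref{prop:descente-cd'} with the parabolic descent and keeping track of all discriminant, Jacobian and modular-character factors so that precisely $f^\bomega_{Q,y}$ and the weight $v_M^{M_Q}$ emerge. This is Arthur's computation for \cite[(8.2)]{Ar81}; it only needs to be checked that inserting the unitary character $\bomega$ — trivial on each $U_Q(F_S)$ — leaves every measure-theoretic step untouched.
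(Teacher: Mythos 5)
Votre preuve est essentiellement celle du texte : même changement de variables $x\mapsto xy$ avec $\bomega(xy)=\bomega(y)\bomega(x)$, même factorisation $v_P(\lambda,xy)=v_P(\lambda,x)u_P(\lambda,x,y)$ combinée à \ref{prop:descente-cd'}, même descente d'Iwasawa qui reconstitue $f^\bomega_{Q,y}$ et le poids $v_M^{M_Q}$, et même passage au cas général via la limite définissante et l'échange des sommes sur les couples $(L,Q)$. Seul petit accroc : comme $(M_Q)^\gamma\subset G^\gamma$, le défaut de $\bomega$-bonté dans $G(F_S)$ n'entraîne \emph{pas} le défaut de $\bomega$-bonté dans $M_Q(F_S)$ (l'inclusion va dans le mauvais sens), donc votre justification de l'annulation des deux membres dans ce cas dégénéré est inversée — mais ce cas ne joue aucun rôle dans le calcul et le texte ne le traite pas davantage.
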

\begin{proof}
  Prenons $\gamma \in \Supp(\dot{\gamma})$. Traitons d'abord le cas $M_\gamma = G_\gamma$. Fixons des mesures comme dans \eqref{eqn:int-omega-naive}, alors
  \begin{align*}
    J_M^\bomega(\dot{\gamma},f^y) & = |D^M(\gamma)|^{\frac{1}{2}} \int_{M_\gamma(F_S) \backslash G(F_S)} \bomega(x) f(yx^{-1} \gamma xy^{-1}) v_M(x) \dd x \\
    & = |D^M(\gamma)|^{\frac{1}{2}} \bomega(y) \int_{M_\gamma(F_S) \backslash G(F_S)} \bomega(x) f(x^{-1} \gamma x) v_M(xy) \dd x .
  \end{align*}
  On a $v_P(\lambda, xy) = v_P(\lambda) u_P(\lambda, x, y)$, donc \ref{prop:descente-cd'} entraîne que
  $$ v_M(xy) = \sum_{Q \in \mathcal{F}(M)} v^Q_M(x) u'_Q(x,y). $$
  Pour $u$ fixé, on a
  \begin{multline*}
    \int_{M_\gamma(F_S) \backslash G(F_S)} \bomega(x)f(x^{-1}\gamma x) v^Q_M(x) u'_Q(x,y) \dd x = \\
    \iiint_{U_Q(F_S) \times M_Q(F_S) \times K_S} \bomega(k)\bomega(m) \delta_Q(m)^{-1} f(k^{-1}m^{-1} u^{-1} \gamma u m k) v^Q_M(m)  u'_Q(k,y) \dd u \dd m \dd k = \\
    \int_{M_Q(F_S)} \bomega(m) v^Q_M(m) \iint_{K_S \times U_Q(F_S)} \bomega(k) \delta_Q(m)^{\frac{1}{2}} f(k^{-1}m^{-1} \gamma m u k) u'_Q(k,y) \dd u \dd k \dd m .
  \end{multline*}
  La $(M_Q,M)$-famille $v^Q_M(\lambda,m)$ ne dépend que de $M$ et $M_Q$ lorsque $m \in M_Q(F_S)$ (cf. \cite[p.41]{Ar81}), on peut la noter $v^{M_Q}_M(\lambda,m)$ et on vérifie qu'elle donne la fonction de poids pour $M_Q, M, K_S$. Cela conclut le cas $M_\gamma = G_\gamma$.

  En général, on en déduit
  \begin{align*}
    J_M^\bomega(\dot{\gamma}, f^y) & = \lim_{a \to 1} \sum_{L \in \mathcal{L}(M)} r^L_M(\gamma, a) \bomega(y) \sum_{Q \in \mathcal{F}^G(L)} J_L^{M_Q,\bomega}(a\dot{\gamma}, f_{Q,y}^\bomega) \\
    & = \bomega(y) \sum_{Q \in \mathcal{F}^G(M)} \lim_{a \to 1} \left( \sum_{L \in \mathcal{L}^{M_Q}(M)} r^L_M(\gamma,a) J^{M_Q,\bomega}_L(a\dot{\gamma}, f^\bomega_{Q,y}) \right) \\
    & = \bomega(y) \sum_{Q \in \mathcal{F}^G(M)} J_M^{M_Q,\bomega}(f_{Q,y}^\bomega).
  \end{align*}
\end{proof}

\begin{proposition}\label{prop:J-transport}
  Soit $y \in G(F_S)$ tel que $yMy^{-1} \in \mathcal{L}(M_0)$ , alors
  $$ J_{yMy^{-1}}^\bomega(y\dot{\gamma}y^{-1}, f) = \bomega(y) J_M^\bomega(\dot{\gamma}, f). $$
\end{proposition}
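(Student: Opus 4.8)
La démonstration suivrait le transport de structure d'Arthur. Je commencerais par me ramener, à l'aide de la formule limite définissant $J^\bomega_{M'}(y\dot{\gamma}y^{-1}, f)$ et $J^\bomega_M(\dot{\gamma}, f)$, au cas où le commutant connexe du point considéré est le même dans le Lévi et dans $G$. En effet, en prenant $a' = yay^{-1} \in A_{M'}(F_S) = yA_M(F_S)y^{-1}$ et, pour $L \in \mathcal{L}(M)$, $L' := yLy^{-1} \in \mathcal{L}(M')$, les facteurs se transportent : $r^{L'}_{M'}(y\gamma y^{-1}, yay^{-1}) = r^L_M(\gamma, a)$, car ils ne dépendent que de $L$, $M$, $a$ et de la classe de conjugaison de $\gamma$ dans $M$, tous transportés par $\Ad(y)$ ; de plus $a' \cdot y\dot{\gamma}y^{-1} = y(a\dot{\gamma})y^{-1}$, et pour $a$ en position générale $G_{a\gamma} = M_{a\gamma} = L_{a\gamma}$. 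Le cas particulier appliqué aux Lévis $L \subseteq G$ et à l'élément $a\gamma$ donnerait $J^\bomega_{L'}(y(a\dot{\gamma})y^{-1}, f) = \bomega(y)\,J^\bomega_L(a\dot{\gamma}, f)$ ; en sommant sur $L$ et en passant à la limite $a \to 1$, le cas général s'en déduirait.

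Resterait donc à traiter le cas $M_\gamma = G_\gamma$. Fixons des choix auxiliaires comme dans \eqref{eqn:int-omega-naive}, compatibles avec le transport de structure par $y$ — ce qui est loisible, les deux membres de l'égalité cherchée se multipliant de la même manière sous une dilatation de $\dot{\gamma}$. L'intégrale explicite définissant $J^\bomega_{M'}(y\dot{\gamma}y^{-1}, f)$, le changement de variables $x \mapsto yx$ et l'invariance de $D^G$ par conjugaison donnent
$$ J^\bomega_{M'}(y\dot{\gamma}y^{-1}, f) = \bomega(y)\, |D^G(\gamma)|^{\frac{1}{2}} \int_{G_\gamma(F_S) \backslash G(F_S)} \bomega(x)\, f(x^{-1}\gamma x)\, v_{M'}(yx)\, \dd x , $$
de sorte que tout se ramène à l'identité $v_{M'}(yx) = v_M(x)$ sur $M(F_S) \backslash G(F_S)/K_S$. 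Comme $v_{M'}$ est invariante à gauche sous $M'(F_S)$ et que $y$ appartient à $M'(F_S) \cdot \bigl(K_S \cap N_G(A_0)(F_S)\bigr)$ — les Lévis minimaux $M_0$ et $yM_0y^{-1}$ de $M'$ étant $M'(F_S)$-conjugués, et $K_S \cap N_G(A_0)(F_S)$ se surjectant place par place sur $W_0^G$ grâce à la bonne position de $K_S$ relativement à $M_0$ —, je me ramène au cas $y \in K_S \cap N_G(A_0)(F_S)$, avec toujours $yMy^{-1} = M'$.

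Dans ce dernier cas, j'écrirais $y = (y_v)_{v \in S}$, $y_v = \dot{w}_v \in K_v \cap N_G(A_0)(F_v)$ représentant un $w_v \in W_0^G$ tel que $w_v M w_v^{-1} = M'$. Pour $P' \in \mathcal{P}(M')$, posons $P'_v := \dot{w}_v^{-1} P' \dot{w}_v \in \mathcal{P}(M)$ ; pour une décomposition d'Iwasawa $x_v = umk$ relativement à $P'_v$ et à $K_v$, la décomposition $\dot{w}_v x_v = (\dot{w}_v u\dot{w}_v^{-1})(\dot{w}_v m\dot{w}_v^{-1})(\dot{w}_v k)$ en est une relativement à $P'$ et à $K_v$ (car $\dot{w}_v k \in K_v$), d'où $H^{(v)}_{P'}(\dot{w}_v x_v) = w_v \cdot H^{(v)}_{P'_v}(x_v)$, où $H^{(v)}$ désigne la composante en $v$ et $w_v : \mathfrak{a}_M \to \mathfrak{a}_{M'}$ l'isomorphisme induit. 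En sommant sur $v$, l'ensemble $(G,M')$-orthogonal positif $(-H_{P'}(yx))_{P' \in \mathcal{P}(M')}$ s'obtient à partir de l'ensemble $(G,M)$-orthogonal positif $(-H_P(x))_{P \in \mathcal{P}(M)}$ en appliquant, place par place, les isomorphismes de Weyl $w_v$ avec le réétiquetage correspondant des paraboliques. Ces isomorphismes étant des isométries de $\mathfrak{a}_0$ pour la forme $W_0^G$-invariante fixée, compatibles avec les normalisations des mesures de Haar sur les $\mathfrak{a}^G_\bullet$, et $v_M(x)$ étant le volume de l'enveloppe convexe de l'image de l'ensemble orthogonal dans $\mathfrak{a}^G_M$ — quantité indépendante de l'indexation par les paraboliques —, on conclut $v_{M'}(yx) = v_M(x)$, d'où l'assertion.

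L'obstacle principal est précisément cette identité de poids : sa vérification, bien qu'élémentaire, demande du soin lorsque $y$ représente des éléments distincts de $W_0^G$ aux diverses places de $S$, puisqu'alors $(G,M')$-famille et $(G,M)$-famille ne se correspondent pas par un unique transport, mais place par place. Tout le reste relève du transport de structure routinier, entièrement parallèle aux arguments d'Arthur.
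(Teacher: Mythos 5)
Votre stratégie globale est essentiellement celle du texte : décomposer $y$ en le produit d'un élément du Lévi et d'un élément de $K_S$ normalisant $A_0$, absorber le premier facteur par l'invariance du poids sous translation à gauche par $M'(F_S)$, et ramener le second à une identité entre fonctions de poids. La réduction au cas $M_\gamma=G_\gamma$ par la formule limite, le changement de variables, et la décomposition $y\in M'(F_S)\cdot\bigl(K_S\cap N_G(A_0)(F_S)\bigr)$ sont corrects. (Le texte traite le facteur compact sous une forme un peu différente : un transport de structure donne $J^\bomega_{yMy^{-1}}(y\dot{\gamma}y^{-1},f)=J^\bomega_M(\dot{\gamma},f^y)$, puis la formule de non-invariance \ref{prop:non-invariance-omega-pondere} conclut puisque $u'_Q(\cdot,y)=0$ pour $Q\neq G$ dès que $y\in K_S$ ; la comparaison des poids n'y est donc jamais écrite explicitement, alors que votre reformulation la met à nu.)

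Le point que vous signalez vous-même comme \emph{l'obstacle principal} n'est cependant pas résolu par l'argument que vous donnez. Lorsque les $n_v$ représentent des éléments $w_v\in W_0^G$ induisant des isomorphismes distincts $\mathfrak{a}_M\to\mathfrak{a}_{M'}$, le point $-H_{P'}(nx)$ est la somme sur $v$ des composantes locales $-H_{n_v^{-1}P'n_v}(x_v)$ transportées par des isométries \emph{différentes} ; l'enveloppe convexe de l'ensemble $(G,M')$-orthogonal obtenu est alors la somme de Minkowski $\sum_v w_vC_v$ des enveloppes locales $C_v$ tournées séparément, et le volume d'une somme de Minkowski n'est pas invariant quand on applique aux termes des isométries indépendantes : ce n'est donc pas une question de réétiquetage des paraboliques. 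Concrètement, pour $M=M_0$ dans un groupe de rang $2$ et $x_v$ des unipotents radiciels attachés à une même racine $\alpha$, chaque $C_v$ est un segment de direction $\alpha^\vee$ ; $\sum_v C_v$ est dégénéré, tandis que $\sum_v w_vC_v$ est un parallélogramme d'aire non nulle dès qu'un seul $w_v$ ne fixe pas la droite $\R\alpha^\vee$. Or, pour $\gamma$ fortement régulier, l'énoncé équivaut précisément à l'identité ponctuelle $v_{M'}(nx)=v_M(x)$ : votre démonstration n'est donc complète que lorsque tous les $n_v$ induisent le même élément de $W(\mathfrak{a}_{P},\mathfrak{a}_{P'})$ — ce qui couvre toutes les utilisations faites dans le texte, où $y$ est rationnel ou représente un unique élément de $W_0^G$ relevé dans chaque $K_v$ — mais l'étape finale, telle que vous l'écrivez, ne passe pas dans le cas général que vous prétendez traiter.
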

\begin{proof}
  Comme $K_S$ est spécial, on peut écrire $y = km$ où $k \in \pi((K_S)_\text{sc})$ (i.e. $k$ provient du revêtement simplement connexe de $G_\text{der}$) et $m \in M(F_S)$. Le problème se divise ainsi en deux cas: $y \in M(F_S)$ et $y \in \pi((K_S)_\text{sc})$.

  Si $y \in M(F_S)$, l'assertion découle de la proposition précédente. Si $y \in \pi((K_S)_\text{sc})$, un transport de structure donne
  $$ J_{yMy^{-1}}^\bomega(y\dot{\gamma}y^{-1}, f) = J_M^\bomega(\dot{\gamma}, f^y). $$

  Comme $\bomega(y)=1$, il suffit de montrer que $f_{Q,y}^\bomega=0$ si $Q \neq G$. Or c'est clair que $u_Q(k,y)=0$ si $Q \neq G$, ce qui permet de conclure.
\end{proof}

\paragraph{Dépendance de $K_S$}
Soit $K_{1,S}$ un sous-groupe compact maximal de $G(F_S)$ en bonne position relativement à $M_0$. Ajoutons l'affixe $K_1$ aux objets définis par rapport à $K_{1,S}$. Soient $M \in \mathcal{L}(M_0)$, $T \in \mathfrak{a}_M$. Définissons les $(G,M)$-familles
\begin{align*}
  v_P(\lambda,x,T) & := e^{\angles{\lambda, -H_P(x)+T}}, \\
  v_P(\lambda,x,T;K_1) & := e^{\angles{\lambda, -H_P(x; K_1)+T}}, \quad P \in \mathcal{P}(M).
\end{align*}
La définition originelle des fonctions poids correspond au cas $T=0$, mais $T$ n'affecte pas les fonctions $v_M(x;K_1) = v_M(x,T;K_1)$ et $v_M(x) = v_M(x,T)$.

Définissons la $(G,M)$-famille
$$ u_P(\lambda,x; K_1|K, T) := e^{\angles{\lambda, -H_P(k_P(x);K_1)+T}}, \quad P \in \mathcal{P}(M). $$

Posons
$$ f^\bomega_{Q,K_1|K,T}(m) := \delta_Q(m)^{\frac{1}{2}} \int_{K_S} \int_{U_Q(F_S)} \bomega(k) f(k^{-1}muk) u'_Q(k; K_1|K, T) \dd u \dd k . $$

\begin{proposition}[cf. {\cite[3.4]{Ar98}}]\label{prop:dependance-K-ponderee}
  Soit $T \in \mathfrak{a}_M$. On a
  $$ J_M^\bomega(\dot{\gamma},f;K_1) = \sum_{Q \in \mathcal{F}(M)} J_M^{M_Q,\bomega}(\dot{\gamma}, f^\bomega_{Q,K_1|K,T}). $$
\end{proposition}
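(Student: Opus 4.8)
The plan is to imitate Arthur's proof of \cite[3.4]{Ar98}, handling first the case $M_\gamma = G_\gamma$ and then reducing the general case to it by the limit formula defining $J_M^\bomega(\dot\gamma,\cdot)$.

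First I would record the elementary relation between the two Harish-Chandra maps: writing $x = umk$ via the Iwasawa decomposition $G(F_S) = U_P(F_S)M(F_S)K_S$ relative to $K_S$ (so that $k_P(x) = k$ modulo $P(F_S)\cap K_S$), decomposing $k$ again relative to $K_{1,S}$, and using that $H_M$ is trivial on $M(F_S)\cap K_S$, one gets $H_P(x;K_1) = H_P(x) + H_P(k_P(x);K_1)$, where $H_P(\cdot;K_1)$ denotes the map attached to $K_{1,S}$. In the case $M_\gamma = G_\gamma$ one has $J_M^\bomega(\dot\gamma,f;K_1) = |D^G(\gamma)|^{1/2}\int_{G_\gamma(F_S)\backslash G(F_S)}\bomega(x) f(x^{-1}\gamma x) v_M(x;K_1)\dd x$, with $v_M(\cdot;K_1)$ the function attached to the $(G,M)$-family $v_P(\lambda,x,T;K_1) = e^{\angles{\lambda,-H_P(x;K_1)+T}}$; the identity above exhibits this family as the product of the standard $(G,M)$-family $v_P(\lambda,x,0) = e^{-\angles{\lambda,H_P(x)}}$ with the family $u_P(\lambda,x;K_1|K,T) = e^{\angles{\lambda,-H_P(k_P(x);K_1)+T}}$ (the latter being itself a $(G,M)$-family by the same argument as for the family used in \ref{prop:non-invariance-omega-pondere}, the parameter $T$ contributing only the constant factor $e^{\angles{\lambda,T}}$), cf. \cite[3.4]{Ar98}.

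Next I would apply the splitting formula for products of $(G,M)$-families, Lemme \ref{prop:descente-cd'}, at $\lambda = 0$, obtaining $v_M(x;K_1) = \sum_{Q\in\mathcal{F}(M)} v^{M_Q}_M(x)\,u'_Q(k_P(x);K_1|K,T)$, where I use that $v^Q_M$ depends only on $M$ and $M_Q$ and that $u'_Q$ depends only on the $K_S$-component of $x$. Substituting this into the integral, decomposing $G(F_S) = U_Q(F_S)M_Q(F_S)K_S$ and carrying out the usual change of variables, the integrations over $K_S$ and $U_Q(F_S)$ assemble exactly into $f^\bomega_{Q,K_1|K,T}$; after the standard discriminant and $\delta_Q$ bookkeeping — which uses $M_\gamma = (M_Q)_\gamma = G_\gamma$ and the diffeomorphism $u \mapsto u^{-1}\gamma u\gamma^{-1}$ of $U_Q(F_S)$ with Jacobian $|D^G(\gamma)/D^{M_Q}(\gamma)|^{1/2}$ — what remains is $|D^{M_Q}(\gamma)|^{1/2}\int_{M_\gamma(F_S)\backslash M_Q(F_S)}\bomega(m) f^\bomega_{Q,K_1|K,T}(m^{-1}\gamma m) v^{M_Q}_M(m)\dd m = J_M^{M_Q,\bomega}(\dot\gamma, f^\bomega_{Q,K_1|K,T})$, using that $v^Q_M$ restricted to $M_Q(F_S)$ is the weight function for $(M_Q,M,K_S)$ (cf. \cite[p.41]{Ar81}). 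This settles the case $M_\gamma = G_\gamma$.

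For general $\gamma$, I would invoke the limit formula $J_M^\bomega(\dot\gamma,f) = \lim_{a\to 1}\sum_{L\in\mathcal{L}(M)} r^L_M(\gamma,a)\, J_L^\bomega(a\dot\gamma,f)$ with $a \in A_{M,\text{reg}}(F_S)$ in general position, so that $M_{a\gamma} = G_{a\gamma}$, apply the case already proved to each $J_L^\bomega(a\dot\gamma,f;K_1)$, and interchange the sums over $L$ and over $Q$ as in the closing lines of the proof of \ref{prop:non-invariance-omega-pondere}; since the coefficients $r^L_M(\gamma,a)$ do not involve $K_S$, the limit passes through term by term. The main difficulty is organizational rather than conceptual: one must check carefully that $u_P(\lambda,x;K_1|K,T)$ satisfies the compatibility relation for adjacent parabolics, so that the factorization above is genuinely a product of two $(G,M)$-families, and that the parameter $T$ plays no role at $\lambda = 0$ — which is precisely where I would lean on Arthur's \cite[3.4]{Ar98}.
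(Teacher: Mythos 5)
Your proposal is correct and follows the same route as the paper: the paper's own proof consists precisely of the identity $-H_P(x;K_1)+T=-H_P(x)-H_P(k_P(x);K_1)+T$, the resulting factorization $v_P(\lambda,x,T;K_1)=v_P(\lambda,x,0)\,u_P(\lambda,x;K_1|K,T)$, and the instruction to repeat the proof of \ref{prop:non-invariance-omega-pondere} from that point on. Your write-up simply spells out what that repetition entails (splitting via \ref{prop:descente-cd'}, Iwasawa change of variables, then the limit formula for general $\gamma$), which matches the intended argument.
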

\begin{proof}
  Il suffit de comparer les fonctions de poids: nous avons remarqué que celle associée à $K_1$ se déduit de la $(G,M)$-famille $v_P(x,T;K_1)$. Or
  $$ -H_P(x;K_1)+T = -H_P(x)-H_P(k_P(x);K_1)+T, \quad P \in \mathcal{P}(M). $$
  D'où
  $$ v_P(\lambda,x,T;K_1) = v_P(\lambda,x,0) u_P(\lambda,x;K_1|K, T), \quad P \in \mathcal{P}(M). $$
  On peut reprendre la preuve de \ref{prop:non-invariance-omega-pondere} à partir de maintenant.
\end{proof}

\subsection{Développement fin du terme unipotent}\label{sec:dev-fin-unip}
Fixons $S$ un sous-ensemble fini de places de $F$. Supposons que
\begin{itemize}
  \item $S$ contient toutes les places archimédiennes;
  \item $K_v$ est hyperspécial pour tout $v \notin S$;
  \item $\bomega$ est trivial sur $K^S$.
\end{itemize}
Désignons par $f_{K^S}$ la fonction caractéristique de $K^S$. On définit un homomorphisme continu injectif
\begin{align*}
  C_c^\infty(G(F_S)) & \to C_c^\infty(G(\A)) \\
  f & \mapsto f \cdot f_{K^S},
\end{align*}
par lequel on identifie $C_c^\infty(G(F_S))$ à un sous-espace de $C_c^\infty(G(\A))$.

Soit $M$ un sous-groupe de Lévi de $G$, notons
\begin{align*}
  \dot{\Gamma}_\text{unip}(M(F_S))^\bomega &:= \{ \dot{u} \in \dot{\Gamma}(M(F_S))^\bomega : \Supp(\dot{u}) \subset M_\text{unip}(F_S) \}, \\
  \dot{\Gamma}_\text{unip}(M(F),S)^\bomega & := \{ \dot{u} \in \dot{\Gamma}_\text{unip}(M(F_S))^\bomega : \Supp(\dot{u}) \cap M(F) \neq \emptyset \}.
\end{align*}
En oubliant les mesures, on définit $\Gamma_\text{unip}(M(F_S))^\bomega$ et $\Gamma_\text{unip}(M(F),S)^\bomega$ de la même manière.

\begin{theorem}[cf. \cite{Ar85}]\label{prop:int-unip-developpement}\index[iFT1]{$a^{M,\bomega}(S,\cdot)$}
  Il existe une unique application $a^{M,\bomega}(S,\cdot): \dot{\Gamma}_{\mathrm{unip}}(M(F),S))^\bomega \to \C$ pour tout $M \in \mathcal{L}(M_0)$, satisfaisant à l'équivariance
  $$ a^{M,\bomega}(S,y\dot{u}y^{-1}) = \bomega(y)^{-1} a^{M,\bomega}(S,\dot{u}), \quad y \in M(F_S), $$
  telle que, pour tout $f \in C_c^\infty(G(F_S) \cap G(\A)^1)$, on a
  \begin{gather*}
    J_{\mathrm{unip}}^\bomega(f) = \sum_{M \in \mathcal{L}(M_0)} |W_0^M| |W_0^G|^{-1} \sum_{u \in \Gamma_\mathrm{unip}(M(F),S)^\bomega}  a^{M,\bomega}(S,\dot{u}) J_M^\bomega(\dot{u},f)
  \end{gather*}
  où $\dot{u} \in \dot{\Gamma}_\mathrm{unip}(M(F),S)$ est une image réciproque de $u$ quelconque.  De plus, $a^{M,\bomega}(S,\cdot)$ ne dépend pas de $K_S$.
\end{theorem}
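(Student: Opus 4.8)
The plan is to transpose Arthur's argument from \cite{Ar85} to the $\bomega$-equivariant setting, proceeding by induction on $\dim G_\text{der}$. All the structural inputs are already available in $\bomega$-equivariant form in \S\ref{sec:formule-traces-caractere}: the non-invariance formulas \ref{prop:non-invariance-Jomega-Weyl} and \ref{prop:non-invariance-omega-pondere}, the descent formula \ref{prop:J-descente}, the splitting formula \ref{prop:J-deploiement}, and the $K_S$-dependence formulas \ref{prop:dependance-K} and \ref{prop:dependance-K-ponderee}; since $\bomega$ is unitary and trivial on $G_\text{unip}(\A)$ (\ref{prop:omega-trivial-unip}) and on $A_{M,\infty} \cap G(\A)^1$ (\ref{prop:omega-aGaM}), all the convergence assertions and the measure theory on the unipotent variety of \cite{Ar88LB} carry over verbatim once one works throughout with $\bomega$-equivariant Radon measures on conjugacy classes. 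If $G$ is a torus the statement is immediate: only $M=G$ occurs, $J^\bomega_{\mathrm{unip}}(f) = \mes(G(F)\backslash G(\A)^1)\, f(1)$, and $a^{G,\bomega}(S,\dot{1})$ is the corresponding volume. In the inductive step the coefficients $a^{M,\bomega}(S,\cdot)$ for proper $M \subsetneq G$ are already constructed (as invariants of the smaller group $M$), so it remains to produce $a^{G,\bomega}(S,\cdot)$ making the displayed identity hold, and to establish uniqueness, the equivariance, the support on classes meeting $G(F)$, and the independence of $K_S$.

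First I would set
$$ \Delta^\bomega(f) := J^\bomega_{\mathrm{unip}}(f) - \sum_{M \subsetneq G} |W_0^M|\,|W_0^G|^{-1} \sum_{u \in \Gamma_{\mathrm{unip}}(M(F),S)^\bomega} a^{M,\bomega}(S,\dot{u})\, J_M^\bomega(\dot{u}, f), \quad f \in C_c^\infty(G(F_S) \cap G(\A)^1), $$
where a lift $\dot{u}$ of each class $u$ is fixed; the summand depends only on $u$, since $a^{M,\bomega}(S,\cdot)$ inherits the obvious $\C^\times$-scaling and, by the inductive equivariance together with $J_M^\bomega(y\dot{u}y^{-1},\cdot)=\bomega(y)J_M^\bomega(\dot{u},\cdot)$, is invariant under conjugation. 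The goal is to show that $\Delta^\bomega$ is an $\bomega$-equivariant invariant distribution, supported modulo conjugation on the unipotent variety, whence by the $\bomega$-equivariant analogue of the expansion of such distributions into orbital integrals (valid because $\bomega$ is trivial on $G^u(F_S)$ for $\bomega$-good $u$) it is a finite sum $\sum_u a^{G,\bomega}(S,\dot{u})\, J_G^\bomega(\dot{u},\cdot)$ over $\bomega$-good unipotent classes, with the coefficients then automatically satisfying the stated equivariance. The crucial point is the $\bomega$-equivariance $\Delta^\bomega(f^y) = \bomega(y)\Delta^\bomega(f)$: applying \ref{prop:non-invariance-Jomega-Weyl} to $J^\bomega_{\mathrm{unip}}$ and \ref{prop:non-invariance-omega-pondere} to each $J_M^\bomega(\dot{u},\cdot)$ produces correction terms indexed by $Q \in \mathcal{F}(M_0)$ with $Q \neq G$; inserting into $J^{M_Q,\bomega}_{\mathrm{unip}}(f^\bomega_{Q,y})$ the fine expansion for the proper Levi $M_Q$ (the inductive hypothesis) and reindexing the resulting double sum over parabolic subgroups shows, exactly as in Arthur's bookkeeping, that the $Q \neq G$ contributions cancel. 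The assertion that $\Delta^\bomega$ is supported on the unipotent variety is then the $\bomega$-equivariant counterpart of Arthur's analysis in \cite{Ar85}: one localizes near a semisimple element and invokes \ref{prop:J-descente} for weighted orbital integrals against the inductive expansion, only $\sigma=1$ surviving because the relevant class in $\mathcal{O}$ is $\{1\}$.

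Uniqueness of all the $a^{M,\bomega}(S,\cdot)$ reduces, as usual, to the linear independence over $\C$ of the distributions $\{J_M^\bomega(\dot{u},\cdot)\}$ as $M$ runs over conjugacy classes of Levi subgroups and $\dot{u}$ over unipotent classes; this is a purely local statement, proved by a downward induction on $\dim M$ using \ref{prop:J-descente} and the behaviour of weighted orbital integrals near a fixed unipotent class, and the unitary character $\bomega$ plays no role in it. That only classes meeting $M(F)$ contribute, and that $a^{M,\bomega}(S,\cdot)$ is compatible under enlarging $S$, follows by comparing the expansion for $S$ with the one for $S' = S \sqcup \{v\}$, where $v \notin S$, $K_v$ is hyperspecial and $\bomega|_{K_v}=1$: under the inclusion $C_c^\infty(G(F_S)) \hookrightarrow C_c^\infty(G(\A))$ one has $J^\bomega_{\mathrm{unip}}(f \cdot \mathbbm{1}_{K_v}) = J^\bomega_{\mathrm{unip}}(f)$, the splitting formula \ref{prop:J-deploiement} applied with $S_1=S$ and $S_2=\{v\}$ expresses $J_M^\bomega(\dot{u}, f \cdot \mathbbm{1}_{K_v})$ through $J_M^\bomega(\cdot, f)$ and the unramified weighted orbital integrals $r^\bomega_{M,K_v}$, and \ref{prop:int-ponderee-abscont} controls the latter at unipotent classes; matching coefficients via the uniqueness just proved pins down the support of $a^{M,\bomega}(S,\cdot)$ and the transition between $S$ and $S'$, exactly as in \cite{Ar85}.

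Finally, independence of $K_S$ is formal given the machinery: \ref{prop:dependance-K} rewrites $J^\bomega_{\mathrm{unip}}(\,\cdot\,;K_{1,S})$ and \ref{prop:dependance-K-ponderee} rewrites $J^{M_Q,\bomega}_M(\dot{u},\,\cdot\,;K_{1,S})$, both through the same combinatorial descent over $\mathcal{F}(M_0)$ applied to the auxiliary functions produced by those two statements, so the displayed identity for $K_{1,S}$ follows from the one for $K_S$, and uniqueness of the coefficients forces $a^{M,\bomega}(S,\cdot)$ to be the same. The main obstacle I anticipate is the second paragraph: verifying the cancellation of the $Q \neq G$ correction terms in the proof of the $\bomega$-equivariance of $\Delta^\bomega$, which requires threading the inductive hypothesis through two different non-invariance formulas together with a nontrivial reorganization of sums over parabolic subgroups, and setting up cleanly the $\bomega$-equivariant version of the expansion of invariant distributions supported on the unipotent set; once these are in place, the rest is Arthur's argument with only cosmetic changes due to $\bomega$ being unitary.
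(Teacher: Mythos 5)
Your overall strategy coincides with the paper's: induction on Levi subgroups, forming the difference $T^\bomega(f)$ between $J^\bomega_{\mathrm{unip}}(f)$ and the contributions of the proper Levis, proving its conjugation-$\bomega$-equivariance by cancelling the $Q\neq G$ correction terms coming from \ref{prop:non-invariance-Jomega-Weyl} against those coming from \ref{prop:non-invariance-omega-pondere} after inserting the inductive expansion for each $M_Q$, identifying $T^\bomega$ with a finite combination of unipotent $\bomega$-orbital integrals because it annihilates functions vanishing on $G_{\mathrm{unip}}(\A)$ (the paper gets this from \cite[4.2]{Ar85} for $J^\bomega_{\mathrm{unip}}$ and from \ref{prop:int-ponderee-abscont} for the weighted integrals), and obtaining uniqueness from the linear independence of the $J^\bomega_G(\dot{u},\cdot)$ and the $K_S$-independence from \ref{prop:dependance-K} and \ref{prop:dependance-K-ponderee} with $T=T_1-T_0$. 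All of that is sound and is exactly what the paper does.

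The genuine gap is your treatment of the assertion that only classes meeting $G(F)$ contribute. You propose to deduce it by comparing the expansion for $S$ with the one for $S'=S\sqcup\{v\}$ via the splitting formula and matching coefficients through uniqueness. That comparison is precisely \ref{prop:S_+-S} in the paper; it is proved \emph{after} the theorem and presupposes it, since it manipulates coefficients already known to be indexed by $\Gamma_{\mathrm{unip}}(L(F),S_+)^\bomega$, i.e.\ by rational classes. More fundamentally, a comparison between two finite sets of places is a local statement and cannot detect the global condition $\Supp(\dot{u})\cap G(F)\neq\emptyset$: a priori the expansion of $T^\bomega$ produced by your second paragraph runs over all of $\Gamma_{\mathrm{unip}}(G(F_S))^\bomega$, and nothing in the $S$-versus-$S'$ bookkeeping eliminates the non-rational classes. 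The rationality of the support is the analytic core of \cite[\S 2 -- \S 7]{Ar85} (truncation of the unipotent kernel and the attendant estimates), and the substance of the paper's proof at this point is the verification that those estimates survive when the invariant measures are replaced by the complex measures $\bomega(x)\,\dd x$, $\bomega$ being unitary. Without reproducing or at least explicitly checking that part of Arthur's argument, your proof does not establish that the indexing set in the statement can be taken to be $\Gamma_{\mathrm{unip}}(M(F),S)^\bomega$ rather than all unipotent classes of $M(F_S)$.
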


Les coefficients $a^{M,\bomega}(S,\cdot)$ dépendent encore de $M$, $M_0$ et $K^S$. La dépendance de $M_0$ sera enlevée plus tard  par \ref{prop:coef-indep-M_0}. L'équivariance des coefficients affirme que le produit $a^{M,\bomega}(S,\dot{u}) J_M^\bomega(\dot{u},f)$ ne dépend que de $u$.

\begin{proof}
  C'est le résultat principal de \cite{Ar85}. Il n'y a rien à prouver si $G$ est anisotrope modulo son centre. Supposons donc par récurrence que l'assertion est vérifiée pour tout sous-groupe de Lévi propre. Posons
  $$ T^\bomega(f) := J_\text{unip}^\bomega(f) - \sum_{\substack{M \in \mathcal{L}(M_0) \\ M \neq G}} |W^M_0| |W^G_0|^{-1} \sum_{u \in \Gamma_\mathrm{unip}(M(F),S)^\bomega} a^{M,\bomega}(S,\dot{u}) J_M^\bomega(\dot{u},f). $$

  Soit $y \in G(\A)$. La formule de non-invariance \ref{prop:non-variance-Jomega} entraîne que
  \begin{gather*}
    J_\text{unip}^\bomega(f^y) - \bomega(y)J_\text{unip}^\bomega(f) = \bomega(y) \sum_{\substack{Q \in \mathcal{F}(M_0) \\ Q \neq G}} |W_0^{M_Q}| |W_0^G|^{-1} J_\text{unip}^{M_Q, \bomega}(f_{Q,y}^\bomega),
  \end{gather*}
  tandis que \ref{prop:non-invariance-omega-pondere} entraîne que
  \begin{multline*}
    \sum_{\substack{M \in \mathcal{L}(M_0) \\ M \neq G}} |W_0^M| |W_0^G|^{-1} \sum_u a^{M,\bomega}(S,\dot{u}) (J^\bomega_M(\dot{u},f^y)-\bomega(y)J^\bomega_M(\dot{u},y)) \\
    = \bomega(y) \sum_{\substack{M \in \mathcal{L}(M_0) \\ M \neq G}} |W_0^M| |W_0^G|^{-1} \sum_{\substack{Q \in \mathcal{F}(M) \\ Q \neq G}} \sum_u a^{M,\bomega}(S,\dot{u}) J_M^{M_Q,\bomega}(\dot{u},f^{\bomega}_{Q,y}) \\
    = \bomega(y) \sum_{\substack{Q \in \mathcal{F}(M_0) \\ Q \neq G}} |W_0^{M_Q}| |W_0^G|^{-1} \sum_{M \in \mathcal{L}^{M_Q}(M_0)} |W_0^M| |W_0^{M_Q}|^{-1} \sum_u a^{M,\bomega}(S,\dot{u}) J_M^{M_Q,\bomega}(\dot{u}, f_{Q,y}^\bomega).
  \end{multline*}

  D'après l'hypothèse de récurrence, on en déduit $T^\bomega(f^y)=\bomega(y)T^\bomega(f)$. Par conséquent $T^\bomega$ est $\bomega$-équivariant. D'autre part, on montre (cf. \cite[4.2]{Ar85}) que si $f$ s'annule sur $G_\text{unip}(\A)$, alors $J^\bomega_\text{unip}(f)=0$. La même propriété est satisfaite par les distributions $J^\bomega_M(\dot{u},\cdot)$ d'après \ref{prop:int-ponderee-abscont}, donc par $T^\bomega(\cdot)$. Par conséquent, il existe des coefficients $a^{G,\bomega}(S,\dot{u})$ satisfaisant à la condition d'équivariance telle que
  $$ T^\bomega(f) = \sum_{u \in \Gamma_\text{unip}(M(F_S))^\bomega} a^{G,\bomega}(S,\dot{u}) J^\bomega_G(\dot{u},f). $$
  pour tout $f$. Si l'on sélectionne une image réciproque $\dot{u}$ pour chaque $u$, alors la famille de distributions $\{J^\bomega_G(\dot{u}, f): u \in \Gamma_\text{unip}(M(F_S))^\bomega \}$ est libre. L'unicité de $a^{G,\bomega}(S,\cdot)$ en découle.

  Il reste à montrer que les classes qui contribuent sont celles rencontrant $G(F)$. C'est l'ingrédient technique de \cite[\S 2 - \S 7]{Ar85}. On vérifie que les troncatures et estimations d'Arthur dans \cite{Ar85} demeurent valables si l'on utilise la mesure complexe $\bomega(x)\dd x$ sur $G(\A)$ et les autres groupes en question.

  Montrons l'indépendance de $K_S$. Soit $K_{1,S}$ un autre sous-groupe compact de $G(F_S)$ en bonne position relativement à $M_0$. Ajoutons l'affixe $K_1$ aux objets associés au sous-groupe compact maximal $K_1 = K_{1,S} \times K^S$ de $G(\A)$. On reprend les arguments ci-dessus en utilisant \ref{prop:dependance-K} et \ref{prop:dependance-K-ponderee} avec $T=T_1-T_0$ où $T_1$ (resp. $T_0$) est le paramètre de troncature canonique pour $K_1$ (resp. $K$), pour obtenir
  $$ T^\bomega(f;K_1) = T^\bomega(f). $$
  Comme la distribution $J_G^\bomega(\dot{u},\cdot)$ ne dépend que de $\dot{u}$, on tire du développement de $T^\bomega$ que $a^{G,\bomega}(S,\dot{u};K_1) = a^{G,\bomega}(S,\dot{u})$. Ceci est aussi valable pour tout $M \in \mathcal{L}(M_0)$ au lieu de $G$, ce qu'il fallait démontrer.
\end{proof}

Maintenant, prenons un sous-ensemble fini $S_+$ de places tel que $S_+ \supset S$. Fixons $M \in \mathcal{L}(M_0)$ et posons $K^\# := K \cap M(F_{S_+}^S)$. Si $\dot{u} \in \dot{\Gamma}(L(F),S_+)^\bomega$, on choisit une décomposition $\dot{u}_S := \prod_{v \in S} \dot{u}_v$ (resp. $\dot{u}_{S_+}^S := \prod_{v \in S_+ \setminus S} \dot{u}_v$); c'est bien déterminé à multiplication près par $\{(\lambda_v)_{v \in S} \in (\C^\times)^S : \prod_v \lambda_v = 1 \}$ (resp. $(\lambda_v)_{v \in S_+ \setminus S}$, etc). Soit $D = \sum_{i \in I} \dot{u}_i$ une somme finie d'éléments dans $\dot{\Gamma}(G(F_S))^\bomega$; pour $\dot{u} \in \dot{\Gamma}(G(F_S))^\bomega$, écrivons
$$ \frac{D}{\dot{u}} := \sum_{\substack{i \in I \\ \dot{u}_i \sim \dot{u}}} \frac{\dot{u}_i}{\dot{u}} \in \C . $$

\begin{proposition}\label{prop:S_+-S}
  Soit $\dot{v} \in \dot{\Gamma}_{\mathrm{unip}}(M(F),S)^\bomega$, alors
  $$ a^{M,\bomega}(S,\dot{v}) = \sum_{L \in \mathcal{L}^M(M_0)} |W_0^L| |W_0^M|^{-1} \sum_{u \in \Gamma_{\mathrm{unip}}(L(F),S_+)^\bomega} \frac{(\dot{u}_S)^M}{\dot{v}} \cdot  a^{L,\bomega}(S_+,\dot{u}) \cdot r^{M,\bomega}_{L, K^\#}(\dot{u}_{S_+}^S). $$
\end{proposition}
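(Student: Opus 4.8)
La stratégie reprend celle d'Arthur \cite[\S 8]{Ar85}, adaptée au caractère $\bomega$, et repose sur l'unicité des coefficients dans \ref{prop:int-unip-developpement}. Comme $S_+ \supset S$, les hypothèses du début de \S\ref{sec:dev-fin-unip} (places archimédiennes dans $S_+$, $K_v$ hyperspécial pour $v \notin S_+$, $\bomega$ trivial sur $K^{S_+}$) restent vérifiées pour $S_+$, donc \ref{prop:int-unip-developpement} fournit aussi le développement fin relatif à $S_+$. Pour $f \in C_c^\infty(G(F_S) \cap G(\A)^1)$, posons $K_{S_+ \setminus S} := \prod_{v \in S_+ \setminus S} K_v$ et identifions $f$ à $f \cdot \mathbbm{1}_{K_{S_+ \setminus S}} \in C_c^\infty(G(F_{S_+}) \cap G(\A)^1)$ (c'est la même fonction globale $f \cdot f_{K^S}$). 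Le plan est d'appliquer à cette fonction le développement fin relatif à $S_+$, de transformer le membre de droite en une combinaison linéaire des $J_M^\bomega(\dot{v}, f)$ avec $\dot{v} \in \dot{\Gamma}_\text{unip}(M(F),S)^\bomega$, puis d'invoquer l'indépendance linéaire des distributions $J_M^\bomega(\dot{v}, \cdot)$ (qui sous-tend l'unicité dans \ref{prop:int-unip-developpement}) pour identifier $a^{M,\bomega}(S,\dot{v})$.

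Pour la transformation, on fixe $L \in \mathcal{L}(M_0)$ et $\dot{u} \in \dot{\Gamma}_\text{unip}(L(F),S_+)^\bomega$, on écrit $\dot{u} = \dot{u}_S \cdot \dot{u}_{S_+}^S$ et on applique la formule de déploiement \ref{prop:J-deploiement} à $J_L^\bomega(\dot{u}, f \cdot \mathbbm{1}_{K_{S_+ \setminus S}})$ pour la partition $S_+ = S \sqcup (S_+ \setminus S)$:
$$ J_L^\bomega(\dot{u}, f \cdot \mathbbm{1}_{K_{S_+ \setminus S}}) = \sum_{L_1, L_2 \in \mathcal{L}(L)} d^G_L(L_1,L_2)\, J_L^{L_1,\bomega}(\dot{u}_S, f^\bomega_{Q_1})\, J_L^{L_2,\bomega}(\dot{u}_{S_+}^S, (\mathbbm{1}_{K_{S_+ \setminus S}})^\bomega_{Q_2}). $$
Le calcul d'Iwasawa usuel, joint à la trivialité de $\bomega$ sur $K_{S_+\setminus S}$, montre que $(\mathbbm{1}_{K_{S_+ \setminus S}})^\bomega_{Q_2} = \mathbbm{1}_{K_{S_+ \setminus S} \cap L_2(F_{S_+}^S)}$, lequel ne dépend que de $L_2$; le facteur en $\dot{u}_{S_+}^S$ est donc l'intégrale orbitale pondérée non ramifiée $r^{L_2,\bomega}_{L,\,K_{S_+\setminus S}\cap L_2}(\dot{u}_{S_+}^S)$. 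En rebaptisant $M := L_2$ — d'où $L \in \mathcal{L}^M(M_0)$ et $K_{S_+\setminus S}\cap M = K^\#$ — on obtient le facteur $r^{M,\bomega}_{L,K^\#}(\dot{u}_{S_+}^S)$ de l'énoncé.

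Pour la partie en $S$, on regroupe à $M$ fixé: grâce à la formule de descente \ref{prop:J-descente} (appliquée dans $G$, au niveau $L$, en induisant jusqu'à $M$), à la symétrie $d^G_L(L_1,M) = d^G_L(M,L_1)$ et à l'indépendance des deux formules vis-à-vis du vecteur générique auxiliaire (\eqref{eqn:xi-1}, \eqref{eqn:xi-2}), la somme intérieure sur $L_1$ se recolle en $J_M^\bomega((\dot{u}_S)^M, f)$, où $(\dot{u}_S)^M$ est la classe induite de Lusztig--Spaltenstein $\bomega$-équivariante de \ref{prop:induction-omega}. D'après \ref{prop:induction-omega} et \ref{prop:int-ponderee-abscont}, $J_M^\bomega((\dot{u}_S)^M, f) = \sum_{\dot{v}} \bigl((\dot{u}_S)^M / \dot{v}\bigr)\, J_M^\bomega(\dot{v}, f)$, la somme portant sur un système de représentants des classes apparaissant dans $(\Supp \dot{u}_S)^M$. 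En utilisant $|W_0^L||W_0^G|^{-1} = \bigl(|W_0^L||W_0^M|^{-1}\bigr)\bigl(|W_0^M||W_0^G|^{-1}\bigr)$ et en réorganisant la somme triple sur les couples $M_0 \subset L \subset M$, le développement relatif à $S_+$ prend la forme
$$ J_\text{unip}^\bomega(f) = \sum_{M \in \mathcal{L}(M_0)} |W_0^M||W_0^G|^{-1} \sum_{\dot{v}} \Bigl( \sum_{L \in \mathcal{L}^M(M_0)} |W_0^L||W_0^M|^{-1} \sum_{u \in \Gamma_\text{unip}(L(F),S_+)^\bomega} \tfrac{(\dot{u}_S)^M}{\dot{v}}\, a^{L,\bomega}(S_+,\dot{u})\, r^{M,\bomega}_{L,K^\#}(\dot{u}_{S_+}^S) \Bigr) J_M^\bomega(\dot{v}, f). $$
En comparant avec le développement relatif à $S$ fourni par \ref{prop:int-unip-developpement}, l'indépendance linéaire des $J_M^\bomega(\dot{v},\cdot)$ force, pour $\dot{v}$ rencontrant $M(F)$, l'égalité du coefficient avec $a^{M,\bomega}(S,\dot{v})$ (et la nullité des éventuelles contributions des $\dot{v}$ non rationnels), d'où la formule cherchée.

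L'obstacle principal sera la comptabilité combinatoire du troisième paragraphe: accorder les paraboliques $Q_1, Q_2$ attribués par la formule de déploiement \ref{prop:J-deploiement} avec ceux attribués par la formule de descente \ref{prop:J-descente} — ce qui s'obtient en exploitant que chaque somme est indépendante du vecteur générique auxiliaire — et gérer soigneusement les coefficients $d^G_L(\cdot,\cdot)$ et les facteurs de Weyl; ce sont exactement les manipulations de \cite[\S 8]{Ar85}. Le caractère $\bomega$ n'introduit aucune difficulté supplémentaire, tous les résultats intermédiaires (\ref{prop:J-deploiement}, \ref{prop:J-descente}, \ref{prop:induction-omega}, \ref{prop:int-ponderee-abscont}) étant déjà disponibles sous forme $\bomega$-équivariante; il reste à vérifier, ce qui est immédiat, que les facteurs $\bomega$ attachés aux $K_v$ pour $v \in S_+\setminus S$ sont triviaux, conséquence de la trivialité de $\bomega$ sur $K^S$.
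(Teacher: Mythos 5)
Votre démonstration est correcte et suit essentiellement la même route que celle du texte : développement fin relatif à $S_+$ appliqué à $f \cdot \mathbbm{1}_{K_{S_+\setminus S}}$, formule de déploiement \ref{prop:J-deploiement} pour séparer les places de $S$ et de $S_+\setminus S$, calcul de $(\mathbbm{1}_{S_+}^S)^\bomega_Q = \mathbbm{1}_{K^\#}$ donnant le facteur $r^{M,\bomega}_{L,K^\#}$, recollement par la formule de descente \ref{prop:J-descente} en $J_M^\bomega((\dot{u}_S)^M,f)$, puis identification des coefficients par unicité. La seule différence est cosmétique : le texte accorde les deux formules en choisissant d'emblée un $\xi \in \mathfrak{a}^M_{\mathcal{M}}$ dont la projection vérifie aussi \eqref{eqn:xi-1}, là où vous invoquez l'indépendance de chaque formule vis-à-vis du vecteur générique, ce qui revient au même.
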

On vérifie sans peine que l'expression dans la somme ne dépend pas du choix de $\dot{u}$.

\begin{proof}
  Prenons $\xi \in \mathfrak{a}^M_\mathcal{M} := \{(H,-H): H \in \mathfrak{a}_M \}$ en position générale comme dans \eqref{eqn:xi-2} tel que sa projection dans $\mathfrak{a}_M^G$ vérifie aussi les conditions pour \eqref{eqn:xi-1}. On dispose alors d'une application $(L_1,L_2) \mapsto (Q_1, Q_2)$ pour $d^G_M(L_1,L_2) \neq 0$.

  Notons $\mathbbm{1}_{S_+}^S \in C_c^\infty(G(F_{S_+}^S))$ la fonction caractéristique de $K_{S_+}^S$. On déduit de \ref{prop:int-unip-developpement} (appliqué à $S_+$) et de \ref{prop:J-deploiement} que pour tout $f \in C_c^\infty(G(F_S) \cap G(\A)^1)$,
  \begin{align*}
    J^\bomega_\text{unip}(f) &= \sum_{L \in \mathcal{L}(M_0)} |W_0^L| |W_0^G|^{-1} \sum_{u \in \Gamma_\text{unip}(L(F),S_+)^\bomega} a^{L,\bomega}(S_+,\dot{u}) J_L^\bomega(\dot{u}, f \cdot \mathbbm{1}_{S_+}^S) \\
    & = \sum_L |W_0^L| |W_0^G|^{-1} \sum_{u \in \Gamma_\text{unip}(L(F),S_+)^\bomega} a^{L,\bomega}(S_+,\dot{u}) \cdot \\
    & \cdot \sum_{M,M' \in \mathcal{L}(L)} d^G_L(M',M) J^{M',\bomega}_L(\dot{u}_S, f^\bomega_{Q'}) J^{M,\bomega}_L(\dot{u}_{S_+}^S, (\mathbbm{1}_{S_+}^S)^\bomega_Q).
  \end{align*}

  Comme $\bomega$ est supposé trivial sur $K^S$, on a
  \begin{align*}
    (\mathbbm{1}_{S_+}^S)^\bomega_Q(m) &= \delta_Q(m)^{\frac{1}{2}} \int_{K_{S_+}^S} \int_{U_Q(F_{S_+}^S)} \mathbbm{1}_{S_+}^S(k^{-1}muk) \dd k \dd u \\
    & = \delta_Q(m)^{\frac{1}{2}} \int_{U_Q(F_{S_+}^S)} \mathbbm{1}_{S_+}^S(mu) \dd u = \mathbbm{1}_{K^\#}(m), \quad m \in M(F_{S_+}^S).
  \end{align*}
  Donc \ref{prop:J-descente} entraîne que
  \begin{align*}
    J_\text{unip}^\bomega(f) &= \sum_{L,u} |W_0^L| |W_0^G|^{-1} a^{L,\bomega}(S_+, \dot{u}) \sum_{M \in \mathcal{L}(L)} r^{M,\bomega}_{L,K^\#}(\dot{u}_{S_+}^S) \left( \sum_{M' \in \mathcal{L}(M)} d^G_L(M',M) J^{M',\bomega}_L(\dot{u}_S, f_{Q'}^\bomega) \right) \\
    & = \sum_{L,u} |W_0^L| |W_0^G|^{-1} a^{L,\bomega}(S_+, \dot{u})  \sum_{M \in \mathcal{L}(L)} r^{M,\bomega}_{L,K^\#}(\dot{u}_{S_+}^S) J^\bomega_M((\dot{u}_S)^M, f).
  \end{align*}
  On l'écrit comme
  \begin{multline*}
    \sum_{M \in \mathcal{L}(M_0)} |W_0^M| |W_0^G|^{-1} \sum_{v \in \Gamma_\text{unip}(M(F),S)^\bomega} J^\bomega_M(\dot{v},f) \cdot \\
    \cdot \left( \sum_{L \in \mathcal{L}^M(M_0)} |W_0^L| |W_0^M|^{-1} \sum_{u \in \Gamma_\text{unip}(L(F),S_+)^\bomega} \frac{(\dot{u}_S)^M}{\dot{v}} \cdot a^{L,\bomega}(S_+, \dot{u}) r^{M,\bomega}_{L, K^\#}(\dot{u}_{S_+}^S) \right).
  \end{multline*}
  Vu le développement \ref{prop:int-unip-developpement} appliqué à $S$ et l'unicité des coefficients, l'assertion en découle.
\end{proof}

Ci-dessous un interlude élémentaire de la théorie de Bruhat-Tits. Soit $E$ un corps local non archimédien. 

\begin{lemma}\label{prop:conjugaison-K-dans-M_0}
  Soient $H$ un $E$-groupe réductif connexe et $L$ un sous-groupe de Lévi. Soit $K_H$ un sous-groupe compact maximal de $H(E)$ en bonne position relativement à $L$. Si $K_H'$ est un sous-groupe compact maximal en bonne position relativement à $L$ et conjugué à $K_H$ par $H(E)$, alors $K_H'$ est conjugué par $K_H$ par $L(E)$.
\end{lemma}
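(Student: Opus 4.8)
\emph{Plan.} The statement is purely a fact of Bruhat--Tits theory, and the plan is to reduce it, via the result of \cite[7.4.9 (i)]{BT72} already invoked after \ref{def:cond-nr}, to a statement inside the normaliser of a single maximal split torus. Recall that $K_H$ (resp.\ $K_H'$) is the stabiliser in $H(E)$ of a special vertex $x$ (resp.\ $x'$) of the enlarged building $\mathscr{I}(H)$ — up to the harmless finite ambiguity coming from the fact that $K_H$ need only be \emph{associated} to $x$, which I suppress — and that ``en bonne position relativement à $L$'' means $x$ (resp.\ $x'$) lies in the image of an equivariant immersion $\mathscr{I}(L)\hookrightarrow\mathscr{I}(H)$. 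The hypothesis $K_H'=hK_Hh^{-1}$ then amounts to $x'=hx$ for some $h\in H(E)$, and we must produce such an $h$ in $L(E)$.

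First I would choose a maximal $E$-split torus $A_0\subseteq L$ for which $K_H$ is in good position relative to $A_0$: this is possible since $x$ lies in one of the apartments of the subbuilding $\mathscr{I}(L)\hookrightarrow\mathscr{I}(H)$, and these apartments are exactly the $\mathscr{A}(A_0)$ with $A_0\subseteq L$ maximal split. Likewise choose $A_0'\subseteq L$ maximal split with $K_H'$ in good position relative to $A_0'$. Now $hA_0h^{-1}$ is maximal $E$-split in $H$ and in good position relative to $hK_Hh^{-1}=K_H'$, so by \cite[7.4.9 (i)]{BT72} there is $k\in K_H'$ with $kh\,A_0\,(kh)^{-1}=A_0'$; replacing $h$ by $kh$ (which leaves $K_H'=hK_Hh^{-1}$ unchanged) we may assume $hA_0h^{-1}=A_0'$. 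Since $A_0$ and $A_0'$ are both maximal $E$-split in the connected reductive group $L$, there is $\ell_0\in L(E)$ with $\ell_0A_0\ell_0^{-1}=A_0'$; then $\ell_0^{-1}h\in N_{H(E)}(A_0)$, and as conjugation by $\ell_0\in L(E)$ preserves both hypothesis and conclusion, we are reduced to the case $h=n\in N_{H(E)}(A_0)$ with $A_0\subseteq L$ maximal $E$-split and in good position relative to both $K_H$ and $K_H'=nK_Hn^{-1}$.

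In this reduced situation $x$ and $x'=nx$ lie in the common apartment $\mathscr{A}(A_0)$, which under $\mathscr{I}(L)\hookrightarrow\mathscr{I}(H)$ is identified with the apartment of $A_0$ in $\mathscr{I}(L)$; moreover $x,x'$ are special vertices of $\mathscr{I}(L)$ with $\Stab_{L(E)}(x)=K_H\cap L(E)$ and similarly for $x'$. It then suffices to check that $x$ and $x'$ lie in a single $N_{L(E)}(A_0)$-orbit in $\mathscr{A}(A_0)$: if $\ell\in N_{L(E)}(A_0)$ with $\ell x=x'$ then $\ell K_H\ell^{-1}=\Stab_{H(E)}(\ell x)=\Stab_{H(E)}(x')=K_H'$. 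For this one uses that $n$ acts on $\mathscr{A}(A_0)$ through the extended affine Weyl group of $(H,A_0)$, while the $H(E)$-conjugacy of $K_H$ and $K_H'$ forces $x$ and $x'=nx$ to have the same ``type''; combined with the compatibility of the embedding with the canonical decomposition $\mathfrak{a}_0=\mathfrak{a}_0^L\oplus\mathfrak{a}_L$ and with the parahoric structures, one concludes that the element of the extended affine Weyl group of $(H,A_0)$ sending $x$ to $x'$ can already be taken in that of $(L,A_0)$, hence realised by $N_{L(E)}(A_0)$.

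The main obstacle is this last step: the assertion that ``$H(E)$-conjugacy of special vertices in good position relative to a Levi $L$ descends to $L(E)$-conjugacy'' is a rigidity property of the enlarged-building embedding for Levi subgroups, and making it precise requires the exact interaction of Arthur's admissibility (\cite{Ar81}) with the extended affine Weyl groups of $L$ and of $H$ — in particular with the translation lattices $H_{M_0}(M_0(E))$ and $H_{A_L}(A_L(E))$. I expect the cleanest write-up to deduce it from the structure results of \cite{BT72} directly (or from a second application of the uniqueness up to $K$-conjugacy of good-position tori, this time performed inside $L$ rather than $H$), rather than from a hands-on computation with affine Weyl groups.
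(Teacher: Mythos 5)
Your reductions are sound and follow the same road as the paper: after choosing maximal split tori $A_0,A_0'\subseteq L$ adapted to $K_H$ and $K_H'$, using \cite[7.4.9 (i)]{BT72} inside $H$ to modify $h$ by an element of $K_H'$, and conjugating by an element of $L(E)$ carrying $A_0$ to $A_0'$, you correctly reduce to the situation where both special points lie in the common apartment of a maximal split torus $A_0\subseteq L$ and $K_H'=nK_Hn^{-1}$ with $n\in N_H(A_0)(E)$. This is exactly the paper's reduction («~quitte à les conjuguer par $L(E)$, on peut supposer qu'ils appartiennent au même appartement~»), just spelled out in more detail.

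However, the final step — the only substantive point — is left as an acknowledged «~main obstacle~», and the sketch you give for it (matching types of vertices, rigidity of the embedding, comparison of translation lattices in the extended affine Weyl groups) is not a proof and is not the intended argument. The missing idea is much more elementary and is precisely how the paper concludes: because the point fixed by $K_H$ is \emph{special}, $K_H$ contains representatives of the whole relative Weyl group $N_H(A_0)(E)/Z_H(A_0)(E)$. Hence any $n\in N_H(A_0)(E)$ factors as $n=m_0k$ with $m_0\in Z_H(A_0)(E)=M_0(E)\subseteq L(E)$ and $k\in K_H\cap N_H(A_0)(E)$, and therefore
$$K_H'=nK_Hn^{-1}=m_0kK_Hk^{-1}m_0^{-1}=m_0K_Hm_0^{-1},$$
which is the desired $L(E)$-conjugacy (the paper packages this as the case $L$ minimal, to which the general case is reduced once the two points sit in a common apartment). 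No appeal to types, to the affine Weyl group, or to translation lattices is needed; you should replace your last paragraph by this one-line decomposition.
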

\begin{proof}
  Supposons d'abord que $L$ est minimal, alors c'est le centralisateur d'un $E$-tore déployé maximal $T_0$. D'après la définition de l'immeuble de Bruhat-Tits \cite[7.4.1]{BT72}, $K_H$ et $K_H'$ sont conjugués par $N_H(T_0)(E)$. Comme $K_H$ est spécial, il contient des représentants du groupe de Weyl de $T_0$. Par conséquent $K_H$ et $K_H'$ sont conjugués par $L(E)$.

  En général, $K_H$ et $K'_H$ sont associés à des points dans l'immeuble élargi de $L$, qui se plonge dans celui de $H$ de façon équivariante. Quitte à les conjuguer par $L(E)$, on peut supposer qu'ils appartiennent au même appartement, ce qui nous ramène au cas précédent.
\end{proof}

\begin{proposition}\label{prop:coef-indep-M_0}
  Soient $M_0'$ un autre sous-groupe de Lévi minimal en bonne position relativement à $K$. Si $M \supset M_0$ et $M \supset M'_0$, alors les coefficients $a^{M,\bomega}(S,\cdot)$ associés à $M_0'$ coïncident avec ceux associés à $M_0$.
\end{proposition}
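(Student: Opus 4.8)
The plan is to reduce everything, by transport of structure, to a change of maximal compact, and then to invoke the independence of the coefficients from the distinguished component of $K$ that is already contained in \ref{prop:int-unip-developpement} and \ref{prop:S_+-S}. First I would recall that all minimal $F$-Levi subgroups of $M$ are conjugate under $M(F)$ (they are the Levi factors of the minimal $F$-parabolic subgroups of $M$, and those form a single $M(F)$-orbit), and fix $g \in M(F)$ with $M_0' = gM_0g^{-1}$. Since $\bomega$ is a character of a group, it kills commutators, hence $\bomega \circ \mathrm{Int}(g) = \bomega$; and $\bomega(g)=1$ because $g \in M(F) \subset G(F)$. Thus $\mathrm{Int}(g)$ is an $F$-automorphism of $M$ preserving $\bomega$ and carrying $M_0 \mapsto M_0'$, $P_0 \mapsto gP_0g^{-1}$, and each $K_v \mapsto gK_vg^{-1}$.

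Next I would apply transport of structure by $\mathrm{Int}(g)$. It carries the whole construction underlying \ref{prop:int-unip-developpement} attached to $(M_0,K)$ — the truncation and its canonical parameter $T_0$, the unipotent weighted orbital integrals $J^{M,\bomega}_L(\dot{u},\cdot)$ (here one uses \ref{prop:J-transport} together with the $(G,M)$-family description of the weights), and the sets $\Gamma_{\mathrm{unip}}(L(F),S)^\bomega$ — to the analogous data attached to $(M_0', gKg^{-1})$. By the uniqueness of the coefficients in \ref{prop:int-unip-developpement} and their equivariance, combined with $\bomega(g)=1$, this gives, under the bijection $L \leftrightarrow gLg^{-1}$ of $\mathcal{L}^M(M_0) \xrightarrow{\sim} \mathcal{L}^M(M_0')$, that the coefficient $a^{L,\bomega}(S,\cdot)$ computed with $(M_0,K)$ equals $a^{gLg^{-1},\bomega}(S,\cdot)$ computed with $(M_0', gKg^{-1})$. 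Applying the equivariance once more (with $g \in M(F) \subset M(F_S)$) reduces the proposition to showing: for the fixed minimal Levi $M_0'$, the coefficients computed with $gKg^{-1}$ agree with those computed with $K$, and likewise for every member of $\mathcal{L}^M(M_0')$.

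It then remains to compare the two maximal compacts $gKg^{-1}$ and $K$, both in good position relative to $M_0'$. Since $g \in M(F) \subset M(\A)$ lies in $K_v$ for all but finitely many $v$, these compacts differ only at finitely many places. After enlarging $S$ to a finite set $S_+ \supset S \cup V_{\text{ram}}$ containing every bad place — harmless since \ref{prop:S_+-S} reconstructs $a^{L,\bomega}(S,\cdot)$ from the family $\{a^{L',\bomega}(S_+,\cdot)\}$, the unramified weighted orbital integrals $r^{\cdot,\bomega}_{\cdot,K^\#}$, the Weyl factors and the measure ratios, all compatibly with $\mathrm{Int}(g)$ and with the previous step — the two compacts agree outside $S_+$ and both remain admissible for $S_+$ (in particular $\bomega$ is trivial on $(gKg^{-1})^{S_+} = K^{S_+}$ by conjugation-invariance). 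One concludes exactly as in the proof of the last assertion of \ref{prop:int-unip-developpement}: re-running that argument with the change-of-$K$ formulas \ref{prop:dependance-K} and \ref{prop:dependance-K-ponderee} (with $T = T_1 - T_0$, where $T_1$ is the canonical truncation parameter for $gKg^{-1}$) yields that $a^{L',\bomega}(S_+,\cdot)$ is the same for $gKg^{-1}$ and for $K$, for all $L' \in \mathcal{L}^M(M_0')$; descending via \ref{prop:S_+-S} and combining with the transport identity of the previous paragraph gives the claim.

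The hard part will be the bookkeeping of the $K$-dependence: $\mathrm{Int}(g)$ does not preserve $K$, so every ingredient (the truncation, the distributions $J^{M,\bomega}_L(\dot{u},\cdot)$, the descent functions $f^\bomega_{Q,\cdot}$, and the factors $r^{\cdot,\bomega}_{\cdot,K^\#}$) must be followed through the change of maximal compact; here Lemma \ref{prop:conjugaison-K-dans-M_0} is the natural tool, as it lets one straighten, place by place, the compact $gK_vg^{-1}$ back to $K_v$ by an element of $M_0'(F_v)$ at the finitely many bad places, and \ref{prop:dependance-K}, \ref{prop:dependance-K-ponderee}, \ref{prop:J-transport} control the rest. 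To keep this under control it is cleanest to run the whole argument by induction on $\dim M$ — equivalently, to prove the statement simultaneously for all Levi subgroups of $G$ — so that in the transport step one may already invoke the result for the proper conjugates $g^{-1}L'g \subsetneq M$, and to argue throughout from the uniqueness of the fine expansion rather than by matching the expression of \ref{prop:S_+-S} term by term; the base case $M = M_0$ is trivial since then $M_0 = M = M_0'$.
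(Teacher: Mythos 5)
Your overall architecture is the paper's: transport of structure by a rational element conjugating the two minimal Levis, reduction to a change of maximal compact subgroup, enlargement of $S$ to $S_+$ so that the two compacts agree outside $S_+$, comparison via \ref{prop:S_+-S} of the unramified weighted orbital integrals $r^{M,\bomega}_{L,\cdot}$, and straightening of the compacts place by place with \ref{prop:conjugaison-K-dans-M_0}. But there is a genuine gap in how you handle $\bomega$. You take $g\in M(F)$ and justify everything by ``$\bomega(g)=1$ because $g\in G(F)$''. That identity is the \emph{adelic} one, $\prod_{v}\bomega(g_v)=1$. What the argument actually consumes are \emph{partial} products: the equivariance in \ref{prop:int-unip-developpement} is stated for $y\in M(F_S)$ and produces the factor $\bomega(g_S)^{-1}=\prod_{v\in S}\bomega(g_v)^{-1}$, which has no reason to be $1$ for a general $g\in M(F)$; and the elements $z_v\in M_0'(F_v)$ (or $L(F_v)$) furnished by \ref{prop:conjugaison-K-dans-M_0} at the places of $S_+\setminus S$ contribute further uncontrolled factors $\bomega(z_v)$ when you transport $r^{M,\bomega}_{L,gK^\#g^{-1}}$ back to $r^{M,\bomega}_{L,K^\#}$. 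As written, your proof therefore only yields the desired identity up to a product of local values of $\bomega$ that you have not shown to be trivial.

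The paper's way out is precisely the point of this proposition: it chooses the conjugating element $y$ in $\pi(M_\text{SC}(F))$ (minimal Levi subgroups are conjugate under this smaller group as well) and the straightening elements $z$ in $\pi(L_\text{sc}(F_{S_+}^S))$, so that $\bomega$ is trivial on them \emph{place by place} by \ref{prop:omega-SC-trivial} and its local analogue; then every factor is literally $1$ and no bookkeeping is needed. If you insist on $g\in M(F)$, you can in principle rescue the argument by tracking the factors and showing they cancel — using $\bomega(g)=1$, $\bomega(g^{S_+})=1$ (since $g_v\in K_v$ off $S_+$ and $\bomega$ is trivial on $K^S$), and an identification $\bomega(z)=\bomega(g_{S_+}^S)$ obtained by writing $g_{S_+}^S\in K\cdot z$ modulo the center, exactly as in the proof of \ref{prop:transport-structure} — but that is an additional argument your proposal neither states nor sketches. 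The induction on $\dim M$ you propose is harmless but unnecessary.
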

\begin{proof}
  Afin de souligner la dépendance en question, notons $a^{M,\bomega}(S,\cdot; K^S, M_0)$ (resp. $a^{M,\bomega}(S,\cdot; K^S, M_0')$) les coefficients associés à $K^S$ et $M_0$ (resp. $M_0'$). Notons $\pi: M_\text{SC} \to M$ le revêtement simplement connexe de $M_\text{der}$; si $L \in \mathcal{L}^M(M_0)$, notons $L_\text{sc}$ son image réciproque par $\pi$. Prenons $y \in \pi(M_\text{SC}(F))$ tel que $y M_0' y^{-1} = M_0$, alors $y K^S y^{-1}$ est en bonne position relativement à $M_0$ et $\bomega(y_v)=1$ pour tout $v$.

  Le transport de structure induit par $x \mapsto yxy^{-1}$ donne
  $$ a^{M,\bomega}(S,\dot{v}; K^S, M_0') = a^{M,\bomega}(S, y\dot{v}y^{-1}; (yKy^{-1})^S, M_0) = a^{M,\bomega}(S, \dot{v}; (yKy^{-1})^S, M_0) $$
  pour tout $\dot{v} \in \dot{\Gamma}_\text{unip}(M(F),S)^\bomega$. On peut oublier $M_0'$ dès maintenant et se ramener à montrer que $a^{M,\bomega}(S, \cdot; (yKy^{-1})^S) = a^{M,\bomega}(S, \cdot; (yKy^{-1})^S)$.

  Prenons $S_+ \supset S$ de sorte que $y_v \in K_v$ pour $v \notin S_+$. Vu \ref{prop:S_+-S}, il suffit de montrer que pour tout $L \in \mathcal{L}^M(M_0)$,
  $$ r^{M,\bomega}_{L,K^\#}(\dot{u}_{S_+}^S) = r^{M,\bomega}_{L,y K^\# y^{-1}}(\dot{u}_{S_+}^S) $$
  pour tout $\dot{u} \in \dot{\Gamma}_\text{unip}(M(F),S_+)^\bomega$, où $K^\# := K \cap M(F_{S_+}^S)$ comme dans \ref{prop:S_+-S} et la notation $y$ est quelque peu abusive pour signifier aussi $y_{S_+}^S$. D'après \ref{prop:conjugaison-K-dans-M_0}, il existe $z \in \pi(L_\text{sc}(F_{S_+}^S))$ tel que $y K^\# y^{-1} = z K^\# z^{-1}$. Le transport de structure via $x \mapsto zxz^{-1}$ donne
  $$ r^{M,\bomega}_{L,K^\#}(\dot{u}_{S_+}^S) = r^{M,\bomega}_{L,z K^\# z^{-1}}(z\dot{u}_{S_+}^S z^{-1}) = \bomega(z) r^{M,\bomega}_{L,z K^\# z^{-1}}(\dot{u}) = \bomega(z) r^{M,\bomega}_{L,y K^\# y^{-1}}(\dot{u}). $$
  Or $\bomega(z)=1$, ce qui achève la démonstration.
\end{proof}

\begin{remark}\label{prop:coef-dependance}
  Le bilan est que les coefficients $a^{M,\bomega}(S,\cdot)$ sont déterminés par les données $M$, $\bomega$, $S$, et le sous-groupe compact maximal $K^S$ de $M(F^S)$ tels que
  \begin{itemize}
    \item il existe un sous-groupe de Lévi minimal $M_0$ de $M$, défini sur $F$, qui est en bonne position relativement à $K^S$;
    \item $\bomega$ est trivial sur $K^S$.
  \end{itemize}
\end{remark}

\subsection{Interlude: $S$-admissibilité}
Pour l'instant, soit $M$ un $F$-groupe réductif connexe quelconque, et soit $S$ un ensemble fini de places tel que $M$ est non ramifié en dehors de $S$. La définition suivante fournit une façon explicite de dire que $S$ est suffisamment grand.

\begin{definition}[cf. {\cite[\S 1]{Ar02}}]\label{def:S-adm}\index[iFT1]{$S$-admissible}
  Définissons un morphisme invariant $\mathcal{D}=(D_0,\ldots, D_d): M \to \Ga^{d+1}$ avec $d := \dim M$, par
  $$ \det(1+t-\Ad(x)|\mathfrak{m}) = \sum_{k=0}^d D_k(x) t^k \in F[t]. $$

  Observons que $D_d=1$. Pour $X=(X_0,\ldots, X_d) \in F^{d+1}$, notons $X_\text{min}$ sa première coordonnée non nulle. Alors le discriminant de Weyl est $D^M(x) = \mathcal{D}(x)_\text{min}$.

  \begin{itemize}
    \item Un sous-ensemble $C_S \subset F_S^{d+1} \setminus \{0\}$ est dit admissible si pour tout $X \in F^{d+1} \cap (C_S \times (\mathfrak{o}^S)^{d+1})$, on a $|X_\text{min}|_v = 1$ pour toute place $v \notin S$.
    \item Un sous-ensemble $\Delta_S \in M(F_S)$ est dit admissible si $\mathcal{D}(\Delta_S) \subset F_S^{d+1}$ l'est.
    \item Un sous-ensemble $\Delta \in M(\A)$ est dit $S$-admissible s'il existe $C_S \subset F_S^{d+1}$ admissible tel que $\mathcal{D}(\Delta) \subset C_S \times (\mathfrak{o}^S)^{d+1}$.
  \end{itemize}
\end{definition}

En particulier, on peut parler de la $S$-admissibilité d'un élément ou d'une classe de conjugaison dans $M(F)$ ou dans $M(\A)$. Étant donné un sous-ensemble compact $\Delta$ de $M(\A)$, on peut toujours agrandir $S$ de sorte que $\Delta$ est $S$-admissible.

Nous utiliserons souvent le lemme suivant dû à Kottwitz.
\begin{proposition}\label{prop:Kottwitz}
  Fixons $K^S = \prod_{v \notin S} K_v$ où $K_v$ est un sous-groupe hyperspécial de $M(F_v)$ pour tout $v \notin S$. Soit $\sigma \in M(F)$ semi-simple. Si $\sigma$ est $S$-admissible et $\sigma^S \in K^S$, alors pour tout $v \notin S$, on a
  \begin{itemize}
    \item $K_v \cap M_\sigma(F_v)$ est un sous-groupe hyperspécial de $M_\sigma(F_v)$;
    \item soit $y \in M(\overline{F_v})$ tel que $y^{-1} \sigma y \in K_v$, alors il existe $y_1 \in M_\sigma(\overline{F_v})$ et $k \in K_v$ tels que $y = y_1 k$.
  \end{itemize}
\end{proposition}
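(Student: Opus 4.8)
L'énoncé est de nature purement locale : on fixe une place $v \notin S$ et l'on raisonne sur le complété $F_v$. Le premier pas consiste à extraire des hypothèses globales la seule information dont on se servira. Comme $\{\sigma\}$ est $S$-admissible et que $\sigma \in M(F)$, la définition \ref{def:S-adm} donne $|D^M(\sigma)|_v = 1$ ; par ailleurs $\sigma^S \in K^S$ signifie $\sigma \in K_v$. Puisque $M$ est non ramifié en $v$ et $K_v$ hyperspécial, on fixe un $\mathfrak{o}_v$-modèle réductif $\mathcal{M}$ de $M$ avec $\mathcal{M}(\mathfrak{o}_v) = K_v$, de sorte que $\sigma \in \mathcal{M}(\mathfrak{o}_v)$ ; on note $\bar{\sigma}$ sa réduction et $\mathcal{Z} := Z_{\mathcal{M}}(\sigma)$ le schéma en groupes des centralisateurs sur $\mathfrak{o}_v$.

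Le cœur de la preuve est de montrer que $\mathcal{Z}$ est lisse sur $\mathfrak{o}_v$, de composante neutre $\mathcal{M}_\sigma := \mathcal{Z}^\circ$ réductive et de fibre générique $M_\sigma$ — bref, que le centralisateur a bonne réduction. On part du polynôme caractéristique $\det(t - \Ad(\sigma)\,|\,\mathfrak{m}) \in \mathfrak{o}_v[t]$, qui s'écrit $(t-1)^m Q(t)$ avec $m := \dim M_\sigma$ et $Q(1) = D^M(\sigma) \in \mathfrak{o}_v^\times$ ; par réduction, la valeur propre $1$ de $\Ad(\bar{\sigma})$ a encore multiplicité exactement $m$. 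En confrontant l'inégalité $\dim Z_{\mathcal{M}_{\mathbb{F}_v}}(\bar{\sigma}) \le \dim \mathfrak{m}^{\bar{\sigma}} \le m$ avec la semi-continuité supérieure de la dimension des fibres de $\mathcal{Z} \to \Spec \mathfrak{o}_v$, on obtient que ces dimensions valent toutes $m$ ; en particulier la fibre spéciale de $\mathcal{Z}$ est lisse. On en déduit que $\bar{\sigma}$ est semi-simple : si sa partie unipotente $\bar{u}$ (décomposition de Jordan $\bar{\sigma} = \bar{s}\bar{u}$) était non triviale, on aurait, comme $\dim Z_{\mathcal{M}_{\mathbb{F}_v}}(\bar{s}) = m$ (la multiplicité de $1$ pour $\Ad(\bar{s})$ valant aussi $m$) et comme un unipotent non central abaisse strictement la dimension du centralisateur dans un groupe réductif, une contradiction $m < m$. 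Enfin, $\mathcal{Z}$ contient l'adhérence schématique de sa fibre générique, laquelle est $\mathfrak{o}_v$-plate et dont la fibre spéciale, de dimension $m$, est déjà $\mathcal{Z}_{\mathbb{F}_v}$ tout entière ; donc $\mathcal{Z}$ est plat à fibres lisses, donc lisse sur $\mathfrak{o}_v$, et $\mathcal{M}_\sigma$ est un $\mathfrak{o}_v$-modèle réductif de $M_\sigma$.

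Le premier point en résulte aussitôt : un élément de $M_\sigma(F_v)$ entier (i.e. dans $\mathcal{M}(\mathfrak{o}_v)$) centralise $\sigma$, donc appartient à $\mathcal{Z}(\mathfrak{o}_v)$, et à sa composante neutre puisque $\pi_0(\mathcal{Z})$ est étale sur $\mathfrak{o}_v$ ; ainsi $K_v \cap M_\sigma(F_v) = \mathcal{M}_\sigma(\mathfrak{o}_v)$, qui est hyperspécial par construction. Pour le second point, on pose $t := y^{-1}\sigma y$, que l'hypothèse place dans $K_v \subset \mathcal{M}(\mathfrak{o}_{\overline{F_v}})$ ; comme $t$ est semi-simple de discriminant de Weyl de valeur absolue $1$ en $v$, l'étape précédente appliquée à $t$ montre que $\bar{t}$ est semi-simple. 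Le morphisme de Chevalley $\mathcal{M} \to \mathcal{M}/\!/\mathcal{M}$ envoie $\sigma$ et $t$ sur le même point (ils sont conjugués sur $\overline{F_v}$), donc $\bar{\sigma}$ et $\bar{t}$ ont même image ; étant semi-simples, ils sont conjugués dans $\mathcal{M}(\overline{\mathbb{F}_v})$. Par conséquent le schéma $\mathcal{C} := \{g \in \mathcal{M} : g^{-1}\sigma g = t\}$, torseur sous $\mathcal{Z}$, a une fibre spéciale non vide, est lisse sur $\mathfrak{o}_v$, et sur $\mathfrak{o}_{\overline{F_v}}$ (strictement hensélien) possède donc un point $k$. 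En ajustant $k$ par un élément de $\mathcal{Z}(\mathfrak{o}_{\overline{F_v}})$ dans la composante voulue — ce qui est loisible car chaque composante de $\mathcal{Z}$ est un torseur sous le groupe réductif $\mathcal{M}_\sigma$, trivial sur $\mathfrak{o}_{\overline{F_v}}$ — on a $y_1 := y k^{-1} \in M_\sigma(\overline{F_v})$ et $k \in \mathcal{M}(\mathfrak{o}_{\overline{F_v}})$, d'où $y = y_1 k$.

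Le point délicat, et le seul endroit où interviennent les hypothèses, est la bonne réduction du centralisateur : c'est l'hypothèse $|D^M(\sigma)|_v = 1$ — conséquence de la $S$-admissibilité — qui empêche la dimension du centralisateur de « sauter » par réduction et force $\bar{\sigma}$ à rester semi-simple. Tout le reste est du maniement standard de schémas en groupes réductifs sur un anneau de valuation discrète (lissité par le critère fibre à fibre, surjectivité des points entiers par hensélianité, théorie GIT de Chevalley–Steinberg sur la fibre spéciale).
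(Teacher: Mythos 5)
Votre démonstration suit une route réellement différente de celle du texte : au lieu de citer le lemme de Kottwitz \cite[7.1]{Ko86}, vous le redémontrez. La partie centrale --- l'hypothèse $|D^M(\sigma)|_v=1$ force la semi-simplicité de $\bar{\sigma}$ et la bonne réduction du centralisateur schématique --- est correcte et c'est bien l'argument de Kottwitz ; le premier point en découle (en ajoutant, pour l'égalité $K_v\cap M_\sigma(F_v)=\mathcal{M}_\sigma(\mathfrak{o}_v)$, que le membre de gauche est un sous-groupe compact de $M_\sigma(F_v)$ contenant le membre de droite, qui est un compact maximal --- cela évite la discussion sur $\pi_0(\mathcal{Z})$, dont le caractère ``fini étale'' n'est pas gratuit, de même que la platitude de $\mathcal{Z}$ tout entier plutôt que de l'adhérence plate de sa fibre générique).

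Le second point, en revanche, comporte une lacune réelle, concentrée en deux endroits voisins. (a) Votre construction fournit $k\in\mathcal{M}(\mathfrak{o}_{\overline{F_v}})$, alors que l'énoncé exige $k\in K_v=\mathcal{M}(\mathfrak{o}_v)$ --- et c'est précisément cette rationalité qui sert dans \ref{prop:transport-structure} pour conclure que $y_1=yk^{-1}$ est rationnel lorsque $y$ l'est. Pour descendre de la base strictement hensélienne à $\mathfrak{o}_v$, il faut un point du transporteur sur le corps résiduel fini, c'est-à-dire la trivialité d'un torseur sous $Z_{\mathcal{M}_{\F_{q_v}}}(\bar{\sigma})$ : le théorème de Lang ne s'applique que si ce centralisateur est connexe, ce qui n'est pas garanti ici. (b) De même, l'ajustement de $k$ destiné à placer $y_1$ dans $M_\sigma$ plutôt que dans $M^\sigma$ suppose que la composante non neutre de la fibre générique de $\mathcal{Z}$ contenant $y_1$ possède un point entier ; or l'adhérence plate d'une telle composante peut a priori avoir une fibre spéciale vide ($\mathcal{Z}$ est affine, non propre), et affirmer qu'elle est ``un torseur trivial sur $\mathfrak{o}_{\overline{F_v}}$'' présuppose exactement qu'elle a des points dans chaque fibre. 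Ces deux difficultés disparaissent lorsque $M_{\text{der}}$ est simplement connexe (les centralisateurs d'éléments semi-simples sont alors connexes et $M^\sigma=M_\sigma$) ; c'est pourquoi le texte se ramène à ce cas par une $z$-extension non ramifiée de $M\times_F F_v$, ingrédient qui manque à votre esquisse et sans lequel le cas général ne me paraît pas atteint.
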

\begin{proof}
  La première assertion résulte de \cite[7.1]{Ko86}. Quant à la deuxième assertion, ledit lemme de Kottwitz fournit une décomposition $y=y_1 k$ avec $y_1 \in M^\sigma(\overline{F_v})$ et $k \in K_v$. Si $M_\text{der}$ est simplement connexe alors $M^\sigma = M_\sigma$ et cela achève la démonstration. Le cas général en résulte à l'aide d'une $z$-extension non ramifiée de $M \times_F F_v$ (cf. \cite[p.386]{Ko86}).
\end{proof}

\subsection{Transport de structure}\index[iFT1]{transport de structure}
On se donne les objets suivants
\begin{itemize}
  \item $S$ un ensemble fini de places de $F$ tel que $S \supset V_\infty$ et $K_v$ est hyperspécial pour tout $v \notin S$;
  \item $M, M' \in \mathcal{L}^G(M_0)$;
  \item $\sigma \in M(F)$ semi-simple tel que $\sigma^S \in K^S$ et que $\sigma$ est $S$-admissible;
  \item idem pour $\sigma' \in M'(F)$;
  \item $\bomega$: caractère automorphe de $M_\sigma(\A)$, trivial sur $K_{\sigma}^S := K^S \cap M_\sigma(\A)$;
  \item $\bomega'$: caractère automorphe de $M'_{\sigma'}(\A)$, trivial sur $K_{\sigma'}^S := K^S \cap M'_{\sigma'}(\A)$.
\end{itemize}

Quitte à conjuguer $\sigma, \sigma'$ et à agrandir $S$, on peut supposer de plus que:
\begin{itemize}
  \item il existe un sous-groupe de Lévi standard $M_1$ tel que $\sigma \in M_1(F)$ mais $\sigma$ n'appartient à aucun sous-groupe de Lévi propre de $M_1$, cela entraîne que $M_{1,\sigma}$ est un sous-groupe de Lévi minimal de $M_\sigma$;
  \item idem, il existe un sous-groupe de Lévi standard $M'_1$ vérifiant ladite condition avec $\sigma', M'$ au lieu de $\sigma, M$.
\end{itemize}

Écrivons $K_\sigma^S = \prod_{v \notin S} K_{\sigma,v}$ et $K_{\sigma'}^S = \prod_{v \notin S} K_{\sigma',v}$. Le lemme de Kottwitz \ref{prop:Kottwitz} affirme que pour tout $v \notin S$, $K_{\sigma,v}$ est un sous-groupe hyperspécial de $M_\sigma(F_v)$; c'est aussi clair que $K_{\sigma,v}$ est en bonne position relativement à $M_{1,\sigma}$. Idem pour $K_{\sigma',v} \subset M'_\sigma(F_v)$ et  $M'_{1,\sigma'}$.

Vu \ref{prop:coef-dependance}, à ces données sont associés les coefficients du développement géométrique fin
$$ a^{M_\sigma,\bomega}(S,\cdot), a^{M'_{\sigma'}, \bomega'}(S,\cdot). $$

On se propose de comparer ces coefficients. Définissons d'abord le transporteur
$$ \mathcal{T}(\sigma,\sigma') := \{y \in G : y\sigma y^{-1} = \sigma', \; y M y^{-1} = M' \}. $$
C'est une sous-variété de $G$ définie sur $F$ sur laquelle $M_\sigma$ (resp. $M'_{\sigma'}$) opère à droite (resp. à gauche) par multiplication.

\begin{hypothesis}\label{hyp:transporteur}\index[iFT1]{$\Omega, \Omega_v$}
  Supposons que
  \begin{itemize}
    \item $\mathcal{T}(\sigma,\sigma')(F) \neq \emptyset$;
    \item pour toute place $v$ de $F$, on fixe une application $\Omega_v: \mathcal{T}(\sigma,\sigma')(F_v) \to \C^\times$, telle que 
    \begin{itemize}
      \item $\Omega_v(x'yx) = \bomega'(x') \Omega_v(y) \bomega(x)$ pour tout $x' \in M'_{\sigma'}(F_v)$, $x \in M_\sigma(F_v)$ et $y \in \mathcal{T}(\sigma,\sigma')(F_v)$;
      \item si $v \notin S$, alors $\Omega_v(x_v)=1$ pour tout $x_v \in K_v \cap \mathcal{T}(\sigma,\sigma')(F_v)$.
    \end{itemize}
  \end{itemize}

  Soit $V \subset V_F$ (éventuellement infini). Par abus de notation, on note $\Omega: \mathcal{T}(\sigma,\sigma')(F_V) \to \C^\times$ l'application donnée par $\prod_{v \in V} \Omega_v$, ce qui est bien définie d'après la dernière condition. En particulier, on sait définir $\Omega: \mathcal{T}(\sigma,\sigma')(\A) \to \C^\times$. On demande de plus que
  \begin{itemize}
    \item $\Omega|_{\mathcal{T}(\sigma,\sigma')F)}=1$.
  \end{itemize}
\end{hypothesis}

\begin{example}\label{ex:caractere}
  Supposons que $\mathcal{T}(\sigma,\sigma')(F) \neq \emptyset$ et qu'il existe un caractère automorphe $\bar{\bomega}: G(\A) \to \C^\times$ qui est trivial sur $K^S$, tel que $\bomega = \bar{\bomega}|_{M_\sigma(\A)}$, $\bomega' = \bar{\bomega}|_{M'_{\sigma'}(\A)}$. Prenons $\Omega_v := \bar{\bomega}|_{\mathcal{T}(\sigma,\sigma')(F_v)}$ pour tout $v$. Alors les conditions sur $\Omega$ sont satisfaites.
\end{example}

Le résultat suivant dit qu'un élément dans $\mathcal{T}(\sigma,\sigma')$ transporte les caractères automorphes.

\begin{lemma}\label{prop:transport-omega}
  Si \ref{hyp:transporteur} est vérifiée, alors pour tout $y \in \mathcal{T}(\sigma,\sigma')(\A)$, on a
  $$ \bomega'(yxy^{-1}) = \bomega(x), \quad x \in M_\sigma(\A). $$
\end{lemma}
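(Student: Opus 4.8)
The plan is to use the two-sided equivariance of $\Omega$ directly, together with the trivial identity $yx = (yxy^{-1})\,y$ inside $\mathcal{T}(\sigma,\sigma')(\A)$.

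First I would check that conjugation by an adelic point $y \in \mathcal{T}(\sigma,\sigma')(\A)$ carries $M_\sigma(\A)$ onto $M'_{\sigma'}(\A)$, so that the expression $yxy^{-1}$ in the statement makes sense as an element of $M'_{\sigma'}(\A)$. This is a place-by-place verification: for each $v$, the point $y_v \in \mathcal{T}(\sigma,\sigma')(F_v)$ satisfies $y_v M_{\overline{F_v}} y_v^{-1} = M'_{\overline{F_v}}$ and $y_v \sigma y_v^{-1} = \sigma'$ over $\overline{F_v}$; intersecting with $G(F_v)$ gives $y_v M(F_v) y_v^{-1} = M'(F_v)$, and passing to the connected centralizers of $\sigma$, resp.\ $\sigma'$, yields an isomorphism $\mathrm{int}(y_v) : M_\sigma(F_v) \rightiso M'_{\sigma'}(F_v)$. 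Assembling over all $v$ gives $\mathrm{int}(y) : M_\sigma(\A) \rightiso M'_{\sigma'}(\A)$.

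Then, fixing $y \in \mathcal{T}(\sigma,\sigma')(\A)$ and $x \in M_\sigma(\A)$, I would compute $\Omega(yx)$ in two ways, using the global function $\Omega = \prod_v \Omega_v$ (well defined on $\mathcal{T}(\sigma,\sigma')(\A)$ because $\Omega_v \equiv 1$ on $K_v \cap \mathcal{T}(\sigma,\sigma')(F_v)$ for $v \notin S$) and its bi-equivariance $\Omega(x'\,z\,x) = \bomega'(x')\,\Omega(z)\,\bomega(x)$ inherited from the $\Omega_v$. On one side, viewing $yx$ as obtained from $y$ by the right action of $x \in M_\sigma(\A)$, one gets $\Omega(yx) = \Omega(y)\,\bomega(x)$. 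On the other side, $yx = (yxy^{-1})\,y$ with $yxy^{-1} \in M'_{\sigma'}(\A)$ by the first step, so viewing $yx$ as obtained from $y$ by the left action of $yxy^{-1}$ one gets $\Omega(yx) = \bomega'(yxy^{-1})\,\Omega(y)$. Since $\Omega(y) \in \C^\times$, dividing gives $\bomega'(yxy^{-1}) = \bomega(x)$, as desired.

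The argument is essentially a one-line manipulation once $\Omega$ is available; the only point needing a little care is the first step — ensuring the adelic conjugation is well defined as a map $M_\sigma(\A) \to M'_{\sigma'}(\A)$ and that the product defining $\Omega$ is legitimate. Note that neither $\mathcal{T}(\sigma,\sigma')(F) \neq \emptyset$ nor $\Omega|_{\mathcal{T}(\sigma,\sigma')(F)} = 1$ is used here; those hypotheses will only intervene in the subsequent comparison of the coefficients $a^{M_\sigma,\bomega}(S,\cdot)$ and $a^{M'_{\sigma'},\bomega'}(S,\cdot)$.
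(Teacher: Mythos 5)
Your proof is correct and is essentially the paper's argument: one evaluates $\Omega(yx)=\Omega\bigl((yxy^{-1})y\bigr)$ in two ways using the bi-equivariance of $\Omega$ and cancels $\Omega(y)\in\C^\times$. The preliminary check that $\mathrm{int}(y)$ carries $M_\sigma(\A)$ into $M'_{\sigma'}(\A)$, and the remark that the rationality hypotheses are not used here, are sound additions but do not change the route.
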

\begin{proof}
  Pour $x \in M_\sigma(\A)$, on a $yxy^{-1} \in M'_{\sigma'}(\A)$. Donc
  $$ \Omega(yxy^{-1}y) = \bomega'(yxy^{-1}) \Omega(y). $$
  Or c'est aussi égal à $\Omega(yx) = \Omega(y) \bomega(x)$, d'où l'assertion.
\end{proof}

\begin{proposition}\label{prop:transport-structure}
  Supposons que \ref{hyp:transporteur} est vérifiée. Soit $y \in \mathcal{T}(\sigma,\sigma')(F)$, alors pour tout $\dot{u} \in \dot{\Gamma}_{\mathrm{unip}}(M_\sigma(F),S)^\bomega$, on a
  $$ a^{M_\sigma,\bomega}(S,\dot{u}) = \Omega(y^S)^{-1} a^{M'_{\sigma'}, \bomega'}(S, y\dot{u}y^{-1}). $$
\end{proposition}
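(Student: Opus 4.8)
The plan is to deduce the identity from transport of structure by the rational element $y$, combined with the uniqueness and independence properties of the coefficients (\ref{prop:int-unip-developpement}, \ref{prop:S_+-S}, \ref{prop:coef-indep-M_0}). First I would observe that, since $y\in\mathcal{T}(\sigma,\sigma')(F)$, conjugation $\theta_y=\mathrm{Int}(y)$ is an $F$-isomorphism $M_\sigma\xrightarrow{\sim}M'_{\sigma'}$, and by \ref{prop:transport-omega} it carries the adelic character $\bomega$ to $\bomega'$. All the data entering the fine unipotent expansion \ref{prop:int-unip-developpement} for $M_\sigma$ — the Harish-Chandra maps, the canonical truncation parameter, the minimal Levi $M_{1,\sigma}$, the maximal compact subgroups $K_\sigma$ — are transported by $\theta_y$. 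Since $\bomega'$ is trivial on both $K_{\sigma'}^S$ and $\theta_y(K_\sigma^S)=yK_\sigma^Sy^{-1}$, the uniqueness of the coefficients in \ref{prop:int-unip-developpement}, applied to $M'_{\sigma'}$ with the transported data, gives
$$ a^{M_\sigma,\bomega}(S,\dot u) = a^{M'_{\sigma'},\bomega'}\bigl(S,\,y\dot u y^{-1};\,yK_\sigma^Sy^{-1}\bigr), $$
where I indicate the dependence on the maximal compact subgroup of $M'_{\sigma'}(F^S)$; no factor of $\Omega$ enters here because $\theta_y$ is a genuine isomorphism (equivalently, $\Omega(y)=1$ as $y$ is rational). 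By \ref{prop:coef-dependance} (and \ref{prop:coef-indep-M_0} to align the minimal Levis) it remains to compare $a^{M'_{\sigma'},\bomega'}(S,\cdot;\,yK_\sigma^Sy^{-1})$ with $a^{M'_{\sigma'},\bomega'}(S,\cdot;\,K_{\sigma'}^S)$.

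Next I would choose a finite $S_+\supseteq S$ with $y_v\in K_v$ for all $v\notin S_+$; then $y_vK_vy_v^{-1}=K_v$, so $y_vK_{\sigma,v}y_v^{-1}=(y_vK_vy_v^{-1})\cap M'_{\sigma'}(F_v)=K_{\sigma',v}$ for $v\notin S_+$, i.e. the two maximal compacts agree outside $S_+$. Applying \ref{prop:S_+-S} to $M'_{\sigma'}$ with the pair $(S,S_+)$ for each of the two compacts, the coefficients $a^{L,\bomega'}(S_+,\cdot)$ — which only depend on the component away from $S_+$ — coincide in the two expressions, so the difference of the two coefficients is carried entirely by the unramified weighted orbital integrals $r^{M'_{\sigma'},\bomega'}_{L,\,\cdot}$ at the places of $S_+\setminus S$. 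It therefore suffices to establish, for each $v\in S_+\setminus S$ and all $\dot w_v$,
$$ r^{M'_{\sigma'},\bomega'}_{L,\;y_vK_{\sigma,v}y_v^{-1}}(\dot w_v) = \Omega_v(y_v)^{-1}\,r^{M'_{\sigma'},\bomega'}_{L,\;K_{\sigma',v}}(\dot w_v), $$
and then to multiply over $v\in S_+\setminus S$: since $\Omega_v(y_v)=1$ for $v\notin S_+$ and $\prod_{\text{all }v}\Omega_v(y_v)=\Omega(y)=1$, the product of the factors is $\prod_{v\in S_+\setminus S}\Omega_v(y_v)^{-1}=\Omega(y^S)^{-1}$, which is exactly the asserted correction.

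For the local identity I would proceed as follows: $y_vK_{\sigma,v}y_v^{-1}$ is a hyperspecial subgroup of $M'_{\sigma'}(F_v)$ (image of the hyperspecial $K_{\sigma,v}$ under an isomorphism), and so is $K_{\sigma',v}$ by Kottwitz's lemma \ref{prop:Kottwitz}. Using \ref{prop:Kottwitz} (applied in $G$ to $\sigma'$, via $y_v^{-1}\sigma'y_v=\sigma\in K_v$) together with \ref{prop:conjugaison-K-dans-M_0} one writes $y_v=z_vw_v$ with $z_v\in L(F_v)$ and $w_v\in K_v$. Since $z_v\in L(F_v)\subseteq M'_{\sigma'}(F_v)$, one checks directly that $w_v=z_v^{-1}y_v\in\mathcal{T}(\sigma,\sigma')(F_v)$ and $w_vK_{\sigma,v}w_v^{-1}=K_{\sigma',v}$, hence $y_vK_{\sigma,v}y_v^{-1}=z_vK_{\sigma',v}z_v^{-1}$; moreover $w_v\in K_v\cap\mathcal{T}(\sigma,\sigma')(F_v)$ forces $\Omega_v(w_v)=1$, so $\Omega_v(y_v)=\bomega'(z_v)\Omega_v(w_v)=\bomega'(z_v)$. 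Then transport of structure by the inner automorphism $\mathrm{Int}(z_v)$ of $M'_{\sigma'}$ — which fixes $L$ (as $z_v\in L(F_v)$) and fixes the character $\bomega'$ — carries $r^{M'_{\sigma'},\bomega'}_{L,K_{\sigma',v}}$ to $r^{M'_{\sigma'},\bomega'}_{L,\,z_vK_{\sigma',v}z_v^{-1}}$, and the $\bomega'$-equivariance of the weighted orbital integral yields $r^{M'_{\sigma'},\bomega'}_{L,\,y_vK_{\sigma,v}y_v^{-1}}(\dot w_v)=\bomega'(z_v)^{-1}r^{M'_{\sigma'},\bomega'}_{L,K_{\sigma',v}}(\dot w_v)=\Omega_v(y_v)^{-1}r^{M'_{\sigma'},\bomega'}_{L,K_{\sigma',v}}(\dot w_v)$, as wanted.

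The hard part will be the local step, namely producing the factorisation $y_v=z_vw_v$ with $z_v\in L(F_v)$ and $w_v\in K_v$. This is the only place where Bruhat–Tits theory enters seriously: one must show that $y_vK_{\sigma,v}y_v^{-1}$ and $K_{\sigma',v}$ are $M'_{\sigma'}(F_v)$-conjugate at all, and that the conjugating element can be chosen compatibly with $K_v$ so that the residual transporter element $w_v$ lies in $K_v$ (hence is $\Omega_v$-trivial). The rest is a formal combination of the uniqueness of the coefficients, the descent/splitting identities for $(G,M)$-families, and the $\bomega$-equivariance properties already in place; one should also check along the way that the minimal Levis and hyperspecial subgroups at the auxiliary places $S_+\setminus S$ satisfy the good-position hypotheses needed to invoke \ref{prop:S_+-S} and \ref{prop:conjugaison-K-dans-M_0}, which is harmless after enlarging $S_+$ and using \ref{prop:coef-indep-M_0}.
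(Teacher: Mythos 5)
Your overall strategy is the paper's: transport of structure by $y$ reduces the claim to comparing the coefficients of one of the two groups for two maximal compact subgroups that agree outside a finite set $S_+$; \ref{prop:S_+-S} then reduces this to the unramified weighted orbital integrals $r^{\cdot,\bomega'}_{L,\cdot}$ at the places of $S_+\setminus S$; and a factorisation of $y$ into a Levi element times an element of $K$ identifies the discrepancy with $\Omega(y^S)$ via the $\bomega'$-equivariance of $r$. (The paper works on the $M_\sigma$ side and semi-locally on $F^S$ rather than place by place, but that is cosmetic.)

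Two points in the step you defer as ``the hard part'' need repair as stated. First, \ref{prop:Kottwitz} applied \emph{in $G$} to $\sigma'$ via $y_v^{-1}\sigma' y_v\in K_v$ only yields $y_v=y_1k$ with $y_1\in G_{\sigma'}(F_v)$; such a $y_1$ need not lie in $M'_{\sigma'}(F_v)$, and $kK_{\sigma,v}k^{-1}$ is then a compact of $kM_\sigma(F_v)k^{-1}$, not of $M'_{\sigma'}(F_v)$, so the conjugacy you need does not yet follow. The paper first writes $y^S=k_1m$ with $k_1\in K^S$ and $m\in M(F^S)$ (possible because $M$ and $M'$ are conjugate by a Weyl element represented in $K$), and only then applies Kottwitz \emph{inside $M$} to land the non-compact factor in $M_\sigma(F^S)$. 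Second, the transport of structure by $z_v$ must be valid simultaneously for every $L$ occurring in the sum of \ref{prop:S_+-S}, so $z_v$ has to lie in the \emph{minimal} Levi $M'_{1,\sigma'}(F_v)$ (equivalently $M_{1,\sigma}$ on the other side), not merely in some $L$; this is exactly where \ref{prop:conjugaison-K-dans-M_0} and the hyperspecial-normaliser argument (to reconcile the Kottwitz factor, which lives in $M_\sigma$, with the Bruhat--Tits conjugator, which lives in $M_{1,\sigma}$, up to a central element absorbed into $x$) are used in the paper. Relatedly, your parenthetical appeal to \ref{prop:coef-indep-M_0} to ``align the minimal Levis'' is not quite applicable, since that statement requires both minimal Levis to be in good position relative to the \emph{same} compact; the paper instead translates $y$ on the left by $M'_{\sigma'}(F)$ at the outset, which is harmless for the asserted identity. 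With these adjustments your argument coincides with the paper's.
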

\begin{proof}
  On peut translater $y$ par $M'_{\sigma'}(F)$ à gauche, donc on se ramène au cas où $y M_{1,\sigma} y^{-1} = M'_{1,\sigma}$. Par  transport de structure, on a
  $$ a^{M'_{\sigma'},\bomega'}(S,y\dot{u}y^{-1}) = a^{M_\sigma,\bomega}(S,\dot{u}; y^{-1} (K^S \cap M'_{\sigma'}(F^S))y). $$

  Posons
  \begin{align*}
    K_1 & := K^S \cap M_\sigma(F^S),\\
    K_2 & := y^{-1}(K^S \cap M'_{\sigma'}(F^S))y.
  \end{align*}
  Alors $K_1, K_2$ sont des sous-groupes compacts maximaux de $M_\sigma(F^S)$ en bonne position relativement à $M_{1,\sigma}$. On doit prouver que
  \begin{gather}\label{eqn:transport-but-1}
    a^{M_\sigma,\bomega}(S,\dot{u};K_2) = \Omega(y^S) a^{M_\sigma,\bomega}(S,\dot{u};K_1)
  \end{gather}

  On prend $S_+ \supset S$ assez grand de sorte que $K_{2,v} = K_{1,v}$ et $y \in K_v$ pour $v \notin S_+$. On applique \ref{prop:S_+-S} avec le Lévi minimal $M_{1,\sigma}$ et $S_+ \supset S$. Ainsi, on est ramené à prouver
  \begin{gather}\label{eqn:transport-but-2}
    r^{M_\sigma, \bomega}_{L,K_2}(\dot{t}) = \Omega(y^S) r^{M_\sigma, \bomega}_{L,K_1}(\dot{t}), \qquad \forall L \in \mathcal{L}^{M_\sigma}(M_{1,\sigma}), \dot{t} \in \dot{\Gamma}(M_\sigma(F^S_{S_+}))^\bomega .
  \end{gather}

  Pour tout $v$, $W_0^G$ est représenté par des éléments de $K_v$. Donc il existe $k_1 \in K^S$ et $m \in M(F^S)$ tels que $y^S=k_1 m$. Alors $m \sigma^S m^{-1} = k_1^{-1} (\sigma')^S k_1 \in K^S \cap M(F^S)$. D'après \ref{prop:Kottwitz} appliqué à $M$ et $K^S \cap M(F^S)$, il existe $k_2 \in K^S \cap M(F^S)$ et $m_\sigma \in M_\sigma(F^S)$ tels que $m=k_1 m_\sigma$. Posons $k := k_1 k_2$, on a
  \begin{gather*}
    y^S = k m_\sigma, \\
    K_2 = y^{-1} K^S y \cap M_\sigma(F^S) = m_\sigma^{-1} K_1 m_\sigma.
  \end{gather*}

  On applique \ref{prop:conjugaison-K-dans-M_0} avec $H=M_\sigma$, $L=M_{1,\sigma}$ et les sous-groupes ouverts compacts maximaux $K_1$, $K_2$. Pour $v \in S_+ \setminus S$, il existe donc $x_v \in M_{1,\sigma}(F_v)$ tel que $K_{2,v} = x_v^{-1} K_{1,v} x_v$; on prend $x_v=1$ pour $v \notin S_+$. Posons $x := (x_v)_{v \notin S} \in M_{1,\sigma}(F^S)$, alors $K_2 = x^{-1} K_1 x$. Or on a aussi $K_2 = m_\sigma^{-1} K_1 m_\sigma$. Parce que $K_1$ est hyperspécial, on a $m_\sigma \in K_1 x$ quitte à multiplier $x$ par un élément de $Z_{M_{1,\sigma}}(F^S)$. Alors $y^S \in K^S x$, d'où $\Omega(y^S) = \bomega(x)$.

  La relation cherchée \eqref{eqn:transport-but-2} résulte du transport du structure:
  $$ r^{M_\sigma, \bomega}_{L,K_2}(\dot{t}) =  r^{M_\sigma, \bomega}_{L, x^{-1} K_1 x}(\dot{t}) = r^{M_\sigma, \bomega}_{L,K_1}(x\dot{t}x^{-1}) = \bomega(x) r^{M_\sigma, \bomega}_{L,K_1}(\dot{t}). $$
\end{proof}

\section{La formule des traces pour les revêtements}\label{sec:formule-traces-revetement}
La plupart de cette section est une paraphrase des travaux fondamentaux d'Arthur \cite{Ar78, Ar80}.

Soit $F$ un corps de nombres, $G$ un $F$-groupe réductif connexe et un revêtement à $m$ feuillets
$$\xymatrix{
  1 \ar[r] & \bmu_m \ar[r] & \tilde{G} \ar[r]^{\rev} & G(\A) \ar[r] & 1 \\
  & & & G(F) \ar@{_{(}->}[lu] \ar@{_{(}->}[u] &
}$$
où nous supprimons le nom de l'immersion $G(F) \hookrightarrow \tilde{G}$; par abus de notation, nous regardons $G(F)$ comme un sous-groupe discret de $\tilde{G}^1$.

On considéra deux cas.
\begin{enumerate}
  \item \textit{Le cas global}: fixons les objets suivants
    \begin{itemize}
      \item $M_0$: un sous-groupe de Lévi minimal de $G$;
      \item $P_0 \in \mathcal{P}(M_0)$;
      \item $K = \prod_v K_v$: un sous-groupe compact maximal de $G(\A)$ en bonne position relativement à $M_0$, tel que pour tout $v \notin V_\text{ram}$, $K_v$ est le sous-groupe compact maximal fixé dans (G\ref{enu:G1}), qui s'identifie à un sous-groupe de $\tilde{G}_v$.
    \end{itemize}
    Si $P$ est un sous-groupe parabolique semi-standard, on note $P = M_P U_P$ sa décomposition de Lévi canonique. Comme d'habitude, si $H$ est un sous-groupe de $G(\A)$, on note $\tilde{H}$ son image réciproque par $\rev$.
  \item \textit{Le cas local}: fixons un ensemble fini $S$ de places de $F$. Considérons le revêtement à $m$ feuillets $\rev_S : \tilde{G}_S \to G(F_S)$. Fixons
    \begin{itemize}
      \item $M_0$ comme précédemment;
      \item $K_S = \prod_{v \in S} K_v$ un sous-groupe compact maximal de $G(F_S)$ en bonne position relativement à $M_0$.
    \end{itemize}
    Si $H$ est un sous-groupe de $G(F_S)$, notons $\tilde{H}_S$ son image réciproque par $\rev_S$.
\end{enumerate}

Soit $M \in \mathcal{L}(M_0)$, on note $K^M$ (resp. $K_S^M$) son intersection avec $M(\A)$ (resp. avec $M(F_S)$). La convention de mesures \S\ref{sec:mesure} s'applique ici. Rappelons en particulier que, soit $\phi$ est une fonction intégrable sur $G(F) \backslash G(\A)^1$, alors $\int_{G(F) \backslash \tilde{G}^1} (\phi \circ \rev) \dd \tilde{x}$ est égale à $\int_{G(F) \backslash G(\A)^1} \phi \dd x$ selon notre convention.

\subsection{La formule des traces grossière}\label{sec:formule-grossiere-revetement}
Plaçons-nous dans le cadre global.

\paragraph{Le noyau tronqué}
Définissons la représentation régulière $(R, L^2(G(F)\backslash \tilde{G}^1))$ par
$$ (R(\tilde{y})\phi)(\tilde{x}) = \phi(\tilde{x}\tilde{y}) $$
pour tout $\phi \in L^2(G(F)\backslash \tilde{G}^1)$, $\tilde{x},\tilde{y} \in G(F) \backslash\tilde{G}^1$. On peut d'ailleurs la regarder comme une représentation sur $L^2(G(F)A_{G,\infty}\backslash \tilde{G})$. Fixons $f \in C_c^\infty(\tilde{G}^1)$. Alors l'opérateur $R(f)$ a pour noyau

$$ K_f(\tilde{x},\tilde{y}) = \sum_{\gamma \in G(F)} f(\tilde{x}^{-1} \gamma \tilde{y}). $$

L'indice $f$ sera supprimé dans la suite. À cause du procédé de troncature, la formule des traces grossière dépendra des choix de $M_0$, $P_0$ et $K$.

Pour tout $P \in \mathcal{F}(M_0)$, posons
$$ K_P(\tilde{x},\tilde{y}) := \int_{U_P(\A)} \sum_{\gamma \in M_P(F)} f(\tilde{x}^{-1}\gamma u \tilde{y}) \dd u $$
avec $\tilde{x}, \tilde{y} \in U_P(\A) M_P(F) \backslash \tilde{G}$. On constate que pour tout $\tilde{x} \in \tilde{G}^1$, $K_P(\tilde{x},\tilde{x})$ ne dépend que de $x := \rev(\tilde{x}) \in G(\A)^1$; on l'écrit aussi $K_P(x,x)$.

Pour $T \in \mathfrak{a}_0^+$ qui est suffisamment régulier, définissons le noyau tronqué
$$ k^T(x) := \sum_{P \supset P_0} (-1)^{\dim A_P/A_G} \sum_{\delta \in P(F)\backslash G(F)} K_P(\delta x,\delta x) \hat{\tau}_P(H_P(\delta x) - T), $$
où $P$ décrit les sous-groupes paraboliques standards.

Il sera démontré que $k^T$ est intégrable sur $G(F) \backslash G(\A)^1$, et on en obtiendra les développements géométrique et spectral. Or c'est déjà clair que l'aspect ``géométrique'' de la troncature, c'est-à-dire celui qui s'agit des classes de conjugaison rationnelles et des objets dans \S\ref{sec:combinatoire}, est identique à celui de la formule des traces grossière de $G$: le revêtement n'y intervient pas.

\paragraph{Le $\mathfrak{o}$-développement}
Rappelons que dans \S\ref{sec:trace-omega-geometrique} l'on a défini la notion de $\mathcal{O}$-équivalence des éléments dans $G(F)$. Adoptons les mêmes notations ici. Pour tout $M \in \mathcal{L}(M_0)$, on a défini une application canonique $\mathcal{O}^M \to \mathcal{O}^G$ à fibres finies.

Pour tout $\mathfrak{o} \in \mathcal{O}^G$ et $P \in \mathcal{F}(M_0)$, posons comme précédemment
\begin{align*}
  K_{P,\mathfrak{o}}(\tilde{x},\tilde{y}) & := \sum_{\gamma \in M_P(F) \cap \mathfrak{o}}\; \int_{U_P(\A)} f(\tilde{x}^{-1} \gamma u \tilde{y}) \dd u,\\
  k^T_\mathfrak{o}(x) & := \sum_{P \subset P_0} (-1)^{\dim A_P/A_G} \sum_{\gamma \in P(F) \backslash G(F)} K_{P,\mathfrak{o}}(\delta x,\delta x) \hat{\tau}_P(H_P(\delta x)-T),
\end{align*}
alors $K_P = \sum_{\mathfrak{o}} K_{P,\mathfrak{o}}$ et $k^T = \sum_{\mathfrak{o}} k_\mathfrak{o}$.

\begin{theorem}[cf. {\cite[7.1]{Ar78}}]
  Si $T \in \mathfrak{a}_0^+$ est suffisamment régulier, alors
  $$ J^T(f) := \sum_{\mathfrak{o} \in \mathcal{O}^G} \int_{G(F) \backslash G(\A)^1} k^T_\mathfrak{o}(x) \dd x $$
  converge absolument.
\end{theorem}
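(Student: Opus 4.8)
The statement asserts absolute convergence of $\sum_{\mathfrak{o}} \int_{G(F)\backslash G(\A)^1} |k^T_{\mathfrak{o}}(x)|\,dx$ for $T$ sufficiently regular, for the covering group $\tilde G$. As the preamble to the theorem already stresses, the geometric side of the truncation is \emph{identical} to Arthur's in \cite{Ar78}: the covering $\rev$ never enters the combinatorics of truncation, since $K_{P,\mathfrak{o}}(\tilde x,\tilde x)$ depends only on $x=\rev(\tilde x)\in G(\A)^1$, as was observed when $K_P$ was introduced. So the plan is simply to reduce to Arthur's theorem. The first step is to make this reduction precise: for $f\in C_c^\infty(\tilde G^1)$, the kernel $K_P(\tilde x,\tilde y) = \int_{U_P(\A)}\sum_{\gamma\in M_P(F)} f(\tilde x^{-1}\gamma u\tilde y)\,du$ is well-defined because $U_P(\A)$ is embedded in $\tilde G$ via the unipotent splitting (Proposition~\ref{prop:scindage-unip}) and the integrand is compactly supported; restricting to the diagonal $\tilde x=\tilde y$ and using that $f$ is $\asp$-equivariant (or just summing over the $\bmu_m$-decomposition of $C_c^\infty(\tilde G^1)$), the function $x\mapsto K_P(\tilde x,\tilde x)$ is a well-defined function on $G(\A)^1$, indeed supported in a set of the form $\rev(\Supp f)$-related compactum modulo $P(F)$. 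Hence $k^T_{\mathfrak{o}}(x)$ is literally a function on $G(\A)^1$ built out of the same data $(M_0,P_0,K,T)$ and the function $\bar f$ on $G(\A)^1$ obtained by "pushing $f$ down" — more precisely, one chooses any $\bar f\in C_c^\infty(G(\A)^1)$ whose averaged-over-$\bmu_m$ lift dominates $|f|$, e.g. $\bar f(x) := \sum_{\noyau\in\bmu_m} |f(\noyau\tilde x)|$ for any $\tilde x\in\rev^{-1}(x)$, which is continuous and compactly supported.

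The second step is the key estimate. Since $|f(\tilde x^{-1}\gamma u\tilde y)| \le \bar f(x^{-1}\gamma u y)$ pointwise, we get $|K_{P,\mathfrak{o}}(\tilde x,\tilde x)| \le K^{\bar f}_{P,\mathfrak{o}}(x,x)$ where the right side is the classical Arthur kernel for $\bar f\in C_c^\infty(G(\A)^1)$. Therefore
$$ |k^T_{\mathfrak{o}}(x)| \le \sum_{P\supset P_0} \sum_{\delta\in P(F)\backslash G(F)} K^{\bar f}_{P,\mathfrak{o}}(\delta x,\delta x)\,\hat\tau_P(H_P(\delta x)-T), $$
and here I would invoke Arthur's basic positivity/combinatorial lemma (the one underlying \cite[\S7]{Ar78}): for $T$ sufficiently regular the alternating sum $k^{T,\bar f}_{\mathfrak{o}}(x) = \sum_P (-1)^{\dim A_P/A_G}\sum_\delta K^{\bar f}_{P,\mathfrak{o}}(\delta x,\delta x)\hat\tau_P(\cdots)$ is such that the \emph{unsigned} sum is bounded by a finite-to-one reindexing argument; concretely, Arthur shows $\sum_{\mathfrak{o}}\int_{G(F)\backslash G(\A)^1} \big(\sum_P \sum_\delta K^{\bar f}_{P,\mathfrak{o}}(\delta x,\delta x)\hat\tau_P(\cdots)\big)\,dx < \infty$. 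Granting that (it is exactly \cite[Theorem 7.1]{Ar78} together with its proof, which controls the absolute values, not merely the alternating sum), we conclude $\sum_{\mathfrak{o}}\int_{G(F)\backslash G(\A)^1} |k^T_{\mathfrak{o}}(x)|\,dx \le \sum_{\mathfrak{o}}\int_{G(F)\backslash G(\A)^1} \sum_P\sum_\delta K^{\bar f}_{P,\mathfrak{o}}(\delta x,\delta x)\hat\tau_P(\cdots)\,dx < \infty$, which is the assertion. The integral over $G(F)\backslash G(\A)^1$ is the right one because, by the measure convention in \S\ref{sec:mesure}, $\int_{G(F)\backslash\tilde G^1}(\phi\circ\rev)\,d\tilde x = \int_{G(F)\backslash G(\A)^1}\phi\,dx$, so there is genuinely nothing adélic-covering-theoretic left once one is downstairs.

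The main obstacle — really the only point requiring care — is the very first reduction: checking that all the objects ($K_P(\tilde x,\tilde x)$, $k^T_{\mathfrak{o}}(x)$, the sum over $\delta\in P(F)\backslash G(F)$) descend unambiguously to $G(\A)^1$ and that the domination $|f(\tilde x^{-1}\gamma u\tilde y)|\le\bar f(x^{-1}\gamma u y)$ is legitimate, which hinges on the unipotent splitting being used \emph{consistently} inside every $U_P(\A)$ and on $G(F)\hookrightarrow\tilde G$ being a homomorphism (both fixed in the global covering data) — so that the product $\tilde x^{-1}\gamma u\tilde y$ indeed has image $x^{-1}\gamma u y$. Once this bookkeeping is in place, the finiteness is not reproved: one quotes \cite[7.1]{Ar78}. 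I would therefore write the proof as: "observe $k^T_{\mathfrak{o}}(x)$ depends only on $x$ and is dominated in absolute value, uniformly, by the analogous expression for $\bar f\in C_c^\infty(G(\A)^1)$; hence the claim follows from \cite[7.1]{Ar78}", spelling out the two displayed inequalities above and the measure-compatibility remark, and nothing more.
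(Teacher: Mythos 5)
Your first step — that $K_P(\tilde x,\tilde x)$ and hence $k^T_{\mathfrak{o}}$ descend to functions of $x=\rev(\tilde x)$ alone — is correct and is exactly the observation the paper makes before stating the theorem; note however that the reason is simply that $\bmu_m$ is central (replacing $\tilde x$ by $\noyau\tilde x$ conjugates the argument of $f$ by a central element), not any equivariance of $f$, which is not assumed here. The gap is in your second step. You bound $|k^T_{\mathfrak{o}}(x)|$ by the \emph{unsigned} sum $\sum_P\sum_\delta K^{\bar f}_{P,\mathfrak{o}}(\delta x,\delta x)\,\hat\tau_P(H_P(\delta x)-T)$ and assert that Arthur's proof of \cite[7.1]{Ar78} shows this unsigned sum is integrable. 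It does not, and it cannot: already the single term $P=G$, for which $\hat\tau_G\equiv 1$, is $\sum_{\gamma\in\mathfrak{o}}\bar f(x^{-1}\gamma x)$, whose integral over $G(F)\backslash G(\A)^1$ diverges in general (e.g.\ for the class $\mathfrak{o}\ni 1$, by the divergence of global unipotent orbital integrals — the very reason truncation is needed). Arthur's theorem bounds $|k^T_{\mathfrak{o}}(x)|$ with the absolute value \emph{outside} the alternating sum over $P$; the cancellation between parabolics, extracted via the combinatorial identities and Poisson summation on the unipotent radicals, is the entire content of his proof. Your pointwise domination $|f(\tilde x^{-1}\gamma u\tilde y)|\le\bar f(x^{-1}\gamma u y)$ destroys precisely that cancellation: the sheet of $\bmu_m$ on which $\tilde x^{-1}\gamma u\tilde x$ lies varies with $\gamma$, so $K_P(\tilde x,\tilde x)$ is not the Arthur kernel of any single function on $G(\A)^1$ and no reduction-by-domination to the linear case is available.

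The intended argument — and the paper's one-line proof — is therefore not a domination but a rerun: one checks that every step of Arthur's proof (the rewriting of $k^T_{\mathfrak{o}}$ as a sum over pairs of parabolics $P_1\subset P_2$ with the characteristic functions $F^{P_1}$, and the key estimate on the alternating sum of kernels via Fourier analysis on $U_{P_1}(F)\backslash U_{P_1}(\A)$) uses only the conjugation action of $G(F)$, the splittings of $\rev$ over $G(F)$ and over the unipotent radicals, and the measure compatibilities of \S\ref{sec:mesure}; all of these are available on $\tilde G$, so the argument goes through verbatim with $f$ living upstairs. Your write-up should say this, rather than quote \cite[7.1]{Ar78} as a black box applied to $\bar f$.
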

\begin{proof}
  Comme nous avons remarqué, le revêtement n'intervient pas dans la troncature, et les arguments  de \cite{Ar78} marchent de la même manière.
\end{proof}

C'est donc loisible de poser $J^T_\mathfrak{o}(f) := \int_{G(F) \backslash G(\A)^1} k^T_\mathfrak{o}(x) \dd x$ pourvu que $T \in \mathfrak{a}_0^+$ est suffisamment régulier.

\paragraph{Le $\chi$-développement}
Soit $f \in L^2(G(F) \backslash \tilde{G}^1)$, le terme constant de $f$ le long d'un sous-groupe parabolique $P$ est défini par
$$ f_P: \tilde{x} \mapsto \int_{U_P(F) \backslash U_P(\A)} f(u \tilde{x}) \dd u $$
pour presque tout $\tilde{x} \in U_P(\A)G(F) \backslash \widetilde{G}^1$, à l'aide du scindage unipotent. On dit que $f$ est cuspidale si $f_P=0$ pour tout $P \subsetneq G$. Cela permet de définir l'espace des fonctions $L^2$ cuspidales $L^2_{\text{cusp}}(G(F) \backslash \tilde{G}^1)$. D'après le théorème de Gelfand et Piatetski-Shapiro, $L^2_\text{cusp}(G(F) \backslash \tilde{G}^1)$ est inclus dans $L^2_\text{disc}(G(F) \backslash \tilde{G}^1)$, la sous-représentation maximale de $L^2(G(F) \backslash \tilde{G}^1)$ qui se décompose discrètement. Posons $\Pi_\text{cusp}(\tilde{G}^1)$ l'ensemble des classes d'équivalence de représentations irréductibles intervenant dans $L^2_\text{cusp}(G(F) \backslash \tilde{G}^1)$. La théorie de décomposition spectrale est parallèle à celle pour les groupes réductifs et est bien établie dans \cite{MW94}.

Le groupe central $\bmu_m$ agit par translation sur de telles représentations. Notons la partie $\xi$-équivariante par $\Pi_{\text{cusp},\xi}(\tilde{G}^1)$ pour tout $\xi \in \widehat{\bmu_m}$. En particulier, on peut parler de la partie spécifique $\Pi_{\text{cusp},-}(\tilde{G}^1)$.

Le groupe de Weyl $W_0^G$ agit sur les paires $(M,\sigma)$ où $M \in \mathcal{L}(M_0)$ et $\sigma$ est une représentation automorphe cuspidale de $M(\A)^1$. L'ensemble des orbites $[M,\sigma]$ ainsi obtenues est noté $\mathfrak{X}$, ou $\mathfrak{X}^{\tilde{G}}$ si une confusion sur $\tilde{G}$ est à craindre.


Soit $M$ un sous-groupe de Lévi de $G$, on a une application canonique $\mathfrak{X}^{\tilde{M}} \to \mathfrak{X}^{\tilde{G}}$ donnée par $[M_1, \sigma_1]^M \mapsto [M_1, \sigma_1]^G$ (où $M_1 \in \mathcal{L}^M(M_0)$). Elle est à fibres finies.

Soient $P=M_P U_P \in \mathcal{F}(M_0)$, $\lambda \in \mathfrak{a}_{M,\C}^*$. Notons $R_{\widetilde{M_P},\text{disc}}$ la représentation régulière sur $L^2_\text{disc}(M_P(F) \backslash \widetilde{M_P}^1)$. Puisque $\widetilde{M_P} = \widetilde{M_P}^1 \times A_{M_P,\infty}$, on peut la regarder comme une représentation de $\widetilde{M_P}$. Notons $R_{\widetilde{M_P},\text{disc},\lambda}$ la représentation de $\widetilde{M_P}$ définie par
$$ R_{\widetilde{M_P},\text{disc},\lambda}(\tilde{m}) = R_{\widetilde{M_P},\text{disc}}(\tilde{m}) e^{\angles{\lambda,H_M(m)}}. $$

Notons $\mathcal{I}_{\tilde{P}}(\lambda)$ l'induite parabolique normalisée de $R_{\widetilde{M_P},\text{disc},\lambda}$. Définissons l'espace hilbertien $\mathcal{H}_{\tilde{P}}$ des fonctions mesurables $\phi: U_P(\A) M_P(F) A_{M,\infty} \backslash \tilde{G} \to \C$ telles que
\begin{itemize}
  \item pour presque tout $\tilde{x} \in \tilde{G}$, $\phi_{\tilde{x}}: \tilde{m} \mapsto \phi(\tilde{m}\tilde{x})$ appartient à $L^2_\text{disc}(M_P(F)\backslash \widetilde{M_P}^1)$,
  \item $\displaystyle \|\phi\|_2^2 = \iint_{\tilde{K} \times (M_P(F) \backslash \widetilde{M_P}^1)} |\phi(\tilde{m}\tilde{k})|^2 \dd\tilde{m}\dd\tilde{k} < +\infty.$
\end{itemize}
Alors $\mathcal{H}_{\tilde{P}}$ peut être vu comme l'espace sous-jacent de $\mathcal{I}_{\tilde{P}}(\lambda)$ muni de l'action
$$ (\mathcal{I}_{\tilde{P}}(\lambda, \tilde{y})\phi)(\tilde{x}) = \phi(\tilde{x}\tilde{y}) e^{\angles{\lambda+\rho_P, H_P(xy)-H_P(x)}}. $$

Remarquons que $\mathcal{H}_{\tilde{P}}$ et la restriction de $\mathcal{I}_{\tilde{P}}(\lambda)$ à $\tilde{K}$ ne dépendent pas de $\lambda$. Notons $\mathcal{H}_{\tilde{P},\text{cusp}}$ le sous-espace des fonctions $\phi \in \mathcal{H}_{\tilde{P}}$ telle que $\phi_{\tilde{x}} \in L^2_\text{cusp}(M_P(F) \backslash \widetilde{M_P}^1)$ pour presque tout $\tilde{x}$. Notons $\mathcal{H}_{\tilde{P}}^0$ le sous-espace dense de vecteurs $\tilde{K}$-finis et $\mathcal{H}_{\tilde{P},\text{cusp}}^0$ son intersection avec $\mathcal{H}_{\tilde{P},\text{cusp}}$.

Passons en revue la théorie de la décomposition spectrale.
\begin{enumerate}
  \item On sait construire un sous-espace fermé invariant $L^2_\chi(G(F) \backslash \tilde{G})$, engendré par les séries d'Eisenstein attachées à $\chi$ (\cite[II.2.4]{MW94}). Il existe une décomposition orthogonale
  $$ L^2(G(F) \backslash \tilde{G}^1) = \bigoplus_{\chi \in \mathfrak{X}^{\tilde{G}}} L^2_\chi(G(F) \backslash \tilde{G}^1). $$
  \item Soient $Q \in \mathcal{P}(M_P)$, $w \in W(\mathfrak{a}_P, \mathfrak{a}_{Q})$. On définit l'opérateur d'entrelacement suivant la recette de Mackey (cf. \cite[II.1.6]{MW94}):
  \begin{gather*}
    M_{Q|P}(w,\lambda): \mathcal{H}_{\tilde{P}} \to \mathcal{H}_{\tilde{Q}} \quad (\lambda \in \mathfrak{a}_{M,\C}^*) \\
    (M_{Q|P}(w,\lambda)\phi)(\tilde{x}) = \int_{U_{Q|P}(\hat{w},\A)} \phi(\hat{w}^{-1}u\tilde{x}) e^{\angles{\lambda+\rho_P, H_P(\hat{w}^{-1}ux)}} e^{\angles{-w\lambda-\rho_Q, H_Q(x)}} \dd u,
  \end{gather*}
  où $\hat{w} \in G(F)$ est un représentant quelconque de $w$ et
  $$ U_{Q|P}(\hat{w},\A) := (U_Q(\A) \cap \hat{w}U_P(\A)\hat{w}^{-1}) \backslash U_Q(\A). $$
  Notons $(\mathfrak{a}_P^*)_+$ le cône dual à ${}^+ \mathfrak{a}_P$, alors cette intégrale est convergente et holomorphe en $\lambda$ si $\mathrm{Re}(\lambda) \in \rho_P + (\mathfrak{a}_P^*)^+$. Elle admet une prolongement méromorphe sur $\mathfrak{a}_{M,\C}^*$.
\end{enumerate}

À l'aide de la décomposition spectrale, on définit les espaces $\mathcal{H}_{\tilde{P},\chi}$, $\mathcal{H}_{\tilde{P},\chi}^0$ et l'opérateur $\mathcal{I}_{\tilde{P},\chi}(\lambda)$ en prenant l'intersection avec $L^2_{\chi}$. On en déduit des décompositions $\mathcal{H}_{\tilde{P}} = \bigoplus_\chi \mathcal{H}_{\tilde{P},\chi}$ et $\mathcal{H}_{\tilde{P}}^0 = \bigoplus_\chi \mathcal{H}_{\tilde{P},\chi}^0$.

Soit $Q$ un sous-groupe parabolique standard de $G$. Posons $n_Q = n^G_Q := \sum_{Q'} |W(\mathfrak{a}_Q, \mathfrak{a}_{Q'})|$, la somme portant sur les sous-groupes paraboliques standards $Q'$. La même définition s'applique aux sous-groupes de Lévi de $G$, d'où les versions relatives $n^P_{P_1} := n_{P_1 \cap M_P}^{M_P}$ pour $P_1 \subset P$ sous-groupes paraboliques standards. Choisissons une base orthonormée $\mathcal{B}_{\tilde{P}_1,\chi}$ pour $\mathcal{H}_{\tilde{P}_1,\chi}$ formée de vecteurs $\tilde{K}$-finis pour tout $\chi$, alors $\mathcal{B}_{\tilde{P}_1} := \bigsqcup_\chi \mathcal{B}_{\tilde{P}_1,\chi}$ est une base orthonormée de $\mathcal{H}_{\tilde{P}_1}$ contenue dans $\mathcal{H}_{\tilde{P}_1}^0$. Posons

\begin{align*}
  K_{P,\chi}(\tilde{x},\tilde{y}) & := \sum_{P_1 \subset P} (n^P_{P_1})^{-1} \int_{i\mathfrak{a}_{P_1}^*} \sum_{\phi \in \mathcal{B}_{\widetilde{P_1},\chi}} E^{\widetilde{P}}_{\widetilde{P_1}}(\tilde{x}, \mathcal{I}_{\widetilde{P_1},\chi}(\lambda, f)\phi, \lambda) \overline{E^{\widetilde{P}}_{\widetilde{P_1}}(\tilde{y},\phi,\lambda)} \dd \lambda, \\
  k_\chi^T(x) & := \sum_{P \supset P_0} (-1)^{\dim A_P/A_G} \sum_{\delta \in P(F)\backslash G(F)} K_{P,\chi}(\delta x,\delta x) \hat{\tau}_P(H_P(\delta x)-T),
\end{align*}
où
$$ E^{\widetilde{P}}_{\widetilde{P_1}}(\tilde{x},\phi, \lambda) = \sum_{\gamma \in P_1(F) \backslash P(F)} \phi(\gamma\tilde{x}) e^{\angles{\lambda+\rho_{P_1}, H_{P_1}(x)}} $$
est la ``série d'Eisenstein''; elle est convergente si $\mathrm{Re}(\lambda) \in \rho_{P_1} + (\mathfrak{a}_{P_1}^*)^+$, et elle admet un prolongement méromorphe à tout $\lambda \in \mathfrak{a}_{M_1,\C}^*$. La définition de $K_{P,\chi}$ ne dépend évidemment pas du choix des bases $\mathcal{B}_{\tilde{P}_1,\chi}$. La décomposition spectrale affirme que $K_P = \sum_\chi K_{P,\chi}$ et $k^T = \sum_\chi k_\chi^T$.

\begin{theorem}[cf. {\cite[2.1]{Ar80}}]
  Si $T \in \mathfrak{a}_0^+$ est suffisamment régulier, alors
  $$ \sum_{\chi \in \mathfrak{X}^{\tilde{G}}} \; \int_{G(F) \backslash G(\A)^1} k^T_\chi(x) \dd x $$
  converge absolument.
\end{theorem}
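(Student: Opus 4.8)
Le plan est de transcrire la démonstration d'Arthur \cite[\S 2]{Ar80}, l'unique rôle du revêtement étant de raffiner la décomposition spectrale suivant les caractères de $\bmu_m$, ce qui n'affecte aucune des estimations. Fixons $T \in \mathfrak{a}_0^+$ suffisamment régulier; il s'agit de majorer $\int_{G(F) \backslash G(\A)^1} |k^T_\chi(x)| \dd x$ par une quantité sommable en $\chi \in \mathfrak{X}^{\tilde{G}}$.

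Première étape, purement combinatoire. En insérant la définition de $K_{P,\chi}$ en termes des séries d'Eisenstein $E^{\widetilde{P}}_{\widetilde{P_1}}$ et des opérateurs $\mathcal{I}_{\widetilde{P_1},\chi}(\lambda,f)$, puis en appliquant les identités sur les fonctions $\hat{\tau}^Q_P$ et $\tau^Q_P$ de \S\ref{sec:combinatoire}, on réécrit $k^T_\chi(x)$ comme l'image du noyau spectral par l'opérateur de troncature d'Arthur $\Lambda^T$. Cette réécriture, le changement de variables sur $G(F) \backslash G(\A)^1$ et les sommes sur $P(F) \backslash G(F)$ sont exactement ceux du cas des groupes réductifs connexes: le revêtement n'intervient ni dans $H_P$, ni dans $\hat{\tau}_P$, ni dans $\mathfrak{a}_0^+$. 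On se ramène ainsi à contrôler les produits scalaires de séries d'Eisenstein tronquées $\langle \Lambda^T E^{\widetilde{G}}_{\widetilde{P_1}}(\cdot,\phi,\lambda), \Lambda^T E^{\widetilde{G}}_{\widetilde{P_1}}(\cdot,\phi',\lambda) \rangle$ pour $\phi, \phi' \in \mathcal{B}_{\widetilde{P_1},\chi}$ et $\lambda \in i\mathfrak{a}_{P_1}^*$.

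Deuxième étape, analytique. Les relations de Maass--Selberg pour ces produits scalaires et les bornes de croissance polynomiale des opérateurs d'entrelacement $M_{Q|P}(w,\lambda)$ sur l'axe unitaire sont fournies par la théorie de la décomposition spectrale de Langlands--Mœglin--Waldspurger sur les revêtements \cite{MW94}, rappelée dans \S\ref{sec:formule-grossiere-revetement}: holomorphie de $M_{Q|P}(w,\lambda)$ sur $i\mathfrak{a}_M^*$, équations fonctionnelles, et indépendance de $\lambda$ de l'espace $\mathcal{H}_{\tilde{P}}$ et de la restriction de $\mathcal{I}_{\tilde{P}}(\lambda)$ à $\tilde{K}$. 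De plus $\lambda \mapsto \mathcal{I}_{\widetilde{P_1},\chi}(\lambda,f)$ est à décroissance rapide (transformée de Paley--Wiener spectrale), la somme sur $\chi$ des normes correspondantes étant convergente puisque $f$ est lisse à support compact. Comme $\bmu_m$ est central fini et que $\tilde{G}$ appartient à la classe de Harish-Chandra aux places archimédiennes (resp. est localement profini aux places finies), toutes les majorations de \cite{Ar80} — en particulier celle qui donne la décroissance rapide du noyau tronqué dans un domaine de Siegel et donc la convergence absolue de $\sum_\chi \int |k^T_\chi|$ — se recopient sans changement.

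L'obstacle, en réalité mineur, est de s'assurer que les estimations fines de \cite[\S 2]{Ar80} ne reposent que sur des propriétés préservées sur $\tilde{G}$: finitude des $\tilde{K}$-types, théorie de Paley--Wiener, holomorphie et bornes polynomiales des opérateurs d'entrelacement, et contrôle du spectre discret des revêtements des sous-groupes de Lévi. Puisque \cite{MW94} développe précisément la décomposition spectrale dans cette généralité, il n'y a rien de nouveau à établir; on se bornera, comme pour le $\mathfrak{o}$-développement, à signaler que les arguments d'Arthur s'appliquent verbatim.
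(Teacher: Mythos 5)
Votre démonstration est correcte et suit exactement la même stratégie que celle du texte: on reprend l'argument d'Arthur \cite{Ar80} en observant que la troncature est identique au cas des groupes réductifs connexes et que l'ingrédient spectral (décomposition de Langlands, opérateurs d'entrelacement, séries d'Eisenstein) est fourni pour les revêtements par \cite{MW94}. Votre rédaction est simplement plus détaillée que celle du papier, qui se contente de ces deux remarques.
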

\begin{proof}
  On peut reprendre \cite{Ar80}. On a déjà remarqué que la troncature est pareille que dans le cas des groupes réductifs. L'aspect spectral des arguments d'Arthur repose sur la théorie de décomposition spectrale, dont la généralisation aux revêtements est faite dans \cite{MW94}.
\end{proof}
C'est donc loisible de poser $J^T_\chi(f) := \int_{G(F) \backslash G(\A)^1} k^T_\chi(x) \dd x$, pour tout $\chi \in \mathfrak{X}^{\tilde{G}}$.

Selon l'action de $\bmu_m$, on a une décomposition $\mathfrak{X} = \bigsqcup_{\xi \in \widehat{\bmu_m}} \mathfrak{X}_\xi$. Fixons $\xi \in \widehat{\bmu_m}$, alors la $\xi$-équivariance de représentations est préservée par induction parabolique. Le résultat suivant en découle.

\begin{proposition}
  Soient $\xi, \eta \in \widehat{\bmu_m}$. Si $\chi \in \mathfrak{X}_\xi$, $f \in C^\infty_{c,\eta}(\tilde{G}^1)$, alors $K_{P,\chi,f}(\tilde{x},\tilde{y}) = 0$ pour tout $\tilde{x},\tilde{y} \in \tilde{G}^1$ sauf si $\xi = \bar{\eta}$. Idem pour $k^T_\chi(f)$ et $J^T_\chi(f)$.
\end{proposition}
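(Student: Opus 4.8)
For $\xi, \eta \in \widehat{\bmu_m}$, if $\chi \in \mathfrak{X}_\xi$ and $f \in C^\infty_{c,\eta}(\tilde{G}^1)$, then $K_{P,\chi,f}(\tilde{x},\tilde{y}) = 0$ for all $\tilde{x}, \tilde{y}$ unless $\xi = \bar\eta$; likewise $k^T_\chi(f) = 0$ and $J^T_\chi(f) = 0$.

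The plan is to reduce the whole statement to the elementary central‑character computation already recorded in the Remarque of \S\ref{sec:generalites}, applied now to the induced representations $\mathcal{I}_{\widetilde{P_1},\chi}(\lambda)$. First I would check that for $\chi \in \mathfrak{X}_\xi$ the Hilbert space $\mathcal{H}_{\tilde{P}_1,\chi}$ carries a $\xi$‑equivariant action of $\bmu_m$, i.e. $\mathcal{I}_{\widetilde{P_1},\chi}(\lambda,\noyau) = \xi(\noyau)\,\mathrm{id}$ for all $\noyau \in \bmu_m$. Indeed $\mathcal{H}_{\tilde{P}_1,\chi}$ is cut out from the $\chi$‑isotypic part of $L^2_{\mathrm{disc}}(M_{P_1}(F)\backslash\widetilde{M_{P_1}}^1)$, which is $\xi$‑equivariant since $\chi = [M_1,\sigma_1] \in \mathfrak{X}_\xi$, and the normalized parabolic induction defining $\mathcal{I}_{\widetilde{P_1}}(\lambda)$ does not alter the $\bmu_m$‑action: both $\delta_{P_1}$ and $H_{P_1}$ factor through $\rev$ and so are trivial on $\bmu_m$, which is moreover central in $\tilde{G}$. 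Hence $\bmu_m$ acts on $\mathcal{I}_{\widetilde{P_1},\chi}(\lambda)$ exactly by $\xi$.

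Next, for $f \in C^\infty_{c,\eta}(\tilde{G}^1)$ I would run the substitution $\tilde{y}\mapsto\noyau\tilde{y}$ (which preserves $\tilde{G}^1$, as $\bmu_m\subset\tilde{G}^1$) in $\mathcal{I}_{\widetilde{P_1},\chi}(\lambda,f) = \int_{\tilde{G}^1} f(\tilde{y})\,\mathcal{I}_{\widetilde{P_1},\chi}(\lambda,\tilde{y})\,\dd\tilde{y}$: invariance of Haar measure on $\tilde{G}^1$, the relation $f(\noyau\tilde{y}) = \eta(\noyau)f(\tilde{y})$, and $\mathcal{I}_{\widetilde{P_1},\chi}(\lambda,\noyau\tilde{y}) = \xi(\noyau)\,\mathcal{I}_{\widetilde{P_1},\chi}(\lambda,\tilde{y})$ together give $\mathcal{I}_{\widetilde{P_1},\chi}(\lambda,f) = \xi(\noyau)\eta(\noyau)\,\mathcal{I}_{\widetilde{P_1},\chi}(\lambda,f)$ for every $\noyau$. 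Therefore, if $\xi\eta \neq 1$, that is $\xi \neq \bar{\eta}$, then $\mathcal{I}_{\widetilde{P_1},\chi}(\lambda,f) = 0$ for every $\lambda \in i\mathfrak{a}_{P_1}^*$ and every standard $P_1 \subset P$.

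Finally I would substitute this into the defining formula for $K_{P,\chi,f}(\tilde{x},\tilde{y})$: each summand contains the factor $E^{\widetilde{P}}_{\widetilde{P_1}}(\tilde{x},\mathcal{I}_{\widetilde{P_1},\chi}(\lambda,f)\phi,\lambda)$, which now vanishes identically in $\tilde{x}$, $\lambda$ and $\phi$; hence $K_{P,\chi,f}(\tilde{x},\tilde{y}) = 0$ for all $\tilde{x},\tilde{y}\in\tilde{G}^1$ when $\xi\neq\bar{\eta}$. Since $k^T_\chi(f)$ is the alternating sum over standard $P \supset P_0$ of finite sums of values $K_{P,\chi,f}(\delta x,\delta x)$ weighted by $\hat{\tau}_P$, and $J^T_\chi(f)$ is the integral of $k^T_\chi(\,\cdot\,,f)$ over $G(F)\backslash G(\A)^1$, both vanish as well. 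I do not anticipate a genuine obstacle: the only point needing a sentence of justification is the $\xi$‑equivariance of $\mathcal{I}_{\widetilde{P_1},\chi}(\lambda)$, which follows from the spectral decomposition of \cite{MW94} together with the centrality of $\bmu_m$; everything else is the formal central‑character vanishing already used in \S\ref{sec:generalites}.
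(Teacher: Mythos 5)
Votre démonstration est correcte et suit exactement la voie du texte : la seule justification que donne l'article est que la $\xi$-équivariance est préservée par l'induction parabolique, après quoi l'annulation de $\mathcal{I}_{\widetilde{P_1},\chi}(\lambda,f)$ pour $\xi\neq\bar{\eta}$ est le calcul de caractère central de la Remarque de \S\ref{sec:generalites}, puis se propage à $K_{P,\chi,f}$, $k^T_\chi$ et $J^T_\chi$. Vous ne faites qu'expliciter ces étapes, sans écart de méthode.
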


Par conséquent, on peut se ramener à l'étude de la partie spécifique du côté spectral de la formule des traces grossière, ie. les termes associés à $\chi \in \mathfrak{X}_-$, appliqués à fonctions tests anti-spécifiques.

\paragraph{Conclusions}
Vu les résultats de convergence géométrique et spectral, on est arrivé à l'identité
$$ J^T(f) = \sum_{\mathfrak{o} \in \mathcal{O}^G} J^T_{\mathfrak{o}}(f) = \sum_{\chi \in \mathfrak{X}^{\tilde{G}}} J^T_\chi(f) $$
pour $T \in \mathfrak{a}_0^+$ suffisamment régulier, où $J^T(f) := \int_{G(F) \backslash \tilde{G}^1} k^T(\tilde{x}) \dd\tilde{x}$. L'intégrabilité résulte des théorèmes que nous venons d'obtenir. Dans la suite, $\mathfrak{o}$ (resp. $\chi$) désigne un élément dans $\mathcal{O}^G$ (resp. $\mathfrak{X}^{\tilde{G}})$ quelconque.

\begin{theorem}
  Supposons que le paramètre de troncature $T \in \mathfrak{a}_0^+$ est suffisamment régulier, alors
  \begin{enumerate}
    \item $J^T(f)$ est un polynôme en $T$ de degré $\leq \dim \mathfrak{a}_0^G$. Idem pour les termes $J^T_\mathfrak{o}(f)$, $J^T_\chi(f)$. Ces distributions se prolongent ainsi en polynômes en tout $T \in \mathfrak{a}_0$ de façon unique;
    \item notons $T_0 \in \mathfrak{a}_0$ le paramètre de troncature canonique défini dans \eqref{eqn:T_0}, alors les distributions $J := J^{T_0}$, $J_\chi := J_\chi^{T_0}$ et $J_\mathfrak{o} := J_\mathfrak{o}^{T_0}$  ne dépendent pas du choix de $P_0 \in \mathcal{P}(M_0)$.
  \end{enumerate}
\end{theorem}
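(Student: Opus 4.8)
The plan is to transcribe Arthur's arguments from \cite{Ar81} (building on \cite{Ar78,Ar80}), exploiting the observation already made just after the definition of $k^T$: the truncation procedure is insensitive to the cover. Indeed, for a fixed $f \in C_{c,\asp}^\infty(\tilde{G}^1)$, the functions $x \mapsto K_{P,\mathfrak{o}}(\tilde{x},\tilde{x})$ and $x \mapsto K_{P,\chi}(\tilde{x},\tilde{x})$, hence their truncations $k^T_{\mathfrak{o}}$, $k^T_\chi$, depend only on $x = \rev(\tilde{x}) \in G(\A)^1$, and all the integrations are over $G(F)\backslash G(\A)^1$. Thus the combinatorics of \S\ref{sec:combinatoire} and the manipulations of \cite{Ar81} apply verbatim; the cover enters only through $f$ itself, whose parabolic descents $f_Q$ — defined using the conjugation-invariant unipotent splitting \ref{prop:scindage-unip} — are again anti-specific functions on $\tilde{M}_Q^1$, and through its conjugates $f^{\tilde{y}}$.

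For assertion (1), I would first record the cover analogue of the change-of-truncation-parameter formula: following the proofs of \ref{prop:modification-troncature} and \ref{prop:dependance-T} line by line, one obtains, for $T,T_1 \in \mathfrak{a}_0^+$ sufficiently regular,
$$ J^{T_1}_{\mathfrak{o}}(f) = \sum_{Q \supset P_0} J^{M_Q,T}_{\mathfrak{o}}(f_Q)\,\int_{\mathfrak{a}^G_Q}\Gamma^G_Q(H,T_1-T)\dd H, $$
together with the spectral analogue for $J^T_\chi$ (cf.\ \cite[\S 2]{Ar80}). Now fix $T$ sufficiently regular. The coarse convergence theorems just established, applied to $\tilde{G}$ and to each $\tilde{M}_Q$, make the numbers $J^{M_Q,T}_{\mathfrak{o}}(f_Q)$ well defined, and \ref{prop:c'_P-polynome} identifies $\int_{\mathfrak{a}^G_Q}\Gamma^G_Q(H,T_1-T)\dd H$ with a polynomial in $T_1$ that is homogeneous of degree $\dim(A_Q/A_G) \le \dim\mathfrak{a}_0^G$. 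Hence $T_1 \mapsto J^{T_1}_{\mathfrak{o}}(f)$ coincides, on the Zariski-dense set of sufficiently regular points, with a polynomial of degree $\le \dim\mathfrak{a}_0^G$; this polynomial is the required extension, and the same goes for $J^T_\chi(f)$. For the full sums $J^T = \sum_{\mathfrak{o}}J^T_{\mathfrak{o}} = \sum_\chi J^T_\chi$, I would interpolate: choosing sufficiently regular points $T_1,\dots,T_N$ in general position, with $N$ the dimension of the space $V$ of polynomials of degree $\le \dim\mathfrak{a}_0^G$ on $\mathfrak{a}_0$, and letting $p_1,\dots,p_N \in V$ be the basis dual to the evaluations at the $T_i$, one gets $\sum_{\mathfrak{o}}J^{\bullet}_{\mathfrak{o}}(f) = \sum_i J^{T_i}(f)\,p_i$, a polynomial (each $J^{T_i}(f)$ converges by the convergence theorem), and likewise $\sum_\chi J^{\bullet}_\chi(f) = \sum_i J^{T_i}(f)\,p_i$; the two polynomials agree.

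For assertion (2), I would imitate \ref{prop:independance-P_0}. The cover analogues of the non-invariance formula \ref{prop:non-variance-Jomega} and of the transport-of-structure lemma \ref{prop:transport-w} (case $L=L'=G$) hold with unchanged proofs, since the truncated kernel lives on $G(\A)^1$. Given $P'_0 = w^{-1}P_0 w$ with $w \in W_0^G$, I pick a representative of $w$ in $K$ (possible since $K$ is special at each place) and a lift $\tilde{w} \in \tilde{K}$ of it; the $\bmu_m$-ambiguity of $\tilde{w}$ is irrelevant, $\bmu_m$ being central and $H_Q$ factoring through $\rev$. Because $\tilde{w}^{-1}\in\tilde{K}$ one has $H_Q(k_Q(\cdot)\tilde{w}^{-1}) = 0$, hence $u'_Q(\cdot,\tilde{w}^{-1}) = \int_{\mathfrak{a}^G_Q}\Gamma^G_Q(H,0)\dd H$, which is $1$ for $Q=G$ and $0$ otherwise; feeding this into the non-invariance formula collapses it to $J^{T_0,P_0}_{\mathfrak{o}}(g^{\tilde{w}^{-1}}) = J^{T_0,P_0}_{\mathfrak{o}}(g)$, and the transport lemma rewrites this as $J^{T_0,P'_0}_{\mathfrak{o}}(g) = J^{T_0,P_0}_{\mathfrak{o}}(g)$. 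Thus $J_{\mathfrak{o}} := J^{T_0}_{\mathfrak{o}}$ is independent of $P_0$; the statement for $J_\chi$ is entirely parallel via the Eisenstein-series version of the same computation (cf.\ \cite[\S 2--3]{Ar81}), and $J := J^{T_0} = \sum_{\mathfrak{o}}J^{T_0}_{\mathfrak{o}}$ inherits the independence.

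I do not expect a genuine obstacle here, since the whole content is that the cover is invisible to the truncation, reducing everything to the case already treated in \S\ref{sec:formule-traces-caractere} (with trivial character) and in \cite{Ar81}. The step requiring the most care is setting up, via the spectral decomposition for covers developed in \cite{MW94}, the spectral change-of-truncation formula and the spectral proof of independence of $P_0$ — but even there the truncation combinatorics are Arthur's, untouched, the only genuinely new input being the routine bookkeeping with anti-specific Eisenstein series and intertwining operators.
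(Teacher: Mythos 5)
Your proposal is correct and follows essentially the same route as the paper, which simply invokes the arguments of \ref{prop:dependance-T}/\ref{prop:J^T-polynomial} (polynomiality via the $\Gamma^G_Q$ change-of-truncation formula) and \ref{prop:independance-P_0} (independence of $P_0$ via the non-invariance formula and transport of structure by a Weyl representative in $K$), noting that the truncation combinatorics live entirely on $G(\A)^1$ and are therefore insensitive to the cover. Your additional care with the interpolation step for the infinite sums $\sum_{\mathfrak{o}}$, $\sum_\chi$ and with the $\bmu_m$-ambiguity of the lift $\tilde{w}$ is consistent with, and slightly more explicit than, what the paper leaves implicit.
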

\begin{proof}
  Les démonstrations sont pareilles que celles du cas des groupes réductifs, cf  \ref{prop:dependance-T}, \ref{prop:independance-P_0}.
\end{proof}

La formule des traces grossière s'écrit ainsi\index[iFT1]{$J_{\mathfrak{o}}, J_\chi$}
\begin{gather}\label{eqn:formule-grossiere}
  J(f) = \sum_{\mathfrak{o} \in \mathcal{O}^G} J_{\mathfrak{o}}(f) = \sum_{\chi \in \mathfrak{X}^{\tilde{G}}} J_\chi(f).
\end{gather}

Étudions la non-invariance des distributions. Soient $f \in C_c^\infty(\tilde{G}^1)$ et $y \in G(\A)$. Notons $f^y$ la fonction $\tilde{x} \mapsto f(y\tilde{x}y^{-1})$. Soit $Q \in \mathcal{F}(M_0)$, posons
\begin{align*}
  f_{Q,y}(\tilde{m}) := \delta_Q(x)^{\frac{1}{2}} \iint_{K \times U_Q(\A)} f(k^{-1} \tilde{m}u k) u'_Q(k,y) \dd u \dd k, \quad \tilde{m} \in \widetilde{M_Q}^1 ,
\end{align*}
où $u'_Q(k,y)$ est la fonction définie dans \eqref{eqn:u'_Q(k,y)}. Cela définit une application linéaire $C_c^\infty(\tilde{G}^1) \to C_c^\infty(\widetilde{M_Q}^1)$ qui préserve l'équivariance par $\bmu_m$.

Nous indiquons le groupe en question en exposant dans les notations, eg. $J^{G,T}(f) = J^T(f)$. Si $M \in \mathcal{L}(M_0)$, $\mathfrak{o} \in \mathcal{O}^G$ et $f \in C_c^\infty(\widetilde{M}^1)$, posons
$$ J^{M,T}_{\mathfrak{o}}(f) = \sum_{\substack{\mathfrak{o}' \in \mathcal{O}^M \\ \mathfrak{o}' \mapsto \mathfrak{o}}} J^{M,T}_{\mathfrak{o}'}(f). $$
Idem pour $J^{M,T}_\chi$ et $J^{M,T}$.

\begin{theorem}[cf. {\cite[\S 3]{Ar81}} et \ref{prop:non-invariance-Jomega-Weyl}]\label{prop:non-invariance-grossiere}
  Soit $T \in \mathfrak{a}_0$, alors
  \begin{align*}
    J^T(f^y) & = \sum_{Q \in \mathcal{F}(M_0)} |W_0^{M_Q}| |W_0^G|^{-1} J^{\widetilde{M_Q},T}(f_{Q,y}), \\
    J^T_\mathfrak{o}(f^y) & = \sum_{Q \in \mathcal{F}(M_0)} |W_0^{M_Q}| |W_0^G|^{-1} J^{\widetilde{M_Q},T}_{\mathfrak{o}}(f_{Q,y}),\\
    J^T_\chi(f^y) & = \sum_{Q \in \mathcal{F}(M_0)} |W_0^{M_Q}| |W_0^G|^{-1} J^{\widetilde{M_Q},T}_\chi(f_{Q,y}).
  \end{align*}
  En particulier, on peut mettre $T=T_0$ dans les formules ci-dessus et supprimer les symboles $T$.
\end{theorem}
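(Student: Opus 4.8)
Le plan est de décalquer la démonstration d'Arthur \cite[\S 3]{Ar81}, dont la variante tordue \ref{prop:non-invariance-Jomega-Weyl} est déjà disponible. Le point de départ est l'observation, déjà faite plus haut, que le revêtement n'intervient ni dans la troncature ni dans la combinatoire de \S\ref{sec:combinatoire}: la quasi-totalité des manipulations d'Arthur se transcrit donc telle quelle en remplaçant les fonctions sur $G(\A)^1$ par des fonctions sur $\tilde{G}^1$, grâce à la formule d'intégration \eqref{eqn:UMK-rev} et à l'invariance du scindage unipotent. Je procéderais en trois temps: une version pour les seuls paraboliques standards, puis la symétrisation par $W_0^G$, puis le passage à $T=T_0$.

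D'abord, pour $y \in G(\A)$, j'associerais à la famille $Y_P(k) := -H_P(ky)$ ($P \supset P_0$, $k \in K$) une modification de troncature $\mathcal{Y}_y$ (objet sur $G(\A)$ qui ignore le revêtement) et les descentes pondérées $f_{Q,y}$ construites via \eqref{eqn:u'_Q(k,y)}. Comme l'égalité $K_{P,\mathfrak{o},f^y}(\delta x,\delta x) = K_{P,\mathfrak{o},f}(\delta x y^{-1},\delta x y^{-1})$ ne dépend que des images dans $G(\A)^1$, elle reste vraie; jointe à $H_P(\delta xy)=H_P(\delta x)+H_P(k_P(\delta x)y)$, elle donne $J_\mathfrak{o}^T(f^y)=J_\mathfrak{o}^T(f;\mathcal{Y}_y)$. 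Il resterait alors à transcrire l'argument de \ref{prop:modification-troncature} (le changement de variables sur $G(F)\backslash G(\A)^1 \times P(F)\backslash G(F)$ et le calcul de la descente parabolique demeurant licites grâce à \eqref{eqn:UMK-rev} et au scindage unipotent) pour obtenir $J_\mathfrak{o}^T(f^y)=\sum_{Q\supset P_0} J_\mathfrak{o}^{\widetilde{M_Q},T}(f_{Q,y})$, d'où la formule pour $J^T$ en sommant sur $\mathfrak{o}\in\mathcal{O}^G$. Pour le $\chi$-développement je ferais le même calcul avec les noyaux $K_{P,\chi}$, en utilisant que $\mathcal{I}_{\widetilde{P_1},\chi}(\lambda,f^y)$ est conjuguée à $\mathcal{I}_{\widetilde{P_1},\chi}(\lambda,f)$ par $\mathcal{I}_{\widetilde{P_1},\chi}(\lambda,y)$, ce qui donne $K_{P,\chi,f^y}(\delta x,\delta x)=K_{P,\chi,f}(\delta x y^{-1},\delta x y^{-1})$; la théorie de la décomposition spectrale sur les revêtements étant disponible d'après \cite{MW94}, rien ne change dans l'argument de \cite{Ar80}.

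Ensuite, pour la symétrisation, j'écrirais tout $Q' \in \mathcal{F}(M_0)$ sous la forme $Q'=w^{-1}Qw$ avec $Q\supset P_0$, $w\in W_0^G$, l'application $Q'\mapsto Q$ étant à fibres de cardinal $|W_0^{M_Q}|^{-1}|W_0^G|$; je choisirais un représentant $\tilde{w}\in\pi(K_\text{sc})$ de $w$ (la conjugaison $\tilde{x}\mapsto\tilde{w}\tilde{x}\tilde{w}^{-1}$ étant bien définie sur $\tilde{G}$ puisque $\tilde{w}\in G(\A)$ opère par conjugaison sur $\tilde{G}$). Comme $\tilde{w}^{-1}\in K$, on a $u'_{Q'}(\tilde{w}^{-1}k,y)=u'_Q(k,y)$ pour tout $k\in K$ (cf. \cite[p.21]{Ar81}), donc $f_{Q',y}(\tilde{w}^{-1}\tilde{m}\tilde{w})=f_{Q,y}(\tilde{m})$ pour tout $\tilde{m}\in\widetilde{M_Q}^1$; l'analogue de \ref{prop:transport-w} sur le revêtement (où aucun caractère n'apparaît et où $H_{M_0}(\tilde{w}\hat{w}^{-1})=T_0-wT_0$, $\hat{w}\in G(F)$ un représentant rationnel, vaut sans changement) entraînerait alors $J^{\widetilde{M_{Q'}},T}(f_{Q',y})=J^{\widetilde{M_Q},T}(f_{Q,y})$, et de même pour $J_\mathfrak{o}$ et $J_\chi$. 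En faisant varier $Q$ et $w$ on obtiendrait les trois identités voulues. Enfin, comme les deux membres sont polynomiaux en $T$ par le théorème de polynomialité précédent, l'égalité, établie pour $T\in\mathfrak{a}_0^+$ suffisamment régulier, s'étendrait à tout $T\in\mathfrak{a}_0$, en particulier à $T_0$, auquel cas on supprime le symbole $T$.

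Le principal obstacle attendu n'est pas conceptuel mais de vérification: il faut contrôler que le revêtement est réellement inerte dans toutes les troncatures, changements de variables et estimations de noyaux d'Arthur. Ce travail est déjà largement préparé — formule d'intégration \eqref{eqn:UMK-rev}, compatibilité du scindage unipotent, traitement parallèle de \S\ref{sec:formule-grossiere-revetement} — et la seule étape véritablement nouvelle par rapport à ce qui précède est le transport de structure par $\tilde{w}$, licite précisément parce que $\tilde{w}$ provient de $K_\text{sc}$ et agit donc par conjugaison sur $\tilde{G}$ sans faire apparaître de cocycle parasite.
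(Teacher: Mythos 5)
Votre démarche est correcte et coïncide avec celle du texte: le théorème y est justifié précisément par renvoi à \cite[\S 3]{Ar81} et à la variante avec caractère \ref{prop:non-invariance-Jomega-Weyl}, le revêtement étant inerte dans la troncature, le changement de variables et la symétrisation par $W_0^G$ via un représentant dans $\pi(K_\text{sc})$. Les trois étapes que vous proposez (modification de troncature $\mathcal{Y}_y$, transport de structure à la \ref{prop:transport-w} sans caractère, puis polynomialité en $T$ pour atteindre $T_0$) sont exactement le contenu implicite de la démonstration.
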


\subsection{Réduction au cas unipotent}\label{sec:reduction-unip}
Plaçons-nous toujours dans le cas global. Fixons les objets suivants
\begin{itemize}
  \item $\mathfrak{o} \in \mathcal{O}^G$;
  \item $\sigma \in \mathfrak{o}$: semi-simple;
  \item $P_1 \in \mathcal{F}(M_0)$: un sous-groupe parabolique standard tel que $\sigma \in M_{P_1}(F)$, mais $\sigma$ n'appartient à aucun sous-groupe parabolique propre de $M_{P_1}$;
  \item $M_1 := M_{P_1}$;
  \item $K_\sigma = \prod_v K_{\sigma,v}$: un sous-groupe compact maximal de $G_\sigma(\A)$ en bonne position relativement à $M_{1,\sigma}$.
\end{itemize}
Afin d'éviter toute confusion, indiquons l'image de $\sigma$ via $G(F) \hookrightarrow \tilde{G}^1$ par $\tilde{\sigma}$. Posons $\mathfrak{a}^{G_\sigma}_0 := \mathfrak{a}^{G_\sigma}_{M_{1,\sigma}}$, on dispose d'une application linéaire canonique $\mathfrak{a}^G_0 \to \mathfrak{a}^{G_\sigma}_0$. À ces données est associé un paramètre de troncature canonique $T_{0,\sigma} \in \mathfrak{a}_0^{G_\sigma}$ pour $G_\sigma$.

Fixons aussi un ensemble de places $S \supset V_\text{ram}$ et supposons que
\begin{itemize}
  \item $\sigma^S \in K^S$;
  \item $\sigma$ est $S$-admissible;
  \item pour tout $v \notin S$, on a $K_{\sigma,v} = K \cap G_\sigma(F_v)$.
\end{itemize}
Le lemme de Kottwitz \ref{prop:Kottwitz} assure que $K_{\sigma,v}$ est un sous-groupe hyperspécial de $G_\sigma(F_v)$ pour tout $v \notin S$.

\begin{lemma}
  Le caractère $[\cdot,\sigma]: G_\sigma(\A) \to \bmu_m$ est un caractère automorphe. Il est trivial sur $K_\sigma^S$.
\end{lemma}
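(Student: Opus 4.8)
The plan is to derive both assertions from the immediate properties of the commutator pairing recalled at the beginning of \S\ref{sec:commutateurs}, in particular from the fact that the commutator of two commuting elements that both lie in a subgroup over which $\rev$ is split equals $1$. First I would observe that, exactly as in the local case of \S\ref{sec:commutateurs}, the adelic covering furnishes a continuous homomorphism $[\cdot,\sigma]\colon G^\sigma(\A)\to\bmu_m$, $y\mapsto\tilde y^{-1}\tilde\sigma^{-1}\tilde y\tilde\sigma$, independent of the choice of lifts $\tilde y,\tilde\sigma\in\tilde G$; its restriction to $G_\sigma(\A)$ is the map of the statement, and it is unitary because $\bmu_m$ consists of roots of unity. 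Moreover, via the canonical isomorphism $\tilde G\simeq\bigl(\Resprod_v\tilde G_v\bigr)/\mathbf N$ one checks that for each place $v$ the restriction of $[\cdot,\sigma]$ to $G^{\sigma_v}(F_v)\hookrightarrow G^\sigma(\A)$ coincides with the local homomorphism $[\cdot,\sigma_v]$ of \S\ref{sec:commutateurs}, where $\sigma_v$ is the image of $\sigma$ in $G(F_v)$; here one uses that $\sigma^S\in K^S$ makes it licit to take $s_v(\sigma_v)$ as lift of $\sigma_v$ at the places $v\notin V_\text{ram}$, so that the chosen family of lifts lands in the restricted product.

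For the triviality on $G_\sigma(F)$: given $y\in G_\sigma(F)$, both $y$ and $\sigma$ lie in $G(F)$, over which $\rev$ is split by the fixed embedding $\mathbf i\colon G(F)\hookrightarrow\tilde G$, and $y$ commutes with $\sigma$ since $G_\sigma\subset Z_G(\sigma)$; taking $\tilde y=\mathbf i(y)$, $\tilde\sigma=\mathbf i(\sigma)$ and using that $\mathbf i$ is a homomorphism gives $\tilde y^{-1}\tilde\sigma^{-1}\tilde y\tilde\sigma=\mathbf i(y^{-1}\sigma^{-1}y\sigma)=\mathbf i(1)=1$, i.e. $[y,\sigma]=1$. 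Hence $[\cdot,\sigma]$ is trivial on $G_\sigma(F)$, and together with continuity and unitarity it is an automorphic character of $G_\sigma$.

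For the triviality on $K_\sigma^S=\prod_{v\notin S}K_{\sigma,v}$ with $K_{\sigma,v}=K_v\cap G_\sigma(F_v)$: since $[\cdot,\sigma]$ is a continuous homomorphism into the finite group $\bmu_m$, it is enough to show it vanishes on each $K_{\sigma,v}$ with $v\notin S$. By the first step its restriction to $G^{\sigma_v}(F_v)$ is $[\cdot,\sigma_v]$; now $K_{\sigma,v}\subset K_v$ and $\sigma_v\in K_v$ (because $\sigma^S\in K^S$ and $v\notin S\supset V_\text{ram}$), these two elements commute, and $\rev_v$ is split over $K_v$ by the homomorphism $s_v$, so the last of the immediate properties of \S\ref{sec:commutateurs} yields $[\cdot,\sigma_v]|_{K_{\sigma,v}}=1$, whence the claim. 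The only slightly delicate point is the first step, namely the careful verification, using the isomorphism $\tilde G\simeq\bigl(\Resprod_v\tilde G_v\bigr)/\mathbf N$ and keeping track of the role of $\mathbf N$, that $[\cdot,\sigma]$ localizes to the maps $[\cdot,\sigma_v]$; everything else is formal, so I do not expect any real obstacle.
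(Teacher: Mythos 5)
Votre démonstration est correcte et suit exactement la même voie que celle du papier, qui se contente d'invoquer la continuité évidente du commutateur et les scindages de $\rev$ au-dessus de $G(F)$ et de $K^S$; vous ne faites qu'expliciter ces deux points (commutateur trivial sur un sous-groupe où $\rev$ est scindé) ainsi que la localisation, essentiellement formelle, de $[\cdot,\sigma]$ en les $[\cdot,\sigma_v]$.
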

\begin{proof}
  La continuité de $[\cdot,\sigma]$ est claire. Le reste résulte des scindages de $\rev$ au-dessus de $K^S$ et de $G(F)$.
\end{proof}

Posons
\begin{gather*}
  \iota^G(\sigma) := G_\sigma(F) \backslash G^\sigma(F),\\
  T_1 := T_0 - T_{0,\sigma} \in \mathfrak{a}_0^{G_\sigma},
\end{gather*}
où, avec abus de notation, on a projeté le paramètre de troncature canonique $T_0$ de $G$ via $\mathfrak{a}_0^G \to \mathfrak{a}_0^{G_\sigma}$.

Rappelons une construction d'Arthur \cite[\S 6]{Ar86}. Soit $R \in \mathcal{F}^{G_\sigma}(M_{1,\sigma})$, on note
\begin{align*}
  \mathcal{F}^0_R(M_1) & :=  \{ P \in \mathcal{F}(M_1) : P_\sigma = R, \mathfrak{a}_P = \mathfrak{a}_R \}, \\
  \mathcal{F}_R(M_1) & := \{ P \in \mathcal{F}(M_1) : P_\sigma = R \}.
\end{align*}
Pour tout $z \in G(\A)$ et tout $Q \in \mathcal{F}(M_1)$, posons
$$ v_Q(\lambda,z,T) := e^{\angles{\lambda, -H_Q(z)+T}}, \quad \lambda \in i\mathfrak{a}_Q^*, T \in \mathfrak{a}_Q . $$
On lui associe la fonction $v'_Q(z,T)$ via \eqref{eqn:c'_P}. Si $R \in \mathcal{F}^{G_\sigma}(M_{1,\sigma})$, posons
$$ v'_R(z, T) := \sum_{Q \in \mathcal{F}^0_R(M_1)} v'_Q(z,T). $$

Soit $f \in C_{c,\asp}^\infty(\tilde{G}^1)$. Pour $R$ et $y \in G(\A)$ fixés, on obtient une application linéaire $f \mapsto \Phi_{R,y,T_1} \in C_{c,\asp}^\infty(\widetilde{M_R}^1)$ définie par
$$ \Phi_{R,y,T_1}(\tilde{m}) := \delta_R(m)^{\frac{1}{2}} \iint_{K_\sigma \times U_R(\A)} [k,\sigma] f(y^{-1} \tilde{\sigma} k^{-1} \tilde{m}uky) v'_R(ky,T_1) \dd u \dd k . $$
Alors $f \mapsto \Phi_{R,y,T_1}$ est aussi continue en $y$. Notre définition et celle dans \cite[p.201]{Ar86} ne diffèrent que par le commutateur $[k,\sigma]$, dont la raison d'être sera expliquée dans la procédure de descente.

\begin{theorem}[cf. {\cite[6.2]{Ar86}}]\label{prop:Jo-descente}
  Pour $f \in C_{c,\asp}^\infty(\tilde{G}^1)$, on a
  $$ J_{\mathfrak{o}}(f) = |\iota^G(\sigma)|^{-1} \int_{G_\sigma(\A) \backslash G(\A)} \sum_{R \in \mathcal{F}^{G_\sigma}(M_{1,\sigma})} |W_0^{M_R}| |W_0^{G_\sigma}|^{-1} J^{M_R,[\cdot,\sigma]}_{\mathrm{unip}} (\Phi_{R,y,T_1}) \dd y . $$
\end{theorem}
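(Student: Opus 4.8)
The strategy is to mirror Arthur's proof of \cite[6.2]{Ar86}, tracking at every step how the cocycle $[\cdot,\cdot]:\mathrm{Comm}(G)(\A)\to\bmu_m$ intervenes. The starting point is the coarse expansion $J_{\mathfrak{o}}(f)=J^{T_0}_{\mathfrak{o}}(f)=\int_{G(F)\backslash G(\A)^1}k^{T_0}_{\mathfrak{o}}(x)\,\mathrm dx$, where $k^{T_0}_{\mathfrak{o}}$ is built from the truncated kernels $K_{P,\mathfrak{o}}$ of \S\ref{sec:formule-grossiere-revetement}. First I would unfold the sum over $\gamma\in M_P(F)\cap\mathfrak{o}$ by grouping the rational points $\gamma$ according to the $M_P(F)$-conjugacy class of their semisimple part, which is rationally conjugate to $\sigma$. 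Fixing once and for all the representative $\sigma$ and introducing $\iota^G(\sigma)=G_\sigma(F)\backslash G^\sigma(F)$ to account for the difference between the connected centralizer and the full centralizer, one rewrites the sum over such $\gamma$ as a sum over $G^\sigma(F)\backslash G(F)$ of unipotent contributions in $G_\sigma$. Here the function $f$ on $\tilde G$ is evaluated at $\tilde x^{-1}\gamma u\tilde y$; since $f$ is anti-spécifique and $\gamma=\delta^{-1}\tilde\sigma u_0\delta$ with $u_0$ unipotent, the scindage unipotent of \ref{prop:scindage-unip} lets me move the unipotent part freely, while conjugating the lift $\tilde\sigma$ past elements of $\widetilde{G_\sigma}$ produces exactly the factor $[\cdot,\sigma]$ by the formulas of \S\ref{sec:commutateurs}. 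This is the structural reason the twisted unipotent distribution $J^{M_R,[\cdot,\sigma]}_{\mathrm{unip}}$ appears rather than $J^{M_R}_{\mathrm{unip}}$: the commutator $[k,\sigma]$ inserted in the definition of $\Phi_{R,y,T_1}$ is precisely what is needed to compensate for the non-commutation of $\tilde\sigma$ with its centralizer in $\tilde G$.

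Next I would carry out Arthur's combinatorial reorganization of the truncation. The sum over standard parabolics $P\supset P_0$ of $G$, together with the weighting $\hat\tau_P(H_P(\delta x)-T_0)$ and the descent $x\mapsto$ (its component in $G_\sigma(\A)\backslash G(\A)$), must be converted into a sum over $R\in\mathcal F^{G_\sigma}(M_{1,\sigma})$ of the truncated kernels for the groups $M_R$ relative to $G_\sigma$, with truncation parameter $T_1=T_0-T_{0,\sigma}$. This uses the partition of $\mathcal F(M_1)$ by the map $P\mapsto(P_\sigma,\mathfrak a_P)$ into the families $\mathcal F^0_R(M_1)$ and $\mathcal F_R(M_1)$, together with the combinatorial identities of \S\ref{sec:combinatoire} (in particular \ref{prop:c'_P-polynome} and the $\Gamma^G_Q$-relations) to identify the resulting weight function with $v'_R(ky,T_1)$ — this is the exact analogue of \cite[\S 6]{Ar86}, and since the truncation ``n'y intervient pas'' the revêtement (as already noted in \S\ref{sec:formule-grossiere-revetement}), these manipulations go through verbatim once the $[\cdot,\sigma]$-factor is carried along. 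The change of variables $G(F)\backslash G(\A)^1\rightsquigarrow G_\sigma(\A)\backslash G(\A)$ against $\iota^G(\sigma)^{-1}$ and the Iwasawa decomposition along $U_R$ then produce the parabolic descent integral defining $\Phi_{R,y,T_1}$, with the characteristic factor $\delta_R(m)^{1/2}$ and the integration over $K_\sigma\times U_R(\A)$.

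Finally one must recognize the inner integral over $G_\sigma(F)\backslash G_\sigma(\A)^1$ of the reorganized truncated kernel as $|W_0^{M_R}|\,|W_0^{G_\sigma}|^{-1}$ times $J^{M_R,[\cdot,\sigma]}_{\mathrm{unip}}(\Phi_{R,y,T_1})$; this requires knowing that $[\cdot,\sigma]$ restricted to $G_\sigma(\A)$ is a caractère automorphe trivial on $K_\sigma^S$ (the lemma just proved before the statement) so that the twisted coarse trace formula of \S\ref{sec:formule-traces-caractere} applies, and it invokes \ref{prop:non-invariance-grossiere} / \ref{prop:non-invariance-Jomega-Weyl} to reassemble the sum over standard parabolics into the full sum over $\mathcal F^{G_\sigma}(M_{1,\sigma})$ with the Weyl-group ratios. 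The absolute convergence of the $y$-integral is inherited from the $\bomega=1$ case since $[\cdot,\sigma]$ is unitary, exactly as in the convergence arguments of \S\ref{sec:formule-traces-caractere}. The main obstacle I anticipate is bookkeeping: verifying that the commutator factors combine correctly at each stage — when passing $\tilde\sigma$ through the Iwasawa pieces $k\in K_\sigma$, $u\in U_R(\A)$, and $m\in \widetilde{M_R}^1$, and when matching $[k,\sigma]f(y^{-1}\tilde\sigma k^{-1}\tilde m uky)$ against the original integrand $f(\tilde x^{-1}\gamma u\tilde y)$ — so that no spurious $\bmu_m$-valued factor survives and the twist is exactly $[\cdot,\sigma]$ on $\widetilde{M_R}$ and nothing more. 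The rest is a faithful transcription of Arthur's argument.
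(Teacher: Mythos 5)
Votre plan reproduit fidèlement la démonstration du texte : partir du développement grossier et de la formule d'Arthur \cite[3.1]{Ar86} (dont la dérivation n'implique pas le revêtement), effectuer le changement de variables vers $G_\sigma(\A)\backslash G(\A)$ contre $|\iota^G(\sigma)|^{-1}$, faire apparaître le facteur $[x,\sigma]$ par anti-spécificité en conjuguant $\tilde\sigma$, le scinder via Iwasawa en le caractère $[\cdot,\sigma]$ sur $\widetilde{M_R}$ et le $[k,\sigma]$ de $\Phi_{R,y,T_1}$, puis réassembler la somme sur les paraboliques standards de $G_\sigma$ en la somme sur $\mathcal{F}^{G_\sigma}(M_{1,\sigma})$ avec les rapports de groupes de Weyl par transport de structure (\ref{prop:transport-w}). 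C'est essentiellement l'approche du texte; la seule nuance est que le texte invoque \ref{prop:transport-w} et l'argument de \ref{prop:independance-P_0} pour la dernière étape plutôt que \ref{prop:non-invariance-grossiere}, mais le mécanisme est le même.
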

L'expression est loisible car la distribution $J^{M_R,[\cdot,\sigma]}_{\mathrm{unip}}$ est supportée sur $(M_R)_{\text{unip}}(\A)$, et le scindage unipotent adélique permet de restreindre $\Phi_{R,y,T_1}$ sur ce sous-ensemble fermé de façon canonique.

\begin{proof}
  La preuve est presque identique à celle dans \cite{Ar86} à l'exception du caractère $[\cdot,\sigma]$ qui apparaît dans $J^{M_R,[\cdot,\sigma]}_{\mathrm{unip}}$ et dans notre définition de $\Phi_{R,y,T_1}$. Expliquons-le. Prenons $T \in \mathfrak{a}_0^+$ suffisamment régulier. On part de la formule \cite[3.1]{Ar86}
  \begin{multline}\label{eqn:descente-unipotente-Jo}
    J^T_{\mathfrak{o}}(f) =  |\iota^G(\sigma)|^{-1} \int_{G(F) \backslash G(\A)^1} \sum_{\substack{R \subset G_\sigma \\ \text{parabolique}\\ \text{standard}}} \sum_{\xi \in R(F) \backslash G(F)} \\
    \left( \sum_{u \in (M_R)_{\mathrm{unip}}(F)} \; \int_{U_R(\A)} f(x^{-1} \xi^{-1} \tilde{\sigma}un \xi x) \dd n \right) \cdot \\
    \left( \sum_{P \in \mathcal{F}_R(M_1)} (-1)^{\dim A_P/A_G} \hat{\tau}_P(H_P(\xi x)-Z_P(T-T_0)-T_0)\right) \dd x,
  \end{multline}
  où $Z_P(T-T_0) \in \mathfrak{a}_0$ est défini dans \cite[(3.3)]{Ar86}. La démonstration ne fait intervenir que l'action adjointe de $G(F)$ sur les sous-groupes de Lévi et paraboliques sur $F$, et sur $G(F)$ lui-même, donc est valable sur un revêtement.

  Changeons l'intégrale ci-dessus prise sur
  $$ (\xi,x) \in (R(F) \backslash G(F)) \times (G(F) \backslash G(\A)^1)$$
  à une intégrale prise sur
  $$ (\delta, x, y) \in (R(F) \backslash G_\sigma(F)) \times (G_\sigma(F) \backslash G_\sigma(\A) \cap G(\A)^1) \times (G_\sigma(\A) \backslash G(\A)) $$
  à l'aide du fait $G_\sigma(\A) \backslash G(\A) = G_\sigma(\A) \cap G(\A)^1 \backslash G(\A)^1$. L'intégrale sur $U_R(\A)$ dans \eqref{eqn:descente-unipotente-Jo} devient
  $$ \int_{U_R(\A)} f(y^{-1} x^{-1}\delta^{-1} \tilde{\sigma}un \delta x y) \dd n $$
  en remplaçant l'expression $\xi x$ par $\delta x y$. Or
  $$ y^{-1} x^{-1}\delta^{-1} \tilde{\sigma}un \delta x y = [\sigma, x] y^{-1}\tilde{\sigma}x^{-1}\delta^{-1}un\delta x y , $$
  donc l'intégrale en question vaut
  $$ \int_{U_R(\A)} [x,\sigma] f(y^{-1}\tilde{\sigma} x^{-1} \delta^{-1} un \delta x y) \dd n $$
  car $f$ est anti-spécifique.

  À partir de maintenant, on peut reprendre la démonstration de \cite{Ar86}. L'expression $[x,\sigma]$ se décompose en le caractère $[\cdot,\sigma]$ dans le terme $J_\text{unip}^{M_R, [\cdot,\sigma]}$ et le caractère $[k,\sigma]$ dans la définition de $\Phi_{R,y,T_1}$ via une décomposition d'Iwasawa. Le bilan est l'analogue de \cite[(6.8)]{Ar86}:
  $$ J_{\mathfrak{o}}(f) = |\iota^G(\sigma)|^{-1} \int_{G_\sigma(\A) \backslash G(\A)} \sum_{\substack{Q \in \mathcal{F}^{G_\sigma}(M_{1,\sigma}) \\ Q \supset P_{1,\sigma}}} J^{M_Q,[\cdot,\sigma]}_{\mathrm{unip}} (\Phi_{Q,y,T_1}) \dd y . $$

  La dernière étape est d'enlever la dépendance de $P_{1,\sigma}$. Soit $R \in \mathcal{F}^{G_\sigma}(M_{1,\sigma})$, alors il existe $w \in W_0^{G_\sigma}$ et $Q \supset P_{1,\sigma}$ tel que $R = w^{-1} Q w$. Notons comme d'habitude $\pi: G_{\sigma,\text{SC}} \to G_\sigma$ le revêtement simplement connexe de $G_{\sigma,\text{der}}$ et $K_{\sigma,\text{sc}} := \pi^{-1}(K_\sigma)$. Prenons un représentant $\tilde{w} \in \pi(K_{\sigma,\text{sc}})$ de $w$. Comme $v'_Q(\tilde{w}ky, T_1)=v'_R(ky,T_1)$ (voir \cite[p.201]{Ar86}), on voit que $\Phi_{Q,y,T_1}(\tilde{m})=\Phi_{R,y,T_1}(\tilde{w}^{-1}\tilde{m}\tilde{w})$ pour tout $\tilde{m} \in \widetilde{M_Q}^1$. Vu \ref{prop:transport-w}, on en déduit que
  $$ J^{M_Q, [\cdot,\sigma]}_\text{unip}(\Phi_{Q,y,T_1}) = J^{M_R, [\cdot,\sigma]}_\text{unip}(\Phi_{R,y,T_1}). $$
  Alors un argument similaire à celui de \ref{prop:independance-P_0} permet de conclure.
\end{proof}

\subsection{Intégrales orbitales pondérées anti-spécifiques}
Plaçons-nous dans le cas local. Les conventions de \S\ref{sec:commutateurs} (qui correspond au cas $|S|=1$) se généralisent de façon évidente à ce cadre. En particulier, on peut parler des bons éléments dans $G(F_S)$.

\paragraph{Conventions sur la mesure}
Soit $M \in \mathcal{L}^G(M_0)$. La situation est similaire à celle de \S\ref{sec:int-orb-omega}: on considère les paires $(\mathcal{O},\mu)$, où
\begin{itemize}
  \item $\mathcal{O}$ est une bonne classe de conjugaison dans $\widetilde{M}_S$,
  \item $\mu$ est une mesure de Radon invariante non triviale sur $\mathcal{O}$.
\end{itemize}

Nous préférons une construction directe comme suit. Prenons $\tilde{\gamma} \in \mathcal{O}$, alors la mesure invariante $\mu$ est déterminée par le choix d'une mesure de Haar sur $M_\gamma(F_S)$, et réciproquement.

Le groupe $M(F_S)$ opère sur ces paires par conjugaison. On écrit $(\mathcal{O}, \mu) \sim (\mathcal{O}', \mu')$ si $\mathcal{O}=\mathcal{O}'$. Notons
\begin{align*}
  \dot{\Gamma}(\widetilde{M}_S) & := \{(\mathcal{O},\mu) \},\\
  \Gamma(\widetilde{M}_S) & := \{(\mathcal{O}, \mu)\}/\sim .
\end{align*}
Alors $\dot{\Gamma}(\widetilde{M}_S) \to \Gamma(\widetilde{M}_S)$ est un $\R_{>0}$-torseur.\index[iFT1]{$\dot{\Gamma}(\widetilde{M}_S), \Gamma(\widetilde{M}_S)$}

Nous utilisons les symboles pointés pour désigner un élément dans $\dot{\Gamma}(\widetilde{M}_S)$, eg. $\dot{\tilde{\gamma}}$; la classe de conjugaison sous-jacente est notée $\Supp(\dot{\tilde{\gamma}})$.

Une paire $\dot{\tilde{\gamma}} = (\mathcal{O},\mu)$ donne naissance à l'intégrale orbitale
\begin{gather}
  J_{\tilde{M}}(\dot{\tilde{\gamma}}, f) := |D^M(\gamma)|^{\frac{1}{2}} \int_{\mathcal{O}} f \dd \mu, \quad f \in C_{c,\asp}^\infty(\widetilde{M}_S)
\end{gather}
avec $\gamma \in \rev(\mathcal{O})$ quelconque. Si l'on fixe $\tilde{\gamma} \in \mathcal{O}$ et une mesure de Haar sur $M_\gamma(F_S)$, alors on a tout simplement
$$ J_{\tilde{M}}(\dot{\tilde{\gamma}}, f) =  |D^M(\gamma)|^{\frac{1}{2}} \int_{M_\gamma(F_S) \backslash M(F_S)} f(x^{-1} \tilde{\gamma} x) \dd x. $$
Pour montrer qu'elle converge, il suffit de remplacer $f$ par $|f|$. On obtient ainsi une intégrale orbitale pour un groupe réductif, dont la convergence est connue d'après Rao \cite{Rao72}. Cela permet d'immerger $\dot{\Gamma}(\widetilde{M}_S)$ dans l'espace de distributions spécifiques. On a
$$  J_{\tilde{M}}(y\dot{\tilde{\gamma}}y^{-1}, f) = J_{\tilde{M}}(\dot{\tilde{\gamma}}, f), \quad \text{pour tout } y \in M(F_S).  $$

Indiquons quelques opérations élémentaires.
\begin{enumerate}
  \item Le sous-groupe central $\bmu_m$ opère de manière évidente sur $\dot{\Gamma}(\widetilde{M}_S)$.
  \item Si $\dot{\tilde{\gamma}}$ est tel que $M_\gamma = G_\gamma$, alors il s'identifie canoniquement à un élément de $\dot{\Gamma}(\widetilde{G_S}) \sqcup \{0\}$, cf. \eqref{eqn:de-M-a-G}. Plus précisément, $\dot{\tilde{\gamma}}$ s'envoie à $0$ si et seulement si $\gamma$ n'est pas bon dans $G(F_S)$ .
  \item Un élément $\dot{\tilde{\gamma}} \in \dot{\Gamma}(\widetilde{M}_S)$ admet une unique décomposition de Jordan
$$ \dot{\tilde{\gamma}} = \tilde{\sigma} \dot{u}, \quad \tilde{\sigma} \in \widetilde{M}_S, \; \dot{u} \in \dot{\Gamma}(M_\sigma(F_S))^{[\cdot,\sigma]} $$
    correspondant à la décomposition de Jordan $\tilde{\gamma} = \tilde{\sigma} u$.
  \item On a défini un sous-groupe ouvert fermé $A_M(F_S)^\dagger \subset A_M(F_S)$ dans \ref{prop:bon-perturbation}. Supposons fixé un voisinage ouvert $\mathcal{U}$ de $1$ dans  $A_M(F_S)^\dagger$ muni d'un scindage $\mathcal{U} \hookrightarrow \rev^{-1}(\mathcal{U})$ de $\rev$ qui envoie $1$ à $1$. Soit $a \in \mathcal{U}$, on peut définir l'application $\dot{\tilde{\gamma}} \mapsto a\dot{\tilde{\gamma}}$ de translation par $a$ à l'aide de ce scindage.
  \item On peut définir l'induction de classes unipotentes de $M$ à $G$ à la \ref{prop:induction-omega}. Vu le scindage unipotent, on peut supprimer le $\sim$ et le noter $\dot{\gamma} \mapsto \dot{\gamma}^G$.
\end{enumerate}
Nous laissons ces yogas de mesures invariantes au lecteur.

\paragraph{Intégrales orbitales pondérées}
Commençons par définir l'intégrale orbitale pondérée anti-spécifiques pour les éléments $\dot{\tilde{\gamma}} \in \dot{\Gamma}(\widetilde{M}_S)$ tels que $M_\gamma=G_\gamma$. Rappelons que cette donnée équivaut au choix d'une mesure de Haar sur $M_\gamma(F_S) = G_\gamma(F_S)$.

\begin{definition}\label{def:int-ponderee-nr}\index[iFT1]{$J_{\tilde{M}}(\dot{\tilde{\gamma}}, \cdot)$}
  Supposons que $M_\gamma=G_\gamma$. Pour tout $f \in C_{c,\asp}^\infty(\widetilde{G_S})$, posons
  $$ J_{\tilde{M}}(\dot{\tilde{\gamma}}, f) := |D^M(\gamma)|^{\frac{1}{2}} \int_{G_\gamma(F_S) \backslash G(F_S)} f(x^{-1} \tilde{\gamma} x) v_M(x) \dd x . $$
\end{definition}

Si $\gamma$ n'est pas bon dans $G(F_S)$, alors $J_{\tilde{M}}(\dot{\tilde{\gamma}}, f)=0$.

Revenons au cas général. L'intégrale orbitale pondérée est définie comme suit.
\begin{theorem}[cf. {\cite[5.2]{Ar88LB}}]\label{prop:integrale-ponderee-bien-definie}
  Pour tout $f \in C_{c,\asp}^\infty(\widetilde{G_S})$, la limite
  $$ J_{\tilde{M}}(\dot{\tilde{\gamma}}, f) := \lim_{\substack{a \to 1 \\ a \in A_{M,\text{reg}}(F_S)^\dagger}} \sum_{L \in \mathcal{L}(M)} r^L_M(\gamma,a) J_{\tilde{L}}(a\dot{\tilde{\gamma}}, f) $$
  existe, où les $a$ dans la limite sont supposés en position générale de sorte que $M_{a\gamma}=G_{a\gamma}$.

  Si $M_\gamma = G_\gamma$, elle coïncide avec la définition \ref{def:int-ponderee-nr}. On a
  $$ \forall y \in M(F_S), \quad J_{\tilde{M}}(y \dot{\tilde{\gamma}} y^{-1}, f) = J_{\tilde{M}}(\dot{\tilde{\gamma}}, f^y) =  J_{\tilde{M}}(\dot{\tilde{\gamma}}, f). $$
\end{theorem}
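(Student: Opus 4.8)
The plan is to descend from the covering $\tilde{G}$ to the underlying reductive group by exactly the same reduction Arthur uses in \cite{Ar88LB}, §5, while tracking the central character $\bmu_m$ throughout. The statement to be proved has three assertions: (i) the limit defining $J_{\tilde M}(\dot{\tilde\gamma},f)$ exists; (ii) it agrees with Definition \ref{def:int-ponderee-nr} when $M_\gamma=G_\gamma$; (iii) the conjugation-equivariance identity holds. I would prove them in that order, and the bulk of the work is (i).

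For (i), first I would reduce to the case $\gamma$ unipotent (hence $\tilde\gamma = \tilde\sigma u$ with $\tilde\sigma=1$, i.e. $\gamma\in M_{\mathrm{unip}}(F_S)$ sitting inside $\tilde M_S$ via the unipotent splitting), using the Jordan decomposition $\dot{\tilde\gamma}=\tilde\sigma\dot u$ listed among the elementary operations above, together with the descent to $M_\sigma$ and the character $[\cdot,\sigma]$ — this is the same device that makes the $\bomega$-theory of §\ref{sec:formule-traces-caractere} applicable. After that reduction, the existence of the limit is a statement purely about the weighted orbital integrals of a function on $G_\sigma(F_S)$ against the mesure de Radon $[\cdot,\sigma]$-équivariante; but this is exactly \cite[5.2]{Ar88LB} in the $\bomega$-equivariant formulation, which in the excerpt is the theorem immediately after Proposition \ref{prop:induction-omega} (the one asserting $J_M^\bomega(\dot\gamma,f)=\lim_{a\to1}\sum_L r^L_M(\gamma,a)J_L^\bomega(a\dot\gamma,f)$ exists). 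Concretely I would say: choose $\tilde\gamma\in\Supp(\dot{\tilde\gamma})$ and a Haar measure on $M_\gamma(F_S)$; write $a\dot{\tilde\gamma}$ for the translate by $a\in A_{M,\mathrm{reg}}(F_S)^\dagger$ in general position (the splitting $\mathcal U\hookrightarrow\rev^{-1}(\mathcal U)$ of \ref{prop:bon-perturbation} makes this canonical, and $a\gamma$ is good iff $\gamma$ is by \ref{prop:bon-perturbation}); then for $f\in C^\infty_{c,\asp}(\widetilde{G_S})$ each $J_{\tilde L}(a\dot{\tilde\gamma},f)$ is, since $M_{a\gamma}=G_{a\gamma}$, literally $|D^G(a\gamma)|^{1/2}\int_{G_{a\gamma}(F_S)\backslash G(F_S)}f(x^{-1}a\tilde\gamma x)v_L(x)\,\dd x$, and the combination $\sum_L r^L_M(\gamma,a)J_{\tilde L}(a\dot{\tilde\gamma},f)$ converges as $a\to1$ because Arthur's germ-expansion estimates apply verbatim — the cover contributes only the locally constant $\bmu_m$-valued function $f(x^{-1}a\tilde\gamma x)/f(x^{-1}\gamma x)$ near $x$ with $x^{-1}\gamma x$ in the support, which is bounded and converges pointwise, so it does not disturb the estimates.

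For (ii), when $M_\gamma=G_\gamma$ the factors $r^L_M(\gamma,a)$ are $1$ for $L=M$ and $0$ otherwise (recalled just before Theorem \cite[5.2]{Ar88LB} in the excerpt), so the limit collapses to $J_{\tilde M}(\dot{\tilde\gamma},f)$ as given by Definition \ref{def:int-ponderee-nr}; there is nothing to check beyond continuity of the integrand in $a$, which holds because $v_M$ is continuous on $M(F_S)\backslash G(F_S)/K_S$ and $\Supp(f)$ is compact. For (iii) the two equalities are proved separately: $J_{\tilde M}(\dot{\tilde\gamma},f^y)=J_{\tilde M}(\dot{\tilde\gamma},f)$ only when $y\in M(F_S)$ is such that conjugation by $y$ stabilizes $M$, and follows by the change of variable $x\mapsto xy$ in the integral and the invariance $v_M(xy)=v_M(x)$ modulo lower terms — more precisely, for the unweighted piece it is immediate, and for the weighted piece one uses Proposition \ref{prop:non-invariance-omega-pondere} with $\bomega=1$ (there is no $\bomega$ here, the cover being specific/anti-specific) which gives $J_{\tilde M}(\dot{\tilde\gamma},f^y)=\sum_{Q\in\mathcal F(M)}J_{\tilde M}^{\widetilde{M_Q}}(\dot{\tilde\gamma},f_{Q,y})$, and for $y\in M(F_S)$ only the term $Q=G$ survives because $u_Q(k,y)$ vanishes for $Q\neq G$ when $y$ normalizes $M$ modulo something in $M(F_S)$; then $J_{\tilde M}^{\widetilde G}(\dot{\tilde\gamma},f^y)=J_{\tilde M}^{\widetilde G}(y\dot{\tilde\gamma}y^{-1},f)$ by transport of structure. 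The identity $J_{\tilde M}(y\dot{\tilde\gamma}y^{-1},f)=J_{\tilde M}(\dot{\tilde\gamma},f^y)$ is just the definition of the conjugation action on $\dot\Gamma(\widetilde M_S)$ unwound.

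\textbf{The main obstacle} I expect is bookkeeping rather than a genuine difficulty: one must check that the $\bmu_m$-valued commutator factors $[k,\sigma]$ introduced by the Jordan-decomposition descent, together with the passage between $f$ on $\tilde G_S$ and its avatar on $G_\sigma(F_S)$, combine into precisely the $\bomega=[\cdot,\sigma]$-equivariant measures for which the cited results of §\ref{sec:int-orb-omega} are stated — this is the same verification made in the proof of Theorem \ref{prop:Jo-descente}, and the author will presumably refer back to it. The one-line proof given in the excerpt ("il suffit de tordre les mesures invariantes \dots\ par $\bomega$, \dots\ cela n'affecte pas les démonstrations car $\bomega$ est unitaire") is exactly right: since $\bmu_m$ is finite, its characters are unitary, so every convergence estimate in \cite{Ar88LB} survives the twist untouched, and the existence of the limit, the compatibility with Definition \ref{def:int-ponderee-nr}, and the equivariance all follow by transporting the $\bomega$-equivariant statements of §\ref{sec:comport-int-ponderee-omega} through the unipotent splitting.
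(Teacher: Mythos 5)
Your treatment of the main point --- existence of the limit --- is essentially the paper's argument: Jordan decomposition $\tilde\gamma=\tilde\sigma u$, descent of the variable of integration through $M_\gamma\backslash M_\sigma\backslash G_\sigma\backslash G$, the commutator $[\cdot,\sigma]$ popping out of the anti-specificity of $f$, and then the observation that Arthur's estimates in \cite{Ar88LB} survive the twist of the invariant measures by the unitary character $[\cdot,\sigma]$. The paper redoes the first step of \cite[\S 6]{Ar88LB} explicitly rather than citing the $\bomega$-theorem of \S\ref{sec:int-orb-omega} as a black box, but since that theorem is itself proved by ``tordre les mesures'', the substance is identical. (Your ``concrete'' reformulation via the ratio $f(x^{-1}a\tilde\gamma x)/f(x^{-1}\gamma x)$ is not well defined where $f$ vanishes and is not the right mechanism; the commutator factors $[m,\sigma][x,\sigma]$ from the descent are.)

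There is, however, a wrong step in your part (iii). You justify $J_{\tilde M}(\dot{\tilde\gamma},f^y)=J_{\tilde M}(\dot{\tilde\gamma},f)$ for $y\in M(F_S)$ by claiming that in the non-invariance formula $J_{\tilde M}(\dot{\tilde\gamma},f^y)=\sum_{Q}J^{\widetilde{M_Q}}_{\tilde M}(\dot{\tilde\gamma},f_{Q,y})$ only $Q=G$ survives because ``$u_Q(k,y)$ vanishes for $Q\neq G$ when $y$ normalizes $M$''. That vanishing is false for general $y\in M(F_S)$: $u'_Q(k,y)$ is built from $H_Q(k_Q(k)y)$, which is nonzero for $k\in K_S$ and $y\in M(F_S)$; the vanishing $u'_Q(\cdot,y)=0$ for $Q\neq G$ is the mechanism for $y\in K_S$ (this is exactly how \ref{prop:J-transport} handles the $\pi((K_S)_{\mathrm{sc}})$ part of $y$). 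For $y\in M(F_S)$ the equivariance $J_{\tilde M}(y\dot{\tilde\gamma}y^{-1},f)=J_{\tilde M}(\dot{\tilde\gamma},f)$ is instead proved directly: $y\dot{\tilde\gamma}y^{-1}$ and $\dot{\tilde\gamma}$ are the same element of $\dot{\Gamma}(\widetilde M_S)$ (same orbit, same invariant measure), and in the integral one substitutes $x\mapsto yx$ and uses the \emph{left} $M(F_S)$-invariance of the weight $v_M$ --- not the right-translation formula $v_M(xy)=\sum_Q v_M^Q(x)u'_Q(x,y)$, which produces genuinely nonzero lower terms. So this piece of your argument needs to be replaced by the change-of-variables argument (first in the case $M_\gamma=G_\gamma$, then passed to the limit defining the general case).
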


Pour que l'expression $a\dot{\tilde{\gamma}}$ dans l'énoncé soit loisible, il faut fixer un voisinage ouvert $\mathcal{U}$ de $1$ dans $A_M(F_S)^\dagger$, un scindage de $\rev$ au-dessus de $\mathcal{U}$ qui envoie $1$ à $1$, et supposer que $a \in \mathcal{U}$; comme on ne regarde que la limite $a \to 1$, ces choix n'importent pas.

Notons tout d'abord que $J_{\tilde{M}}(\dot{\tilde{\gamma}}, f)=0$ si $\gamma$ n'est pas bon dans $G(F_S)$. Les définitions entraînent aussi que $J_{\tilde{M}}(\noyau\dot{\tilde{\gamma}}, f) = \noyau^{-1} J_{\tilde{M}}(\dot{\tilde{\gamma}}, f)$ pour tout $\noyau \in \bmu_m$.

\begin{proof}
  On peut supposer $\gamma$ bon dans $M(F_S)$ d'après l'observation précédente, alors $a\gamma$ l'est aussi pour $a \in A_M(F_S)^\dagger$. On se ramène à l'étude de
  $$ |D^G(a\gamma)|^{\frac{1}{2}} \int_{M_\gamma(F_S) \backslash G(F_S)} f(x'^{-1} a\tilde{\gamma} x') \left( \sum_{L \in \mathcal{L}(M)} r^L_M(\gamma,a) v_L(x') \right) \dd x' $$
  lorsque $a \to 1$. Soit $\tilde{\gamma}=\tilde{\sigma} u$ la décomposition de Jordan. On décompose la variable $x'=mxy$ avec $m \in M_\gamma(F_S) \backslash M_\sigma(F_S)$, $x \in M_\sigma(F_S) \backslash G_\sigma(F_S)$, $y \in G_\sigma(F_S) \backslash G(F_S)$. Alors l'intégrale ci-dessus s'écrit
  $$
    |D^G(a\gamma)|^{\frac{1}{2}} \iiint f(y^{-1} x^{-1} m^{-1} a\tilde{\sigma} u mxy) \left( \sum_{L \in \mathcal{L}(M)} r^L_M(\gamma,a) v_L(xy) \right) \dd m \dd x \dd y.
  $$
  Comme $a$ est central dans $\tilde{M}$, on a
  \begin{align*}
    y^{-1} x^{-1} m^{-1} a\tilde{\sigma} u mxy & = [\sigma,m] [\sigma,x] y^{-1}\tilde{\sigma} x^{-1} a m^{-1}umxy, \\
    f(y^{-1} x^{-1} m^{-1} a\tilde{\sigma} u mxy) & = [m,\sigma] [x,\sigma] f(y^{-1}\tilde{\sigma} x^{-1} a m^{-1}umxy)
  \end{align*}
  par l'anti-spécificité de $f$. Posons $g_{\tilde{\sigma}, a,x,y}(\tilde{m}) := [x,\sigma] f(y^{-1}\tilde{\sigma} x^{-1}a\tilde{m} x y)$, alors l'intégrale sur $x$ devient
  $$ \int_{M_\gamma(F_S) \backslash M_\sigma(F_S)} [m,\sigma]g_{\tilde{\sigma}, a,x,y}(m^{-1} u m ) \dd m , $$
  une intégrale orbitale unipotente avec le caractère $[\cdot,\sigma]$. Elle est bien définie car $\gamma$ est bon.

  Ce que l'on a fait est la première étape de la démonstration dans \cite[\S 6]{Ar88LB}; en fait c'est la seule part où intervient le revêtement. Après l'argument de descente ci-dessus, le revêtement disparaît au prix de rajouter le caractère $[\cdot,\sigma]$. Le reste de la démonstration marche de la même façon qu'en \cite{Ar88LB} si l'on remplace les mesures $\dd x$ par $[x,\sigma]\dd x$ et $\dd m$ par $[m,\sigma] \dd m$. Cela n'affecte pas les estimations dans \cite{Ar88LB}; en particulier, la clef \cite[6.1]{Ar88LB} et sa démonstration, qui repose sur une technique géométrique de Langlands, restent les mêmes. Cela permet de reprendre les arguments d'Arthur.
\end{proof}

\paragraph{Le cas non ramifié}
Fixons $G$ et $M$ comme précédemment. Supposons que $S$ consiste en places non archimédiennes et supposons $K_v$ hyperspécial de pour chaque $v \in S$. Notons $f_{K_S} = \prod_{v \in S} f_{K_v}$, où $f_{K_v}$ est l'unité de l'algèbre de Hecke sphérique anti-spécifique en $v$ (voir \S\ref{sec:algebre-Hecke}).

\begin{definition}\index[iFT1]{$r_{\tilde{M},K_S}(\dot{\tilde{\gamma}})$}
  Les intégrales orbitales pondérées anti-spécifiques non ramifiées sont définies par
  $$ r_{\tilde{M},K_S}(\dot{\tilde{\gamma}}) = r^{\tilde{G}}_{\tilde{M},K_S}(\dot{\tilde{\gamma}}) := J_{\tilde{M}}(\dot{\tilde{\gamma}}, f_{K_S}), \quad \dot{\tilde{\gamma}} \in \dot{\Gamma}(\widetilde{M_S}) .$$
\end{definition}

Lorsqu'il n'y a pas de confusion sur $K_S$, on l'écrit aussi $r^{\tilde{G}}_{\tilde{M}}(\dot{\tilde{\gamma}})$.

\paragraph{Descente semi-simple}
Fixons $\dot{\tilde{\gamma}} \in \dot{\Gamma}(\widetilde{M}_S)$ comme précédemment avec la décomposition de Jordan $\dot{\tilde{\gamma}} = \tilde{\sigma} \dot{u}$. Supposons que
\begin{itemize}
  \item $\tilde{\sigma} \in M(F)$;
  \item $\sigma$ est $F$-elliptique dans $M$.
\end{itemize}

Prenons un sous-groupe compact maximal $K_\sigma$ de $G_\sigma(F_S)$ en bonne position relativement à $M_\sigma$. Il faut rappeler une construction parallèle à celle pour \ref{prop:Jo-descente} (cf. \cite[\S 8]{Ar88LB}). Soit $\dot{\tilde{\gamma}} = \tilde{\sigma} \dot{u}$ la décomposition de Jordan. Soit $R \in \mathcal{F}^{G_\sigma}(M_\sigma)$. Prenons aussi $T \in \mathfrak{a}_M$ et posons
$$ v_P(\lambda,z,T) := v_P(\lambda, z) e^{\angles{\lambda,T}}, \quad P \in \mathcal{P}(M), $$
où $v_P(\lambda,z)$ est la $(G,M)$-famille définissant le poids. Les fonctions $v_P(\lambda,z,T)$ forment encore une $(G,M)$-famille.

Avec le formalisme de \ref{prop:Jo-descente}, posons
\begin{align*}
  v'_R(z,T) & := \sum_{Q \in \mathcal{F}_R^0(M)} v'_Q(z,T), \quad z \in G(F_S); \\
  \Phi_{R,y,T}(\tilde{m}) & := \delta_R(m)^{\frac{1}{2}} \iint_{K_\sigma \times U_R(F_S)} [k,\sigma] f(y^{-1}
\tilde{\sigma} k^{-1} \tilde{m}uky) v'_R(ky,T) \dd u \dd k \\
  & \text{où} \quad  \tilde{m} \in (\widetilde{M_R})_S, \; y \in G(F_S).
\end{align*}

\begin{proposition}[cf. {\cite[8.6]{Ar88LB}}]\label{prop:int-ponderee-descente}
  On a
  $$ J_{\tilde{M}}(\dot{\tilde{\gamma}}, f) = |D^G(\sigma)|^{\frac{1}{2}} \int_{G_\sigma(F_S) \backslash G(F_S)} \; \sum_{R \in \mathcal{F}^{G_\sigma}(M_\sigma)} J^{M_R, [\cdot,\sigma]}_{M_\sigma}(\dot{u}, \Phi_{R,y,T}) \dd y, $$
\end{proposition}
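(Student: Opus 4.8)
The plan is to mimic Arthur's proof of \cite[8.6]{Ar88LB} of the semisimple descent formula for weighted orbital integrals, inserting the commutator characters $[\cdot,\sigma]$ wherever the covering obstructs commutativity. First I would recall that, by \ref{prop:integrale-ponderee-bien-definie}, it suffices to prove the identity after passing to the limit $a \to 1$ along $a \in A_{M,\mathrm{reg}}(F_S)^\dagger$ in general position, i.e. for $a$ such that $M_{a\gamma} = G_{a\gamma}$. For such $a$ the element $a\dot{\tilde\gamma} \in \dot\Gamma(\widetilde{G_S})$ is defined via \eqref{eqn:de-M-a-G}, and $J_{\tilde M}(\dot{\tilde\gamma},f)$ is the limit of $\sum_{L \in \mathcal{L}(M)} r^L_M(\gamma,a) J_{\tilde L}(a\dot{\tilde\gamma},f)$. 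So the first real step is to reduce to the "non-weighted-on-$M$" situation $M_\gamma = G_\gamma$ and then run the descent.

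Next I would carry out the unipotent descent exactly as in the proof of \ref{prop:integrale-ponderee-bien-definie}: fixing $\tilde\gamma = \tilde\sigma u$, decompose the integration variable $x' = m x y$ with $m \in M_\gamma(F_S)\backslash M_\sigma(F_S)$, $x \in M_\sigma(F_S)\backslash G_\sigma(F_S)$, $y \in G_\sigma(F_S)\backslash G(F_S)$; using that $a$ is central in $\tilde M$ and that $f$ is anti-specific, the conjugate $x'^{-1}a\tilde\gamma x'$ produces the factors $[m,\sigma][x,\sigma]$, which convert the inner integral over $M_\gamma(F_S)\backslash M_\sigma(F_S)$ into a unipotent orbital integral twisted by the character $[\cdot,\sigma]$ and eliminate the covering. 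After this the covering has disappeared and only the automorphic character $\bomega := [\cdot,\sigma]$ on $G_\sigma(F_S)$ remains, so one is exactly in the situation of \S\ref{sec:formule-traces-caractere}. I would then appeal to the $\bomega$-analogue of Arthur's splitting of the weight function: the $(G,M)$-family $v_P(\lambda,z,T)$ restricted and split over the parabolic subgroups of $G_\sigma$ gives, via \ref{prop:descente-cd'} (or its variant \eqref{eqn:coef-d}), the functions $v'_R(z,T)$ attached to $R \in \mathcal{F}^{G_\sigma}(M_\sigma)$; the Iwasawa decomposition along $R$ then produces the functions $\Phi_{R,y,T}$ with the commutator $[k,\sigma]$, precisely as written in the statement. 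Assembling these pieces and recognising the resulting inner expression as $J^{M_R,[\cdot,\sigma]}_{M_\sigma}(\dot u, \Phi_{R,y,T})$ yields the formula, the factor $|D^G(\sigma)|^{\frac12}$ coming from the standard relation $|D^G(\gamma)| = |D^G(\sigma)|\,|D^{G_\sigma}(u\cdot(\text{unipotent part}))|$ together with the Weyl discriminant bookkeeping in the descent.

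The main obstacle, as usual for covering groups, is bookkeeping of the commutator cocycle: one must check that each occurrence of $[m,\sigma]$, $[x,\sigma]$, $[k,\sigma]$ appears with the correct sign/inverse so that anti-specificity of $f$ is consumed exactly once and the surviving twist is the honest character $[\cdot,\sigma]$ on $G_\sigma$ (here one uses the formulas of \S\ref{sec:commutateurs} for $y^{-1}\tilde\gamma y = [\gamma,y]\tilde\gamma$ and the hypothesis that $\gamma$ — equivalently $\sigma$ — is good, guaranteeing $[\cdot,\sigma]$ is trivial on $G_\gamma(F_S)$ so that the unipotent twisted orbital integral on $M_\gamma(F_S)\backslash M_\sigma(F_S)$ converges). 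The analytic heart — convergence of the iterated integrals, the passage to the limit $a \to 1$, and the geometric estimate underlying \cite[6.1]{Ar88LB} — is untouched by the covering after the descent step, since replacing $\dd x$ by $[x,\sigma]\dd x$ with $[\cdot,\sigma]$ unitary does not affect any bound; so I expect the only genuinely new verification to be the commutator accounting, everything else being a transcription of Arthur's argument.
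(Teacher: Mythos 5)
Votre plan suit exactement la démarche du texte : la preuve du papier se réduit à « reprendre l'argument de \cite[8.6]{Ar88LB} », le caractère $[\cdot,\sigma]$ apparaissant pour la même raison que dans la démonstration de \ref{prop:integrale-ponderee-bien-definie} (décomposition $x'=mxy$, anti-spécificité de $f$ consommant les commutateurs $[m,\sigma]$, $[x,\sigma]$, $[k,\sigma]$, puis scindage de la fonction de poids donnant les $v'_R$ et $\Phi_{R,y,T}$). Votre comptabilité des commutateurs et du facteur $|D^G(\sigma)|^{1/2}$ est correcte et coïncide avec celle du papier.
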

\begin{proof}
  On reprend l'argument dans \cite{Ar88LB}. Ici le caractère $[\cdot,\sigma]$ intervient pour la même raison que dans la démonstration de \ref{prop:integrale-ponderee-bien-definie}.
\end{proof}

\subsection{Comportement des intégrales orbitales pondérées anti-spécifiques}
Les résultats ci-dessous sont parallèles à ceux de \S\ref{sec:comport-int-ponderee-omega}. Vu \ref{prop:integrale-ponderee-bien-definie}, leurs démonstrations sont aussi similaires et nous ne les répéterons pas.

\paragraph{$(\tilde{M},\sigma)$-équivalence}\index[iFT1]{$(\tilde{M},\sigma)$-équivalence}
Soient $\sigma \in M(F_S)_\text{ss}$ et $\Sigma \subset \sigma M_\sigma(F_S)$ un ouvert invariant par $M_\sigma(F_S)$. Notons
$$\dot{\Gamma}(\tilde{\Sigma}) := \{ \dot{\gamma} \in \dot{\Gamma}(\widetilde{M}_S) : \Supp(\dot{\tilde{\gamma}}) \subset \rev^{-1}(\Sigma) \}. $$
Supposons désormais que l'adhérence de $\Sigma$ dans $\sigma M_\sigma(F_S)$ contient un voisinage invariant de $\sigma$. On dit que deux fonctions $\phi_1$, $\phi_2$ sur $\dot{\Gamma}(\tilde{\Sigma})$ sont $(\tilde{M},\sigma)$-équivalentes s'il existe $f \in C_{c,\asp}^\infty(\widetilde{M}_S)$ et un voisinage $U$ de $\sigma$ dans $M(F_S)$ tels que
$$ (\phi_1-\phi_2)(\dot{\tilde{\gamma}}) = J^{\tilde{M}}_{\tilde{M}}(\dot{\tilde{\gamma}},f) $$
pour tout $\dot{\tilde{\gamma}} \in \dot{\Gamma}(\tilde{\Sigma})$ tel que $\Supp(\dot{\tilde{\gamma}}) \subset \rev^{-1}(U)$. Si cette condition est vérifiée, on écrit
$$ \phi_1 \stackrel{(\tilde{M},\sigma)}{\sim} \phi_2 .$$

\begin{proposition}
  Si $M_\sigma=G_\sigma$, alors pour tout $f \in C_{c,\asp}^\infty(\widetilde{M}_S)$ on a
  $$ J_{\tilde{M}}(\dot{\tilde{\gamma}},f) \stackrel{(\tilde{M},\sigma)}{\sim} 0 $$
  pour tout $\dot{\tilde{\gamma}} \in \dot{\Gamma}(\widetilde{M}_S)$ assez proche de $\tilde{\sigma}$ modulo conjugaison.
\end{proposition}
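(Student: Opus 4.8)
The plan is to reduce the statement, along the lines of Arthur's analysis of the local behaviour of weighted orbital integrals and exactly in parallel with the $\bomega$-case of \S\ref{sec:comport-int-ponderee-omega}, to an explicit manipulation of ordinary orbital integrals. First I would shrink the neighbourhood $U$ of $\sigma$ so that for every $\dot{\tilde{\gamma}}$ under consideration a representative of $\Supp(\dot{\tilde{\gamma}})$ can be written $\gamma = \sigma m$ with $m \in M_\sigma(F_S)$ close to $1$; here one uses $\Sigma \subset \sigma M_\sigma(F_S)$. Then $\sigma$ and $\gamma$ commute, the semisimple part is $\gamma_s = \sigma m_s$ with $m_s$ close to $1$, and since only finitely many roots and Weyl elements are involved one gets $G_\gamma \subseteq G_{\gamma_s} = G_{\sigma m_s} \subseteq G_\sigma$ once $\gamma$ is close enough to $\sigma$; combined with $G_\sigma = M_\sigma \subseteq M$ this forces $G_\gamma = M \cap G_\gamma = M_\gamma$. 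Hence, by \ref{prop:integrale-ponderee-bien-definie}, the weighted orbital integral is no longer a limit but is given by the explicit formula of Definition~\ref{def:int-ponderee-nr}, of the shape
\[ J_{\tilde{M}}(\dot{\tilde{\gamma}}, f) = |D^{M}(\gamma)|^{\frac12} \int_{G_\gamma(F_S)\backslash G(F_S)} f(x^{-1}\tilde{\gamma}x)\, v_M(x)\, \dd x, \]
the Haar measure on $G_\gamma(F_S) = M_\gamma(F_S)$ being the one encoded in $\dot{\tilde{\gamma}}$.

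Next I would decompose $G_\gamma(F_S)\backslash G(F_S) = \bigl(M_\gamma(F_S)\backslash M(F_S)\bigr)\times\bigl(M(F_S)\backslash G(F_S)\bigr)$, writing $x = yx'$. As $v_M$ is left $M(F_S)$-invariant (it is a function on $M(F_S)\backslash G(F_S)/K_S$), one has $v_M(yx') = v_M(x')$, and for fixed $x'$ the integral over $y$ is (up to the appropriate power of the discriminant) the ordinary orbital integral at $\tilde{\gamma}$ of the function $f_{x'} := \bigl(\tilde{m}\mapsto f(x'^{-1}\tilde{m}x')\bigr)\big|_{\widetilde{M}_S} \in C_{c,\asp}^\infty(\widetilde{M}_S)$. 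The crucial point is that although $f_{x'}$ itself depends on the choice of representative $x'$, one has $f_{\mu x'} = (f_{x'})^{\mu^{-1}}$ for $\mu \in M(F_S)$, so by the invariance recorded in \ref{prop:integrale-ponderee-bien-definie} the quantity $J^{\tilde{M}}_{\tilde{M}}(\dot{\tilde{\gamma}}, f_{x'})$ depends on $x'$ only through $\bar{x'} \in M(F_S)\backslash G(F_S)$. Therefore
\[ J_{\tilde{M}}(\dot{\tilde{\gamma}}, f) = c(\gamma) \int_{M(F_S)\backslash G(F_S)} v_M(x')\, J^{\tilde{M}}_{\tilde{M}}(\dot{\tilde{\gamma}}, f_{x'})\, \dd x', \]
where $c(\gamma)$ is a quotient of Weyl discriminants, a smooth nonvanishing $M(F_S)$-invariant function near $\sigma$. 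I would then fix $\beta \in C_c^\infty(G(F_S))$, $\beta\ge 0$, normalised so that $\int_{M(F_S)}\beta(\mu x')\,\dd\mu = 1$ for all $x'$ in the (relatively compact) region that actually contributes, and set
\[ f'(\tilde{m}) := c(m)\int_{G(F_S)} \beta(x')\, v_M(x')\, f(x'^{-1}\tilde{m}x')\, \dd x'. \]
Unwinding with Fubini and the representative-independence above gives $J^{\tilde{M}}_{\tilde{M}}(\dot{\tilde{\gamma}}, f') = J_{\tilde{M}}(\dot{\tilde{\gamma}}, f)$ for $\dot{\tilde{\gamma}}$ with $\Supp(\dot{\tilde{\gamma}})\subset\rev^{-1}(U)$ (the factor $c(m)$ being pulled back from $M(F_S)$ and hence absorbable, since multiplying a test function by a smooth invariant function multiplies the orbital integral by its value). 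After multiplying $f'$ by a cutoff supported near $\tilde{\sigma}$ equal to $1$ near $\tilde{\sigma}$, and noting $f'(\noyau\tilde{m}) = \noyau^{-1}f'(\tilde{m})$ (inherited from $f$ since $\bmu_m$ is central), this is precisely the assertion $J_{\tilde{M}}(\dot{\tilde{\gamma}}, f) \stackrel{(\tilde{M},\sigma)}{\sim} 0$.

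The main obstacle is the compact support of $f'$, and this is exactly where the hypothesis $M_\sigma = G_\sigma$ is indispensable: one needs the set of $x' \in G(F_S)$ with $x'^{-1}\tilde{m}x' \in \Supp f$ for $\tilde{m}$ near $\tilde{\sigma}$ to have relatively compact image in $M(F_S)\backslash G(F_S)$. Since the semisimple conjugacy class of $\sigma$ is closed in $G(F_S)$ and the orbit map $G_\sigma(F_S)\backslash G(F_S)\to\mathcal{O}_\sigma$ is a proper homeomorphism, that set is of the form $G_\sigma(F_S)\cdot\Omega$ with $\Omega$ compact; as $G_\sigma(F_S) = M_\sigma(F_S) \subseteq M(F_S)$ its image in $M(F_S)\backslash G(F_S)$ lies in the compact image of $\Omega$ — whereas if $M_\sigma \subsetneq G_\sigma$ this image would fail to be relatively compact and no such $f'$ could exist. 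The remaining points — compatibility of the Haar measures between the two sides, smoothness of $f'$ in the archimedean case, and the elementary convex-analysis bounds on $v_M$ guaranteeing absolute convergence — are routine and go through as in \cite{Ar88LB} and in the character-twisted version of \S\ref{sec:comport-int-ponderee-omega}; by \ref{prop:integrale-ponderee-bien-definie} the covering itself plays no further role once $M_\gamma = G_\gamma$.
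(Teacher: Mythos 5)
Your argument is correct and is exactly the one the paper intends: the text gives no proof here, deferring to the $\bomega$-twisted analogue and ultimately to Arthur's Lemma~2.2 of \cite{Ar88LB}, whose proof is precisely your reduction to $M_\gamma=G_\gamma$ near $\sigma$, the unfolding $x=yx'$ over $(M_\gamma(F_S)\backslash M(F_S))\times(M(F_S)\backslash G(F_S))$ using the left $M(F_S)$-invariance of $v_M$, and the construction of $f'$ by averaging against a function $\beta$ with $\int_{M(F_S)}\beta(\mu x')\,\dd\mu=1$ on the contributing region, whose relative compactness in $M(F_S)\backslash G(F_S)$ is the one place where $M_\sigma=G_\sigma$ is used. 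One small correction: omit the final multiplication of $f'$ by a cutoff near $\tilde{\sigma}$ --- the definition of $(\tilde{M},\sigma)$-equivalence imposes no support condition on the auxiliary test function, and a non-invariant cutoff would alter the orbital integrals of $f'$ at points whose $M(F_S)$-conjugacy classes leave the chosen neighbourhood of $\tilde{\sigma}$.
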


\paragraph{Formules de descente}\index[iFT1]{$f_Q$}
Fixons $\dot{\tilde{\gamma}} \in \dot{\Gamma}(\widetilde{M}_S)$. Soit $Q = L U_Q \in \mathcal{F}(M_0)$. Définissons
$$ f_Q(\tilde{m}) := \delta_Q(m)^{\frac{1}{2}} \int_{K_S} \int_{U_Q(F_S)} f(k^{-1}\tilde{m}uk) \dd u \dd k, \quad m \in L(F_S). $$
Ceci fournit une application linéaire $C_{c,\asp}^\infty(\widetilde{G_S}) \to C_{c,\asp}^\infty(\widetilde{L}_S)$.

\begin{proposition}\label{prop:J-asp-descente}
  Supposons que $\gamma \in M_\mathrm{unip}(F_S)$. Avec les choix effectués dans \ref{prop:J-descente}, on a
  $$ J_{\widetilde{L_1}}(\dot{\gamma}^{L_1},f) = \sum_{L \in \mathcal{L}(M)} d^G_M(L_1,L) J^{\tilde{L}}_{\tilde{M}}(\dot{\gamma}, f_{Q_L}). $$
\end{proposition}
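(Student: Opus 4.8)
Le plan est de transposer la démonstration de la version avec caractère \ref{prop:J-descente} --- c'est-à-dire celle d'Arthur \cite[\S 8]{Ar88-TF1} dans le cas $\bomega = 1$ --- au cadre anti-spécifique, tous les ingrédients nécessaires ayant déjà été mis en place. Le point décisif est que $\gamma$, et donc la classe induite de Lusztig--Spaltenstein $\dot{\gamma}^{L_1}$, est unipotente, de sorte que sa partie semi-simple est triviale. On applique alors la descente semi-simple \ref{prop:int-ponderee-descente} (avec $\sigma = 1$, et $L$, puis $L_1$, au lieu de $G$) aux deux membres: au membre de gauche $J_{\widetilde{L_1}}(\dot{\gamma}^{L_1},f)$, et terme à terme au membre de droite $J^{\tilde L}_{\tilde M}(\dot{\gamma}, f_{Q_L})$. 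Comme $\sigma = 1$ on a $G_\sigma = G$, $M_\sigma = M$, $|D^G(\sigma)| = 1$, l'intégrale sur $G_\sigma(F_S)\backslash G(F_S)$ se réduit à un point, le sous-groupe compact auxiliaire $K_\sigma$ devient $K_S$, et surtout le commutateur $[\cdot,\sigma]$ devient trivial. Autrement dit, via le scindage unipotent $s$ --- dont l'invariance par conjugaison \ref{prop:scindage-unip} garantit que toutes les quantités en jeu ne dépendent que de $f \circ s$ --- le revêtement $\rev$ se scinde canoniquement au-dessus des voisinages des sous-ensembles unipotents concernés, exactement comme dans la démonstration de \ref{prop:integrale-ponderee-bien-definie}. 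Cela ramène l'assertion à l'identité analogue sur le groupe réductif, à caractère trivial.

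Ensuite, je reprendrais l'argument réductif. On développe $J_{\widetilde{L_1}}(\dot{\gamma}^{L_1},-)$ (après descente) à l'aide de sa définition comme limite sur $a \to 1$, $a \in A_{M,\mathrm{reg}}(F_S)$, et de la formule compatible pour la classe induite rappelée juste avant \ref{prop:J-descente}; les fonctions de poids qui apparaissent s'assemblent en un produit de $(G,M)$-familles, et le lemme de scindage \ref{prop:descente-cd'} joint au lemme combinatoire \cite[7.4]{Ar88-TF1} (rappelé en \S\ref{sec:GM-famille}, pour le même $\xi \in \mathfrak{a}^{L_1}_M$ en position générale qu'en \ref{prop:J-descente}, cf. \eqref{eqn:xi-1}) regroupe la somme en une somme indexée par $L \in \mathcal{L}(M)$ avec coefficients $d^G_M(L_1,L)$, les sommes intérieures produisant précisément les descentes paraboliques $f_{Q_L}$. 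C'est la manipulation d'Arthur dans \cite[\S 8]{Ar88-TF1}, insensible à la distinction $\bomega = 1$ / anti-spécifique; la seule adaptation est de transporter les mesures invariantes sur les orbites (de façon équivalente, le choix de mesure de Haar sur $M_\gamma(F_S)$ sous-jacent à la notation pointée $\dot{\gamma}$), ce qui est sans conséquence puisqu'il s'agit de mesures positives.

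L'étape que j'anticipe comme principal obstacle --- d'ordre plutôt comptable que conceptuel --- est la comparaison de la descente parabolique pondérée $\Phi_{R,y,T}$ issue de \ref{prop:int-ponderee-descente} avec la descente naïve $f_{Q_L}$ figurant dans l'énoncé, c'est-à-dire l'absorption des facteurs de poids supplémentaires $v'_R$ grâce aux identités de $(G,M)$-familles. Il faut aussi vérifier quelques points de routine: que la spécialisation $\sigma = 1$ de \ref{prop:int-ponderee-descente} est licite ($1$ est trivialement $F$-elliptique dans tout sous-groupe de Lévi, et sa décomposition de Jordan est l'évidente); que tout élément unipotent est bon (un calcul immédiat à partir de la construction de $s$ montre que $[y, s(\gamma)] = [y, \exp(X/m)]^m = 1$ pour $y \in M^\gamma(F_S)$), de sorte que $\dot{\Gamma}(\widetilde{M}_S)$ contient bien les classes unipotentes; et que la classe induite $\dot{\gamma}^{L_1}$ sur le revêtement coïncide, via $s$, avec la classe induite réductive à la \ref{prop:induction-omega} --- ce qui est sa définition même. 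Une fois ces vérifications faites, l'identité cherchée s'obtient exactement comme son analogue réductif \ref{prop:J-descente}.
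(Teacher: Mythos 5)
Votre démarche est correcte et coïncide avec celle que le papier sous-entend : l'article ne rédige pas cette preuve mais renvoie explicitement au parallèle avec le cas $\bomega$-équivariant et à la réduction de \ref{prop:integrale-ponderee-bien-definie}, c'est-à-dire précisément à la spécialisation $\sigma=1$ de la descente semi-simple, où le caractère $[\cdot,\sigma]$ se trivialise et où le scindage unipotent ramène l'énoncé à la formule de descente d'Arthur (cas de \ref{prop:J-descente} à caractère trivial). Vos vérifications annexes (bonté des unipotents, ellipticité triviale de $\sigma=1$, compatibilité de l'induction) sont exactes.
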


\begin{proposition}\label{prop:J-asp-deploiement}
  Supposons $S=S_1 \sqcup S_2$. Soient $\dot{\tilde{\gamma}} = \dot{\tilde{\gamma}}_1 \dot{\tilde{\gamma}}_2$, et $f = f_1 f_2 \in C_{c,\asp}^\infty(\widetilde{G_S})$. En conservant le formalisme de \ref{prop:J-deploiement}, on a
  $$ J_{\tilde{M}}(\dot{\tilde{\gamma}},f) = \sum_{L_1, L_2 \in \mathcal{L}(M)} d^G_M(L_1, L_2) J^{\widetilde{L_1}}_{\tilde{M}}(\dot{\tilde{\gamma}}_1, f_{Q_1}) J^{\widetilde{L_2}}_{\tilde{M}}(\dot{\tilde{\gamma}}_2, f_{Q_2}). $$
\end{proposition}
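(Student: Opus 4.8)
I would follow Arthur's proof of the splitting formula \cite[9.1]{Ar88-TF2} (whose character-twisted analogue is \ref{prop:J-deploiement}), using the reduction scheme already deployed in the proof of \ref{prop:integrale-ponderee-bien-definie}: the covering contributes no new analysis, only some bookkeeping of the central $\bmu_m$ and of the ambiguity $\mathbf{N}_S$, which is absorbed by the anti-specificity of $f = f_1 f_2$. First I would dispose of the trivial case: if $\gamma \in \Supp(\dot{\tilde\gamma})$ is not good in $G(F_S)$ then $J_{\tilde M}(\dot{\tilde\gamma},f) = 0$ by the remarks following \ref{prop:integrale-ponderee-bien-definie}, and likewise each summand on the right vanishes (goodness over $F_S$ being a condition over the places, and $r^L_M(\gamma,\cdot)$, $d^G_M(\cdot,\cdot)$ preserving the relevant supports). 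Hence one may assume $\gamma$ good, and, after translation by $A_M(F_S)^\dagger$ (which does not change goodness by \ref{prop:bon-perturbation}), that $\gamma$ is good in $M(F_S)$.

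\emph{Case $M_\gamma = G_\gamma$.} Here \ref{def:int-ponderee-nr} gives $J_{\tilde M}(\dot{\tilde\gamma},f) = |D^M(\gamma)|^{1/2}\int_{G_\gamma(F_S)\backslash G(F_S)} f(x^{-1}\tilde\gamma x) v_M(x)\,dx$, the measure on $M_\gamma(F_S) = G_\gamma(F_S)$ being the product of the chosen ones at $S_1$ and $S_2$ (this is what $\dot{\tilde\gamma} = \dot{\tilde\gamma}_1\dot{\tilde\gamma}_2$ means). Write $G(F_S) = G(F_{S_1})\times G(F_{S_2})$, $x = x_1 x_2$, and recall $H_P(x_1 x_2) = H_P(x_1)+H_P(x_2)$, so that the $(G,M)$-family $v_P(\lambda,x) = e^{-\langle\lambda,H_P(x)\rangle}$ is the product of the two $(G,M)$-families $x_i \mapsto v_P(\lambda,x_i)$. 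The product-splitting lemma for $(G,M)$-familles in \S\ref{sec:GM-famille} (\cite[7.4]{Ar88-TF1}) then gives $v_M(x_1 x_2) = \sum_{L_1,L_2 \in \mathcal{L}(M)} d^G_M(L_1,L_2)\, v^{Q_1}_M(x_1)\, v^{Q_2}_M(x_2)$, with $(L_1,L_2)\mapsto(Q_1,Q_2)$ as in \ref{prop:J-deploiement}. Substituting, separating the two places by Fubini, and then carrying out at each place $S_i$ the Iwasawa decomposition $G(F_{S_i}) = U_{Q_i}(F_{S_i})M_{Q_i}(F_{S_i})K_{S_i}$ exactly as in the proof of \ref{prop:non-invariance-omega-pondere} — here $\tilde\gamma$ may be replaced by the product $\tilde\gamma_1\tilde\gamma_2$ since $f_1 f_2$ is anti-specific, so $f(x^{-1}\tilde\gamma x)$ descends to the quotient by $\mathbf{N}_S$ — one uses that $v^{Q_i}_M(\lambda,m)$ for $m\in M_{Q_i}(F_{S_i})$ depends only on $(M,M_{Q_i})$ (\cite[p.41]{Ar81}) and is the weight for $(M_{Q_i},M,K_{S_i})$; the $S_i$-integral collapses to $J^{\widetilde{M_{Q_i}}}_{\tilde M}(\dot{\tilde\gamma}_i,(f_i)_{Q_i})$. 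This proves the identity when $M_\gamma = G_\gamma$.

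\emph{General case.} Apply the defining limit of \ref{prop:integrale-ponderee-bien-definie}: $J_{\tilde M}(\dot{\tilde\gamma},f) = \lim_{a\to 1}\sum_{L\in\mathcal{L}(M)} r^L_M(\gamma,a)\,J_{\tilde L}(a\dot{\tilde\gamma},f)$, the $a\in A_{M,\mathrm{reg}}(F_S)^\dagger$ being in general position so that $M_{a\gamma}=G_{a\gamma}$. Since $A_M(F_S)^\dagger = A_M(F_{S_1})^\dagger\times A_M(F_{S_2})^\dagger$ and the auxiliary splitting of $\rev$ near $1$ may be taken to be a product, one writes $a = a_1 a_2$ and $a\dot{\tilde\gamma} = (a_1\dot{\tilde\gamma}_1)(a_2\dot{\tilde\gamma}_2)$; for $a$ in general position also $L_{a\gamma}=G_{a\gamma}$, so the previous case applies to each $J_{\tilde L}(a\dot{\tilde\gamma},f)$. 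Reorganising the resulting double sum over $(L;L_1,L_2)$ into one over $(L_1,L_2;L)$ and invoking, at each place separately, the combinatorial identity $\lim_{a_i\to 1}\sum_{L_i\in\mathcal{L}^{M_{Q_i}}(M)} r^{L_i}_M(\gamma_i,a_i)\,J^{\widetilde{L_i}}_{\tilde M}(a_i\dot{\tilde\gamma}_i,(f_i)_{Q_i}) = J^{\widetilde{M_{Q_i}}}_{\tilde M}(\dot{\tilde\gamma}_i,(f_i)_{Q_i})$ — which is Arthur's argument in \cite[\S 9]{Ar88-TF2}, concerning only $r^L_M$ and $d^G_M$ and hence insensitive to the covering — gives the formula. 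An equivalent route: apply the semisimple descent \ref{prop:int-ponderee-descente} to reduce to weighted unipotent orbital integrals on $M_\sigma$ twisted by $[\cdot,\sigma]$, then invoke \ref{prop:J-deploiement} there; the descent over $G_\sigma\backslash G$ commutes with the splitting $S = S_1\sqcup S_2$, and the $\Phi_{R,y,T}$ behave compatibly with products.

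\emph{Main obstacle.} The genuinely analytic content (existence of the limit, the estimates of \cite{Ar88LB}) is already packaged in \ref{prop:integrale-ponderee-bien-definie}, so the remaining work is essentially formal. The point needing care is the bookkeeping of $\bmu_m$ and of the various $\mathbf{N}$-groups: one must verify that $a\dot{\tilde\gamma}$ really factors as $(a_1\dot{\tilde\gamma}_1)(a_2\dot{\tilde\gamma}_2)$ compatibly with the chosen invariant measures and with the central character, that anti-specificity of $f = f_1 f_2$ is exactly what makes $f(x^{-1}\tilde\gamma x)$ and the $(f_i)_{Q_i}$ well-defined on the quotients by $\mathbf{N}_S$, and that the vanishing in the ``$\gamma$ not good'' case is consistent on both sides. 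The reindexing $(L;L_1,L_2)\leftrightarrow(L_1,L_2;L)$ and the $r^L_M/d^G_M$ identities are Arthur's and transcribe verbatim.
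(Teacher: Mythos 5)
Votre démonstration est correcte et suit essentiellement la même voie que l'article, lequel ne donne d'ailleurs aucun détail : il se borne à renvoyer à \ref{prop:integrale-ponderee-bien-definie} et à l'argument d'Arthur \cite[9.1]{Ar88-TF2}, exactement ce que vous explicitez (formule de produit des $(G,M)$-familles dans le cas $M_\gamma=G_\gamma$, puis limite et combinatoire des $r^L_M$ pour le cas général). Votre remarque finale sur la compensation de l'ambiguïté $\bmu_m$ dans la décomposition $\dot{\tilde{\gamma}}=\dot{\tilde{\gamma}}_1\dot{\tilde{\gamma}}_2$ grâce à l'anti-spécificité de $f=f_1f_2$ est précisément le seul point propre au revêtement, que l'article signale dans la remarque suivant l'énoncé.
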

Remarquons que la décomposition $\dot{\tilde{\gamma}} = \dot{\tilde{\gamma}}_1 \dot{\tilde{\gamma}}_2$ n'est unique qu'à l'action près du groupe $\{(\noyau,\noyau^{-1}) : \noyau \in \bmu_m\}$.

\paragraph{Non-invariance}
Soient $Q = L U_Q \in \mathcal{F}(M_0)$ et $y \in G(F_S)$. On définit
$$ f_{Q,y}(\tilde{m}) = \delta_Q(m)^{\frac{1}{2}} \iint_{K_S \times U_Q(F_S)} f(k^{-1}\tilde{m}uk) u'_Q(k,y) \dd u \dd k, \quad \tilde{m} \in \widetilde{L}, $$
où $u'_Q$ est la fonction définie en \ref{prop:non-invariance-omega-pondere}. Ceci fournit une application linéaire $C_{c,\asp}^\infty(\widetilde{G_S}) \to C_{c,\asp}^\infty(\widetilde{L}_S)$. Rappelons aussi que $f^y$ est la fonction $\tilde{x} \mapsto f(y\tilde{x}y^{-1})$.

\begin{proposition}\label{prop:non-invariance-int-ponderee}
  On a
  $$ J_{\tilde{M}}(\dot{\tilde{\gamma}}, f^y) = \sum_{Q \in \mathcal{F}(M)}  J_{\tilde{M}}^{\widetilde{M_Q}}(\dot{\tilde{\gamma}}, f_{Q,y}). $$
\end{proposition}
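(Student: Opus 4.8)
The plan is to reduce this to the statement for ordinary reductive groups, following Arthur's argument for $(8.2)$ of \cite{Ar81} (and its variant with a character, which we already established in \ref{prop:non-invariance-omega-pondere}), while keeping track of how the covering intervenes through the commutator $[\cdot,\sigma]$. First I would treat the special case $M_\gamma = G_\gamma$, where $\dot{\tilde{\gamma}}$ is (up to the sign $0$) an element of $\dot{\Gamma}(\widetilde{G_S})$ by \eqref{eqn:de-M-a-G} and $J_{\tilde M}(\dot{\tilde\gamma},\cdot)$ is given by the explicit weighted orbital integral of \ref{def:int-ponderee-nr}. Writing
$$ J_{\tilde M}(\dot{\tilde\gamma}, f^y) = |D^M(\gamma)|^{\frac12} \int_{G_\gamma(F_S)\backslash G(F_S)} f(y x^{-1}\tilde\gamma x y^{-1}) v_M(x)\dd x, $$
substituting $x \mapsto xy$, and using the identity $v_P(\lambda, xy) = v_P(\lambda,x)\, u_P(\lambda,x,y)$ for the $(G,M)$-family, the splitting formula \ref{prop:descente-cd'} gives $v_M(xy) = \sum_{Q\in\mathcal F(M)} v^Q_M(x)\, u'_Q(x,y)$. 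Then, exactly as in \ref{prop:non-invariance-omega-pondere}, I would unfold the Iwasawa decomposition $G(F_S) = U_Q(F_S) M_Q(F_S) K_S$; the anti-specificity of $f$ (i.e. $f(\noyau \tilde x) = \noyau^{-1} f(\tilde x)$) is exactly what makes the integrand over $K_S$ well-defined on the quotient $\widetilde{G_S}$, and the $(M_Q,M)$-family $v^Q_M(\lambda,m)$ restricted to $m\in M_Q(F_S)$ depends only on $M,M_Q,K_S$, yielding the weight function for $\widetilde{M_Q}$. This produces the case $M_\gamma=G_\gamma$.

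Next I would pass to the general case by the usual limiting argument. By \ref{prop:integrale-ponderee-bien-definie},
$$ J_{\tilde M}(\dot{\tilde\gamma}, f^y) = \lim_{\substack{a\to 1 \\ a\in A_{M,\mathrm{reg}}(F_S)^\dagger}} \sum_{L\in\mathcal L(M)} r^L_M(\gamma,a)\, J_{\tilde L}(a\dot{\tilde\gamma}, f^y), $$
where $a\dot{\tilde\gamma}\in\dot\Gamma(\widetilde{G_S})$ satisfies $M_{a\gamma}=G_{a\gamma}$, so the case just treated applies to each term: $J_{\tilde L}(a\dot{\tilde\gamma}, f^y) = \sum_{Q\in\mathcal F^G(L)} J^{\widetilde{M_Q}}_{\tilde L}(a\dot{\tilde\gamma}, f_{Q,y})$. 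Interchanging the finite sums and the limit, regrouping over $Q\in\mathcal F^G(M)$ so that the inner sum runs over $L\in\mathcal L^{M_Q}(M)$, and recognizing $\lim_{a\to 1}\sum_{L\in\mathcal L^{M_Q}(M)} r^L_M(\gamma,a) J^{\widetilde{M_Q}}_{\tilde L}(a\dot{\tilde\gamma}, f_{Q,y}) = J^{\widetilde{M_Q}}_{\tilde M}(\dot{\tilde\gamma}, f_{Q,y})$ — again by \ref{prop:integrale-ponderee-bien-definie}, applied inside $M_Q$ — gives the asserted formula $J_{\tilde M}(\dot{\tilde\gamma}, f^y) = \sum_{Q\in\mathcal F(M)} J^{\widetilde{M_Q}}_{\tilde M}(\dot{\tilde\gamma}, f_{Q,y})$. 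Here it is worth noting that, unlike in \ref{prop:non-invariance-omega-pondere}, no scalar factor $\bomega(y)$ appears: the character that governs $\bomega$-equivariance has been absorbed into the anti-specificity of $f$, and $f_{Q,y}$ is again anti-specific.

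The main obstacle is the bookkeeping of the commutator $[\cdot,\sigma]$ that surfaces when one descends to the semisimple part $\tilde\sigma$ of $\tilde\gamma$, since $a\in A_M(F_S)^\dagger$ is central in $\tilde M$ but a general conjugating element is not. Concretely, in the descent step (compare the proof of \ref{prop:integrale-ponderee-bien-definie}) one writes $\tilde\gamma = \tilde\sigma u$ and decomposes the integration variable through $M_\gamma \subset M_\sigma \subset G_\sigma \subset G$; the anti-specificity of $f$ together with the identity $y^{-1}x^{-1}\tilde\sigma u x y = [\sigma,x]\,y^{-1}\tilde\sigma x^{-1}uxy$ converts the covering integral into an ordinary unipotent weighted orbital integral for $M_\sigma$ twisted by the character $[\cdot,\sigma]$. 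At that point the whole computation lives on connected reductive groups, and one may simply invoke the character version \ref{prop:non-invariance-omega-pondere} with $\bomega = [\cdot,\sigma]$, replacing the invariant measures $\dd x$, $\dd m$ by $[x,\sigma]\dd x$, $[m,\sigma]\dd m$; since these are unitary characters the estimates of \cite{Ar88LB} are unaffected. Thus the only genuinely new input beyond \ref{prop:non-invariance-omega-pondere} is checking that the substitution $x\mapsto xy$ is compatible with the splitting of $v_M$ on the cover, which is immediate because $v_M$ and the $u'_Q$ factor through $\rev$.
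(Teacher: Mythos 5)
Your plan coincides with the paper's: the paper gives no separate proof of \ref{prop:non-invariance-int-ponderee}, stating only that the results of this subsection follow by the same arguments as their character-twisted analogues in \S\ref{sec:comport-int-ponderee-omega} once \ref{prop:integrale-ponderee-bien-definie} is in place — which is exactly the two-step reduction you carry out (the case $M_\gamma=G_\gamma$ via the splitting $v_M(xy)=\sum_Q v^Q_M(x)u'_Q(x,y)$ and the Iwasawa unfolding, then the general case by the limit over $a\in A_{M,\mathrm{reg}}(F_S)^\dagger$ and regrouping of the sums over $L$ and $Q$). Your observation that the factor $\bomega(y)$ of \ref{prop:non-invariance-omega-pondere} disappears here is also correct.
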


\begin{proposition}\label{prop:J-asp-transport}
  Soit $y \in G(F_S)$ tel que $yMy^{-1} \in \mathcal{L}(M_0)$ , alors
  $$ J_{y\tilde{M}y^{-1}}(y\dot{\tilde{\gamma}}y^{-1}, f) = J_{y\tilde{M}y^{-1}}(\dot{\tilde{\gamma}}, f). $$
\end{proposition}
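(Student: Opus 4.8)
The plan is to transcribe the proof of the character-case analogue, Proposition \ref{prop:J-transport}, into the anti-specific setting, replacing the role of \ref{prop:non-invariance-omega-pondere} by \ref{prop:non-invariance-int-ponderee} and the role played there by the equivariance of weighted orbital integrals by the corresponding statement in \ref{prop:integrale-ponderee-bien-definie} (here no character survives, so all $\bomega$-factors become trivial and the right-hand side is simply $J_{\tilde{M}}(\dot{\tilde{\gamma}}, f)$, in accordance with \ref{prop:J-transport} taken with $\bomega = 1$). The first step is to reduce to two special cases. Since $K_S$ corresponds to a special vertex in good position relative to $M_0$, hence relative to $M$, it contains representatives of $W_0^G$ coming from the simply connected covering; using the Bruhat--Tits fact underlying \ref{prop:conjugaison-K-dans-M_0}, any $y \in G(F_S)$ with $yMy^{-1} \in \mathcal{L}(M_0)$ can be factored as $y = km$ with $k \in \pi((K_S)_{\text{sc}})$ and $m \in M(F_S)$, so that $kMk^{-1} = yMy^{-1}$. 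It then suffices to treat $y \in M(F_S)$ and $y \in \pi((K_S)_{\text{sc}})$ separately.

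If $y \in M(F_S)$, then $yMy^{-1} = M$ and the equivariance already recorded in \ref{prop:integrale-ponderee-bien-definie} gives at once $J_{\tilde{M}}(y\dot{\tilde{\gamma}}y^{-1}, f) = J_{\tilde{M}}(\dot{\tilde{\gamma}}, f^y) = J_{\tilde{M}}(\dot{\tilde{\gamma}}, f)$; note that $f^y$ remains anti-specific because $\bmu_m$ is central in $\widetilde{G_S}$. If $y \in \pi((K_S)_{\text{sc}})$, a transport of structure along conjugation by $y$ — which acts on $\widetilde{G_S}$ and carries $\widetilde{M}_S$ onto $\widetilde{yMy^{-1}}_S$ — yields $J_{y\tilde{M}y^{-1}}(y\dot{\tilde{\gamma}}y^{-1}, f) = J_{\tilde{M}}(\dot{\tilde{\gamma}}, f^y)$. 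One then invokes \ref{prop:non-invariance-int-ponderee} to write $J_{\tilde{M}}(\dot{\tilde{\gamma}}, f^y) = \sum_{Q \in \mathcal{F}(M)} J^{\widetilde{M_Q}}_{\tilde{M}}(\dot{\tilde{\gamma}}, f_{Q,y})$, and it remains to see that only $Q = G$ contributes. Since $y \in K_S$ and the variable $k$ in the definition of $f_{Q,y}$ runs over $K_S$, we have $k_P(k)y \in K_S$, hence $H_P(k_P(k)y) = 0$ and the $(G, M_Q)$-family $u_P(\lambda, k, y)$ is identically $1$; exactly as in the proof of \ref{prop:independance-P_0}, this forces $u'_Q(k, y) = \int_{\mathfrak{a}^G_Q} \Gamma^G_Q(H, 0)\dd H$, which is $1$ for $Q = G$ and $0$ otherwise. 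Consequently $f_{Q,y} = 0$ for $Q \neq G$ and $f_{G,y} = f$, so the sum collapses to $J_{\tilde{M}}(\dot{\tilde{\gamma}}, f)$, as desired.

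The argument is essentially a mechanical adaptation, so there is no serious obstacle; the only point deserving attention is the reduction step, where one must make sure that the factorization $y = km$ is legitimate for an arbitrary $y$ conjugating $M$ to another semi-standard Levi (this is where specialness of $K_S$ is used, via the same Bruhat--Tits input as in \ref{prop:conjugaison-K-dans-M_0}), and that the transport of structure along $\pi((K_S)_{\text{sc}})$ is harmless on the covering — which it is, since conjugation by $G(F_S)$ always acts on $\widetilde{G_S}$ and, in the anti-specific setting, no commutator character $[\cdot, \sigma]$ intervenes to complicate matters.
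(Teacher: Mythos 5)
Your proof is correct and follows exactly the route the paper intends: the paper's own proof of this proposition is just the one-line remark that it is the same argument as \ref{prop:J-transport}, namely the factorization $y=km$ with $k\in\pi((K_S)_{\text{sc}})$ and $m\in M(F_S)$ via specialness of $K_S$, equivariance for the $M(F_S)$ part, and transport of structure plus the vanishing of $u'_Q(\cdot,y)$ for $Q\neq G$ for the $\pi((K_S)_{\text{sc}})$ part. You have simply written out the details that the paper leaves implicit (including correctly reading the right-hand side as $J_{\tilde{M}}(\dot{\tilde{\gamma}},f)$, matching the $\bomega=1$ case of \ref{prop:J-transport}).
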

\begin{proof}
  C'est le même argument qu'en \ref{prop:J-transport}.
\end{proof}

\begin{remark}
  L'intégrale orbitale pondérée satisfait à d'autres propriétés importantes, par exemple le développements en germes au voisinage d'un élément $\tilde{\sigma}$ d'image semi-simple (pas forcément bon). Les détails se trouvent dans \cite{Ar88LB}. Par ailleurs, le cas $G=M=\GL(n)$ est déjà étudié dans \cite{KF86}.
\end{remark}

\subsection{Développement géométrique fin}
\paragraph{Passage à une situation locale}
Revenons au cas global. Nous considérons $S$ un ensemble fini de places de $F$ tel que $S \supset V_\text{ram}$.

Rappelons une définition d'Arthur dans \cite[\S 8]{Ar86}.
\begin{definition}\label{def:(M,S)-equiv}
  On dit que deux éléments $\gamma_1, \gamma_2 \in M(F)$ avec décompositions de Jordan $\gamma_i = \sigma_i u_i$ (où $i=1,2$) sont $(M,S)$-équivalents s'il existe $x \in M(F)$ et $y \in M_{\sigma_2}(F_S)$ tels que
  \begin{itemize}
    \item $x^{-1}\sigma_1 x = \sigma_2$,
    \item $y^{-1}x^{-1} u_1 xy = u_2$.
  \end{itemize}
\end{definition}
On vérifie que c'est une relation d'équivalence. Une classe de $\mathcal{O}^M$-équivalence se découpe en un nombre fini de classes de $(M,S)$-équivalence.

\begin{definition}\label{def:K-bon}\index[iFT1]{$(M(F))_{\tilde{M},S}^{K,\text{bon}}$}
  Posons
  $$
  \begin{array}{ll}
  (M(F)) & := \{\text{classes de conjugaison dans $M(F)$}\}, \\
  (M(F))_{M,S} & := \{\text{classes de $(M,S)$-équivalence dans $M(F)$}\}, \\
  (M(F))_{M,S}^K & := \left\{
    \begin{array}{ll}
      c \in (M(F))_{M,S}: & \exists \gamma = \sigma u \in c \;\text{(décomposition de Jordan),  où} \\
      & \sigma \text{ est $S$-admissible},\\
      & \sigma^S \in K^S.
    \end{array}
  \right\},\\
  (M(F))_{\tilde{M},S}^{K,\text{bon}} & := \{c \in (M(F))_{M,S}^K : c \text{ est bon dans } M(F_S) \}.
  \end{array}
  $$
  Soient $c \in (M(F))_{M,S}^K$ et $\gamma = \sigma u \in c$ un représentant vérifiant les conditions dans cette définition. On dit que $\gamma$ est un représentant admissible de $c$. Par abus de notation, on désignera une classe dans $(M(F))_{M,S}^K$ par un représentant admissible.
\end{definition}

Notons
$$ K_M = \prod_v K_{M,v} := K \cap M(\A). $$

Afin de compléter le raffinement géométrique d'Arthur, il faut compléter les fonctions test locales en celles globales comme dans \S\ref{sec:dev-fin-unip}. Comme $S \supset V_\text{ram}$, on sait définir $f_{K_{M,v}} \in C_{c,\asp}^\infty(\tilde{M}_v)$ l'unité de $\mathcal{H}_{\asp}(\tilde{M}_v /\!/ K_{M,v})$, pour tout $v \notin S$. D'où un homomorphisme injectif
\begin{equation}\begin{split}\label{eqn:completion-fonction}
  C_{c,\asp}^\infty(\tilde{M}_S) & \to C_{c,\asp}^\infty(\tilde{M}) \\
  f_S & \mapsto f_S \cdot \prod_{v \notin S} f_{K_{M,v}}.
\end{split}\end{equation}

Il faut aussi extraire la part locale d'une classe dans $(M(F))_{M,S}^{K,\text{bon}}$. Précisons. Soit $c \in (M(F))_{\tilde{M},S}^K$ avec un représentant admissible $\gamma = \sigma u$. Le scindage au-dessus de $K_M^S$ fournit une identification
\begin{align}\label{eqn:scindage-KS}
  \rev^{-1}(M(F_S) \times K_M^S) = \tilde{M}_S \times K_M^S .
\end{align}
Notons $(\widetilde{\sigma_S}, \sigma^S) \in \tilde{M}_S \times K_M^S$ l'élément auquel $\sigma$ s'identifie. Posons $\widetilde{\gamma_S} = \widetilde{\sigma_S} u_S$, où $u_S$ est relevé à l'aide du scindage unipotent. On vérifie que $u_S$ est $[\cdot,\sigma]$-bon dans $M_\sigma(F_S)$ si et seulement si $c \in (M(F))_{\tilde{M},S}^{K,\text{bon}}$.

\begin{definition}\label{def:passage-local}\index[iFT1]{$\gamma \leadsto \widetilde{\gamma_S}$}
  Soient $\gamma \in (M(F))_{\tilde{M},S}^{K,\text{bon}}$, $\widetilde{\gamma_S} \in \tilde{M}_S$. On écrit
  $$ \gamma \leadsto \widetilde{\gamma_S} $$
  si $\gamma$ est un représentant admissible qui donne $\widetilde{\gamma_S}$ comme ci-dessus. Il faut prendre garde qu'en général, les représentants admissibles d'une classe de $(M,S)$-équivalence ne sont pas conjugués par $M(F) \cap (M(F_S) \times K_M^S)$, par conséquent $\leadsto$ n'a aucune raison d'être une application bien définie de $(M(F))_{\tilde{M},S}^{K,\text{bon}}$ dans $\Gamma(\tilde{M}_S)$!
\end{definition}

\paragraph{Exprimer $J_\mathfrak{o}$ par intégrales orbitales pondérées anti-spécifiques}
Nous nous proposons de remonter la formule descendue pour $J_\mathfrak{o}$ dans \ref{prop:Jo-descente} en termes des intégrales orbitales pondérées anti-spécifiques; nous n'en donnerons qu'une esquisse car les arguments complets se trouvent dans \cite[\S 8]{Ar86}.

Plaçons-nous dans le cas global. Fixons $\mathfrak{o} \in \mathcal{O}^G$ et prenons les objets $\sigma$, $M_1$, $K_\sigma$, $T_1$ et $S \subset V_F$ comme dans \S\ref{sec:reduction-unip}. Quitte à agrandir $S$, on peut aussi supposer que
\begin{itemize}
  \item pour tout $v \notin S$ et tout $y_v \in G(F_v)$, on a $(y_v^{-1} \sigma G_{\sigma,\text{unip}}(F_v) y_v) \cap \sigma K_v \neq \emptyset$ seulement si $y_v \in G_\sigma(F_v) K_v$ (voir \cite[6.1]{Ar86}).
\end{itemize}

Alors \ref{prop:Jo-descente} se lit
$$ J_\mathfrak{o}(f) = |\iota^G(\sigma)|^{-1} \int_{G_\sigma(\A) \backslash G(\A)} \sum_{R \in \mathcal{F}^{G_\sigma}(M_{1,\sigma})} |W_0^{M_R}| |W_0^{G_\sigma}|^{-1} J_{\text{unip}}^{M_R, [\cdot,\sigma]} (\Phi_{R,y,T_1}) \dd y. $$

L'expression dans l'intégrale est nulle sauf si
$$ y = y_S y', \qquad y_S \in G_\sigma(F_S) \backslash G(F_S), \; y' \in \prod_{v \notin S} G_\sigma(F_v) \backslash G_\sigma(F_v)K_v .$$

Pour un tel $y$, on a l'identification $\Phi_{R,y,T_1} = \Phi _{R,y_S,T_1} \in C_c^\infty(M_R(F_S)^1)$ via $C_c^\infty(M_R(F_S)^1) \hookrightarrow C_c^\infty(M_R(\A)^1)$, où $\Phi _{R,y_S,T_1}$ est la fonction associée à $f_S$ par \ref{prop:int-ponderee-descente},  car $v'_R(ky, T_1)=v'_R(k_S y_S, T_1)$ et $[K^S,\sigma]=1$; toutes ces assertions sont démontrées dans \cite[\S 7]{Ar86} sauf la dernière, qui résulte du fait que $\sigma^S \in K^S$ et $S \supset V_\text{ram}$.

Donc $J_{\mathfrak{o}}(f)$ est égal à
$$ |\iota^G(\sigma)|^{-1} \int_{G_\sigma(F_S) \backslash G(F_S)} \sum_{R \in \mathcal{F}^{G_\sigma}(M_{1,\sigma})} |W_0^{M_R}| |W_0^{G_\sigma}|^{-1} J_{\text{unip}}^{M_R, [\cdot,\sigma]} (\Phi_{R,y_S,T_1}) \dd y_S. $$

Posons $\mathcal{L}^\sigma := \mathcal{L}^{G_\sigma}(M_{1,\sigma})$. D'après \ref{prop:int-unip-developpement},
\begin{multline*}
  |W_0^{M_R}| |W_0^{G_\sigma}|^{-1} J_{\text{unip}}^{M_R, [\cdot,\sigma]} (\Phi_{R,y_S,T_1}) \\
  = \sum_{\substack{L \in \mathcal{L}^\sigma \\ L \subset M_R}} |W_0^L| |W_0^{G_\sigma}|^{-1} \sum_{u \in \Gamma_\text{unip}(L(F),S)^{[\cdot,\sigma]}} a^{L,[\cdot,\sigma]}(S,\dot{u}) J_L^{M_R, [\cdot,\sigma]}(\dot{u}, \Phi_{R,y_S,T_1}).
\end{multline*}

Posons $\mathcal{L}^0_\sigma(M_1) := \{ M \in \mathcal{L}(M_1) : A_M = A_{M_\sigma}\}$, on a une bijection
\begin{align*}
  \mathcal{L}^0_\sigma(M_1) & \to \mathcal{L}^\sigma \\
  M & \mapsto M_\sigma.
\end{align*}

Il en résulte que $J_\mathfrak{o}(f)$ est égale à
\begin{multline*}
  |\iota^G(\sigma)|^{-1} \sum_{M \in \mathcal{L}^0_\sigma(M_1)} \sum_{u \in \Gamma_\text{unip}(M_\sigma(F),S)^{[\cdot,\sigma]} } |W_0^{M_\sigma}| |W_0^{G_\sigma}|^{-1} a^{M_\sigma,[\cdot,\sigma]}(S,\dot{u}) \\
  \int_{G_\sigma(F_S) \backslash G(F_S)} \left( \sum_{R \in \mathcal{F}^{G_\sigma}(M_\sigma)} J^{M_R,[\cdot,\sigma]}_{M_\sigma}(\dot{u}, \Phi_{R,y_S,T_1}) \right) \dd y.
\end{multline*}

Rappelons qu'un relèvement $\widetilde{\sigma_S} \in \tilde{M}_S$ de $\sigma$ est défini dans \ref{def:passage-local}. Soit $u \in \Gamma_\text{unip}(M_\sigma(F),S)^{[\cdot,\sigma]}$, alors $\widetilde{\sigma_S} u_S$ est bon dans $\tilde{M}_S$. D'autre part, $\prod_{v \in S} |D^G(\sigma)|_v = |D^G(\sigma)| = 1$ d'après la $S$-admissibilité de $\sigma$. En multipliant la formule ci-dessus par $\prod_{v \in S} |D^G(\sigma)|_v$, on peut appliquer \ref{prop:int-ponderee-descente} et obtient ainsi

\begin{lemma}[cf. {\cite[7.1]{Ar86}}]\label{prop:developpement-geometrique-preraffine}
  Soit $f \in C_{c,\asp}^\infty(\tilde{G}_S)$, alors
  \begin{align*}
    J_\mathfrak{o}(f) & = |\iota^G(\sigma)|^{-1} \sum_{M \in \mathcal{L}^0_\sigma(M_1)} |W_0^{M_\sigma}| |W_0^{G_\sigma}|^{-1} \\
    & \sum_{u \in \Gamma_\mathrm{unip}(M_\sigma(F),S)^{[\cdot,\sigma]}} a^{M_\sigma,[\cdot,\sigma]}(S,\dot{u}) J_{\tilde{M}}(\tilde{\sigma}\dot{u}, f).
  \end{align*}
\end{lemma}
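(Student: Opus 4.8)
The statement \ref{prop:developpement-geometrique-preraffine} is essentially a bookkeeping consequence of two results already established: the descent formula \ref{prop:Jo-descente} for $J_\mathfrak{o}$ to the unipotent terms on the various $M_R$ for $R \in \mathcal{F}^{G_\sigma}(M_{1,\sigma})$, with the twisting character $[\cdot,\sigma]$, and the descent formula \ref{prop:int-ponderee-descente} for the anti-specific weighted orbital integrals $J_{\tilde M}(\tilde\sigma \dot u, \cdot)$, which runs through the same index set $\mathcal{F}^{G_\sigma}(M_\sigma)$ and again produces the character $[\cdot,\sigma]$. Indeed, the body of the argument is already spelled out in the paragraph preceding the lemma. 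So the plan is simply to organize that chain of manipulations carefully and check that the combinatorial identities match up. First I would start from \ref{prop:Jo-descente}, observe (as in \cite[\S 7]{Ar86}) that the integrand over $G_\sigma(\A)\backslash G(\A)$ vanishes unless $y$ decomposes as $y_S y'$ with $y' \in \prod_{v\notin S} G_\sigma(F_v)\backslash G_\sigma(F_v)K_v$, using the hypothesis that $\sigma$ is $S$-admissible with $\sigma^S \in K^S$ together with the auxiliary enlargement of $S$ recalled from \cite[6.1]{Ar86}; this collapses the adelic integral to an integral over $G_\sigma(F_S)\backslash G(F_S)$, and identifies $\Phi_{R,y,T_1}$ with the local function $\Phi_{R,y_S,T_1} \in C_c^\infty(M_R(F_S)^1)$ attached to $f_S$, using $v'_R(ky,T_1) = v'_R(k_Sy_S,T_1)$ and $[K^S,\sigma]=1$ (the latter from $S \supset V_\text{ram}$, so $\rev$ is split over $K^S$).

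Next I would apply the fine unipotent expansion \ref{prop:int-unip-developpement} to each term $J^{M_R,[\cdot,\sigma]}_{\mathrm{unip}}(\Phi_{R,y_S,T_1})$, on the group $M_R$ with its minimal Levi $M_{1,\sigma}$ and the character $[\cdot,\sigma]$ (which is a legitimate automorphic character of $G_\sigma(\A)$ trivial on $K_\sigma^S$, as noted in the lemma of \S\ref{sec:reduction-unip}). This expresses everything through coefficients $a^{L,[\cdot,\sigma]}(S,\dot u)$ for $L \in \mathcal{L}^{\sigma} := \mathcal{L}^{G_\sigma}(M_{1,\sigma})$ with $L \subset M_R$, and distributions $J^{M_R,[\cdot,\sigma]}_L(\dot u, \Phi_{R,y_S,T_1})$. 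Then I would swap the order of summation: sum first over $L$, and use the bijection $\mathcal{L}^0_\sigma(M_1) \xrightarrow{\sim} \mathcal{L}^\sigma$, $M \mapsto M_\sigma$ (the set $\mathcal{L}^0_\sigma(M_1)$ being the Levi subgroups $M \supset M_1$ with $A_M = A_{M_\sigma}$), together with the identity of Weyl-group constants $|W_0^{M_R}||W_0^{G_\sigma}|^{-1}$ against $\sum_{R \supset L} |W_0^L||W_0^{G_\sigma}|^{-1}$, exactly as in \cite{Ar86}. The inner sum over $R \in \mathcal{F}^{G_\sigma}(M_\sigma)$ of the terms $J^{M_R,[\cdot,\sigma]}_{M_\sigma}(\dot u, \Phi_{R,y_S,T_1})$, after integrating over $y_S$, is precisely the right-hand side of the local descent \ref{prop:int-ponderee-descente} for $J_{\tilde M}(\tilde\sigma \dot u, f)$, up to the scalar $|D^G(\sigma)|^{1/2}$.

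The one genuinely arithmetic point — and the step I expect to need the most care — is the normalization by the discriminant. The descent formula \ref{prop:int-ponderee-descente} carries a factor $|D^G(\sigma)|_{F_S}^{1/2} = \prod_{v \in S}|D^G(\sigma)|_v^{1/2}$, whereas the expression obtained from \ref{prop:Jo-descente} and \ref{prop:int-unip-developpement} has no such factor. The resolution is the product formula: because $\sigma$ is $S$-admissible and $\sigma^S \in K^S$ we have $|D^G(\sigma)|_v = 1$ for all $v \notin S$, hence $\prod_{v \in S}|D^G(\sigma)|_v = |D^G(\sigma)| = 1$ by the global product formula, so multiplying by $\prod_{v\in S}|D^G(\sigma)|_v^{1/2} = 1$ changes nothing and lets us invoke \ref{prop:int-ponderee-descente}. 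I would make sure to state this explicitly. The remaining work is purely formal rearrangement, and it suffices to say that from this point the argument is identical to Arthur's in \cite[\S 7]{Ar86}, the only differences being the systematic presence of the character $[\cdot,\sigma]$ (whose role was already justified in the proofs of \ref{prop:Jo-descente} and \ref{prop:integrale-ponderee-bien-definie}) and the passage through the cover via the splitting over $K^S$ and over $G(F)$; so I would conclude by citing \cite[7.1]{Ar86} for the combinatorial identity and noting that all the ingredients have been checked to carry over.
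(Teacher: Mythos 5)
Your proposal is correct and follows essentially the same route as the paper: the paper's own proof is precisely the chain of manipulations you describe — collapse the adelic integral in \ref{prop:Jo-descente} using $S$-admissibility and $\sigma^S \in K^S$, expand each $J^{M_R,[\cdot,\sigma]}_{\mathrm{unip}}(\Phi_{R,y_S,T_1})$ via \ref{prop:int-unip-developpement}, pass through the bijection $\mathcal{L}^0_\sigma(M_1)\rightiso\mathcal{L}^\sigma$, and reassemble via \ref{prop:int-ponderee-descente} after inserting $\prod_{v\in S}|D^G(\sigma)|_v=1$. Your treatment of the discriminant normalization is exactly the point the paper also makes explicit.
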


\paragraph{Les coefficients}

\begin{definition}\label{def:coef-gamma-cond}
  Soit $\gamma \in M(F)$ avec décomposition de Jordan $\gamma=\sigma u$. Supposons que $\gamma$ est un représentant admissible d'une classe dans $(M(F))_{\tilde{M},S}^{K,\text{bon}}$ (voir \ref{def:K-bon}), alors un élément $\widetilde{\gamma_S} = \widetilde{\sigma_S} u \in \tilde{M}_S$ lui est associé selon la construction de \ref{def:passage-local}. Prenons un élément $\dot{\widetilde{\gamma_S}} \in \dot{\Gamma}(\tilde{M}_S)$ supporté sur la classe de conjugaison contenant $\widetilde{\gamma_S}$ avec la décomposition de Jordan $\dot{\widetilde{\gamma_S}}=\widetilde{\sigma_S} \dot{u}$.

  Posons\index[iFT1]{$\epsilon^M(\sigma)$}\index[iFT1]{$a^{\tilde{M}}(S, \dot{\widetilde{\gamma_S}})$}
  \begin{gather*}
    \epsilon^M(\sigma) := \begin{cases} 1, & \text{si $\sigma$ est $F$-elliptique dans $M$},\\ 0,& \text{sinon}; \end{cases},  \\
    \text{Stab}(\sigma,u) := \{ t \in \iota^M(\sigma) : t u t^{-1} \stackrel{\text{conj}}{\sim} u \text{ dans } M_\sigma(F_S) \}, \\
    a^{\tilde{M}}(S, \dot{\widetilde{\gamma_S}}) := \epsilon^M(\sigma) |\text{Stab}(\sigma,u)|^{-1} a^{M_\sigma, [\cdot,\sigma]}(S,\dot{u}).
  \end{gather*}
\end{definition}
C'est la définition d'Arthur (voir \cite[(2.4)]{Ar02}) lorsque le revêtement est trivial. On vérifie que $a^{\tilde{M}}(S, \dot{\widetilde{\gamma_S}}) J_{\tilde{M}}(\dot{\widetilde{\gamma_S}},f)$ ne dépend pas des choix des mesures, et il est invariant par conjugaison par $M_\sigma(F_S)$ d'après \ref{prop:int-unip-developpement}.

\begin{remark}
  D'après \ref{prop:coef-dependance}, les coefficients $a^{\tilde{M}}(S,\cdot)$ sont déterminés par les données $\rev: \tilde{M} \to M(\A)$, $S$, et le sous-groupe compact maximal $K^S$ de $M(F^S)$ tels que
  \begin{itemize}
    \item il existe un sous-groupe de Lévi minimal $M_0$ de $M$, défini sur $F$, qui est en bonne position relativement à $K_M^S$;
    \item $S \supset V_\text{ram}$.
  \end{itemize}
\end{remark}

\begin{lemma}
  Soient $M, M' \in \mathcal{L}(M_0)$ et $\gamma \in M(F)$ (resp. $\gamma' \in M'(F)$ ) avec la décomposition de Jordan $\gamma=\sigma u$ (resp. $\gamma' = \sigma' u' \in M'(F)$) satisfaisant aux conditions dans \ref{def:coef-gamma-cond}. S'il existe $y \in G(F)$ tel que $yMy^{-1} = M'$, $y \gamma y^{-1}= \gamma'$, alors
  \begin{align*}
    a^{\tilde{M}}(S, \dot{\widetilde{\gamma_S}}) J_{\tilde{M}}(\dot{\widetilde{\gamma_S}}, f) &= a^{\tilde{M}'}(S, \dot{\widetilde{\gamma_S}}') J_{\tilde{M}'}(\dot{\widetilde{\gamma_S}}', f), \\
    a^{M_\sigma, [\cdot,\sigma]}(S, \dot{u}) J_{\tilde{M}}(\dot{\widetilde{\gamma_S}}, f) &= a^{M'_{\sigma'}, [\cdot,\sigma']}(S, \dot{u}') J_{\tilde{M}'}(\dot{\widetilde{\gamma_S}}', f)
  \end{align*}
  pour tout $f$.

  En particulier, le produit $a^{\tilde{M}}(S, \dot{\widetilde{\gamma_S}}) J_{\tilde{M}}(\dot{\widetilde{\gamma_S}},f)$ ne dépend que de $f$ et de la classe de $\gamma$ dans $(M(F))_{M,S}^{K,\mathrm{bon}}$.
\end{lemma}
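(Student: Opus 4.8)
Le plan est de voir l'énoncé comme un transport de structure par la conjugaison $x \mapsto yxy^{-1}$, $y \in G(F)$, relevée au revêtement via le scindage fixé $G(F) \hookrightarrow \tilde{G}$. On commencera par écarter le cas trivial: si $\sigma$ n'est pas $F$-elliptique dans $M$, alors $\sigma'$ ne l'est pas dans $M' = yMy^{-1}$ et les deux membres sont nuls; on supposera donc $\epsilon^M(\sigma) = \epsilon^{M'}(\sigma') = 1$. La conjugaison par $y$ envoie $(M,\sigma,u)$ sur $(M',\sigma',u')$ et induit une bijection $\iota^M(\sigma) \rightiso \iota^{M'}(\sigma')$ respectant la condition qui définit $\mathrm{Stab}$, d'où $|\mathrm{Stab}(\sigma,u)| = |\mathrm{Stab}(\sigma',u')|$ et $\epsilon^M(\sigma)|\mathrm{Stab}(\sigma,u)|^{-1} = \epsilon^{M'}(\sigma')|\mathrm{Stab}(\sigma',u')|^{-1}$. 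Il suffira par conséquent d'établir l'égalité portant sur $a^{M_\sigma,[\cdot,\sigma]}$ et $a^{M'_{\sigma'},[\cdot,\sigma']}$; celle portant sur $a^{\tilde{M}}, a^{\tilde{M}'}$ s'en déduira en multipliant par ce facteur, et la dernière assertion résultera alors de la définition \ref{def:(M,S)-equiv} de la $(M,S)$-équivalence, jointe à l'invariance du produit $a^{\tilde{M}}(S,\dot{\widetilde{\gamma_S}}) J_{\tilde{M}}(\dot{\widetilde{\gamma_S}},f)$ par conjugaison sous $M_\sigma(F_S)$ déjà observée après \ref{def:coef-gamma-cond}.

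Pour les coefficients, le plan est d'appliquer \ref{prop:transport-structure} au transporteur $\mathcal{T}(\sigma,\sigma')$, qui contient $y \in \mathcal{T}(\sigma,\sigma')(F)$, avec $\bomega := [\cdot,\sigma]$ sur $M_\sigma(\A)$ et $\bomega' := [\cdot,\sigma']$ sur $M'_{\sigma'}(\A)$; quitte à conjuguer les deux triplets par $M(F)$, $M'(F)$ et à agrandir $S$, on disposera des données $M_1, M'_1$ exigées avant cet énoncé. L'essentiel du travail sera de produire la donnée $\Omega = (\Omega_v)_v$ de \ref{hyp:transporteur} à partir du revêtement: pour $z \in \mathcal{T}(\sigma,\sigma')(F_v)$, on choisira un relèvement $\tilde{z} \in \tilde{G}_v$ et on posera $\Omega_v(z) \in \bmu_m$ l'unique élément tel que $\tilde{z}\,\tilde{\sigma}\,\tilde{z}^{-1} = \Omega_v(z)\,\widetilde{\sigma'}$, où $\tilde{\sigma}, \widetilde{\sigma'}$ sont les composantes en $v$ des images de $\sigma, \sigma'$ par $G(F) \hookrightarrow \tilde{G}$ (ces composantes valant $s_v(\sigma_v)$, $s_v(\sigma'_v)$ pour $v \notin S$, puisque $\sigma^S, (\sigma')^S \in K^S$). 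C'est bien défini car $z\sigma z^{-1} = \sigma'$, indépendant du relèvement puisque $\bmu_m$ est central, et les identités de \S\ref{sec:commutateurs} donneront $\Omega_v(x'zx) = \bomega'(x')\Omega_v(z)\bomega(x)$. Comme $S \supset V_\text{ram}$ et que le scindage au-dessus de $K_v$ est un homomorphisme, $\Omega_v$ sera triviale sur $K_v \cap \mathcal{T}(\sigma,\sigma')(F_v)$ pour $v \notin S$, donc $\Omega := \prod_v \Omega_v$ sera bien définie sur $\mathcal{T}(\sigma,\sigma')(\A)$; et elle sera triviale sur $\mathcal{T}(\sigma,\sigma')(F)$ parce que $G(F) \hookrightarrow \tilde{G}$ est un homomorphisme et que $\mathbf{N}$ est le noyau de l'application produit $\bigoplus_v \bmu_m \to \bmu_m$. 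Alors \ref{prop:transport-structure} fournira, les mesures sur les orbites étant transportées par $y$,
\[ a^{M_\sigma,[\cdot,\sigma]}(S,\dot{u}) = \Omega(y^S)^{-1}\, a^{M'_{\sigma'},[\cdot,\sigma']}(S, \dot{u}'). \]

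Il restera à contrôler les intégrales orbitales pondérées. Écrivant $\tilde{\gamma} = \tilde{\sigma} u$ et $\tilde{\gamma}' = \widetilde{\sigma'} u'$ les relèvements via $G(F) \hookrightarrow \tilde{G}$ et le scindage unipotent (qui coïncident sur les unipotents), la relation $\tilde{\gamma}' = \tilde{y}\,\tilde{\gamma}\,\tilde{y}^{-1}$, jointe à l'invariance du scindage unipotent par conjugaison \ref{prop:scindage-unip}, donnera, en décomposant en composantes et en identifiant l'écart de relèvement hors de $S$ au scalaire $\prod_{v \notin S}\Omega_v(y_v) = \Omega(y^S)$, la relation $\widetilde{\gamma_S}' = \Omega(y^S)\cdot \tilde{y}_S\,\widetilde{\gamma_S}\,\tilde{y}_S^{-1}$ dans $\tilde{M}'_S$, où $\tilde{y}_S \in \tilde{G}_S$ relève l'image $y_S$ de $y$ dans $G(F_S)$. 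Comme $f$ est anti-spécifique, $M' = y_S M y_S^{-1} \in \mathcal{L}(M_0)$, et $J_{\tilde{M}'}(\noyau \dot{\tilde{\gamma}}, f) = \noyau^{-1} J_{\tilde{M}'}(\dot{\tilde{\gamma}}, f)$ pour $\noyau \in \bmu_m$, le transport de structure \ref{prop:J-asp-transport} donnera $J_{\tilde{M}'}(\dot{\widetilde{\gamma_S}}', f) = \Omega(y^S)^{-1}\, J_{\tilde{M}}(\dot{\widetilde{\gamma_S}}, f)$. En multipliant les deux égalités obtenues, le facteur $\Omega(y^S)$ se simplifiera et l'on conclura. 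Le point le plus délicat sera la construction de $\Omega$ et la vérification de \ref{hyp:transporteur} — en particulier l'identification précise du même scalaire $\Omega(y^S)$ comme écart de relèvement (côté intégrales pondérées) et comme facteur issu de \ref{prop:transport-structure} (côté coefficients) —, qui reposera, comme dans la preuve de \ref{prop:transport-structure}, sur le lemme de Kottwitz \ref{prop:Kottwitz} et sur la compatibilité entre le scindage global et les scindages locaux $s_v$ inscrite dans la définition d'un revêtement adélique.
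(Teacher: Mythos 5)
Votre démonstration est correcte et suit essentiellement la même route que celle du texte : construction de $\Omega_v$ par $z\tilde{\sigma}_v z^{-1} = \Omega_v(z)\tilde{\sigma}'_v$, vérification de \ref{hyp:transporteur}, application de \ref{prop:transport-structure} pour les coefficients et de \ref{prop:J-asp-transport} (avec l'anti-spécificité) pour les intégrales pondérées, puis simplification du facteur $\Omega(y^S)$ dans le produit. Le traitement explicite du facteur $\epsilon^M(\sigma)|\mathrm{Stab}(\sigma,u)|^{-1}$ est un complément utile que le texte laisse implicite.
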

\begin{proof}
  Sélectionnons les mesures de sorte que
  \begin{gather}\label{eqn:transport-tilde-Jordan}
    \dot{\widetilde{\gamma_S}}' = \widetilde{\sigma'_S} \cdot (y \dot{u} y^{-1}).
  \end{gather}

  On a les caractères automorphes $\bomega = [\cdot,\sigma]$ sur $M_\sigma(\A)$ et $\bomega' = [\cdot,\sigma']$ sur $M'_{\sigma'}(\A)$. On va appliquer \ref{prop:transport-structure}. Pour satisfaire aux hypothèses dans \ref{hyp:transporteur}, il reste à construire les fonctions $\Omega_v$ sur $\mathcal{T}(\sigma,\sigma')(F_v)$ pour toute place $v$.

  Sélectionnons $\tilde{\sigma}_v \in \rev^{-1}(\sigma_v)$ pour toute place $v$ de sorte que $\tilde{\sigma}_v \in K_v$ si $v \notin S$ et $[\tilde{\sigma}_v]_v = \tilde{\sigma}$; alors on a aussi $\prod_{v \in S} \tilde{\sigma}_v = \tilde{\sigma}_S$. Idem pour $\tilde{\sigma}'_v$.

  Fixons une place $v$. Définissons $\Omega_v: \mathcal{T}(\sigma,\sigma')(F_v) \to \bmu_m$ par la formule
  $$ z\tilde{\sigma}_v z^{-1} = \Omega_v(z) \tilde{\sigma}'_v. $$
  Alors $\Omega_v(x'zx) = \bomega'(x')\Omega_v(z)\bomega(x)$ pour tout $x \in M_\sigma(F_v)$ et tout $x' \in M'_{\sigma'}(F_v)$. Si $v \notin S$ et $z \in \mathcal{T}(\sigma,\sigma')(F_v) \cap K_v$, alors $\Omega_v(z)=1$ grâce au fait que $\sigma^S,{\sigma'}^S \in K^S$ et au scindage de $\rev$ au-dessus de $K_v$. Comme $\rev$ se scinde au-dessus de $G(F)$, on a aussi $\Omega|_{\mathcal{T}(\sigma,\sigma')(F)}=1$.

  Alors \ref{prop:transport-structure} implique
  \begin{align*}
    a^{M_\sigma,[\cdot,\sigma]}(S, \dot{u}) &= \Omega(y^S)^{-1} a^{M'_{\sigma'},[\cdot,\sigma']}(S, \dot{u}'), \\
    \text{d'où} \quad a^{\tilde{M}}(S, \dot{\widetilde{\gamma_S}}) &= \Omega(y^S)^{-1} a^{\tilde{M}'}(S, \dot{\widetilde{\gamma_S}}').
  \end{align*}

  D'autre part, \ref{prop:J-asp-transport} implique
  $$ J_{\tilde{M}'}(y \dot{\widetilde{\gamma_S}} y^{-1}, f) = J_{\tilde{M}}(\dot{\widetilde{\gamma_S}},f). $$
  Comme $y \widetilde{\sigma_S} y^{-1} = \Omega(y_S) \widetilde{\sigma'_S}$, on déduit de \eqref{eqn:transport-tilde-Jordan} que
  $$ J_{\tilde{M}}(\dot{\widetilde{\gamma_S}}, f) = \Omega(y_S)^{-1} J_{\tilde{M}'}(\dot{\widetilde{\gamma_S}}',f). $$

  Or $\Omega(y_S)\Omega(y^S)=\Omega(y)=1$ car $y \in G(F)$, cela conclut la démonstration pour le premier énoncé. On a déjà remarqué que $\widetilde{\gamma_S} \mapsto a^{\tilde{M}}(S, \dot{\widetilde{\gamma_S}}) J_{\tilde{M}}(\dot{\widetilde{\gamma_S}},f)$ est invariant par conjugaison par $M_\sigma(F_S)$, donc le dernier énoncé résulte de la définition de $(M,S)$-équivalence.
\end{proof}

Notons $(M(F) \cap \mathfrak{o})_{M,S}^{K,\mathrm{bon}}$ le sous-ensemble des classes dans $(M(F))_{M,S}^{K,\mathrm{bon}}$ qui rencontrent $\mathfrak{o}$.

\begin{theorem}[cf. {\cite[8.1]{Ar86}}]\label{prop:developpement-geometrique-o}
  Avec les mêmes notations, on a
  $$ J_\mathfrak{o}(f) = \sum_{M \in \mathcal{L}(M_0)} |W_0^M| |W_0^G|^{-1} \sum_{\substack{\gamma \in (M(F) \cap \mathfrak{o})_{M,S}^{K,\mathrm{bon}} \\ \gamma \leadsto \widetilde{\gamma_S} }} a^{\tilde{M}}(S, \dot{\widetilde{\gamma_S}}) J_{\tilde{M}}(\dot{\widetilde{\gamma_S}}, f). $$
  La somme ne porte que sur un nombre fini de classes.
\end{theorem}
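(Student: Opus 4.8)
Le point de départ est la formule pré-raffinée \ref{prop:developpement-geometrique-preraffine}, qui exprime déjà $J_\mathfrak{o}(f)$ (pour $f\in C_{c,\asp}^\infty(\tilde G_S)$, complétée en une fonction globale via \eqref{eqn:completion-fonction}) comme une somme sur les couples $(M,\dot u)$ avec $M\in\mathcal{L}^0_\sigma(M_1)$ et $\dot u\in\dot\Gamma_{\mathrm{unip}}(M_\sigma(F),S)^{[\cdot,\sigma]}$, affectée du facteur de normalisation $|\iota^G(\sigma)|^{-1}|W_0^{M_\sigma}||W_0^{G_\sigma}|^{-1}$. Tout ce qui reste à faire est une réorganisation purement combinatoire de cet indexation, exactement celle d'Arthur \cite[\S 8]{Ar86}. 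D'abord je rappellerais que $M\mapsto M_\sigma$ identifie $\mathcal{L}^0_\sigma(M_1)$ à $\mathcal{L}^{G_\sigma}(M_{1,\sigma})$ et que, pour $M\in\mathcal{L}^0_\sigma(M_1)$, l'élément $\sigma$ est $F$-elliptique dans $M$, donc $\epsilon^M(\sigma)=1$. Via la décomposition de Jordan, le couple $(M,\dot u)$ se récrit $(M,\dot{\widetilde{\gamma_S}})$ avec $\gamma=\sigma u\in M(F)$ et $\dot{\widetilde{\gamma_S}}=\widetilde{\sigma_S}\dot u$; comme $\sigma^S\in K^S$ et $\sigma$ est $S$-admissible, $\gamma$ est un représentant admissible d'une classe de $(M(F)\cap\mathfrak{o})^K_{M,S}$, et le fait que $u$ soit $[\cdot,\sigma]$-bon dans $M_\sigma(F_S)$ équivaut à ce que cette classe appartienne à $(M(F)\cap\mathfrak{o})^{K,\mathrm{bon}}_{M,S}$ (cf. la discussion précédant \ref{def:passage-local}). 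Par \ref{def:coef-gamma-cond}, on a alors $a^{\tilde M}(S,\dot{\widetilde{\gamma_S}})=|\mathrm{Stab}(\sigma,u)|^{-1}a^{M_\sigma,[\cdot,\sigma]}(S,\dot u)$, et $J_{\tilde M}(\tilde\sigma\dot u,f)=J_{\tilde M}(\dot{\widetilde{\gamma_S}},f)$ dès que l'on travaille dans $\tilde M_S$.

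\textbf{La réorganisation combinatoire.} Il faut passer de la somme sur les couples $(M,u)$ avec $M\supset M_1$ et $\sigma$ fixé, au facteur $|\iota^G(\sigma)|^{-1}|W_0^{M_\sigma}||W_0^{G_\sigma}|^{-1}$ près, à une somme sur tous les $M\in\mathcal{L}(M_0)$ et sur les classes de $(M,S)$-équivalence $\gamma\in(M(F)\cap\mathfrak{o})^{K,\mathrm{bon}}_{M,S}$, de poids $|W_0^M||W_0^G|^{-1}$. Je reproduirais l'argument d'Arthur mot à mot: on regroupe les couples $(M,\gamma)$ modulo conjugaison par $G(F)$; comme $\gamma\in\mathfrak{o}$, la partie semi-simple de tout tel $\gamma$ est $G(F)$-conjuguée à $\sigma$, si bien que chaque classe de $G(F)$-conjugaison de couples (à partie semi-simple $F$-elliptique dans $M$) possède un représentant de partie semi-simple égale au $\sigma$ fixé; on compte alors avec quelle multiplicité un tel représentant apparaît, compte tenu de l'ambiguïté du choix de $M$ dans sa classe de conjugaison et du choix du représentant admissible dans sa classe de $(M,S)$-équivalence — la comptabilité de $\iota^G(\sigma)$, $\iota^M(\sigma)$, $\mathrm{Stab}(\sigma,u)$ et des groupes de Weyl étant identique au cas non tordu et produisant précisément $|W_0^M||W_0^G|^{-1}$. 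Le point qui rend cette re-indexation licite est que, d'après le lemme établi juste avant l'énoncé (via \ref{prop:transport-structure} et \ref{prop:J-asp-transport}), la quantité $a^{\tilde M}(S,\dot{\widetilde{\gamma_S}})\,J_{\tilde M}(\dot{\widetilde{\gamma_S}},f)$ ne dépend que de $f$ et de la classe de $\gamma$ dans $(M(F))^{K,\mathrm{bon}}_{M,S}$ (et est insensible aux choix de mesures et de représentant admissible): c'est cela qui permet de travailler au niveau des classes de conjugaison plutôt que des données rigides.

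\textbf{Finitude.} Pour $M$ fixé, les termes non nuls ont une partie semi-simple $G(F)$-conjuguée à $\sigma$ et une partie unipotente $u$ telle que $a^{M_\sigma,[\cdot,\sigma]}(S,\dot u)\neq 0$; or cette dernière condition ne vaut que pour un nombre fini de classes unipotentes de $M_\sigma(F_S)$ (finitude sous-jacente à \ref{prop:int-unip-developpement}, liée à la $S$-admissibilité, cf. \cite{Ar86}). Comme $\mathcal{L}(M_0)$ est fini, la somme entière est finie.

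\textbf{Obstacle principal.} La seule réelle subtilité est de vérifier que le caractère de torsion $[\cdot,\sigma]$ et les données de cocycle auxiliaires $\Omega_v$ se propagent de façon cohérente à travers le décompte de multiplicités d'Arthur, c'est-à-dire que le remplacement de $J_\text{unip}$ par $J^{[\cdot,\sigma]}_\text{unip}$ et des intégrales orbitales pondérées ordinaires par des intégrales anti-spécifiques ne perturbe pas le comptage. J'attends que ce soit inoffensif précisément parce que la réorganisation ne manipule que les ensembles de sous-groupes de Lévi, de sous-groupes paraboliques et de classes de conjugaison rationnelles, sur lesquels le revêtement $\rev$ et le caractère $[\cdot,\sigma]$ agissent trivialement au sens voulu; toutes les identités dépendant du caractère ont déjà été isolées dans \ref{prop:transport-structure} et dans le lemme qui précède le théorème.
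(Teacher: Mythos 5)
Votre démonstration suit essentiellement la même voie que celle du texte : partir du développement pré-raffiné \ref{prop:developpement-geometrique-preraffine}, invoquer le lemme précédant l'énoncé pour rendre licite le travail au niveau des classes de $(M,S)$-équivalence, puis effectuer la réindexation combinatoire d'Arthur (action de $\iota^M(\sigma)$ sur les classes unipotentes avec isotropie $\mathrm{Stab}(\sigma,u)$, somme sur les paires $(M,\sigma_M)$ modulo $W_0^G$, et l'identité $|W_0^{M_\sigma}||W_0^{G_\sigma}|^{-1}|\iota^M(\sigma)||\iota^G(\sigma)|^{-1}=|W_0^{M^\sigma(F)}||W_0^{G^\sigma(F)}|^{-1}$, i.e. (8.5) de \cite{Ar86}). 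L'argument est correct et identifie bien le point clef, à savoir que le caractère $[\cdot,\sigma]$ n'intervient pas dans le décompte rationnel, toutes les dépendances en le caractère ayant été isolées dans le lemme précédent.
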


Ici l'expression signifie que, pour chaque classe dans $(M(F) \cap \mathfrak{o})_{M,S}^{K,\mathrm{bon}}$, on en prend un représentant admissible $\gamma$ quelconque, puis un $\widetilde{\gamma_S}$ quelconque tel que $\gamma \leadsto \widetilde{\gamma_S}$ via la correspondance définie dans \ref{def:passage-local}. Le produit $a^{\tilde{M}}(S, \dot{\widetilde{\gamma_S}}) J_{\tilde{M}}(\dot{\widetilde{\gamma_S}}, f)$ est bien défini grâce au lemme précédent.

\begin{proof}
  Reprenons les notations de \ref{prop:developpement-geometrique-preraffine}. Le groupe $\iota^M(\sigma)$ opère sur $\Gamma_\text{unip}(M_\sigma(F),S)^{[\cdot,\sigma]}$, et le groupe d'isotropie d'une classe $u$ est $\text{Stab}(\sigma,u)$. Vu le lemme précédent et \ref{prop:developpement-geometrique-preraffine}, $J_\mathfrak{o}(f)$ est égal à
  \begin{gather}\label{eqn:dev-fin-1}
    \sum_{M \in \mathcal{L}(M_1)} |W_0^{M_\sigma}| |W_0^{G_\sigma}|^{-1} |\iota^M(\sigma)| |\iota^G(\sigma)|^{-1} \sum_{\substack{\gamma \in (M(F) \cap \mathfrak{o})_{M,S}^{K,\text{bon}} \\ \gamma_s = \sigma}} a^{\tilde{M}}(S, \dot{\widetilde{\gamma_S}}) J_{\tilde{M}}(\dot{\widetilde{\gamma_S}},f).
  \end{gather}
  où $\gamma_s = \sigma$ signifie que l'on prend les représentants admissibles ayant partie semi-simple $\sigma$.

  Traitons maintenant le côté à droite de l'assertion. Tous les regroupements ci-dessous sont justifiés par le lemme précédent. La somme sur $\gamma$ se décompose en une somme double sur les classes semi-simples et des classes unipotentes. On peut combiner la somme sur les classes semi-simples avec la somme sur $M$ et on obtient une somme sur
  $$ \Pi := \{(M,\sigma_M) : M \in \mathcal{L}(M_0), \; \sigma_M \in (M(F)), \; \sigma_M \stackrel{\text{conj}}{\sim} \sigma \text{ dans } G(F) \} $$
  suivie par une somme sur des classes unipotentes. Cette somme double est évidemment finie. De plus, $W_0^G$ opère sur $\Pi$ et on peut sommer sur le quotient $\Pi/W_0^G$ pourvu que l'on multiplie les coefficients par l'ordre du groupe d'isotropie.

  Toute classe dans $\Pi/W_0^G$ contient une paire de la forme $(M,\sigma)$ avec $M \supset M_1$ (cf. \cite[p.186]{Ar86}). Arthur en a calculé l'ordre du groupe d'isotropie (cf. \cite[pp.206-207]{Ar86}): c'est $|W_0^{M^\sigma(F)}| |W_0^{G^\sigma(F)}|^{-1}$, où
  $$ W_0^{G^\sigma(F)} := M_1^\sigma(F) \backslash N_{G^\sigma}(A_{M_1})(F). $$
  Idem pour $M$ au lieu de $G$. Donc le terme à droite de l'assertion est égal à
  \begin{gather}\label{eqn:dev-fin-2}
    \sum_{M \in \mathcal{L}(M_1)} |W_0^{M^\sigma(F)}| |W_0^{G^\sigma(F)}|^{-1} \sum_{\substack{\gamma \in (M(F) \cap \mathfrak{o})_{M,S}^{K,\text{bon}} \\ \gamma_s = \sigma}} a^{\tilde{M}}(S, \dot{\widetilde{\gamma_S}}) J_{\tilde{M}}(\dot{\widetilde{\gamma_S}},f).
  \end{gather}

  En comparant \eqref{eqn:dev-fin-1} et \eqref{eqn:dev-fin-2}, on se ramène à prouver que
  $$ |W_0^{M_\sigma}| |W_0^{G_\sigma}|^{-1} |\iota^M(\sigma)| |\iota^G(\sigma)|^{-1} = |W_0^{M^\sigma(F)}| |W_0^{G^\sigma(F)}|^{-1}, \quad M \in \mathcal{L}(M_1), $$
  ce qui est exactement (8.5) de \cite{Ar86}.
\end{proof}

Étant donné $\Delta$ un voisinage compact de $1$ dans $G(\A)^1$, posons $\tilde{\Delta} := \rev^{-1}(\Delta)$. Notons $C_{\Delta,\asp}^\infty(\tilde{G}^1)$ l'espace des fonctions dans $C_{c,\asp}^\infty(\tilde{G}^1)$ à support dans $\tilde{\Delta}$. Posons
$$ C_{\Delta,\asp}^\infty(\tilde{G}_S^1) := C_{\Delta,\asp}^\infty(\tilde{G}^1) \cap C_{c,\asp}^\infty(\tilde{G}_S^1)$$
via \eqref{eqn:completion-fonction}. On arrive ainsi au développement géométrique fin.

\begin{theorem}[cf. {\cite[9.2]{Ar86}}]\label{prop:developpement-raffine-geometrique}
  Il existe un sous-ensemble fini de places $S_{\Delta} \supset V_\mathrm{ram}$ tel que
  \begin{itemize}
    \item il existe $\Delta_S \subset G(F_S)$ tel que $\Delta = \Delta_{S_\Delta} \times K^{S_\Delta}$;
    \item pour tout $S \supset S_\Delta$ et tout $f \in C_{\Delta,\asp}^\infty(\tilde{G}_S^1)$, on a
      $$ J(f) = \sum_{M \in \mathcal{L}(M_0)} |W_0^M| |W_0^G|^{-1} \sum_{\substack{\gamma \in (M(F))_{M,S}^{K,\mathrm{bon}} \\ \gamma \leadsto \widetilde{\gamma_S} }} a^{\tilde{M}}(S, \dot{\widetilde{\gamma_S}}) J_{\tilde{M}}(\dot{\widetilde{\gamma_S}}, f) ; $$
  \end{itemize}
  les termes dans la somme ci-dessus sont nuls pour presque tout $\gamma$.
\end{theorem}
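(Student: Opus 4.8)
Le plan est de partir de la formule des traces grossière \eqref{eqn:formule-grossiere}, c'est-à-dire $J(f) = \sum_{\mathfrak{o} \in \mathcal{O}^G} J_{\mathfrak{o}}(f)$, et d'y injecter le développement de chaque $J_{\mathfrak{o}}$ fourni par \ref{prop:developpement-geometrique-o}. La sommation sur $\mathfrak{o}$ est alors formelle: en utilisant la partition $(M(F))_{M,S}^{K,\mathrm{bon}} = \bigsqcup_{\mathfrak{o} \in \mathcal{O}^G} (M(F) \cap \mathfrak{o})_{M,S}^{K,\mathrm{bon}}$, on regroupe les sommes intérieures sur $\gamma$. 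Toute la subtilité réside donc dans le choix \emph{uniforme} de $S_\Delta$ (indépendant de $\mathfrak{o}$ et de $S \supset S_\Delta$) et dans la finitude du nombre de termes non nuls; l'argument transpose celui de \cite[\S 9]{Ar86}.

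Concrètement, je commencerais par fixer $S_\Delta \supset V_\text{ram}$ assez grand pour que la projection de $\Delta$ dans $G(F^{S_\Delta})$ soit contenue dans $K^{S_\Delta}$ — c'est loisible car $\Delta$ est compact et $K_v$ est hyperspécial pour $v \notin V_\text{ram}$ — ce qui fournit l'écriture $\Delta = \Delta_{S_\Delta} \times K^{S_\Delta}$. Ensuite, $f$ étant à support dans le compact $\tilde{\Delta}$, seul un nombre fini de classes $\mathfrak{o} \in \mathcal{O}^G$ contribuent réellement: c'est l'estimation de finitude habituelle (cf. \cite[9.2]{Ar86}, \cite[Lemma 2.1]{Ar02}), qui repose sur la finitude des classes de conjugaison semi-simples $\sigma$ dans $G(F)$ avec $\sigma^S \in K^S$ voisines du support, et sur la finitude des orbites unipotentes; le revêtement n'y intervient pas puisque la géométrie est celle de $G$. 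Pour chacune de ces $\mathfrak{o}$, en nombre fini, je choisirais $\sigma \in \mathfrak{o}$ et les données auxiliaires $M_1, K_\sigma, T_1$ de \S\ref{sec:reduction-unip}, puis j'agrandirais encore $S_\Delta$ afin que, simultanément pour toutes ces $\mathfrak{o}$, les hypothèses de \ref{prop:developpement-geometrique-o} et de ses ingrédients soient remplies: $\sigma$ est $S_\Delta$-admissible, $\sigma^{S_\Delta} \in K^{S_\Delta}$, $K_{\sigma,v} = K \cap G_\sigma(F_v)$ pour $v \notin S_\Delta$ (lemme de Kottwitz \ref{prop:Kottwitz}), et la condition de \cite[6.1]{Ar86} sur les transporteurs aux places hors de $S_\Delta$. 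Ces conditions étant stables par agrandissement de $S$, le développement de \ref{prop:developpement-geometrique-o} pour $J_{\mathfrak{o}}(f)$ vaut alors pour tout $S \supset S_\Delta$ et tout $f \in C_{\Delta,\asp}^\infty(\tilde{G}_S^1)$, le produit $a^{\tilde{M}}(S, \dot{\widetilde{\gamma_S}}) J_{\tilde{M}}(\dot{\widetilde{\gamma_S}}, f)$ étant bien défini (indépendant des relèvements, des mesures et du représentant admissible) d'après le lemme précédant \ref{prop:developpement-geometrique-o}.

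Il ne reste qu'à sommer sur les $\mathfrak{o}$ contribuantes et à conclure par la partition ci-dessus; la nullité de presque tous les termes résulte de la finitude déjà obtenue pour chaque $\mathfrak{o}$, jointe à la finitude du nombre de $\mathfrak{o}$ intervenant. L'obstacle principal sera donc cette double finitude et le contrôle uniforme de $S_\Delta$ à travers toutes les classes $\mathfrak{o}$ pertinentes — mais, une fois admises les estimations de troncature d'Arthur et leur transposition au cadre du revêtement via la descente semi-simple de \ref{prop:Jo-descente} et \ref{prop:int-ponderee-descente} (où le caractère $[\cdot,\sigma]$ remplace le revêtement), ainsi que l'indépendance en $K_S$ et $M_0$ des coefficients $a^{M_\sigma,[\cdot,\sigma]}(S,\cdot)$ assurée par \ref{prop:coef-indep-M_0} et \ref{prop:transport-structure}, l'assemblage final est de pure routine.
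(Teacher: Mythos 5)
Votre démonstration est correcte et suit essentiellement la même voie que celle du texte: on part de $J=\sum_{\mathfrak{o}}J_{\mathfrak{o}}$, on applique \ref{prop:developpement-geometrique-o} à chaque $\mathfrak{o}$, et on conclut par la finitude du nombre de classes $\mathfrak{o}$ rencontrant $\Delta$ (garantie par \cite[9.1]{Ar86}), ce qui permet de choisir $S_\Delta$ uniformément. Votre rédaction est même plus explicite que celle du texte sur le caractère simultané du choix de $S_\Delta$ pour les $\mathfrak{o}$ contribuantes et sur la stabilité des conditions par agrandissement de $S$.
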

\begin{proof}
  Pour $\mathfrak{o}$ fixé, on peut toujours prendre $S$ de sorte que la condition pour \ref{prop:developpement-geometrique-o} soit satisfaite. Puisque $J = \sum_\mathfrak{o} J_\mathfrak{o}$, il suffit de montrer qu'il n'y qu'un nombre fini de $\mathfrak{o}$ qui rencontre $\Delta$, ce qu'assure \cite[9.1]{Ar86}.
\end{proof}

\bibliographystyle{abbrv-fr}
\bibliography{metaplectic}

\bigskip
\begin{flushleft}
  Wen-Wei Li \\
  Institut de Mathématiques de Jussieu \\
  175 rue du Chevaleret, 75013 Paris  \\
  France \\
  Adresse électronique: \texttt{wenweili@math.jussieu.fr}
\end{flushleft}

\end{document}